\def\section{\@startsection{section}{1}%
  \z@{1.1\linespacing\@plus\linespacing}{.8\linespacing}%
  {\normalfont\Large\scshape\centering}}
\theoremstyle{plain}
\newtheorem*{thmA}{Theorem A}
\newtheorem*{thmB}{Theorem B}
\newtheorem*{thmC}{Theorem C}
\newtheorem*{thmD}{Theorem D}
\newtheorem*{conj*}{Root Groups Conjecture}
\newtheorem*{thm1.2}{(1.2) Theorem}
\newtheorem*{thm1.3}{(1.3) Theorem}
\newtheorem*{thm1.4}{(1.4) Theorem}
\newtheorem*{prop*}{Proposition}
\newtheorem{prop}{Proposition}[section]
\newtheorem{thm}[prop]{Theorem}
\newtheorem{cor}[prop]{Corollary}
\newtheorem{lemma}[prop]{Lemma}
\theoremstyle{definition}
\newtheorem{Def}[prop]{Definition}
\newtheorem*{Def*}{Definition}
\newtheorem{examples}[prop]{Examples}
\newtheorem{notation}[prop]{Notation}
\newtheorem*{notation*}{Notation}
\newtheorem{remark}[prop]{Remark}
\newtheorem{prob}{Problem}
\newcommand{\cala}{\mathcal{A}}
\newcommand{\calb}{\mathcal{B}}
\newcommand{\calc}{\mathcal{C}}
\newcommand{\calx}{\mathcal{X}}
\newcommand{\caly}{\mathcal{Y}}
\newcommand{\ff}{\mathbb{F}}
\newcommand{\qq}{\mathbb{Q}}
\newcommand{\rr}{\mathbb{R}}
\newcommand{\zz}{\mathbb{Z}}
\newcommand{\ga}{\alpha}
\newcommand{\gb}{\beta}
\newcommand{\gc}{\gamma}
\newcommand{\gd}{\delta}
\newcommand{\gD}{\Delta}
\newcommand{\gre}{\epsilon}
\newcommand{\gl}{\lambda}
\newcommand{\gvp}{\varphi}
\newcommand{\gr}{\rho}
\newcommand{\gs}{\sigma}
\newcommand{\gt}{\tau}
\newcommand{\charc}{{\rm char}}
\newcommand{\sminus}{\smallsetminus}
\newcommand{\lan}{\langle}
\newcommand{\ran}{\rangle}
\newcommand{\half}{\frac{1}{2}}
\newcommand{\e}{\mathbb{1}}
\numberwithin{equation}{section}
\newcommand\Cl{\mathrm{Cl}}
\newcommand{\Comment}[1]{}
\newcommand{\oZ}{Z}
\newcommand{\oC}{\operatorname{C}}
\newcommand{\Weyl}{\operatorname{W}}
\newcommand{\Symm}[1]{S_{#1}}
\begin{document}
\title[Axial algebras of Jordan type half]{Miyamoto involutions in axial algebras of Jordan type half}
\author[J.~I.~Hall,  Y.~Segev, S.~Shpectorov]{J.~I.~Hall\quad  Y. Segev\quad S.~Shpectorov }
\address{Jonathan, I.~Hall\\
        Department of Mathematics\\
				Michigan State University\\
				Wells Hall, 619 Red Cedar Road, East Lansing, MI 48840\\
				United States}
\email{jhall@math.msu.edu}

\address{Yoav Segev \\
         Department of Mathematics \\
         Ben-Gurion University \\
         Beer-Sheva 84105 \\
         Israel}
\email{yoavs@math.bgu.ac.il}
\address{Sergey Shpectorov\\
         School of Mathematics\\
				 University of Birmingham\\
				Watson Building, Edgbaston, Birmingham, B15 2TT\\
				United Kingdom}
\email{s.shpectorov@bham.ac.uk}

\keywords{Axial algebra, 3-transposition, Jordan algebra}
\subjclass[2010]{Primary: 17A99; Secondary: 17C99, 17B69}

\begin{abstract}
Nonassociative commutative algebras $A$ generated by idempotents
$e$ whose adjoint operators $\operatorname{ad}_e\colon A \rightarrow A$,
given by $x \mapsto xe$, are diagonalizable and
have few eigenvalues are of recent interest. When certain
fusion (multiplication) rules
between the associated eigenspaces are imposed, the structure of
these algebras remains rich yet rather rigid. For example
vertex operator algebras give rise to such algebras.
The connection between the Monster algebra and
Monster group extends to many axial algebras which then
have interesting groups of automorphisms.

Axial algebras of Jordan type $\eta$ are commutative
algebras generated by idempotents whose adjoint operators
have a  minimal polynomial dividing $(x-1)x(x-\eta)$, where
$\eta \notin \{0,1\}$ is fixed, with well-defined and restrictive fusion rules.
The case of $\eta \neq \half$ was thoroughly analyzed by Hall, Rehren, and
Shpectorov in a recent paper, in which axial algebras were introduced.
Here we focus on the case where $\eta=\half$, which is much less understood
and is of a different nature.
\end{abstract}

\date{October 5, 2016}
\maketitle
 
\section{Introduction}

Axial algebras, introduced in \cite{HRS1,HRS},
are certain commutative and nonassociative algebras.
Their definition was motivated by constructions from
the theory of vertex operator algebras.
The historical development is discussed
at length in the introduction to \cite{HRS}.
That paper is focused upon those axial algebras said to have 
Jordan type $\eta$. Specifically, all primitive axial algebras of
Jordan type $\eta \neq  \half$ are essentially classified in  \cite{HRS}.
The purpose of this paper is to understand better the structure of 
primitive axial algebras of Jordan type $\half$. These
remain unclassified and are more varied than those with
$\eta \neq  \half$. Especially, many Jordan algebras
occur, hence the name. 

Throughout this paper $\ff$ is a field of characteristic not $2$.

\subsection{The definition of an axial algebra}\label{sub def axial}\hfill
\medskip

Let $A$ be a commutative algebra over $\ff$ that is not necessarily 
associative. For $a\in A$ and $\lambda \in \ff$ let
\[
A_\lambda(a) := \{x \in A\ |\ xa = \lambda x\}\,.
\]
That is, $A_\lambda(a)$ is the $\lambda$-eigenspace of the adjoint operator

\[
\operatorname{ad}_a \colon  A \rightarrow A,\ x\mapsto xa.
\]
(We allow $A_\lambda(a)=0$.)

An \emph{axis} in $A$ is an idempotent element $a = a^2$ of $A$ such 
that the minimal polynomial of $\operatorname{ad}_a$ is a 
product of distinct linear factors. 
The $\ff$-algebra $A$ is  an \emph{axial
algebra} if it is generated by axes. 
An axis $a \in A$ is \emph{absolutely primitive} if $A_1(a) = \ff a$
(note that this is stronger than being a primitive idempotent).
The structure of axial algebras can be very loose. This is typically
remedied by specifying \emph{fusion rules} (that is, \emph{multiplication
rules}) which restrict how eigenspaces are allowed to multiply. 

Fix $\eta \in \ff$ with $\eta \notin \{0,1\}$.
In this
paper we shall be concerned with axial algebras generated by a
set $\cala$ of  absolutely primitive axes with the following eigenvalue 
and fusion requirements:
\emph{
\begin{enumerate}
\item 
For each $a \in \cala$,
the minimal polynomial of $\operatorname{ad}_a$ divides $(x-1)x(x -\eta)$. 
\item
We have
\[
A_\delta (a)A_\epsilon (a) \subseteq A_{\delta\epsilon} (a)\,,
\]
for $\delta, \epsilon \in \{+,-\}$, and
\[
A_0 (a)A_0 (a) \subseteq A_0 (a)\,,
\]
for all $a \in \cala$. Here $\delta\epsilon$ has the obvious meaning, 
and
\[
A_+ (a) = A_1 (a) \oplus A_0(a)
\quad\text{and}\quad 
A_- (a) = A_\eta (a)\,.
\]
\end{enumerate}
}
An absolutely primitive axis $a$ having these two properties will be called an \emph{$\eta$-axis}.
A \emph{primitive axial algebra of Jordan type $\eta$} is 
a commutative algebra generated by $\eta$-axes. The terminology
arises from the fact that Jordan algebras generated by absolutely primitive axes are primitive axial
algebras of Jordan type $\half$. That particular choice for $\eta$ will
be of the greatest interest to us.
\smallskip

\subsection{Main results}\hfill
\medskip

The current paper has its origins in the paper \cite{HRS} of
Hall, Rehren, and Shpectorov. However, we 
observed that Theorem 5.4 of that paper is false in two
senses---the actual result is false for primitive axial algebras
of Jordan type $\eta=\half$ and, while the result is true for the
cases $\eta \neq \half$, the proof given there is not sufficient.
This paper began as an effort to resolve these difficulties.

To state our main results, let $A$ be a primitive axial
algebra of Jordan type $\eta$.  We need to recall certain definitions.
The {\it Miyamoto involution} $\tau(a)$ corresponding 
to an $\eta$-axis $a\in A$ is the automorphism of $A$
defined by $\gt(a)\colon x\mapsto x_+ - x_{-},$ where $x=x_+ + x_{-},$
with $x_+\in A_+(a)$ and $x_{-}\in A_{-}(a)$.  It is easy to check that
$\tau(a)$ is an automorphism of $A$ of order at most $2$ (see
Definition \ref{def miyamoto} below).

\subsubsection{Results concerning the structure of $A$}\hfill
\medskip

Consider the (undirected) graph $\gD$
on the set of all  $\eta$-axes of $A,$ where distinct $a, b$ form
an edge if and only if $ab\ne 0$ (see \S\ref{sect delta}). 
In \S\ref{sect delta} we show the existence of a certain decomposition
of our axial algebras.

%
\begin{thmA}
Let $A$ be a primitive axial algebra of Jordan type $\eta$.
Let $\{\gD_i\mid i\in I\}$
be the connected components of $\gD$ and let $A_i$ be
the subalgebra of $A$ generated by the axes in $\gD_i$.  Then
\begin{enumerate}
\item
$A=\sum_{i\in I}A_i$ is the sum of its ideals $A_i;$ 
\item
$A_iA_j=0,$ for distinct $i, j\in I;$

\item
for each $i\in I$ exactly one of the following holds:
\begin{enumerate}
\item
the map $a\mapsto\gt(a),\ a\in\gD_i$ is injective.

\item
$A_i$ is a Jordan algebra of Clifford type.
\end{enumerate}
\end{enumerate} 
\end{thmA}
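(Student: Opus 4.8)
\emph{Reduction and the easy direction.} The crux is statement (2), that $A_iA_j=0$ for $i\ne j$; given it, statements (1) and the ideal property are formal. Indeed, if all cross products vanish then $\sum_i A_i$ is closed under multiplication and, containing every axis, equals $A$, which is (1); and then $A\,A_i=\big(\sum_k A_k\big)A_i=A_iA_i\subseteq A_i$, so each $A_i$ is an ideal. The starting point for (2) is that distinct components are eigen-orthogonal: if $a\in\gD_i$ and $b\in\gD_j$ with $i\ne j$, then $ab=0$, i.e. $b\in A_0(a)$. Since the fusion rule $A_0(a)A_0(a)\subseteq A_0(a)$ makes $A_0(a)$ a subalgebra and $A_j=\langle\gD_j\rangle$, we get $A_j\subseteq A_0(a)$, so $\operatorname{ad}_a$ annihilates $A_j$; symmetrically $A_i\subseteq A_0(b)$ and $\operatorname{ad}_b$ annihilates $A_i$.

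\emph{From generators to right-normed products.} To promote this I would use the Seress-type partial associativity that follows from the fusion rules together with absolute primitivity ($A_1(a)=\ff a$): for an axis $a$ and $u\in A_0(a)$ one has $a(zu)=(az)u$ for all $z$, equivalently $\operatorname{ad}_a$ and $\operatorname{ad}_u$ commute (the verification is on $\operatorname{ad}_a$-eigenvectors, the eigenvalue-$1$ case using primitivity). Fix $y\in A_j$. For every $a\in\gD_i$ we have $y\in A_0(a)$, so $\operatorname{ad}_y$ commutes with $\operatorname{ad}_a$; consequently, for any right-normed product $r=a_1(a_2(\cdots(a_{k-1}a_k)\cdots))$ with $a_t\in\gD_i$,
\[
r\,y=\operatorname{ad}_y\operatorname{ad}_{a_1}\cdots\operatorname{ad}_{a_{k-1}}(a_k)=\operatorname{ad}_{a_1}\cdots\operatorname{ad}_{a_{k-1}}\operatorname{ad}_y(a_k)=\operatorname{ad}_{a_1}\cdots\operatorname{ad}_{a_{k-1}}(y a_k)=0 ,
\]
since $ya_k=0$. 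Thus every right-normed product of axes in $\gD_i$ annihilates $A_j$.

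\emph{The main obstacle.} The hard step is exactly the passage from right-normed products to arbitrary elements of $A_i$: because $A$ is only commutative and nonassociative, $A_i=\langle\gD_i\rangle$ need not be spanned by right-normed products, and for a product $x=pq$ with $p,q$ both of length $\ge 2$ there is no axis available at the top level to feed into the Seress identity (expanding $p$ in $\operatorname{ad}_{a_1}$-eigenvectors only returns the tautology $xy=xy$). To cross this gap I would bring in the associating Frobenius form these algebras carry: pass to the quotient $A/\mathrm{rad}$ by the radical of the form, which is an ideal, so that the form becomes nondegenerate; there the structure of subalgebras generated by two axes pins down the semisimple decomposition and forces $\overline{A_i}\,\overline{A_j}=0$, after which one lifts using that $\mathrm{rad}$ is an ideal. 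This is where the real work lies.

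\emph{The dichotomy (3).} Here I would argue by contraposition. If $a\mapsto\gt(a)$ is not injective on $\gD_i$, choose distinct $a,b\in\gD_i$ with $\gt(a)=\gt(b)$. Equality of the involutions forces equality of their $\pm$-eigenspaces, so $A_\eta(a)=A_\eta(b)$ and $b\in A_+(a)=\ff a\oplus A_0(a)$; writing $b=\lambda a+b_0$ with $b_0\in A_0(a)$ and using $b^2=b$ together with primitivity yields $\lambda\in\{0,1\}$ and $b_0^2=b_0$. A single coincidence thus produces an orthogonal idempotent $b_0\in A_0(a)$, and I would iterate this across the component to build a system of orthogonal idempotents and an associated quadratic form, identifying $A_i$ as a Jordan algebra of Clifford type. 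Mutual exclusivity is then clear, since in such an algebra the Miyamoto maps are reflections depending only on a line and hence are not injective. The hard part here---again the heart of the paper---is turning one collision $\gt(a)=\gt(b)$ into the global Clifford structure of the entire component, which is precisely where the special features of $\eta=\half$ enter.
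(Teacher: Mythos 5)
Your reduction of (1) to (2), the observation that $A_j\subseteq A_0(a)$ for every axis $a\in\gD_i$ (via the fusion rule $A_0(a)A_0(a)\subseteq A_0(a)$), the Seress-type commutation $[\operatorname{ad}_a,\operatorname{ad}_y]=0$ for $y\in A_0(a)$ (with primitivity used in the $1$-eigenspace), and the annihilation of $A_j$ by right-normed products are all correct, and you have put your finger on the genuine obstruction. But your proposed way over it fails. First, the associating Frobenius form is not available here: its existence for primitive axial algebras of Jordan type $\half$ is established neither in this paper nor in \cite{HRS} (it is a later theorem), so the appeal is unsupported. Second, even granting the form, your plan at best shows $\overline{A_i}\,\overline{A_j}=0$ in $A/\operatorname{rad}$, i.e.\ $A_iA_j\subseteq\operatorname{rad}$; you offer no mechanism for the ``lift'' to $A_iA_j=0$, and none is apparent --- the original difficulty simply reappears inside the radical (and the claim that two-generated subalgebras ``pin down the semisimple decomposition'' is never argued). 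The idea you are missing is \emph{linearization by closure}: each full component $\gD_i$ is a closed set of axes ($\tau(b)$ preserves components and fixes the other components pointwise, by Lemma \ref{lem 2b}), and by \cite[Corollary 1.2]{HRS} the subalgebra generated by a closed set of axes equals its \emph{linear span} --- essentially because $ab\in\operatorname{Span}\{a,b,a^{\tau(b)}\}$ by Theorem \ref{thm betaphi}. Hence $A_i=\operatorname{Span}(\gD_i)$, and $A_iA_j=0$ reduces to products of single axes, where $ab=0$ holds by definition of $\gD$; this is how Lemmas \ref{lem closure}, \ref{lem components1} and \ref{lem component} prove (1) and (2), with no need for right-normed products or the Seress identity at all.

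For (3) your opening computation is fine as far as it goes (the paper gets $ab=0$ at once: $a^{\tau(b)}=a^{\tau(a)}=a$ forces $N_{a,b}=2B_{a,b}$ by Lemma \ref{lem 2b}; note also your case $\lambda=1$ still needs excluding, e.g.\ since then $ab=a\in A_1(b)=\ff b$ gives $a=b$), but from there on the proposal is a program rather than a proof, as you concede. What actually must be shown --- the content of \S\ref{sect delta} and \S\ref{sect jordan} --- is: (i) a single collision $\tau(a)=\tau(b)$ forces $\eta=\half$ and forces $a+b$ to be the identity of every $3$-dimensional $N_{x,c}$ involved (Proposition \ref{prop ta=tb}, a delicate case analysis resting on Lemma \ref{lem vsubb}); (ii) $\e=a+b$ is an identity for the whole component, proved by an induction on distance in the graph (Proposition \ref{prop e}); and (iii) $v_cv_d\in\ff\e$ for \emph{all} axes $c,d$ in the component, proved from the fusion rules plus the observation that $\ff a\oplus\ff b\cong\ff\oplus\ff$ has no nilpotents, so $\gs_{c,d}$ must be a scalar multiple of $a$, $b$ or $\e$, the first two being excluded by absolute primitivity (Theorem \ref{thm jvb}, then Theorem \ref{thm ta=tb}). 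Your sketch of ``iterating orthogonal idempotents into a quadratic form'' addresses none of (i)--(iii) --- in particular it never explains why $\eta$ must equal $\half$ --- so part (3) is not established. Only your exclusivity remark is essentially sound, since $\tau(\half\e+u)=\tau(\half\e-u)$ in $J(V,B)$ and both are axes in the same component.
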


Jordan algebras of Clifford type are discussed in \S\ref{sect jordan}.
Theorem \ref{thm a} below give a more detailed (and refined) version
of Theorem A.

To prove part (3b) of Theorem A we prove:  

%
\begin{thmB}
Let $A$ be a primitive axial algebra of Jordan type $\eta$.
Assume that $\gD$ is connected and that there are two distinct
$\eta$-axes $a, b\in A$ such that $\gt(a)=\gt(b)$.
Then $\eta=\half,\ a+b=\e$ is the identity of $A$ and $A$ is a Jordan
algebra of Clifford type.
\end{thmB}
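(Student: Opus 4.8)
The plan is to first extract, from the single hypothesis $\gt(a)=\gt(b)$, that $a$ and $b$ are orthogonal idempotents, then to force $a+b$ to be the identity of $A$, which will simultaneously yield $\eta=\half$ and the Clifford structure.

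\emph{Orthogonality.} Since $\gt(a)$ and $\gt(b)$ agree as linear maps, they have the same $(+1)$- and $(-1)$-eigenspaces, so $A_+(a)=A_+(b)=:A_+$ and $A_-(a)=A_-(b)=:A_-$. As $a\in A_1(a)\subseteq A_+=A_+(b)=\ff b\oplus A_0(b)$, I would write $a=\mu b+a_0$ with $a_0\in A_0(b)$; applying $\operatorname{ad}_b$ gives $ab=\mu b$, and the symmetric computation gives $ab=\lambda a$. Hence $\mu b=\lambda a$; since distinct nonzero idempotents are linearly independent, $\lambda=\mu=0$ and $ab=0$. Thus $a\in A_0(b)$, $b\in A_0(a)$, and $z:=a+b$ satisfies $z^2=z$, $za=a$, $zb=b$.

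\emph{Reduction.} Because $\gD$ is connected with the two distinct vertices $a,b$, the vertex $a$ has a neighbour $c$ with $ac\ne0$. Writing $c=\lambda a+c_0+c_\eta$ in the eigenspaces of $\operatorname{ad}_a$, I would rule out $c_\eta=0$ using $c^2=c$ and absolute primitivity (otherwise $\lambda=1$ and $a\in A_1(c)=\ff c$, forcing $c=a$). Hence $A_-\ne0$, and $\operatorname{ad}_z\,c_\eta=(a+b)c_\eta=2\eta c_\eta$. For general $x=x_++x_-$ with $x_\pm\in A_\pm$ one has $(a+b)x=(p\,a+q\,b)+2\eta x_-$, where $p\,a$ and $q\,b$ are the $\ff a$- and $\ff b$-components of $x_+$. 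Comparing with $x$ reduces the theorem to two assertions: (I) $A_0(a)\cap A_0(b)=0$ and (II) $\eta=\half$. Indeed, (I) gives $A_0(a)=\ff b$, hence $A_+=\ff a\oplus\ff b$ and $z$ fixes $A_+$ pointwise, while (II) gives $\operatorname{ad}_z=2\eta=1$ on $A_-$; together they make $z=a+b$ the identity $\e$.

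\emph{The crux.} Proving (I) and (II) is the main obstacle, and I may not presuppose $z=\e$. Here I would invoke the Frobenius form $(\cdot,\cdot)$ of the Jordan-type theory (mutually orthogonal eigenspaces for each axis, $(a,a)=1$, and projection onto $\ff a$ given by $x\mapsto(a,x)\,a$) together with the connectivity of $\gD$. For (I), an element $u\in A_0(a)\cap A_0(b)$ is orthogonal to $a$, to $b$, and to $A_-$, and satisfies $zu=0$; the aim is to show that a nonzero such $u$ is incompatible with $\gD$ connected, by an annihilator/ideal argument analogous to the decomposition underlying Theorem A. For (II), the operator $\operatorname{ad}_z$ carries the eigenvalue $2\eta$ on the nonzero space $A_-$ and the eigenvalue $1$ on $\ff a\oplus\ff b$, and the task is to force these to coincide from the interaction of the fusion rules with the form along a path in $\gD$ from $a$ to $b$. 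I expect this coupling of the local relation $ab=0$ with global connectivity to be the technical heart of the argument.

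\emph{Clifford type.} Finally, once $\e=a+b$ and $\eta=\half$ are in hand, I would set $V:=\ff(a-b)\oplus A_-$, so that $A=\ff\e\oplus V$. One checks $(a-b)^2=a^2-2ab+b^2=a+b=\e$ and $(a-b)x_-=ax_--bx_-=\eta x_--\eta x_-=0$ for $x_-\in A_-$, while for $x_-,y_-\in A_-$ the product $x_-y_-\in A_+=\ff\e\oplus\ff(a-b)$ has vanishing $(a-b)$-component because $(\e,a-b)=0$, $(a-b,a-b)=2\ne0$, and $(x_-y_-,a-b)=(x_-,(a-b)y_-)=0$. Thus $V\cdot V\subseteq\ff\e$, and defining $\langle u,v\rangle$ by $uv=\langle u,v\rangle\e$ for $u,v\in V$ exhibits $A=\ff\e\oplus V$ as the spin factor attached to the symmetric bilinear form $\langle\cdot,\cdot\rangle$, that is, a Jordan algebra of Clifford type, completing the proof.
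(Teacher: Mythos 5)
Your opening steps are sound: the eigenspace argument giving $ab=0$ is essentially the paper's Lemma \ref{lem 2b}, and your reduction of the theorem to the two assertions (I) $A_0(a)\cap A_0(b)=0$ and (II) $\eta=\half$ is correct (given $A_+(a)=A_+(b)$, assertion (I) does yield $A_0(a)=\ff b$, and then (I) and (II) together make $a+b$ the identity). But the proof stops exactly where the theorem begins: (I) and (II) are the entire content of the statement, and you prove neither --- you only announce an ``aim'' and a ``task'' and defer to a Frobenius form. That form is not available here: this paper never constructs an invariant bilinear form on a primitive axial algebra of Jordan type $\half$ (its existence is a nontrivial theorem in its own right, not proved in this paper), and even granting it you give no actual derivation of (I) or (II) from it. So the proposal is a valid reduction plus an unexecuted plan, not a proof. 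The same objection applies, less seriously, to your final paragraph on Clifford type, where extracting the $(a-b)$-component of $x_-y_-$ again invokes the invariance $(xy,z)=(x,yz)$ of the unconstructed form.

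For comparison, the paper settles the crux by concrete computations in two-generated subalgebras rather than by a form. For (II): if $\eta\ne\half$ and $c$ is a neighbour of $a$ in $\gD$, then \cite[Proposition~4.8]{HRS} forces $N_{x,c}=B(\eta,\half\eta)_{x,c}$ for $x\in\{a,b\}$, and Lemma \ref{lem 3ceta}(1d) gives $a^{\gt(c)}=c^{\gt(a)}=c^{\gt(b)}=b^{\gt(c)}$, whence $a=b$, a contradiction (Proposition \ref{prop ta=tb}). For the identity element: Proposition \ref{prop ta=tb} shows every $3$-dimensional $N_{x,c}$ with $x\in\{a,b\}$ contains the identity $1_{x,c}=a+b$ (this requires a delicate case analysis via Lemma \ref{lem vsubb}, including ruling out the $B(\half,0)$/$B(\half,0)$ configuration), and Proposition \ref{prop e} then runs a graph-distance argument --- an axis at distance $2$ from $a$ produces a nonzero subspace $W$ on which $\e=a+b$ acts both as zero and as the identity --- to conclude $\e x=x$ for every axis $x$, so $\e$ is the identity by Lemma \ref{lem identity}. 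Finally, Theorem \ref{thm jvb} obtains Clifford type form-free: $v_cv_d=\gs_{c,d}+\frac{1}{4}\e$ lands in $\ff a\oplus\ff b\cong\ff\oplus\ff$, which has no nilpotent elements, so by $\gs_{c,d}^2=\pi_{c,d}\gs_{c,d}$ either $\gs_{c,d}=0$ or $\frac{1}{\pi_{c,d}}\gs_{c,d}$ is one of the idempotents $a$, $b$, $\e$, and absolute primitivity eliminates $a$ and $b$. To salvage your outline you would have to either first prove the existence of the Frobenius form (a substantial separate undertaking) or replace your ``crux'' paragraph with arguments of this concrete kind.
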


The proof of Theorem B uses Theorem \ref{thm jvb} and Proposition \ref{prop e}.

\subsubsection{Results concerning $3$-transpositions}\label{sub intro 3-trans}\hfill
\medskip

Let $G$ be a group generated by a normal set of involutions $D$.
Recall that $D$ is called {\it a set of $3$-transpositions in $G$}
if $|st|\in\{1,2,3\},$ for all $s,t\in D$.  The group
$G$ is then called a {\it $3$-transposition group}.

Let $\cala$ be a generating set
of $\eta$-axes in $A$.  Suppose that $a^{\gt(b)}\in\cala,$ for all $a, b\in\cala,$
where $a^{\gt(b)}$ is the image of $a$ under the Miyamoto
involution $\gt(b)$ (and this and similar notation will
prevail throughout this paper).  In other words, assume
that $\cala$ is {\it closed}.  

As we will see,
the set $D:=\{\gt(a)\mid a\in\cala\text{ and }\gt(a)\ne{\rm id}\}$ is a normal
set of involutions in $G=\lan D\ran$.  Suppose $D$ is a set
of $3$-transpositions in $G,$
then we call $A$ {\it a $3$-transposition algebra with respect to $\cala$}.

In \cite{HRS} it is shown that {\it every} primitive axial 
algebra of Jordan type $\eta\ne\half$ is a $3$-transposition
algebra with respect to {\it any} closed generating set of $\eta$-axes
(see Theorem \ref{thm hrs}).
The case $\eta=\half$ is very different.  
However, the following theorem holds in the case where
$A$ is a Jordan algebra of Clifford type (see \S\ref{sect 3trans}).

%
\begin{thmC}
Assume $\operatorname{char}(\ff) \neq 3$ and that
$A$ is a Jordan algebra of Clifford type that is
additionally a $3$-transposition algebra with
respect to the closed set $\cala$ of $\half$-axes.
Assume further that 
$D:=\{\gt(a)\mid a\in\cala\text{ and }\gt(a)\ne{\rm id}\}$ is
a conjugacy class in $G=\lan D\ran$.  Then $G$ is of symplectic
type and no subgroup
$H=\langle D \cap H\rangle$ is
isomorphic to a central quotient of $\Weyl_2(\tilde{D}_4)$. 
\end{thmC}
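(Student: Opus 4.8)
The plan is to translate the entire statement into the orthogonal geometry of the spin factor underlying $A$ and then read off the group theory. By the structure theory of Jordan algebras of Clifford type (\S\ref{sect jordan}) I would write $A=\ff\e\perp V$, where $(V,b)$ carries a symmetric bilinear form, $\e$ spans the first summand, and the $\half$-axes are precisely the elements $a_v=\half\e+v$ with $b(v,v)=\tfrac14$. A direct eigenspace computation gives $A_1(a_v)=\ff a_v$, $A_0(a_v)=\ff(\e-2v)$ and $A_\half(a_v)=v^\perp$, so $A_+(a_v)=\ff\e\oplus\ff v$ and $A_-(a_v)=v^\perp$. Hence $\gt(a_v)$ fixes $\e$ and acts on $V$ as the orthogonal map $\sigma_v$ fixing $\ff v$ and negating $v^\perp$; equivalently $\sigma_v=-r_v$, with $r_v$ the $b$-reflection in $v$. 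Thus $D=\{\sigma_v\}$, indexed by the lines $\ff v$ through axes, and $G=\langle D\rangle\le\rmO(V,b)$.

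Next I would express the $3$-transposition condition metrically. Restricting $\sigma_v\sigma_w$ to $\langle v,w\rangle$ and computing its trace (which equals $64\,b(v,w)^2-2$), one finds $|\sigma_v\sigma_w|=2$ exactly when $b(v,w)=0$ and $|\sigma_v\sigma_w|=3$ exactly when $b(v,w)=\pm\tfrac18$, while $b(v,w)=\pm\tfrac14$ are the collapsed cases $\gt(a_v)=\gt(a_w)$ excluded from $D$. So for distinct members of $D$ the hypothesis that $D$ is a set of $3$-transpositions is equivalent to $b(v,w)\in\{0,\pm\tfrac18\}$, i.e.\ (after rescaling to unit vectors) to a simply-laced root configuration with mutual angles $60^\circ,90^\circ,120^\circ$. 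It is here that $\charc(\ff)\neq3$ enters: it guarantees that an element of trace $-1$ genuinely has order $3$, so that the $S_3$-subgroups generated by non-commuting members of $D$ are honest. Closure of $\cala$ makes this root set stable under every $\sigma_v$, and the hypothesis that $D$ is a single $G$-class becomes transitivity of $G$ on the root lines.

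From this point I would analyse the commuting graph on $D$ — where $\sigma_v$ and $\sigma_w$ commute iff $v\perp w$ — and build the Fischer space of $(G,D)$, whose lines are the triples arising from the $S_3$-subgroups. The aim is to locate $(G,D)$ within the classification of $3$-transposition groups whose transpositions form a single class and to show the resulting geometry is of symplectic type over $\ff_2$. Concretely I would impose on the $\ff_2$-span of $D$ the alternating form $\langle\bar d,\bar e\rangle$ equal to $0$ or $1$ according as $d,e$ commute or not, and verify from the angle data that it is well defined and nondegenerate with the members of $D$ acting as transvections. Establishing that this form is \emph{symplectic} rather than of another type is equivalent, via the standard description of symplectic type, to the nonexistence of a non-cyclic characteristic abelian $2$-subgroup in the relevant $2$-local subgroups — which is exactly what the $\tilde{D}_4$-exclusion supplies.

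Finally, for the second assertion, suppose some $H=\langle D\cap H\rangle$ were a central quotient of $\Weyl_2(\tilde{D}_4)$. This forces five roots $v_0,\dots,v_4$ with $b(v_0,v_i)=-\tfrac18$ for $1\le i\le4$ and $b(v_i,v_j)=0$ for $1\le i<j\le4$; the Gram matrix of this set is singular, yielding the affine relation $2v_0+v_1+v_2+v_3+v_4=0$ (the mark vector of $\tilde{D}_4$) and an isotropic vector in the radical of its span. I would then derive a contradiction by showing this degeneracy is incompatible, in $\charc\neq3$, with $D$ being a single $G$-class of $3$-transpositions: the subgroup so generated carries an elementary abelian $2$-group that is characteristic abelian and non-cyclic, so $G$ could not be of symplectic type, and the transitivity forced by the single-class hypothesis cannot be reconciled with the radical vector. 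The hard part, I expect, is precisely this last step: the $\tilde{D}_4$-configuration is geometrically realizable inside $\rmO(V,b)$, so only the combined force of closure, the single-conjugacy-class hypothesis, and $\charc\neq3$ can forbid it, and marshalling these to control the \emph{global} group $G$ from the local root data is where the real work lies.
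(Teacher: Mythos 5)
Your geometric dictionary is sound and is in fact the picture the paper itself sets up in \S\ref{sect jordan} and exploits in its closing Examples: axes are $\half\e+v$ with $q(v)=\frac14$, Miyamoto involutions are negatives of reflections, and (on the nondegenerate planes that the values below guarantee) $|\gt(a_v)\gt(a_w)|=2$ iff $B(v,w)=0$ and $=3$ iff $B(v,w)=\pm\frac18$. Indeed the paper's bookkeeping of ``type $-\frac38$'' versus ``type $-\frac18$'' (Definition \ref{def 1/8}) is exactly your sign distinction, since $v_av_b=(\pi_{a,b}+\frac14)\e$, and the substitution $a\mapsto\e-a$ is $v\mapsto -v$. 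But the two places where your plan must actually produce a contradiction are respectively circular and wrong. First, you derive symplectic type from ``the $\tilde{D}_4$-exclusion,'' which is the \emph{other conclusion} of the theorem, not a hypothesis; the logical order in the paper is the reverse: symplectic type is proved first (Theorem \ref{thm bijection}), and only then is it used, through the purely group-theoretic Lemma \ref{lem-lem-5.12}, to reduce the $\tilde{D}_4$-exclusion to a $K_{3,2}$ root configuration. Second, your proposed mechanism for the exclusion --- that a central quotient of $\Weyl_2(\tilde{D}_4)$ inside $G$ would ``carry a characteristic abelian non-cyclic $2$-group, so $G$ could not be of symplectic type'' --- is based on a false premise: $\Weyl_2(\tilde{D}_4)$ \emph{is} a $3$-transposition group of symplectic type (the paper says so explicitly when introducing $\Weyl_2(\tilde{X})$), so its presence contradicts nothing at that level. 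Your Gram-matrix observation (the radical vector $2v_0+v_1+\cdots+v_4$) is correct but, as you concede, yields no contradiction by itself, since the configuration is orthogonally realizable; the admitted ``hard part'' is precisely the entire content.

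What actually closes both gaps in the paper is the quantitative identity of Lemma \ref{lem ta}(2),
\[
\pi_{a,\,b^{\gt(c)}}=8\pi_{a,c}\pi_{b,c}+2\pi_{a,c}-\pi_{a,b}+2\pi_{b,c},
\]
which in your language is the reflection formula $B(u,\,w-8B(v,w)v)=B(u,w)-8B(u,v)B(v,w)$ applied to the admissible values $\pm\frac18$. For symplectic type: if all three pairs among $a,b,c$ have type $-\frac38$ (all inner products normalizable to $-\frac18$), the formula gives $\pi_{a,b^{\gt(c)}}=0$, i.e.\ $B=\frac14$ --- and here is where $\charc(\ff)\ne 3$ and closure bite, since the resulting algebra $J_{a,b^{\gt(c)}}$ has $|\gt(a)\gt(b^{\gt(c)})|\notin\{2,3\}$ (Lemmas \ref{lem a/8}(1), \ref{lem 2dim3}); mixed types are killed by swapping $z\mapsto\e-z$, which flips $-\frac18\leftrightarrow-\frac38$ (Lemma \ref{lem 3chalfa1-b}). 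Note also that your assertion that $B(v,w)=\pm\frac14$ occurs only in the ``collapsed'' case $\gt(a_v)=\gt(a_w)$ fails over isotropic forms ($w=v+z$ with $z$ isotropic, $z\perp v$); it is the $3$-transposition hypothesis plus $\charc(\ff)\ne3$ that excludes such pairs, not the geometry alone. For the $\tilde{D}_4$-exclusion: from the $K_{3,2}$ diagram supplied by Lemma \ref{lem-lem-5.12} the paper normalizes four of the order-$3$ pairs to type $-\frac38$ and then Lemma \ref{lem nx}(2) shows, by iterating the displayed identity around the $\tilde{A}_3$ subdiagram, that no consistent sign assignment exists --- a finite computation with the values $\{-\frac38,-\frac18\}$, not a global or cohomological argument. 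So your frame is workable, but the proof you sketch does not go through as stated, and the missing sign arithmetic is precisely the paper's Lemmas \ref{lem ta}, \ref{lem a/8} and \ref{lem nx}.
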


For more detail about the terminology in Theorem C, see \S\ref{sect 3transpositions}.
In that section we prove a result
about $3$-transposition groups which is of independent
interest.
 
%
\begin{thmD}
Let $G$ be a finite $3$-trans\-pos\-i\-tion group of symplectic type
generated by the conjugacy class $D$ of $3$-trans\-pos\-i\-tions,
such that there is no subgroup
$H=\langle D \cap H\rangle$ 
isomorphic to a central quotient of $\Weyl_2(\tilde{D}_4)$. 

Then there is an $n \in \zz^+$
with $G$ a central quotient of 
$\Weyl(A_n)$ for $n \ge 2$, $\Weyl(D_n)$ for $n\ge 4$,
or $\Weyl(E_n)$ for $n \in \{6,7,8\}$.
\end{thmD}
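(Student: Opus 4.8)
The plan is to convert the group-theoretic hypotheses into the combinatorics of the non-commuting graph and then run the classification of finite $3$-transposition groups, using the $\tilde D_4$ condition as a local-recognition device. Since $D$ is a class of $3$-transpositions, any two distinct $d,e\in D$ generate either $\zz/2\times\zz/2$ (when $de=ed$) or $\Symm{3}$ (when $|de|=3$). Form the non-commuting graph $\Lambda$ on $D$, with edges the pairs satisfying $|de|=3$. This graph carries all Coxeter data: it is a simply-laced diagram, and the Coxeter relations it prescribes hold in $G$, yielding a surjection from the Coxeter group on $\Lambda$ onto $G$. Because $D$ is a single conjugacy class, $\Lambda$ is connected — elements in distinct components would commute, and the $G$-orbit of any transposition stays within its own component — so I may assume $\Lambda$ connected from the outset.

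With $\Lambda$ connected I would invoke the classification of finite $3$-transposition groups of symplectic type. The symplectic-type hypothesis equips $D$ with a nondegenerate alternating form over $\ff_2$, in which commuting corresponds to orthogonality, and this confines $G$ to a short list: the simply-laced Weyl groups $\Weyl(A_n)$, $\Weyl(D_n)$, $\Weyl(E_n)$ together with their central quotients, the classical groups $\mathrm{Sp}_{2m}(2)$ and $O^{\pm}_{2m}(2)$ over $\ff_2$, and the Fischer groups $Fi_{22}$, $Fi_{23}$, $Fi_{24}$. The task is to eliminate every family except the ADE Weyl groups.

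The decisive step is the local recognition supplied by $\tilde D_4$, whose diagram is the four-pointed star: a central node $r_0$ with $|r_0 r_i|=3$ joined to four leaves $r_1,\dots,r_4$ that pairwise commute. Realizing such a star inside $D$ amounts to finding a transposition that fails to commute with four pairwise-commuting members of $D$. In the symplectic and orthogonal $\ff_2$-groups and in the Fischer groups one readily exhibits this configuration, and the subgroup $H=\langle D\cap H\rangle$ it generates is a central quotient of $\Weyl_2(\tilde D_4)$; the hypothesis therefore rules all of these out. By contrast, for a root system of type $A_n$, $D_n$ or $E_n$ the reflections play the role of $D$, a relation $|de|=3$ means the two roots are non-orthogonal, and four pairwise-orthogonal roots each non-orthogonal to one fixed root cannot be found — in $A_n$ and $D_n$ because the roots have at most two nonzero coordinates, in $E_n$ by direct inspection — so no $\tilde D_4$-star occurs. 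Hence $\Lambda$ must be one of the Dynkin diagrams $A_n$, $D_n$, $E_6$, $E_7$, $E_8$. Normalizing to genuinely distinct connected finite types, via the coincidences $\Weyl(D_3)\cong\Weyl(A_3)$ and $\Weyl(D_2)\cong\Weyl(A_1)\times\Weyl(A_1)$ (reducible) and the fact that $E_m$ is of finite type only for $m\in\{6,7,8\}$, produces the stated ranges $n\ge 2$, $n\ge 4$, $n\in\{6,7,8\}$. Finally, since $D$ is a single class and the alternating form is nondegenerate, the kernel of the surjection from the Coxeter group on $\Lambda$ onto $G$ is central, so $G$ is a central quotient of the corresponding Weyl group.

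The hard part will be the local-recognition step inside the symplectic, orthogonal, and Fischer families: exhibiting the $\tilde D_4$-star explicitly and, more delicately, confirming that the subgroup it generates is exactly a central quotient of $\Weyl_2(\tilde D_4)$ rather than something larger, so that the hypothesis genuinely applies. The remaining technical points are the bookkeeping of central quotients, the degenerate small-rank diagrams, and the verification that the surjection from the Coxeter group on $\Lambda$ has central kernel.
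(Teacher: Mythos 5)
Your proposal fails at the local-recognition step, and in a way that inverts the logic of the theorem. A $\tilde{D}_4$-star in $D$ (one transposition braiding with four pairwise-commuting ones) does \emph{not} yield a subgroup that is a central quotient of $\Weyl_2(\tilde{D}_4)$: as the paper records in Proposition \ref{prop-aweyl}, a subset of $D$ with diagram $\tilde{D}_4$ generates a central quotient of either $\Weyl(D_4)$ or $\Weyl_2(\tilde{D}_4)$, and only the latter is excluded by hypothesis. Worse, your claim that no such star occurs in the ADE targets is false: already in $\Weyl(D_4)$ the reflections in $e_1-e_3$, $e_1+e_3$, $e_2-e_4$, $e_2+e_4$ pairwise commute and each has order-$3$ product with the reflection in $e_1-e_2$, giving a $K_{1,4}$ inside the full class $D$; they generate $\Weyl(D_4)$ itself, which the hypothesis permits. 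So ``star-freeness'' neither holds for the conclusion groups nor captures the hypothesis, and eliminating the classical families genuinely requires identifying the group generated by the configuration — the paper does this via the parabolic $2^6{:}O_6^-(2)$ of $O_8^-(2)$ in Lemma \ref{lem-o8}, which is the step your outline acknowledges as ``hard'' but whose difficulty is precisely that your proposed shortcut does not exist.

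The global-graph strategy also breaks down. Your graph $\Lambda$ lives on all of $D$ and is essentially never a Dynkin diagram: in $\Weyl(A_3)\cong\Symm{4}$ the six transpositions give a $4$-regular graph on six vertices. Dynkin diagrams arise only for carefully chosen generating subsets of $D$, and producing such a subset is the actual content of the paper's proof: it applies Hall's structure theorem for symplectic type (Theorem \ref{thm-symp-type}), so that $\bar{G}=G/N$ is $\Symm{n}$, $O^\gre_{2m}(2)$, or $Sp_{2m}(2)$ with $[G,N]$ a $2$-group. Note two errors in the classification you invoke: the Fischer groups are \emph{not} of symplectic type (symplectic type by definition forbids the $3^2{:}2$ and $3^{1+2}{:}2$ triangle subgroups that generate them), while your ``short list'' omits the $2$-group extensions such as $\Weyl(D_n)\cong 2^{n-1}{:}\Symm{n}$ that symplectic type does allow — and the entire $D_n$ family of the conclusion arises exactly from the case $[G,N]\neq 1$, where the $\tilde{D}_4$ hypothesis is used to force $|[a,N]|=2$ and a lifted $A_{n-1}$-set together with one further transposition is shown to carry diagram $D_n$. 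Finally, your closing assertion that the kernel of the surjection from the Coxeter group on $\Lambda$ onto $G$ is central is unjustified: for the full non-commuting graph that Coxeter group is typically infinite and the kernel is far from central; centrality of the kernel (Proposition \ref{prop-weyl}) is available only after one has a generating subset with a genuine spherical diagram, which is the substance of the proof rather than bookkeeping.
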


Of course Theorem $D$ can be used in conjunction with Theorem C
to impose severe restrictions on the groups $G$ that can
occur in Theorem C.

%
\subsection{Some open problems}\hfill
\medskip

Here we list certain remaining open problems.

\begin{prob}
Let $A$ be a primitive axial algebra of Jordan type $\half$
and assume that the graph $\gD$ above is connected and that
the map $a\mapsto\tau(a)$ is bijective on the set of $\half$-axes.
\begin{enumerate}
\item[(i)]
What is the structure of subalgebras of $A$ generated by three $\half$-axes?
 
\item[(ii)]
Suppose $A$ is generated by a finite set of $\half$-axes.  Is $A$
finite dimensional over $\ff$?

\item[(iii)]
What else can be said about the structure of $A$?
\end{enumerate}
\end{prob}

\begin{prob}
Let $A$ be a Jordan algebra of Clifford type
and assume that the graph $\gD$ above is connected.
Let $\cala$ be a closed subset $($see above$)$
of  generating $\half$-axes in $A$ such that the map $a\mapsto\tau(a)$
is injective on $\cala$.  Classify these sets $\cala$.
\end{prob}

\begin{prob}
Are there restriction on $G$ and $D$ if we assume in Theorem C
that $\charc(\ff)=3$?
\end{prob}
\tableofcontents
 
\section{Notation and some definitions}
In this section we assemble the notation and definitions that will prevail
throughout this paper.  Other, more specific ones, will be given
in the beginning of each of the following sections.

As mentioned above, throughout this paper $\ff$ is a field of {\it characteristic not $2$}.
Also, $A$ is a primitive axial algebra of Jordan type $\eta$.

\begin{Def}[Miyamoto involution]\label{def miyamoto}
Let $B$ be an algebra over the field $\ff$ (not necessarily associative).
\begin{enumerate}
\item
Suppose that $B$ is a direct sum $B=B_{+}\oplus B_{-}\, ,$ such that $B_+$
and $B_{-}$ are subspaces of $B$ and such that $B_{\gd}B_{\gre}=B_{\gd\gre},$
for $\gd,\gre\in\{+,-\}$.  For $x\in B,$ write $x=x_++x_{-}\, ,$ with $x_{\gre}\in B_{\gre}$.
The {\it Miyamoto involution} corresponding to the above decomposition of $B$
is the map $\gt\colon B\to B$ defined by $x^{\gt}=x_+-x_{-}\, ,$ for all $x\in B$. It is easy to check that
$\gt$ is an automorphism of $B$ of order at most $2$. (It has order $2$ if and only if $B_{-}\ne 0$.) 

\item
Let $x\in B,$ and assume that $B$ decomposes into a direct sum $B_+(x)\oplus B_{-}(x)$
of ${\rm ad}_x$-invariant subspaces of the adjoint operator ${\rm ad}_x\colon B\to B,\ b\mapsto bx$.
Suppose further that $B_+(x)$ and $B_{-}(x)$ satisfy the rules as in (1) above.
Then we denote the corresponding Miyamoto involution by $\gt(x)$ (where $B$ is understood
from the context) and call $\tau(x)$ the {\it Miyamoto involution corresponding to $x$}.

\item
When $A$ is a primitive axial algebra of Jordan type $\eta$ and $a\in A$ is an
$\eta$-axis, then $\gt(a)$ will denote the Miyamoto involution corresponding to $a$
(as in (2) above, recall the definition of $A_{+}(a)$ and $A_{-}(a)$ from
subsection \ref{sub def axial}).  It is easy to check that if $\gr$ is an automorphism
of $A,$ then $\gt(a^{\gr})=\gt(a)^{\gr}$.  This fact will be used throughout
this paper without further mention. 
\end{enumerate}
\end{Def}

\begin{notation}[Notation and definitions related to axes]\label{not axes}

\noindent
\begin{enumerate}
\item
We denote by $\calx$ the set of all $\eta$-axes in $A$.

\item
For a subalgebras $N\subseteq A$ and $x\in N,$ we denote by $N_{\gl}(x)$
the $\gl$-eigenspace of the adjoint endomorphism
${\rm ad}_x\colon b\mapsto bx$ of $N$. (Recall that we allow the possibility
$N_{\gl}(x)=0$.)

\item
We let $\calx^1:=\{a\in\calx\mid \gt(a)={\rm id}\}$ and $\calx^{\eta}:=\calx\sminus\calx^1$.

\item
For a subset $\calb\subseteq\calx,$ we let $\calb^1=\calb\cap\calx^1$ and $\calb^{\eta}=\calb\cap\calx^{\eta}$. 

\item
A subset $\calb\subseteq\calx$ is {\it closed} if $\calb^{\tau(b)}\subseteq\calb,$
for all $b\in\calb$.

\item
For a set $\calb\subseteq \calx,$ the {\it closure}
of $\calb$ in $\calx$ is the intersection 
of all closed subsets of $\calx$ containing $\calb$.
We denote it by $[\calb]$. 

\item
For a subset $\calb\subseteq\calx,$ we denote by $D_{\calb}:=\{\tau(b)\mid b\in\calb\}$
and $G_{\calb}=\lan D_{\calb}\ran$.
\end{enumerate}
\end{notation}

\begin{notation}[General notation for algebras and subalgebras]\label{not alg}
Let $a,b\in\calx$ with $a\ne b$.
\begin{enumerate}
\item
For a subset $\calb\subseteq \calx,$  we denote by $N_{\calb}$ the subalgebra
of $A$ generated by $\calb$.  If $\calb=\{a,b\},$ we sometimes
write $N_{\calb}=N_{a,b}$.

\item
Note that $N_{a,b}$ satisfies the multiplication rules of Proposition \ref{prop table}
below.  We use the notation $\gvp_{a,b},\ \pi_{a,b}$ and $\gs_{a,b}$
as in Proposition \ref{prop table}.

\item
We denote by $1_{a,b}$ the identity element
of $N_{a,b}$ if $N_{a,b}$ is $3$-dimensional and has an identity element.

Note that by Theorem \ref{thm betaphi}(3),\, $ N_{a,b}$ contains 
an identity element if and only if  $\gs_{a,b}\ne 0$
and $\pi_{a,b}\ne 0$.  In this case the  identity
element of $N_{a,b}$ is $1_{a,b}=\frac{1}{\pi_{a,b}}\gs_{a,b}$. 
\end{enumerate}
\end{notation}

\begin{notation}[Some specific two generated algebras]\label{not algebras}
The following $2$-generated algebras were defined in \cite{HRS}.
In several cases though we changed notation.
Let $a,b\in\calx$ with $a\ne b$.
\begin{enumerate}
\item 
If $ab=0,$ we denote $N_{a,b}=2B_{a,b},$
thus $N_{a,b}$ is $2$-dimensional.

\item
If $ab=-a-b,$ 
we denote $N_{a,b}=3C(-1)^{\times}_{a,b},$ 
thus $N_{a,b}$ is $2$-dimensional 
(see Lemma \ref{lem 3c(-1)times}).

\item
If $ab=\half a+\half b$ we denote $N_{a,b}=J_{a,b},$
thus $N_{a,b}$ is $2$-dimensional
(see Lemma \ref{lem jab}). Our notation here for this
algebra differs from the notation in \cite{HRS}.

\item
If $N_{a,b}$ is $3$-dimensional, we denote $N_{a,b}=B(\eta,\gvp_{a,b})_{a,b},$
where $\gvp_{a,b}\in\ff$ is defined in Proposition \ref{prop table}
(see Theorem \ref{thm betaphi}).

\item
We denote $3C(\eta)_{a,b}=B(\eta,\half\eta)_{a,b}$  
(see Lemma \ref{lem 3ceta} and Remark \ref{rem 3ceta}).

\end{enumerate}
\end{notation}

\section{Preliminaries}
\numberwithin{prop}{subsection}
In this section we give some preliminary properties
of the various algebras from Notation \ref{not algebras}.  In addition,
we assemble some preliminary results.

\subsection{Details about the algebras in Notation \ref{not algebras}}\label{sub details}

\begin{prop}[Proposition 4.6 \cite{HRS}]\label{prop table}
Let $a,b\in\calx$ with $a\ne b$.  Let $\gs=\gs_{a,b}=ab-\eta a-\eta b\in N_{a,b}$. Then there exists 
a scalar $\gvp=\gvp_{a,b}\in\ff$ such that if we set $\pi=\pi_{a,b}=(1-\eta)\gvp-\eta,$
then
\begin{enumerate}
\item
$ab=\gs+\eta a+\eta b;$

\item
$\gs v=\pi v,$ for all $v\in\{a,b,\gs\}.$
\end{enumerate}
\end{prop}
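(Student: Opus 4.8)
Part (1) is nothing but the defining equation $\gs=ab-\eta a-\eta b$ rearranged, so all the content lies in the eigenvector relations of part (2). My plan is to extract everything from the decomposition of $b$ into $\operatorname{ad}_a$-eigenspaces. Since $a$ is an absolutely primitive $\eta$-axis we have $A=A_1(a)\oplus A_0(a)\oplus A_\eta(a)$ with $A_1(a)=\ff a$, so I may write
\[
b=\gvp a+b_0+b_\eta,\qquad \gvp=\gvp_{a,b}\in\ff,\ b_0\in A_0(a),\ b_\eta\in A_\eta(a);
\]
this is how I would define the scalar $\gvp_{a,b}$. Using $a^2=a$, $ab_0=0$ and $ab_\eta=\eta b_\eta$ gives $ab=\gvp a+\eta b_\eta$, and substituting into the definition of $\gs$ yields $\gs=\pi a-\eta b_0$ with $\pi=(1-\eta)\gvp-\eta$. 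Multiplying by $a$ and again using $a^2=a$, $b_0a=0$ gives $\gs a=\pi a$ immediately; this also pins $\pi$, and hence the stated form of $\gvp_{a,b}$.

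For $\gs b$ I would exploit the symmetry $\gs=\gs_{a,b}=\gs_{b,a}$ coming from commutativity. Running the identical computation in the $\operatorname{ad}_b$-eigenspaces, writing $a=\gvp' b+a_0+a_\eta$ with $\gvp'=\gvp_{b,a}$, produces $\gs=\pi' b-\eta a_0$ with $\pi'=(1-\eta)\gvp'-\eta$, and multiplying by $b$ gives $\gs b=\pi' b$. Thus the whole proposition collapses to the single equality $\pi=\pi'$, i.e.\ to the symmetry $\gvp_{a,b}=\gvp_{b,a}$. To get it I would compute $\gs b$ a second time from $\gs=\pi a-\eta b_0$, namely $\gs b=\pi(ab)-\eta b_0b$, and compare $A_1(a)$-components with those of $\pi' b$: this second expression contributes only $\pi\gvp\,a$ to the $A_1(a)$-component (since $b_0b=b_0^2+b_0b_\eta\in A_0(a)\oplus A_\eta(a)$ has no $A_1(a)$-part), whereas $\pi' b$ contributes $\pi'\gvp\,a$, forcing $\pi=\pi'$ whenever $\gvp\neq0$. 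The symmetric comparison of $A_1(b)$-components of $\gs a$ handles the case $\gvp_{b,a}\neq0$, and if both projection coefficients vanish then $\pi=\pi'=-\eta$ outright; so $\pi=\pi'$ in every case and $\gs b=\pi b$.

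Finally $\gs^2=\pi\gs$ should drop out for free: from $\gs=\pi a-\eta b_0$ one gets $\gs^2=\pi^2a+\eta^2b_0^2$, while the $A_0(a)$-component of the identity $\gs b=\pi b$ just proved reads $-\eta b_0^2=\pi b_0$, i.e.\ $b_0^2=-\tfrac{\pi}{\eta}b_0$, and substituting gives $\gs^2=\pi^2a-\pi\eta b_0=\pi\gs$. I expect the only genuinely delicate point to be the equality $\pi=\pi'$: everything else is bilinear bookkeeping in a single eigenspace decomposition, but this step requires playing the two decompositions (against $a$ and against $b$) off one another and disposing of the degenerate case in which a projection coefficient vanishes.
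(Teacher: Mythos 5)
Your proof is correct. Note that the paper itself gives no proof of this statement: it is imported verbatim from \cite{HRS} (Proposition 4.6 there), and your argument is a faithful, self-contained reconstruction of the standard one --- decompose $b=\gvp a+b_0+b_\eta$ with respect to $\operatorname{ad}_a$ (using diagonalizability and absolute primitivity, $A_1(a)=\ff a$), derive $\gs=\pi a-\eta b_0$, and get $\gs a=\pi a$ at once. The genuinely delicate point is exactly the one you isolate, the symmetry $\pi_{a,b}=\pi_{b,a}$ (equivalently $\gvp_{a,b}=\gvp_{b,a}$, since $1-\eta\ne 0$), and your handling is sound: comparing $A_1(a)$-components of the two expressions for $\gs b$ is legitimate precisely because the Jordan-type fusion rules give $b_0^2\in A_0(a)$ (this uses the extra axiom $A_0(a)A_0(a)\subseteq A_0(a)$, not just $A_+A_+\subseteq A_+$ --- without it $b_0^2$ could have an $\ff a$-component and the comparison would fail) and $b_0b_\eta\in A_\eta(a)$; the symmetric comparison in $A_1(b)$ plus the observation that $\gvp=\gvp'=0$ forces $\pi=\pi'=-\eta$ covers all cases. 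Your derivation of $\gs^2=\pi\gs$ from the $A_0(a)$-component of the already-proved identity $\gs b=\pi b$ (yielding $b_0^2=-\frac{\pi}{\eta}b_0$) is a clean way to finish and is exactly in the spirit of the cited source.
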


\begin{lemma}\label{lem 2dim}
Let $a,b$ be two distinct $\eta$-axes in $A$ and suppose that
$N_{a,b}$  is $2$-dimensional.
Then  
\begin{enumerate}
\item
one of the following three statements holds:
\begin{enumerate}
\item
\begin{enumerate}
\item
$\gvp_{a,b}=0,\pi_{a,b}=-\eta$ and $\gs_{a,b}=-\eta a-\eta b,$ also

\item
$ab=0$ and $N_{a,b}=2B_{a,b}$.
\end{enumerate}
\item
\begin{enumerate}
\item
$\eta=-1,\ \gvp_{a,b}=-\half,\ \pi_{a,b}=0$ and $\gs_{a,b}=0,$ also

\item
$ab=-a-b$ and $N_{a,b}=3C(-1)^{\times}_{a,b}$.
\end{enumerate}
\item
\begin{enumerate}
\item
$\eta=\half,\ \gvp_{a,b}=1, \pi_{a,b}=0$ and $\gs_{a,b}=0,$ also 

\item
$ab=\half a+\half b,$ and $N_{a,b}=J_{a,b}$.
\end{enumerate}
\end{enumerate}
\item
If $ab\ne 0,$ then $N_{a,b}$ does not have an identity element.
\end{enumerate}
\end{lemma}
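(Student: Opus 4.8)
The plan is to read off the whole structure from Proposition~\ref{prop table}, whose relations show that $N_{a,b}$ is spanned by $a$, $b$, and $\gs=\gs_{a,b}$ (the products $a^2=a$, $b^2=b$, $ab=\gs+\eta a+\eta b$, and $\gs v=\pi v$ for $v\in\{a,b,\gs\}$ close the multiplication on $\langle a,b,\gs\rangle$). Since $a\ne b$ are distinct idempotents they are linearly independent, so the hypothesis $\dim N_{a,b}=2$ is exactly the statement that $\gs\in\ff a\oplus\ff b$; I would write $\gs=\alpha a+\beta b$ and determine $\alpha,\beta$.

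First I would substitute this expression into the relations $\gs a=\pi a$ and $\gs b=\pi b$ of Proposition~\ref{prop table}(2), reducing every product to the basis $\{a,b\}$ via $ab=\gs+\eta a+\eta b$. Comparing coefficients (legitimate by independence of $a,b$) yields $\beta(\beta+\eta)=0$ and $\alpha(\alpha+\eta)=0$, while subtracting the two scalar equations that both evaluate to $\pi$ gives $(\alpha-\beta)(1-\eta)=0$. As $\eta\ne1$, this forces $\alpha=\beta$, and each is $0$ or $-\eta$, so only two cases survive: $\alpha=\beta=-\eta$ and $\alpha=\beta=0$.

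In the first case $\gs=-\eta a-\eta b$, hence $ab=0$; back-substitution gives $\pi_{a,b}=-\eta$ and then $\gvp_{a,b}=0$ from $\pi=(1-\eta)\gvp-\eta$, which is alternative (a) with $N_{a,b}=2B_{a,b}$. In the second case $\gs=0$, so $\pi_{a,b}=0$, $\gvp_{a,b}=\eta/(1-\eta)$, and $ab=\eta(a+b)$. Here the eigenvalue axiom alone imposes no constraint on $\eta$: on $N_{a,b}=\ff a\oplus\ff b$ the operator $\operatorname{ad}_a$ has eigenvalues $1$ and $\eta$, with eigenvectors $a$ and $w:=\eta a+(\eta-1)b$, both admissible. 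The decisive step — and the one place the argument genuinely uses that $a$ is an $\eta$-axis rather than merely an idempotent with good eigenvalues — is the fusion rule $A_-(a)A_-(a)\subseteq A_+(a)$. Since $A_1(a)\cap N_{a,b}=\ff a$, $A_0(a)\cap N_{a,b}=0$ (as $\eta\ne0$), and $A_\eta(a)\cap N_{a,b}=\ff w$, we have $A_+(a)\cap N_{a,b}=\ff a$ and $A_-(a)\cap N_{a,b}=\ff w$, so $w^2$ must have vanishing $b$-component. Computing $w^2$ with $ab=\eta(a+b)$, that $b$-coefficient equals $(\eta-1)(2\eta^2+\eta-1)$, so $2\eta^2+\eta-1=(2\eta-1)(\eta+1)=0$ and $\eta\in\{\half,-1\}$. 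Reading off $\gvp_{a,b}=\eta/(1-\eta)$ then gives $\gvp_{a,b}=1$ (alternative (c), $N_{a,b}=J_{a,b}$) when $\eta=\half$, and $\gvp_{a,b}=-\half$ (alternative (b), $N_{a,b}=3C(-1)^{\times}_{a,b}$) when $\eta=-1$.

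For part (2), the condition $ab\ne0$ rules out alternative (a), so we are in the case $\gs=0$, $ab=\eta(a+b)$, $\eta\ne0$. If $e=pa+qb$ were an identity, then $ea=a$ forces $q\eta=0$ and $p+q\eta=1$, giving $q=0,\ p=1$, whereas $eb=b$ forces $p\eta=0$, giving $p=0$; these are incompatible, so $N_{a,b}$ has no identity. I expect the only real obstacle to lie in the third paragraph: recognizing that condition~(1) does not restrict $\eta$ and that the fusion rule $A_-(a)A_-(a)\subseteq A_+(a)$ must be invoked to force $\eta\in\{\half,-1\}$; everything else is routine bookkeeping with the relations of Proposition~\ref{prop table}.
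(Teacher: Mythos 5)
Your proposal is correct and follows essentially the same route as the paper: both express $\gs_{a,b}$ in the basis $\{a,b\}$ and, in the $\gs_{a,b}=0$ case, combine the $\eta$-eigenvector of $\operatorname{ad}_a$ on $N_{a,b}$ with the fusion rule to force $2\eta^2+\eta-1=0$, hence $\eta\in\{-1,\half\}$. Your minor variations---deriving $\ga=\gb\in\{0,-\eta\}$ by direct coefficient comparison where the paper splits on $\gs_{a,b}\ne 0$ and uses an absolute-primitivity contradiction, applying the fusion rule after rather than before solving the eigenvalue equation, and writing out the identity-element computation for part (2) that the paper omits---do not change the substance.
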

\begin{proof}
Suppose first that $\gs_{a,b}\ne 0$.  Let $\gs=\gs_{a,b}$ and $\pi=\pi_{a,b}$.
Write $\gs =\ga a+\gb b$.  Multiplying by $a$ we get that $\pi a=\ga a+\gb ab$.  Suppose
$\gb=0$.  Then $\pi a=\ga a,$ thus $\pi\ne 0,$ and  $\ga=\pi$.  Multiplying by $b$ we get that $\pi b=\pi ab,$
so $ab=b$.  But this contradicts the absolute primitivity of $a$.  Hence $\gb\ne 0$.  Similarly, $\ga\ne 0$.
   
Thus we have $ab=\frac{\pi-\ga}{\gb}a$ and similarly multiplying by $b$ we get that $ab=\frac{\pi-\gb}{\ga}b$.
Hence we must have $\ga=\gb=\pi\ne 0$.  It follows that  
$ab=0$ and $N_{a,b}=2B_{a,b}$.  Further $\gs=-\eta a-\eta b$.
Now $\pi a=\gs a=-\eta a$.  Hence $\pi=-\eta$ and then $\gvp_{a,b}=0$.
This shows part (1a).

Suppose now that $\gs=0$.   
Then $ab=\eta a+\eta b$.  Thus, if $a(\ga a+\gb b)=0,$ for some vector $\ga a+\gb b\in N_{a,b},$
then $\ga a+\gb (\eta a+\eta b)=0$.  Hence $\gb\eta =0,$ so $\gb=0,$
and then also $\ga =0,$ and we see that the $0$-eigenspace of $a$ is $\{0\}$
and so is the $0$-eigenspace of $b$.
 
We now compute the $\eta$-eigenspace of $a$.  
Clearly, it is a $1$-dimensional space
spanned by some $\ga a+\gb b,$ with $\ga\ne 0\ne \gb$.

Now since $A$ is of Jordan type $\eta,$ we must have $(\ga a+\gb b)^2\in \ff a$.
Thus
\[
\ga^2a+\gb^2b+2\ga\gb(\eta a+\eta b)\in\ff a.
\]
It follows that $\gb^2+2\ga\gb\eta=0,$ or
\[
\gb=-2\ga\eta.
\]
Canceling $\ga,$ we may assume that the $\eta$-eigenspace of $a$ is spanned
by $a-2\eta b$.

Next we have $a(a-2\eta b)=\eta(a-2\eta b)$.  Hence 
\[
a-2\eta (\eta a+\eta b)=\eta a-2\eta^2 b.
\]
It follows that $2\eta^2+\eta-1=0$ so, $\eta =-1$ or $\eta =\half$.
If $\eta=-1,$ then $ab=-a-b,$ and this is the definition of $3C(-1)^{\times}_{a,b}$.
This shows part (1bii).
Since $0=\gs a=\pi a,$ it follows that $\pi=0$ and then $\gvp=-\half$
and (1bi) holds.

If $\eta=\half,$ then $ab=\half a+\half b$.
This is the algebra $J_{a,b}$.  This shows part (1cii).
As above,  $\pi=0$ and then $\gvp=1$. 

Part (2) is an easy calculation and we omit the details.
\end{proof}

\begin{thm}\label{thm betaphi}
Let $a, b$ be two distinct $\eta$-axes in $A$ and
assume that $N:=N_{a,b}$  is $3$-dimensional.
Set $\gs=\gs_{a,b},\ \gvp=\gvp_{a,b}$ and $\pi=\pi_{a,b}$. Then 
\begin{enumerate}
\item
$\gs\ne 0;$

\item
$\gs z=\pi z,$ for $z\in\{a,b,\gs\};$  

\item
$\pi\ne 0$ if and only if $N$ contains an identity element $1_{a,b}=\frac{1}{\pi}\gs;$

\item
$ab=\gs+\eta a+\eta b;$ in particular, if $\pi\ne 0$ then $ab=\pi 1_{a,b}+\eta a+\eta b;$

\item
$N_1(x)=\ff x,$ $N_0(x)=\ff(\pi x-\gs)$ and $N_{\eta}(x)=\ff((\eta-\gvp)x+\eta y+\gs);$

\item
$x^{\tau(y)}=-\frac{2}{\eta}\gs-\frac{2(\eta-\gvp)}{\eta}y-x$, for $\{x,y\}=\{a,b\};$
 
\item
$xx^{\tau(y)}=-\frac{2(\eta-\gvp)}{\eta}\gs-\left(2(\eta-\gvp)+\frac{2}{\eta}\pi+1\right)x-2(\eta-\gvp)y,$
for $\{x,y\}=\{a,b\}.$
\end{enumerate}
\end{thm}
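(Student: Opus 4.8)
The plan is to set up the three-dimensional algebra $N=N_{a,b}$ with an explicit basis and then derive each of the seven assertions by direct linear algebra, proceeding in the order given since each item feeds the next. First I would fix the spanning set $\{a,b,\gs\}$ for $N$ (where $\gs=\gs_{a,b}$), and observe that since $N$ is three-dimensional these three elements are linearly independent; this is what distinguishes the present situation from Lemma~\ref{lem 2dim}, where $\gs\in\ff a+\ff b$ forces two-dimensionality. The starting data are the identities from Proposition~\ref{prop table}: namely $ab=\gs+\eta a+\eta b$ and $\gs v=\pi v$ for $v\in\{a,b,\gs\}$, with $\pi=(1-\eta)\gvp-\eta$.

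For part~(1), I would argue by contradiction: if $\gs=0$ then $ab=\eta a+\eta b$ lies in $\ff a+\ff b$, and the same analysis as in Lemma~\ref{lem 2dim} (the case $\gs=0$) shows $N$ is spanned by $a,b$ alone, contradicting three-dimensionality. Part~(2) is simply a restatement of Proposition~\ref{prop table}(2), so nothing new is needed. For part~(3), the relation $\gs z=\pi z$ shows that when $\pi\neq 0$ the element $\tfrac{1}{\pi}\gs$ acts as the identity on each basis vector $a,b,\gs$, hence on all of $N$; conversely, if $N$ has an identity $\e$, writing $\e$ in the basis and using $\e a=a$, $\e b=b$ forces the coefficient structure that makes $\pi\neq 0$ and identifies $\e=\tfrac1\pi\gs$. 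Part~(4) is again just Proposition~\ref{prop table}(1), with the substitution $\gs=\pi\cdot 1_{a,b}$ when $\pi\neq 0$.

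The substantive computations are in parts~(5)--(7). For part~(5) I would, for each axis $x\in\{a,b\}$ (with $y$ the other axis), verify the eigenvalue equations directly: that $x$ is a $1$-eigenvector of $\operatorname{ad}_x$ is immediate from idempotency; for the claimed $0$-eigenvector $\pi x-\gs$ I would compute $(\pi x-\gs)x=\pi x^2-\gs x=\pi x-\pi x=0$ using $\gs x=\pi x$; and for the $\eta$-eigenvector $(\eta-\gvp)x+\eta y+\gs$ I would multiply by $x$, substitute $xy=\gs+\eta x+\eta y$ and $\gs x=\pi x$, collect coefficients of $x,y,\gs$, and check each equals $\eta$ times the corresponding coefficient of the candidate vector, invoking $\pi=(1-\eta)\gvp-\eta$ to close the identity. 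The key check is that these three eigenspaces are independent and span $N$, which follows once the three displayed vectors are seen to be a basis. Part~(6) then follows formally: having the eigenspace decomposition from~(5), I write $x=x_+ + x_-$ with $x_+\in N_+(y)=N_1(y)\oplus N_0(y)$ and $x_-\in N_\eta(y)$, solve the linear system expressing $x$ in terms of the basis vectors from~(5) (applied to $y$), and then apply $\gt(y)$ by flipping the sign on the $N_\eta(y)$-component. Part~(7) is the product $x\cdot x^{\tau(y)}$, obtained by substituting the formula from~(6) and expanding using $x^2=x$, $xy=\gs+\eta x+\eta y$, and $\gs x=\pi x$.

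The main obstacle I anticipate is purely bookkeeping rather than conceptual: the normalization in~(5), and especially the sign-and-coefficient tracking in~(6) and~(7), where one must repeatedly substitute the quadratic relations and the definition $\pi=(1-\eta)\gvp-\eta$ to reduce everything to the basis $\{x,y,\gs\}$. Getting the coefficient $-\tfrac{2(\eta-\gvp)}{\eta}$ and the bracketed term $\bigl(2(\eta-\gvp)+\tfrac2\eta\pi+1\bigr)$ in~(7) to come out exactly right requires careful division by $\eta$ (legitimate since $\eta\neq 0$) at the step where $x_-$ is isolated, and it is easy to drop a factor of two or a sign there. I would organize the computation so that~(6) is fully simplified before attempting~(7), so that~(7) is a single clean expansion rather than a nested substitution.
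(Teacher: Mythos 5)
Your proposal is correct, and for the two substantive items it coincides with the paper: for (6) the paper writes $x=-\frac{1}{\eta}\gs-\frac{\eta-\gvp}{\eta}y+\frac{1}{\eta}\bigl((\eta-\gvp)y+\eta x+\gs\bigr)$, i.e.\ exactly your decomposition of $x$ into its $N_+(y)$-part (note $\gs=\pi y-(\pi y-\gs)\in N_1(y)\oplus N_0(y)$) and its $N_\eta(y)$-part, and applies $\gt(y)$ by negating the latter; and (7) is the same single expansion of $x\,x^{\tau(y)}$ using $x^2=x$, $xy=\gs+\eta x+\eta y$ and $\gs x=\pi x$. The genuine difference is in (1)--(5): the paper does not prove these at all but identifies $N_{a,b}$ with the algebra $B(\eta,\gvp)$ of \cite[Theorem 4.7]{HRS} and cites that theorem, adding only the one-line supplement that if $\pi=0$ and $N$ had an identity then $\gs=1_{a,b}\gs=\pi\,1_{a,b}=0$, contradicting (1) --- which settles the ``only if'' direction of (3) more quickly than your coefficient computation for $\e=\ga a+\gb b+\gc\gs$ (though yours is also valid: $\e a=a$ forces $\gb\eta=0$, etc.). Your direct verifications do go through: $\gs=0$ would give $ab\in\operatorname{Span}\{a,b\}$ and hence $\dim N\le 2$; the eigenvector check for $(\eta-\gvp)x+\eta y+\gs$ closes precisely via $\pi=(1-\eta)\gvp-\eta$, as you predict; and since the three eigenvectors belong to the distinct eigenvalues $1,0,\eta$ (distinct because $\eta\notin\{0,1\}$), their independence is automatic, so only spanning needs the basis $\{x,y,\gs\}$ (the change-of-basis determinant is $\eta\neq 0$), after which each eigenspace is forced to be exactly one line --- for $N_1(x)=\ff x$ you could alternatively just quote absolute primitivity. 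What each approach buys: the paper gets brevity by outsourcing (1)--(5) to \cite{HRS}, while your version makes the theorem self-contained at the cost of the bookkeeping you correctly flag as the only real hazard.
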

\begin{proof}
Note that $N_{a,b}$ is isomorphic to the algebra $B(\eta,\gvp)$ defined in
\cite[Theorem 4.7, p.~98]{HRS},
with $(a,b)$ in place of $(c,d)$ and $\gs_{a,b}$ in place of $\rho$.
Hence, parts (1)--(5) are \cite[Theorem 4.7(a), p.~99]{HRS}. 
Note that if $\pi=0$ and $N$ contains an identity element $1_{a,b},$
then $\gs=1_{a,b}\gs=0,$ a contradiction.

For (6) we have
\begin{alignat*}{3}
x&=\textstyle{-\frac{1}{\eta}\gs-\frac{\eta-\gvp}{\eta}y+\frac{1}{\eta}\left((\eta-\gvp)y+\eta x+\gs\right)}& &\iff\\
x^{\tau(y)}&=\textstyle{-\frac{1}{\eta}\gs-\frac{\eta-\gvp}{\eta}y-\frac{1}{\eta}\left((\eta-\gvp)y+\eta x+\gs\right)}& &\iff\\
x^{\tau(y)}&=\textstyle{-\frac{2}{\eta}\gs-\frac{2(\eta-\gvp)}{\eta}y-x.}
\end{alignat*}
Finally, for (7) we have
\begin{align*}\textstyle{
xx^{\tau(y)}}&\textstyle{=x(-\frac{2}{\eta}\gs-\frac{2(\eta-\gvp)}{\eta}y-x)}\\ 
&=\textstyle{-\frac{2}{\eta}\pi x-\frac{2(\eta-\gvp)}{\eta}xy-x}\\
&=\textstyle{-\frac{2}{\eta}\pi x-\frac{2(\eta-\gvp)}{\eta}(\gs+\eta x+\eta y)-x}\\
&=\textstyle{-\frac{2}{\eta}\pi x-\frac{2(\eta-\gvp)}{\eta}\gs-2(\eta-\gvp) x-2(\eta-\gvp) y-x}\\
&=\textstyle{-\frac{2(\eta-\gvp)}{\eta}\gs-\left(2(\eta-\gvp)+\frac{2}{\eta}\pi+1\right)x-2(\eta-\gvp)y .}\qedhere
\end{align*}
\end{proof}

\begin{lemma}\label{lem 3ceta}
Let $a, b$ be two distinct $\eta$-axes in $A$ and suppose that
$N_{a,b}=B(\eta,\,\half\eta)_{a,b}$ (in particular, $N_{a,b}$ is $3$-dimensional).  Set $N=N_{a,b},\ \pi=\pi_{a,b}$
and $\gs=\gs_{a,b}$.   Then $\pi =-\half(\eta+\eta^2),$
and
\begin{enumerate}
\item 
\begin{enumerate}
\item
$ab=\gs+\eta a+\eta b;$

\item
$N_{\eta}(x)=\ff(\gs+\half\eta x+\eta y),$ for $\{x, y\}=\{a,b\};$
\item
$x^{\tau(y)}=-\frac{2}{\eta}\gs-x-y,$
for $\{x,y\}=\{a,b\};$
 
\item
$a^{\tau(b)}=b^{\tau(a)},$
so $|\tau(a)\tau(b)|=3.$
\end{enumerate}

\item
If $\eta\ne -1,$ then $1_{a,b}=\frac{1}{\pi}\gs$ is the identity of $N$ 
while if $\eta=-1,$ then $N$ has no identity element.
\end{enumerate} 
\end{lemma}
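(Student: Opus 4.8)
The plan is to obtain every assertion by specializing the general two-generated structure theory of Theorem \ref{thm betaphi} (together with Proposition \ref{prop table}) to the value $\gvp_{a,b}=\half\eta$; the only step carrying genuine content beyond substitution is the order statement in (1d), which will be the main obstacle.

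First I would record $\pi$. Proposition \ref{prop table} gives $\pi=(1-\eta)\gvp-\eta$, and inserting $\gvp=\half\eta$ and simplifying yields $\pi=-\half(\eta+\eta^2)$, as claimed; I would also note the recurring identity $\eta-\gvp=\half\eta$. The four parts of (1) then follow by direct substitution: (1a) is Theorem \ref{thm betaphi}(4); for (1b) I substitute $\eta-\gvp=\half\eta$ into $N_\eta(x)=\ff((\eta-\gvp)x+\eta y+\gs)$ from Theorem \ref{thm betaphi}(5) to get $N_\eta(x)=\ff(\gs+\half\eta x+\eta y)$; and for (1c) I use $\frac{2(\eta-\gvp)}{\eta}=1$ in $x^{\tau(y)}=-\frac{2}{\eta}\gs-\frac{2(\eta-\gvp)}{\eta}y-x$ from Theorem \ref{thm betaphi}(6) to get $x^{\tau(y)}=-\frac{2}{\eta}\gs-x-y$. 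The first half of (1d), namely $a^{\tau(b)}=b^{\tau(a)}$, is then immediate, since the common right-hand side $-\frac{2}{\eta}\gs-a-b$ is symmetric in $a$ and $b$.

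The genuine work is the claim $|\tau(a)\tau(b)|=3$. Since $N$ is $3$-dimensional, the eigenspaces $A_\eta(a),A_\eta(b)$ are nonzero, so $\tau(a)$ and $\tau(b)$ are honest involutions by Definition \ref{def miyamoto}(1). Writing $c:=a^{\tau(b)}=b^{\tau(a)}$ and applying $\tau$ using $\tau(x^\rho)=\tau(x)^\rho$ from Definition \ref{def miyamoto}(3), I would pass from the equality $a^{\tau(b)}=b^{\tau(a)}$ to $\tau(a)^{\tau(b)}=\tau(b)^{\tau(a)}$, which with $\tau(a)^2=\tau(b)^2={\rm id}$ rearranges to the braid relation $\tau(b)\tau(a)\tau(b)=\tau(a)\tau(b)\tau(a)$. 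Setting $s=\tau(a)$, $t=\tau(b)$, the relation $tst=sts$ gives $(st)^2=s(tst)=s(sts)=ts=(st)^{-1}$, whence $(\tau(a)\tau(b))^3={\rm id}$. It remains only to exclude order $1$, i.e.\ to show $\tau(a)\ne\tau(b)$: if $\tau(a)=\tau(b)$ then $c=a^{\tau(b)}=a^{\tau(a)}=a$, while (1c) gives $c=-\frac{2}{\eta}\gs-a-b$; comparing the coefficient of $\gs$ in the basis $\{a,b,\gs\}$ of $N$ (a basis by Theorem \ref{thm betaphi}(5)) forces $-\frac{2}{\eta}=0$, impossible since $\eta\ne0$. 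Hence the order is exactly $3$.

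Finally, part (2) is read off from Theorem \ref{thm betaphi}(3), which asserts that $N$ has an identity $\frac{1}{\pi}\gs$ precisely when $\pi\ne0$. Since $\pi=-\half\eta(1+\eta)$ and $\eta\ne0$, we have $\pi=0$ if and only if $\eta=-1$; thus for $\eta\ne-1$ the element $1_{a,b}=\frac{1}{\pi}\gs$ is the identity, and for $\eta=-1$ the algebra $N$ has no identity element.
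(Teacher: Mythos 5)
Your proposal is correct and follows essentially the same route as the paper: every part is obtained by substituting $\gvp_{a,b}=\half\eta$ into Proposition \ref{prop table} and Theorem \ref{thm betaphi}, with (1d) read off from the symmetry of the expression in (1c) and (2) from Theorem \ref{thm betaphi}(3). The only difference is that you spell out the standard dihedral argument (the braid relation $\tau(b)\tau(a)\tau(b)=\tau(a)\tau(b)\tau(a)$ plus the exclusion of $\tau(a)=\tau(b)$ via the basis $\{a,b,\gs\}$) that the paper leaves implicit in the step from $a^{\tau(b)}=b^{\tau(a)}$ to $|\tau(a)\tau(b)|=3$.
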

\begin{proof}
Part  (1a) follows from Theorem \ref{thm betaphi}(4).
Also
\[\textstyle{
\pi=(1-\eta)\gvp-\eta=(1-\eta)\frac{\eta}{2}-\eta=-\half(\eta+\eta^2).}
\]
Next, by Theorem \ref{thm betaphi}(5), for $x\in\{a,b\},$
$N_{\eta}(x)$ is spanned by $(\eta-\half\eta)x+\eta y+\gs,$
so  (1b) holds.
By \ref{thm betaphi}(6), $x^{\tau(y)}=-\frac{2}{\eta}\gs-\frac{2(\eta-\half\eta)}{\eta}y-x$, for $\{x,y\}=\{a,b\},$
this shows (1c) and (1d).  Part (2) follows from
Theorem \ref{thm betaphi}(3). 
\end{proof}

\begin{remark}\label{rem 3ceta}
Let $a, b$ be two distinct $\eta$-axes in $A$ and suppose that
$N_{a,b}=B(\eta,\,\half\eta)_{a,b}$.  Set $c_0=a, c_1=b$ and
$c_2=-\frac{2}{\eta}\gs_{a,b}-a-b$.
Then it is readily verified that $\{c_0, c_1, c_2\}$ is a basis of $N_{a,b},$ and  for $\{i, j, k\}=\{0, 1, 2\}$
we have  $c_i^2=c_i$ and $c_ic_j=\half\eta(c_i+c_j-c_k)$.  Thus
$N_{a,b}$ is the algebra denoted $3C(\eta)$
in \cite[Example 3.3, p.~90]{HRS}.
See also \cite{HRS}, p.~91, line 9 (with $(a,b)$ in place of $(c_0,c_1)$).
This explains our notation $3C(\eta)_{a,b}$ (see Notation \ref{not algebras}(5)).
\end{remark}

\begin{lemma}\label{lem halfhalf}
Let $a,b\in A$ be two  distinct $\eta$-axes. 
Suppose that $\eta=\gvp_{a,b}=\half$.  Then
$N:=N_{a,b}$ is $3$-dimensional, so $N=B(\half,\half)_{a,b}$ and
\begin{enumerate}
\item
$\pi_{a,b}=-\frac{1}{4};$

\item
$x^{\tau(y)}=1_{a,b}-x,$ for $\{x,y\}=\{a,b\};$

\item
$N_{\half}(x)=\ff(1_{a,b}-2y),$ for $\{x,y\}=\{a,b\};$

\item
$|\tau(a)\tau(b)|\in\{2,4\};$

\item
$|\tau(a)\tau(b)|=2$ if and only if $\tau(x)=\tau(1_{a,b}-x),$ for $x\in\{a,b\};$  

\item
if $|\tau(a)\tau(b)|=4$ then $\tau(a)\tau(1_{a,b}-a)=\tau(b)\tau(1_{a,b}-b)=:t$
and $Z(\lan \tau(a),\tau(b)\ran)=\lan t\ran;$

\item
$N=B(\half,\half)_{x,y},$ for all $x\in\{a, 1_{a,b}-a\}$ and   $y\in\{b, 1_{a,b}-b\}.$
\end{enumerate}
\end{lemma}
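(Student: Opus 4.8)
The plan is to first pin down the isomorphism type of $N:=N_{a,b}$, then read off items (1)--(3) by direct substitution into Theorem~\ref{thm betaphi}, and finally extract the group-theoretic items (4)--(7) from the action of $\tau(a),\tau(b)$ on $N$. I begin by establishing that $N$ is $3$-dimensional: $N$ is $2$- or $3$-dimensional, and if $\dim N=2$ then Lemma~\ref{lem 2dim} forces $\gvp_{a,b}\in\{0,1\}$ (cases (1a) and (1c), since case (1b) requires $\eta=-1$). As $\gvp_{a,b}=\half\notin\{0,1\}$, we get $\dim N=3$, so $N=B(\half,\half)_{a,b}$ by Notation~\ref{not algebras}(4). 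Item (1) is then immediate from $\pi_{a,b}=(1-\eta)\gvp_{a,b}-\eta=(1-\half)\half-\half=-\tfrac14$. Since $\pi_{a,b}\neq0$, Theorem~\ref{thm betaphi}(3) gives the identity $1_{a,b}=\tfrac1{\pi_{a,b}}\gs_{a,b}=-4\gs_{a,b}$, equivalently $\gs_{a,b}=-\tfrac14\,1_{a,b}$.

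With $\eta=\gvp_{a,b}=\half$ the coefficient $\eta-\gvp_{a,b}$ vanishes, so substituting into Theorem~\ref{thm betaphi}(6) yields $x^{\tau(y)}=-\tfrac2\eta\gs_{a,b}-x=-4\gs_{a,b}-x=1_{a,b}-x$, which is item (2). The same substitution in Theorem~\ref{thm betaphi}(5) gives $N_{\half}(x)=\ff(\tfrac12 y+\gs_{a,b})$, and replacing $\gs_{a,b}=-\tfrac14\,1_{a,b}$ and clearing the scalar produces $N_{\half}(x)=\ff(1_{a,b}-2y)$, which is item (3).

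For the group-theoretic core I set $g:=\tau(a)\tau(b)$ and record the action on the basis $\{a,b,1_{a,b}\}$ of $N$: by item (2) together with $a^{\tau(a)}=a$, $b^{\tau(b)}=b$, and the fact that any automorphism fixes the identity $1_{a,b}$ of $N$. A short computation gives $a^{g^2}=a$ and $b^{g^2}=b$, so $g^2$ acts trivially on $N$. The key step---and the one I expect to be the main obstacle---is to promote this \emph{local} fact into a \emph{global} constraint on $\langle\tau(a),\tau(b)\rangle$, since $g^2$ need not be the identity on all of $A$ (this is exactly why the order $4$ can occur). Using $\tau(z^{\rho})=\tau(z)^{\rho}$ from Definition~\ref{def miyamoto}(3), the identities $a^{g^2}=a$ and $b^{g^2}=b$ force $g^2$ to centralize both $\tau(a)$ and $\tau(b)$. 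Since $\tau(a)$ inverts $g$ (indeed $\tau(a)\,g\,\tau(a)=\tau(b)\tau(a)=g^{-1}$), centrality of $g^2$ gives $g^{-2}=g^{2}$, i.e. $g^4=1$. Finally $g\neq1$, for $\tau(a)=\tau(b)$ would give $b=b^{\tau(b)}=b^{\tau(a)}=1_{a,b}-b$, contradicting the independence of $b$ and $1_{a,b}$. Hence $|\tau(a)\tau(b)|\in\{2,4\}$, which is item (4), and $D:=\langle\tau(a),\tau(b)\rangle$ is dihedral of order $4$ or $8$.

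Items (5)--(7) are then formal. Since $1_{a,b}-a=a^{\tau(b)}$ and $1_{a,b}-b=b^{\tau(a)}$ are $\eta$-axes, the conjugation identity gives $\tau(1_{a,b}-a)=\tau(a)^{\tau(b)}$ and $\tau(1_{a,b}-b)=\tau(b)^{\tau(a)}$. Thus $\tau(a)=\tau(1_{a,b}-a)$ holds iff $\tau(b)$ centralizes $\tau(a)$, i.e. iff $g^2=1$, i.e. (as $g\neq1$) iff $|g|=2$; the condition for $x=b$ is the same, giving item (5). For item (6), when $|g|=4$ I compute $\tau(a)\tau(1_{a,b}-a)=(\tau(a)\tau(b))^2=g^2$ and $\tau(b)\tau(1_{a,b}-b)=(\tau(b)\tau(a))^2=g^{-2}=g^2=:t$, and since $D\cong D_8$ its center is $\langle g^2\rangle=\langle t\rangle$. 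For item (7), the automorphisms $\mathrm{id},\ \tau(b),\ \tau(a),\ g$ send the pair $(a,b)$ respectively to $(a,b)$, $(1_{a,b}-a,b)$, $(a,1_{a,b}-b)$, $(1_{a,b}-a,1_{a,b}-b)$, and each preserves $N$; since an algebra isomorphism preserves $\gs_{a,b}$, $\pi_{a,b}$ and hence $\gvp_{a,b}$, every such $N_{x,y}$ equals $N=B(\half,\half)_{x,y}$.
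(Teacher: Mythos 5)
Your proposal is correct and takes essentially the same route as the paper's proof: items (1)--(3) by substitution into Theorem~\ref{thm betaphi} (using $\eta-\gvp_{a,b}=0$ and $\gs_{a,b}=-\tfrac14 1_{a,b}$), and items (4)--(7) by the same key mechanism, namely that $(\tau(a)\tau(b))^2$ fixes $a$, $b$ and $1_{a,b}$, hence by $\tau(z^\rho)=\tau(z)^\rho$ lies in $Z(\langle\tau(a),\tau(b)\rangle)$, after which the dihedral bookkeeping gives $g^4=1$ and (5)--(7). The only difference is organizational and immaterial: the paper establishes (7) first and extracts centrality via the elements $\tau(y)\tau(1_{a,b}-y)$ and the identity $\tau(1_{a,b}-b)\tau(b)=(\tau(a)\tau(b))^2$, whereas you compute the action of $(\tau(a)\tau(b))^2$ on the basis $\{a,b,1_{a,b}\}$ directly.
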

\begin{proof}
By Lemma \ref{lem 2dim}, $N$ is $3$-dimensional.  
Let $\gvp=\gvp_{a,b},\ \pi=\pi_{a,b},\ \gs=\gs_{a,b}$ and $\e=1_{a,b}$.
Since $\eta=\half,$ 
we have $\pi=(1-\eta)\gvp-\eta=(1-\half)\half-\half=-\frac{1}{4}$.
By Theorem \ref{thm betaphi}(6),
$x^{\tau(y)}=-4\gs-x=\e-x.$
By Theorem \ref{thm betaphi}(5) part (3) holds.
Since $a^{\tau(a)}=a$ and $b^{\tau(a)}=\e-b,$ it follows that $N=B(\half,\half)_{a,\ (\e-b)}$.
Similarly we see that (7) holds.

From (7) it follows that $x^{\tau(y)\tau(\e-y)}=x$, and clearly $y^{\tau(y)\tau(\e-y)}=y,$
for $\{x,y\}=\{a,b\}$.  Hence $\tau(y)\tau(\e-y)\in Z(H),$ where $H=\lan\tau(a),\tau(b)\ran$.
We have 
\begin{gather*}
\tau(\e-b)\tau(b)=\tau(b^{\tau(a)})\tau(b)=(\tau(a)\tau(b))^2=\tau(a)\tau(a^{\tau(b)})\\
=\tau(a)\tau(\e-a)\in Z(H).
\end{gather*}  
If $\tau(x)=\tau(\e-x)$ for $x=a$ or $b,$
then $|\tau(a)\tau(b)|=2$ and clearly if $|\tau(a)\tau(b)|=2,$ then $\tau(x)=\tau(\e-x)$.
This completes the proof of the lemma.
\end{proof}

\begin{remark}\label{rem halfhalf}
Suppose that $\eta=\half$. Let $a,b\in \calx$.
Note that $N_{a,b}=B(\half,\half)_{a,b}$ if and only if $N_{a,b}$ contains
an identity $1_{a,b}$ and $ab=-\frac{1}{4}1_{a,b}+\half a+\half b$.  
\end{remark}

\begin{lemma}\label{lem 3c(-1)times}
Let $C:=3C(-1)^{\times}_{a,b}$.
Then
\begin{enumerate}
\item
$ab=-a-b,\ \gs_{a,b}=0,\ \pi_{a,b}=0$ and $\gvp_{a,b}=-\half;$
\item
$C$ has no identity element;

\item
$C_{-1}(a)=a+2b$ and $C_{-1}(b)=b+2a;$
\item
$ab=a^{\tau(b)}=b^{\tau(a)}=-a-b,$ so $(\tau(a)\tau(b))^3={\rm id};$

\item
if $\charc(\ff)\ne 3,$ the only non-zero idempotents in $C$ are $a, b, -a-b,$
and they are all  $-1$-axis with $C_{-1}(-a-b)=\ff(a-b)$;

\item
if $\charc(\ff)= 3$ then the non-zero idempotents in $C$ are
\[
\mathcal{E}:=\{\ga a+(1-\ga)b\mid \ga\in\ff\},
\]
and each $e\in\mathcal{E}$ is a $-1$-axis in $C$ with $C_{-1}(e)=\ff(a-b).$

\end{enumerate}
\end{lemma}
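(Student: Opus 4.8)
The plan is to carry out every part as explicit linear algebra in the $2$-dimensional algebra $C$ with basis $\{a,b\}$ and multiplication $a^2=a$, $b^2=b$, $ab=-a-b$. Parts (1) and (2) need almost no work. By Notation \ref{not algebras}(2) the hypothesis $C=3C(-1)^{\times}_{a,b}$ records $ab=-a-b$, and since $ab\neq 0$ we are in case (1b) of Lemma \ref{lem 2dim} (case (1a) is excluded by $ab\neq 0$, and in case (1c) the relation $ab=\half a+\half b$ agrees with $-a-b$ only when $\charc(\ff)=3$, in which case $\half=-1$ anyway); thus $\eta=-1$, $\gs_{a,b}=0$, $\pi_{a,b}=0$, $\gvp_{a,b}=-\half$, which is (1). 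As a direct check, $\gs_{a,b}=ab-\eta a-\eta b=(-a-b)+a+b=0$ by Proposition \ref{prop table}, and then $\pi_{a,b}a=\gs_{a,b}a=0$ forces $\pi_{a,b}=0$ and hence $\gvp_{a,b}=-\half$. Part (2) is immediate from Lemma \ref{lem 2dim}(2) since $ab\neq 0$.

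For (3) I would diagonalize $\operatorname{ad}_a$ on $\{a,b\}$: from $a\cdot a=a$ and $b\cdot a=-a-b$ one gets $\operatorname{ad}_a(\ga a+\gb b)=(\ga-\gb)a-\gb b$, and solving $\operatorname{ad}_a(v)=-v$ gives $\gb=2\ga$, so the $(-1)$-eigenspace is spanned by $a+2b$. Interchanging $a$ and $b$ yields $C_{-1}(b)=\ff(2a+b)$.

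For (4) I would first record the decomposition of $C$ under $\operatorname{ad}_b$: $A_1(b)=\ff b$ (absolute primitivity), $A_0(b)=0$, and $A_{-1}(b)=\ff(2a+b)$ by (3), so that $C=\ff b\oplus\ff(2a+b)$ is precisely the $A_+(b)\oplus A_-(b)$ decomposition defining $\tau(b)$. Writing $a=-\half b+\half(2a+b)$ and applying $\tau(b)$ gives $a^{\tau(b)}=-\half b-\half(2a+b)=-a-b=ab$, and by symmetry $b^{\tau(a)}=-a-b=ab$. Putting $c:=a^{\tau(b)}=b^{\tau(a)}$ and invoking the conjugation formula $\tau(x^{\rho})=\tau(x)^{\rho}$ of Definition \ref{def miyamoto}(3), I obtain $\tau(a)^{\tau(b)}=\tau(c)=\tau(b)^{\tau(a)}$, i.e. the braid relation $\tau(b)\tau(a)\tau(b)=\tau(a)\tau(b)\tau(a)$, which for involutions is equivalent to $(\tau(a)\tau(b))^3=\operatorname{id}$. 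This gives (4).

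Finally, for (5) and (6) I would solve $e^2=e$ for $e=\ga a+\gb b$ directly: expanding with $ab=-a-b$ gives $\ga(\ga-2\gb-1)=0$ and $\gb(\gb-2\ga-1)=0$. The branches $\ga=0$ or $\gb=0$ contribute only $0,a,b$, while the remaining branch is the linear system $\ga=2\gb+1$, $\gb=2\ga+1$. \textbf{This branch is the one genuinely delicate point of the proof:} for $\charc(\ff)\neq 3$ it has the unique solution $\ga=\gb=-1$, producing the single extra idempotent $-a-b$, whereas for $\charc(\ff)=3$ both equations collapse to $\ga+\gb=1$, so the entire pencil $\mathcal{E}=\{\ga a+(1-\ga)b\mid\ga\in\ff\}$ consists of idempotents. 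It then remains to check that each nonzero idempotent is an honest $(-1)$-axis and not merely an idempotent with diagonalizable adjoint. Using $ca=b$ and $cb=a$ for $c=-a-b$ one gets $c(a-b)=-(a-b)$ in every characteristic, and the analogous computation for a general $e\in\mathcal{E}$ (which uses $3\ga=0$) gives $e(a-b)=-(a-b)$ when $\charc(\ff)=3$; hence $C=\ff e\oplus\ff(a-b)$ with $A_1(e)=\ff e$ and $A_{-1}(e)=\ff(a-b)$ (note this is consistent with (3), since $a+2b=a-b$ when $\charc(\ff)=3$). The only fusion rule requiring verification is $A_{-1}(e)A_{-1}(e)\subseteq A_1(e)$, which holds because $(a-b)^2=3(a+b)\in\ff e$. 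This simultaneously confirms the list of idempotents, that each is a $(-1)$-axis, and that $C_{-1}(-a-b)=\ff(a-b)$ (respectively $C_{-1}(e)=\ff(a-b)$ for all $e\in\mathcal{E}$ when $\charc(\ff)=3$), giving (5) and (6).
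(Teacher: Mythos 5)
Your proposal is correct and takes essentially the same route as the paper's proof: parts (1) and (2) via Lemma \ref{lem 2dim}, the same eigenvector computations for (3) and (4) (writing $a=-\half b+\half(2a+b)$ to read off $a^{\tau(b)}=-a-b$), and the same quadratic system $\ga-2\gb=1$, $\gb-2\ga=1$, whose collapse in characteristic $3$ yields the pencil $\mathcal{E}$. Your only additions are the explicit braid-relation derivation of $(\tau(a)\tau(b))^3={\rm id}$ and the verification of the fusion rule $C_{-1}(e)C_{-1}(e)\subseteq\ff e$, both of which the paper leaves implicit.
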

\begin{proof}
Part (1) is by definition, and part (2) is
Lemma \ref{lem 2dim}(2).
\medskip

\noindent
(3):\quad 
$a(a+2b)=a+2ab=a-(2a+2b)=-a-2b,$ and similarly for $b$.
\medskip

\noindent
(4):\quad We have $a=-\half b+\half(b+2a),$ so, by definition,
$a^{\tau(b)}=-\half b-\half(b+2a)=-a-b$.
By symmetry, $b^{\tau(a)}=-a-b$.
It follows that $a^{\tau(b)}=b^{\tau(a)}$, and then $|\tau(b)\tau(a)|=3$.
\medskip

\noindent
(5\&6):\quad
Assume that $e:=\ga a+\gb b$ is a non-zero idempotent.
Then
\[
\ga a+\gb b=(\ga a+\gb b)^2=\ga^2a+\gb^2b+2\ga\gb ab=(\ga^2-2\ga\gb)a+(\gb^2-2\ga\gb)b.
\]
If $\ga=0,$ then $\gb=1,$ and $e=b$.  Similarly if $\gb =0,$ then $\ga=1$ and $e=a$.
Suppose $\ga\ne 0\ne\gb$. 
Then $\ga^2-2\ga\gb=\ga,$ and so $\ga-2\gb=1$.  Similarly $\gb-2\ga=1,$
and we see that if $\charc(\ff)\ne 3,$ then $\ga=\gb=-1$.
If $\charc(\ff)=3,$ then we get $\ga+\gb=1$.

If $\charc(\ff)\ne 3,$ then $(-a-b)(a-b)=-(a+b)(a-b)=-(a-b),$
so $-a-b$ is a $-1$-axis.

Next if $\charc(\ff)=3,$ then
\begin{gather*}
(\ga a+(1-\ga)b)(a-b)=\ga a -\ga ab+(1-\ga)ab-(1-\ga)b\\
=\ga a+\ga a+\ga b-a-b+\ga a+\ga b-b+\ga b=\\ 
-a-2b=-a+b=-(a-b).\qedhere
\end{gather*}
\end{proof}

\begin{lemma}\label{lem jab}
Let $a,b\in\calx$ such that $N:=N_{a,b}=J_{a,b}$ $($so that $\eta=\half)$.
Let $\{x,y\}=\{a,b\},$   then
\begin{enumerate}
\item
$\gs_{a,b}=0,$ $\pi_{a,b}=0$ and $\gvp_{a,b}=1;$
\item
$ab=\half a +\half b;$

\item
$N_{\half}(x)=\ff(x-y);$

\item
$x^{\tau(y)}=2y-x;$

\item
$x^{(\tau(y)\tau(x))^k}=(2k+1)x-2ky,$ for all $k\ge 0.$
\end{enumerate}
\end{lemma}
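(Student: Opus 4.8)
The plan is to settle (1) and (2) by citing the earlier $2$-dimensional classification, and then to derive (3)--(5) by elementary linear algebra carried out entirely inside the $2$-dimensional subalgebra $N=\ff a\oplus\ff b$. Indeed, parts (1) and (2) are nothing but the content of Lemma \ref{lem 2dim}(1c): the hypothesis $N_{a,b}=J_{a,b}$ means exactly that $ab=\half a+\half b$ with $\eta=\half$, and that lemma already records $\gs_{a,b}=0$, $\pi_{a,b}=0$ and $\gvp_{a,b}=1$.

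For part (3) I would write down the matrix of $\operatorname{ad}_x$ in the basis $\{a,b\}$ of $N$. From $x^2=x$ and $xy=\half x+\half y$ one reads off $\operatorname{ad}_x(x)=x$ and $\operatorname{ad}_x(y)=\half x+\half y$, so the eigenvalues of $\operatorname{ad}_x$ on $N$ are exactly $1$ and $\half$. The $1$-eigenspace is $\ff x$ by absolute primitivity, and solving $\operatorname{ad}_x v=\half v$ yields $v\in\ff(x-y)$, giving $N_{\half}(x)=\ff(x-y)$. I would emphasize here the consequence used below: since no $0$ occurs among the eigenvalues, $N_0(x)=0$, whence $N_+(x)=N_1(x)=\ff x$ and $N_-(x)=N_{\half}(x)=\ff(x-y)$.

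Part (4) then follows from the eigenspace decomposition, once one notes that $N$ is invariant under the global Miyamoto involution $\tau(y)$: as $N$ is a subalgebra containing $y$, the operator $\operatorname{ad}_y$ preserves $N$, so $N=N_+(y)\oplus N_-(y)$ and $\tau(y)$ acts on $N$ as $+1$ on $N_+(y)=\ff y$ and as $-1$ on $N_-(y)=\ff(y-x)$. Writing $x=y+(x-y)$ with $y\in N_+(y)$ and $x-y\in N_-(y)$ and applying $\tau(y)$ gives $x^{\tau(y)}=y-(x-y)=2y-x$.

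Finally, part (5) is an induction on $k$, the case $k=0$ being trivial. For the inductive step I would apply $\tau(y)$ and then $\tau(x)$ to $(2k+1)x-2ky$, using part (4) in both orderings of $\{x,y\}$ together with the fixed-point relations $y^{\tau(y)}=y$, $x^{\tau(x)}=x$ and linearity: applying $\tau(y)$ carries $(2k+1)x-2ky$ to $(2k+2)y-(2k+1)x$, and applying $\tau(x)$ then carries this to $(2k+3)x-(2k+2)y=(2(k+1)+1)x-2(k+1)y$, as required. There is no genuine obstacle in the argument; the only point needing care is the coefficient bookkeeping and the order of composition in this last step.
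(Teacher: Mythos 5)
Your proposal is correct and takes essentially the same route as the paper's proof: parts (1) and (2) are by definition (equivalently, Lemma \ref{lem 2dim}(1c)), part (3) is the direct eigenvector computation $x(x-y)=\half(x-y)$, part (4) follows from writing $x=y+(x-y)$ and negating the $\half$-eigencomponent, and part (5) is the routine induction that the paper explicitly leaves to the reader. Your bookkeeping in (5), including the right-to-left order of applying $\tau(y)$ then $\tau(x)$ under the exponent convention $x^{gh}=(x^g)^h$, checks out.
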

\begin{proof}
Parts (1) and (2) are by definition.  We have $a(a-b)=a-ab=a-(\half a+\half b)=\half a-\half b$ 
and similarly for $b,$ so (3) holds.
Since $a=b+(a-b),$ then, by definition, $a^{\tau(b)}=b-(a-b)=2b-a,$ and similarly for $b$.
This shows (4).  We leave the calculations of (5) to the reader.
\end{proof}

\begin{lemma}\label{lem etanothalf}
Suppose that $\eta=\half,$ and let $a, b\in\calx$ with $a\ne b$. 
Set $\gvp=\gvp_{a,b}$ and $\gs=\gs_{a,b},$ then
\[
b^{\tau(a)}=-4\gs-(2-4\gvp)a-b.
\]
\end{lemma}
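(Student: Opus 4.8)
The plan is to compute $b^{\gt(a)}$ directly from the eigenspace decomposition of $b$ under $\operatorname{ad}_a$, in a way that is uniform over both the $2$- and $3$-dimensional shapes of $N_{a,b}$. Recall from Definition \ref{def miyamoto} that $\gt(a)$ acts as $+1$ on $A_+(a)$ and as $-1$ on $A_-(a)=A_\eta(a)$, so once I write $b=b_++b_-$ with $b_+\in A_+(a)$ and $b_-\in A_-(a)$, I get $b^{\gt(a)}=b_+-b_-=b-2b_-$. Everything I need is already in Proposition \ref{prop table}, which holds for any distinct pair of $\eta$-axes regardless of $\dim N_{a,b}$: namely $ab=\gs+\eta a+\eta b$ and $\gs v=\pi v$ for $v\in\{a,b,\gs\}$, with $\pi=(1-\eta)\gvp-\eta$.

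The key step is to exhibit the $A_\eta(a)$-component of $b$. I set $w:=(\eta-\gvp)a+\eta b+\gs$. A short computation using $a^2=a$, $ba=ab=\gs+\eta a+\eta b$ and $\gs a=\pi a$ gives $wa=[(\eta-\gvp)+\eta^2+\pi]a+\eta^2 b+\eta\gs$, and this equals $\eta w$ precisely because $\pi=(1-\eta)\gvp-\eta$; hence $w\in A_\eta(a)=A_-(a)$. Solving $w=(\eta-\gvp)a+\eta b+\gs$ for $b$ gives $b=\tfrac{1}{\eta}w+r$ with $r=-\tfrac{\eta-\gvp}{\eta}a-\tfrac{1}{\eta}\gs$. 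To see $r\in A_+(a)$ it suffices to check $a,\gs\in A_+(a)$: clearly $a\in A_1(a)$, while $(\pi a-\gs)a=\pi a-\gs a=\pi a-\pi a=0$ shows $\pi a-\gs\in A_0(a)$, so that $\gs=\pi a-(\pi a-\gs)\in A_1(a)\oplus A_0(a)=A_+(a)$. Thus $b=r+\tfrac{1}{\eta}w$ is exactly the $A_+(a)\oplus A_-(a)$ decomposition, and
\[
b^{\gt(a)}=r-\tfrac{1}{\eta}w=b-\tfrac{2}{\eta}w=-b-\tfrac{2(\eta-\gvp)}{\eta}a-\tfrac{2}{\eta}\gs.
\]
Substituting $\eta=\half$ turns $\tfrac{2}{\eta}$ into $4$ and $\tfrac{2(\eta-\gvp)}{\eta}$ into $2-4\gvp$, yielding $b^{\gt(a)}=-4\gs-(2-4\gvp)a-b$, as claimed.

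The only point to watch is that the argument must stay valid in the degenerate $2$-dimensional cases, where some of these eigenspaces collapse; since every assertion above is phrased as membership in a (possibly zero) subspace, nothing is actually lost, and I expect no genuine obstacle beyond this bookkeeping. If one prefers an argument splitting on $\dim N_{a,b}$, the same formula drops out: for $3$-dimensional $N_{a,b}$ it is Theorem \ref{thm betaphi}(6) with $\eta=\half$, while for the $2$-dimensional cases Lemma \ref{lem 2dim} (together with $\eta=\half\ne-1$) leaves only $2B_{a,b}$ and $J_{a,b}$, where one checks $b^{\gt(a)}=b$ when $ab=0$ and uses Lemma \ref{lem jab}(4) to get $b^{\gt(a)}=2a-b$ for $J_{a,b}$; substituting the relevant values of $\gvp$ and $\gs$ in each case recovers the right-hand side. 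The real content is the identity $wa=\eta w$, which is simply a repackaging of the defining relation for $\pi$.
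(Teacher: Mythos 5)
Your proof is correct, and it takes a genuinely different route from the paper's. The paper proves Lemma \ref{lem etanothalf} by a case split: if $\gs\ne 0$ and $N_{a,b}$ is $3$-dimensional it quotes Theorem \ref{thm betaphi}(6); if $N_{a,b}=2B_{a,b}$ it substitutes $\gvp=0$ and $\gs=-\half(a+b)$ and checks that the right-hand side collapses to $b$; and if $\gs=0$ it appeals to Lemma \ref{lem 2dim}, Lemma \ref{lem jab}, and (in characteristic $3$) Lemma \ref{lem 3c(-1)times}. You instead verify, uniformly and for arbitrary $\eta$, that $w=(\eta-\gvp)a+\eta b+\gs$ satisfies $wa=\eta w$ --- an identity equivalent to the defining relation $\pi=(1-\eta)\gvp-\eta$ of Proposition \ref{prop table}, which indeed carries no dimension hypothesis --- and that the complementary piece $-\frac{\eta-\gvp}{\eta}a-\frac{1}{\eta}\gs$ lies in $A_+(a)$; your check that $\gs=\pi a-(\pi a-\gs)\in A_1(a)\oplus A_0(a)$ is right, and since $A=A_+(a)\oplus A_-(a)$ the two pieces are forced to be the components of $b$, so $b^{\gt(a)}=b-\frac{2}{\eta}w$ gives the formula in one stroke. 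In effect you have extended Theorem \ref{thm betaphi}(5) and (6) from the $3$-dimensional case to all cases, the degenerate ones being absorbed automatically because $w$ may vanish (as it does for $2B_{a,b}$, where $w=\half a+\half b+\gs=0$) without harming the membership statements. What the paper's route buys is brevity given the structure results already in hand; what yours buys is a case-free, characteristic-independent derivation that makes the $\eta=\half$ specialization transparent and would support stating the general-$\eta$ formula $b^{\gt(a)}=-\frac{2}{\eta}\gs-\frac{2(\eta-\gvp)}{\eta}a-b$ as the lemma. Your closing fallback paragraph is essentially the paper's own proof, so you cover that route as well.
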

\begin{proof}
Suppose  $\gs\ne 0$. If $N_{a,b}\ne 2B_{a,b},$ this follows from Lemma \ref{lem 2dim} and Theorem \ref{thm betaphi}(6).
If $N_{a,b}= 2B_{a,b},$ then $\gvp=0$ and $\gs=-\half(a+b),$
so $b^{\tau(a)}=2(a+b)-2a-b=b$. Suppose $\gs=0,$  
then this follows from Lemma \ref{lem 2dim}, Lemma \ref{lem jab} and from Lemma \ref{lem 3c(-1)times}
when $\charc(\ff)=3$.
\end{proof}

\subsection{Some further consequences}\label{sub cons}\hfill
\medskip

\noindent
In this subsection we derive further properties of the algebras discussed in subsection
\ref{sub details}.
 
\begin{lemma}\label{lem 2b}
Let $a, b$ be two distinct $\eta$-axes in $A$.
Then $N_{a,b}=2B_{a,b}$ if and only if $a^{\tau(b)}=a$.
In particular, if $\tau(a)=\tau(b),$ then $N_{a,b}=2B_{a,b}$.
\end{lemma}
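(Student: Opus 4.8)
The plan is to run through the classification of the two-generated subalgebra $N_{a,b}$ recorded in Lemma \ref{lem 2dim}, Lemma \ref{lem 3c(-1)times}, Lemma \ref{lem jab}, and Theorem \ref{thm betaphi}, and in each case simply read off $a^{\tau(b)}$ from the explicit formulas already established. Recall that $N_{a,b}$ is either $2$-dimensional---of type $2B_{a,b}$, $3C(-1)^{\times}_{a,b}$, or $J_{a,b}$---or $3$-dimensional of type $B(\eta,\gvp)_{a,b}$.

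For the forward implication I would argue directly. If $N_{a,b}=2B_{a,b}$ then $ab=0$, so $a\in A_0(b)\subseteq A_+(b)$, and hence $\tau(b)$ fixes $a$, giving $a^{\tau(b)}=a$.

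For the converse I would show that in each of the three remaining cases $a^{\tau(b)}\ne a$, using the recorded formulas together with a linear-independence argument. In the case $3C(-1)^{\times}_{a,b}$ we have $a^{\tau(b)}=-a-b$ by Lemma \ref{lem 3c(-1)times}(4); in the case $J_{a,b}$ we have $a^{\tau(b)}=2b-a$ by Lemma \ref{lem jab}(4); and in the $3$-dimensional case Theorem \ref{thm betaphi}(6) gives $a^{\tau(b)}=-\frac{2}{\eta}\gs-\frac{2(\eta-\gvp)}{\eta}b-a$. In the two $2$-dimensional cases $\{a,b\}$ is a basis (distinct axes are linearly independent, else $N_{a,b}$ would be $1$-dimensional), so $-a-b=a$ forces $b=-2a$ and $2b-a=a$ forces $b=a$, both contradictions (the latter using $\charc(\ff)\ne 2$). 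In the $3$-dimensional case the equality $a^{\tau(b)}=a$ would read $-\frac{2}{\eta}\gs-\frac{2(\eta-\gvp)}{\eta}b-2a=0$; since $\gs\ne 0$ by Theorem \ref{thm betaphi}(1), and the eigenspace description in Theorem \ref{thm betaphi}(5) exhibits $\{a,b,\gs\}$ as a basis of $N_{a,b}$, the nonzero coefficient $-\frac{2}{\eta}$ of $\gs$ makes this impossible.

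The one point requiring care---the main, if modest, obstacle---is justifying that $\{a,b,\gs\}$ is a basis in the $3$-dimensional case, so that the coefficient of $\gs$ cannot be cancelled against the $a$- and $b$-terms; this I would extract from Theorem \ref{thm betaphi}(5) together with $\eta\ne 0$. Finally, for the ``in particular'' clause, if $\tau(a)=\tau(b)$ then $a^{\tau(b)}=a^{\tau(a)}=a$, since $a\in A_1(a)\subseteq A_+(a)$ is fixed by its own Miyamoto involution; the main equivalence then yields $N_{a,b}=2B_{a,b}$.
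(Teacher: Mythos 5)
Your proof is correct, but it takes a genuinely different route from the paper. The paper's proof is a two-line eigenspace argument: since $a^{\tau(b)}=a_+-a_-$ with $a_-\in A_\eta(b)$ and $\charc(\ff)\ne 2$, the fixed-point condition $a^{\tau(b)}=a$ is equivalent to the projection of $a$ onto the $\eta$-eigenspace of ${\rm ad}_b$ vanishing; the forward direction is then immediate from $ab=0$, and for the converse the paper simply invokes \cite[Proposition 2.8]{HRS}, which says precisely that a vanishing $\eta$-projection of one axis on another forces $ab=0$. You instead bypass that external citation entirely and run an exhaustive case analysis over the classification of $N_{a,b}$ (Lemma \ref{lem 2dim}, Lemma \ref{lem 3c(-1)times}, Lemma \ref{lem jab}, Theorem \ref{thm betaphi}), reading off $a^{\tau(b)}$ from the explicit formulas and ruling out $a^{\tau(b)}=a$ in each non-$2B$ case by linear independence. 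Each of your exclusions checks out: $b=-2a$ and $b=a$ contradict the independence of distinct axes (distinct nonzero idempotents cannot be proportional), and in the $3$-dimensional case the coefficient $-\frac{2}{\eta}$ of $\gs$ cannot vanish. Your one flagged concern—that $\{a,b,\gs\}$ is a basis—is fine, though it follows even more directly from Proposition \ref{prop table} than from Theorem \ref{thm betaphi}(5): by the multiplication rules, ${\rm Span}\{a,b,\gs\}$ is closed under the product and contains $a,b$, hence equals $N_{a,b}$, and $3$-dimensionality then makes the spanning set a basis. The trade-off is clear: the paper's argument is shorter and conceptually uniform but leans on a result imported from \cite{HRS}, while yours is longer but self-contained within the paper's \S 3 preliminaries (all of which precede the lemma, so there is no circularity) and has the side benefit of recording $a^{\tau(b)}$ explicitly in every case. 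The ``in particular'' clause you handle exactly as the paper does.
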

\begin{proof}
We have $a^{\tau(b)}=a$ if and only if
\[\tag{$*$}
\text{the projection of $a$ into the $\eta$-eigenspace of ${\rm ad}_b$ is $0$.}
\]
Now if $N_{a,b}=2B_{a,b},$ then $ab=0,$ so clearly $(*)$ holds.  
If $(*)$ holds then, by \cite[Proposition 2.8, p.~88]{HRS}, $ab=0,$
so $N_{a,b}=2B_{a,b}$. 

Next, if $\tau(a)=\tau(b),$ then $a^{\tau(b)}=a^{\tau(a)}=a,$ so $N_{a,b}=2B_{a,b}$.
\end{proof}

\begin{lemma}\label{lem vsubb}
Let $a, b$ be two distinct $\eta$-axes in $A$ and
assume that $N:=N_{a,b}=B(\eta,\gvp_{a,b})_{a,b}$ is $3$-dimensional.
Set $\gs=\gs_{a,b},\ \gvp=\gvp_{a,b}$ and $\pi=\pi_{a,b}$.
Set $V:=N_{b, b^{\tau(a)}}$. Then either $V=N$ or $N$ has an identity
$1_{a,b}$ and exactly one
of the following holds:
\begin{itemize}
\item[(i)]
$N=B(\half, 0)_{a,b},\ V=J_{b,b^{\tau(a)}}$ and then ${\rm Span}\{b, b^{\tau(a)}\}\cap {\rm Span}\{1_{a,b}, a\}=\ff(1_{a,b}-a)$.
\textsl{}
\item[(ii)]
$N=B(\half,\half)_{a,b},$\  $V=2B_{b,b^{\tau(a)}}$ and then ${\rm Span}\{b, b^{\tau(a)}\}\cap {\rm Span}\{1_{a,b}, a\}=\ff 1_{a,b}$.
\end{itemize}
\end{lemma}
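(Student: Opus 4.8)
The plan is to view $V$ as a subalgebra of the three-dimensional $N$ and to decide $\dim V$ by an explicit computation in the basis $\{a,b,\gs\}$ of $N$ (available since $\gs\ne 0$ by Theorem~\ref{thm betaphi}(1)). Write $b':=b^{\tau(a)}$. Theorem~\ref{thm betaphi}(6) gives $b'=-\tfrac{2}{\eta}\gs-\tfrac{2(\eta-\gvp)}{\eta}a-b$; since the $\gs$-coefficient is nonzero, $b$ and $b'$ are automatically linearly independent, so $V=N$ exactly when $bb'\notin{\rm Span}\{b,b'\}$. Theorem~\ref{thm betaphi}(7) provides $bb'=bb^{\tau(a)}$ explicitly, and the first key step is to solve $bb'=\lambda b+\mu b'$ by matching coordinates. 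Matching the $\gs$-coordinate yields $\mu=\eta-\gvp$, and matching the $a$-coordinate then forces $(\eta-\gvp)^2=\eta(\eta-\gvp)$, i.e. $\gvp(\eta-\gvp)=0$, while the $b$-coordinate always determines $\lambda$. Hence $bb'\in{\rm Span}\{b,b'\}$ if and only if $\gvp\in\{0,\eta\}$, and otherwise $V=N$, the first alternative of the statement.

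Assume now $\gvp\in\{0,\eta\}$. I would first note that $\pi=(1-\eta)\gvp-\eta$ is then nonzero ($\pi=-\eta$ if $\gvp=0$ and $\pi=-\eta^2$ if $\gvp=\eta$), so Theorem~\ref{thm betaphi}(3) supplies the identity $1_{a,b}=\tfrac1\pi\gs$, establishing the ``$N$ has an identity'' clause; as $\eta\ne 0$ the values $\gvp=0$ and $\gvp=\eta$ are distinct, giving the exclusivity in ``exactly one''. The crux is then to force $\eta=\half$. Writing $bb'=\lambda b+\mu b'$, the $\eta$-axis property of $b$ shows that ${\rm ad}_b$ acts on $V$ with matrix $\bigl(\begin{smallmatrix}1&\lambda\\0&\mu\end{smallmatrix}\bigr)$ in the basis $\{b,b'\}$, whose eigenvalues must lie in $\{0,1,\eta\}$ with a one-dimensional $1$-eigenspace (absolute primitivity); hence $\mu\in\{0,\eta\}$, and symmetrically $\lambda\in\{0,\eta\}$. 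The computed values $bb'=(2\eta-1)b$ (when $\gvp=\eta$) and $bb'=(1-\eta)b+\eta b'$ (when $\gvp=0$) then leave $\eta=\half$ as the only possibility; alternatively one may simply apply Lemma~\ref{lem 2dim} to the pair $(b,b')$ and compare the two forms of $bb'$.

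With $\eta=\half$ established, the type of $V$ is read off from $bb'$: if $\gvp=\half$ then $bb'=0$, so $V=2B_{b,b'}$ and $N=B(\half,\half)_{a,b}$, which is case~(ii); if $\gvp=0$ then $bb'=\half b+\half b'$, so $V=J_{b,b'}$ and $N=B(\half,0)_{a,b}$, which is case~(i). It remains to identify the intersection ${\rm Span}\{b,b'\}\cap{\rm Span}\{1_{a,b},a\}$. In case~(ii), Lemma~\ref{lem halfhalf}(2) gives $b'=1_{a,b}-b$, so $b+b'=1_{a,b}$; in case~(i), substituting $\gs=\pi\,1_{a,b}=-\half 1_{a,b}$ into the formula for $b'$ gives $b'=2\cdot 1_{a,b}-2a-b$, so $b+b'=2(1_{a,b}-a)$. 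In both cases the two planes are distinct because $b\notin{\rm Span}\{1_{a,b},a\}={\rm Span}\{a,\gs\}$, so the intersection is precisely $\ff\,1_{a,b}$ in case~(ii) and $\ff(1_{a,b}-a)$ in case~(i). The main obstacle is the first step, reducing the membership $bb'\in{\rm Span}\{b,b'\}$ to the single identity $\gvp(\eta-\gvp)=0$ and then forcing $\eta=\half$; after that the remaining case analysis is bookkeeping guided by Theorem~\ref{thm betaphi} and Lemma~\ref{lem 2dim}.
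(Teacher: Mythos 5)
Your proposal is correct, and it reaches the paper's conclusions by a recognizably different organization of the same computation. The paper's proof starts by assuming $\dim V=2$ and immediately invokes the classification of $2$-dimensional $2$-generated algebras (Lemma \ref{lem 2dim}) to split into the two cases $bb^{\tau(a)}=\eta b+\eta b^{\tau(a)}$ and $bb^{\tau(a)}=0$, then matches coefficients against Theorem \ref{thm betaphi}(6),(7) to extract $\gvp$, $\pi$, and finally $\eta=\half$ in each case. You instead work throughout in the basis $\{a,b,\gs\}$: you first characterize exactly when $bb^{\tau(a)}\in{\rm Span}\{b,b^{\tau(a)}\}$, obtaining the clean dichotomy $\gvp(\eta-\gvp)=0$, and only afterwards force $\eta=\half$ via the eigenvalue constraints on ${\rm ad}_b$ and ${\rm ad}_{b^{\tau(a)}}$ restricted to $V$, in place of Lemma \ref{lem 2dim} (which, as you note, would do the same job). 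The computational engine is identical, but your arrangement buys several things the paper leaves implicit: an explicit criterion $V=N\iff\gvp\notin\{0,\eta\}$; the automatic linear independence of $b$ and $b^{\tau(a)}$ (from the nonzero $\gs$-coordinate), which disposes of the degenerate possibility $b=b^{\tau(a)}$ that the paper passes over silently (there it would instead follow from Lemma \ref{lem 2b}, since $b=b^{\tau(a)}$ forces $N_{a,b}=2B_{a,b}$, contradicting $\dim N=3$); and a complete justification of the intersection claims, since you verify that the two planes are distinct and hence meet in exactly a line, whereas the paper only exhibits the vector $1_{a,b}-a$ (resp.\ $1_{a,b}$) in the intersection. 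One small point you should make explicit in the eigenvalue step: ruling out $\mu=1$ with $\lambda\neq 0$ requires not merely that the eigenvalues lie in $\{0,1,\eta\}$ but that the restriction of ${\rm ad}_b$ to the invariant subspace $V$ is semisimple; this does follow from the minimal polynomial of ${\rm ad}_b$ dividing $(x-1)x(x-\eta)$, so the gap is purely expository, with absolute primitivity then excluding the diagonalizable case $\mu=1$, $\lambda=0$ as you say.
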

\begin{proof}
Assume that $V$ is $2$-dimensional.
By Lemma \ref{lem 2dim} we must consider $2$ cases.
\medskip

\noindent
\underline{ Case 1}.    $\eta\in\{-1,\half\}$ and $bb^{\tau(a)}=\eta b^{\tau(a)}+\eta b$.
\smallskip

\noindent
In this case, by Theorem \ref{thm betaphi}(6)
\begin{gather*}\textstyle{
bb^{\tau(a)}=\eta b^{\tau(a)}+\eta b=-\frac{2\eta}{\eta}\gs-\frac{2\eta(\eta-\gvp)}{\eta}a-\eta b+\eta b}\\
=-2\gs-2(\eta-\gvp)a.
\end{gather*}
Hence by Theorem \ref{thm betaphi}(7),
\begin{gather*}
 -2\gs-2(\eta-\gvp)a\\\textstyle{
=-\frac{2(\eta-\gvp)}{\eta}\gs-\left(2(\eta-\gvp)+\frac{2}{\eta}\pi+1\right)b-2(\eta-\gvp)a.}
\end{gather*}
We conclude that $2=\frac{2(\eta-\gvp)}{\eta}$.  This implies
\[
\gvp=0\quad\text{and}\quad\pi=-\eta.
\]
But also $2\eta+\frac{2}{\eta}\pi+1=0,$ thus
\[\textstyle{
\pi=-\frac{2\eta^2+\eta}{2},\quad\text{ so for }\eta\in\{-1,\half\},\ \pi=-\half.}
\]
Note that $-1=\half$ if $\charc(\ff)=3,$ hence the only case that can occur is (i).
Also by Lemma \ref{lem etanothalf} we have
\[\textstyle{
-\half b+\half b^{\tau(a)}=-\half b+\half(-4\gs-2a-b)=-2\gs-a=1_{a,b}-a.}
\]
\medskip

\noindent
\underline{Case 2}.  $bb^{\tau(a)}=0$.
\smallskip

\noindent
In this case, by Theorem \ref{thm betaphi}(7),
\[\textstyle{
-\frac{2(\eta-\gvp)}{\eta}\gs-\left(2(\eta-\gvp)+\frac{2}{\eta}\pi+1\right)b-2(\eta-\gvp)a=0.}
\]
Hence $\eta=\gvp$ and $\pi=-\frac{\eta}{2}$.  Since $\pi=(1-\eta)\gvp-\eta$ we get
$(1-\eta)\eta=\frac{\eta}{2},$ and $\eta=\half$.  This is case (ii).
Since $b^{\tau(a)}=1_{a,b}-b$ the last claim of (ii) holds.
\end{proof}

\begin{lemma}\label{lem identity}
Suppose there exists an element $\e\in A$ such that $\e^2=\e$
and $\e\cdot a=a,$
for all $a\in\cala$.  Then $\e$ is the identity element of $A$.
\end{lemma}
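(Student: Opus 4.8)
The plan is to fix a single axis, decompose $\e$ against its adjoint, and turn the idempotency $\e^2=\e$ together with the fusion rules into a statement about commuting operators.

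First I would fix any $a\in\cala$ and decompose $\e$ with respect to $\operatorname{ad}_a$. Since $a$ is an $\eta$-axis, $A=A_1(a)\oplus A_0(a)\oplus A_\eta(a)$, and by absolute primitivity $A_1(a)=\ff a$. Writing $\e=\e_1+\e_0+\e_\eta$ accordingly, the hypothesis $\e a=a$ reads $\e_1+\eta\e_\eta=a$; comparing eigencomponents and using $\eta\neq 0$ forces $\e_1=a$ and $\e_\eta=0$, so $\e=a+u_a$ with $u_a:=\e-a\in A_0(a)$. Because $a u_a=0$ and $a^2=a$, the relation $\e^2=\e$ collapses to $u_a^2=u_a$. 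Thus, for every generator $a$, the idempotent $\e$ is $a$ plus an idempotent of the subalgebra $A_0(a)$; in particular $\e\in A_+(a)$, so $\e$ is fixed by every Miyamoto involution $\tau(a)$.

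Next I would reformulate the goal. Decomposing an arbitrary $x=x_1+x_0+x_\eta$ against $a$ and using $\e=a+u_a$ gives $\e x=(x_1+\eta x_\eta)+u_a x$, so ``$\e x=x$ for all $x$'' is \emph{equivalent} to: $u_a$ acts as the identity on $A_0(a)$ and as multiplication by $1-\eta$ on $A_\eta(a)$. This is precisely what the fusion rules control: since $u_a\in A_0(a)$ we have $u_a A_0(a)\subseteq A_0(a)$, $u_a A_\eta(a)\subseteq A_\eta(a)$ and $u_a A_1(a)=0$, so $\operatorname{ad}_{u_a}$ preserves every $\operatorname{ad}_a$-eigenspace and hence commutes with the diagonalizable operator $\operatorname{ad}_a$. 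Consequently $\operatorname{ad}_\e=\operatorname{ad}_a+\operatorname{ad}_{u_a}$ commutes with $\operatorname{ad}_a$, and since $a\in\cala$ was arbitrary, $\operatorname{ad}_\e$ commutes with $\operatorname{ad}_a$ for \emph{every} $a\in\cala$.

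With this commuting relation I would set $K:=\{x\in A\mid \e x=x\}$, a subspace containing $\cala$. As $\operatorname{ad}_\e$ commutes with each $\operatorname{ad}_a$ and fixes each generator, it fixes every element reached from $\cala$ by repeated multiplication by axes: if $\e z=z$ then $\e(az)=\operatorname{ad}_\e\operatorname{ad}_a(z)=\operatorname{ad}_a\operatorname{ad}_\e(z)=az$, so $K$ is invariant under all $\operatorname{ad}_a$. One also checks that every $\operatorname{ad}_a$-eigencomponent of an element of $K$ again lies in $K$. The remaining—and genuinely hard—step is to pass from ``closed under multiplication by a single axis'' to ``all of $A$'': a product such as $(ab)(cd)$ is not reached by left multiplications by generators, and the natural formal manipulations (even those exploiting self-adjointness of $\operatorname{ad}_\e$ with respect to an associating form) turn out to be circular at exactly this point. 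I therefore expect the main obstacle to be proving that $K$ is closed under \emph{arbitrary} products, i.e.\ that $K$ is a subalgebra. I would attack this by reducing products of eigenvectors to the two-generated subalgebras analysed in Section~\ref{sub details}, where each $N_{a,b}$ is explicitly $2$- or $3$-dimensional; once $K$ is known to be a subalgebra it contains the generating set $\cala$, whence $K=A$, forcing $u_a$ to act as required on $A_0(a)$ and $A_\eta(a)$ and so identifying $\e$ as the identity of $A$.
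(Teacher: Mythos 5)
Your opening steps are correct and coincide with the paper's: from $\e a=a$ one gets $\e-a\in A_0(a)$, hence $\e\in A_+(a)$ and $\e^{\gt(a)}=\e$ for every $a\in\cala$. (Your extra bookkeeping is fine but mostly unused; note that $\e^2=\e$ is never needed in the paper's proof either, since it follows a posteriori once $\e$ is the identity.) However, the proposal has a genuine gap, and you flag it yourself: you never prove that $K=\{x\in A\mid \e x=x\}$ is closed under arbitrary products. Invariance of $K$ under each $\operatorname{ad}_a$ only captures left-normed monomials $a_1(a_2(\cdots(a_k)\cdots))$, whereas the subalgebra generated by $\cala$ is spanned by \emph{all} monomials, including $(ab)(cd)$, which are not of that form in a nonassociative algebra. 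Your proposed remedy --- reducing to the two-generated subalgebras of \S\ref{sub details} --- is not carried out and does not obviously suffice: the algebras $N_{a,b}$ control products of two \emph{axes}, not products of two general elements of $K$; so the hard step remains unproved and the proof is incomplete as it stands.

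The paper closes exactly this gap by a linear rather than multiplicative argument, and this is the one ingredient your proposal is missing: by \cite[Corollary 1.2]{HRS}, $A$ is \emph{linearly spanned} by the closure $[\cala]$, and by Lemma \ref{lem closure}(1) every $x\in[\cala]$ has the form $a^g$ with $a\in\cala$ and $g\in G=\lan \gt(a)\mid a\in\cala\ran$. Since $\e$ is fixed by every $\gt(a)$, it is fixed by every $g\in G$, whence $\e\cdot a^g=(\e\cdot a)^g=a^g$, and bilinearity then gives $\e x=x$ for all $x\in A$. In other words, the spanning theorem from \cite{HRS} converts the closure-under-products problem into a statement about the Miyamoto-group orbit of the axes, at which point your commuting-operator machinery (the subspace $K$, $\operatorname{ad}_\e$ commuting with each $\operatorname{ad}_a$) becomes unnecessary. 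To repair your write-up, replace the final step by an appeal to that spanning result (or prove an equivalent of it), keeping your first paragraph, which is precisely the paper's observation that $\e$ is $\gt(a)$-invariant for all $a\in\cala$.
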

\begin{proof}
Since $\e=a+(\e-a)\in A_1(a)+A_0(a),$ by definition, $\e^{\tau(a)}=\e$
for all $a\in \cala$.
Let $G:=\lan \tau(a)\mid a\in\cala\ran$.
Then $\e^g=\e,$ for all $g\in G$.  Since any $x\in [\cala]$
has the form $a^g,$ for some $g\in G$ and $a\in\cala,$ we see that $\e\cdot x=\e,$ for
all $x\in [\cala]$. 
By \cite[Corollary 1.2]{HRS}, $A$ is spanned by $[\cala],$
so $\e$ is the identity of $A$.
\end{proof}

\begin{lemma}\label{lem ta}
Assume that $\eta=\half$ and that
$a,b,c\in\calx$ are distinct.  Suppose there exists an element $\e\in A$
such that $\e^2=\e$ and $\e x=x,$ for all $x\in\{a,b,c\}$.  Suppose further that  
\[\textstyle{
xy=\ga_{x,y}\e+\half x+\half y,} 
\]
where $\ga_{x,y}\in\ff,$ for all distinct $x,y\in\{a,b,c\}$.
Then
\begin{enumerate}
\item
$\ga_{x,y}=\pi_{x,y},$ for all distinct $x, y\in\{a,b,c\};$

\item
$ab^{\tau(c)}=\big(8\pi_{a,c}\pi_{b,c}+2\pi_{a,c}-\pi_{a,b}+2\pi_{b,c}\big)\e+\half a+\half b^{\tau(c)}.$
\end{enumerate}
\end{lemma}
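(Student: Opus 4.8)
The plan is to prove (1) first and then use it to reduce (2) to a direct, if careful, computation. For (1), observe that by definition $\gs_{x,y} = xy - \eta x - \eta y = xy - \half x - \half y$, so the hypothesis $xy = \ga_{x,y}\e + \half x + \half y$ says precisely that $\gs_{x,y} = \ga_{x,y}\e$. Multiplying this on the right by $x$ and using $\e x = x$ gives $\gs_{x,y}x = \ga_{x,y} x$, while Proposition \ref{prop table}(2) gives $\gs_{x,y}x = \pi_{x,y}x$. Since $x$ is an axis, and hence nonzero, comparing the two yields $\ga_{x,y} = \pi_{x,y}$, as required.

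For (2), first I would write $b^{\tau(c)}$ explicitly. Applying Lemma \ref{lem etanothalf} to the pair $(c,b)$ gives $b^{\tau(c)} = -4\gs_{b,c} - (2 - 4\gvp_{b,c})c - b$. Now substitute $\gs_{b,c} = \ga_{b,c}\e = \pi_{b,c}\e$ (using (1) and $\gs_{b,c}=\ga_{b,c}\e$), and rewrite $\gvp_{b,c}$ via the defining relation $\pi_{b,c} = (1-\eta)\gvp_{b,c} - \eta = \half\gvp_{b,c} - \half$, i.e. $\gvp_{b,c} = 2\pi_{b,c} + 1$, so that $2 - 4\gvp_{b,c} = -2 - 8\pi_{b,c}$. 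This produces the clean expression $b^{\tau(c)} = -4\pi_{b,c}\e + (2 + 8\pi_{b,c})c - b$.

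Next I would compute $a\,b^{\tau(c)}$ by multiplying through, using $a\e = a$ together with the hypotheses $ac = \pi_{a,c}\e + \half a + \half c$ and $ab = \pi_{a,b}\e + \half a + \half b$ (invoking (1) to replace the $\ga$'s by $\pi$'s). Collecting the coefficients of $\e$, $a$, $c$, and $b$ separately, the crucial simplification is that the $a$-coefficient collapses to exactly $\half$, since $-4\pi_{b,c} + (2+8\pi_{b,c})\cdot\half - \half = \half$. This leaves $a\,b^{\tau(c)} = (8\pi_{a,c}\pi_{b,c} + 2\pi_{a,c} - \pi_{a,b})\e + \half a + (1 + 4\pi_{b,c})c - \half b$. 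Finally, reading off from the formula for $b^{\tau(c)}$ that $\half b^{\tau(c)} = -2\pi_{b,c}\e + (1 + 4\pi_{b,c})c - \half b$, I would replace $(1+4\pi_{b,c})c - \half b$ by $\half b^{\tau(c)} + 2\pi_{b,c}\e$, which absorbs the $c$- and $b$-terms and introduces the extra summand $2\pi_{b,c}$ into the coefficient of $\e$, yielding exactly the claimed identity.

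The computation is routine, so the only genuine obstacle is bookkeeping: one must stay consistent about which pair indexes each of $\gs$, $\gvp$, $\pi$, and in particular apply Lemma \ref{lem etanothalf} with the correct ordering, since $\tau(c)$ is acting and the relevant pair is $(c,b)$ rather than $(b,c)$. No case analysis is needed, because the hypotheses force every relevant two-generated subalgebra into the single normal form already recorded above.
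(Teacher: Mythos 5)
Your proposal is correct, and both parts check out against direct computation: in (2) your normal form $b^{\tau(c)}=-4\pi_{b,c}\e+(2+8\pi_{b,c})c-b$, the collapse of the $a$-coefficient to $\half$, and the final absorption of $(1+4\pi_{b,c})c-\half b$ into $\half b^{\tau(c)}+2\pi_{b,c}\e$ reproduce exactly the paper's computation, which likewise rests on Lemma \ref{lem etanothalf} together with $\gs_{b,c}=\pi_{b,c}\e$ (the paper merely defers the substitution $\gvp_{b,c}=2\pi_{b,c}+1$ to the last line). Where you genuinely diverge is part (1). The paper proves $\ga_{x,y}=\pi_{x,y}$ by a case analysis on $\dim N_{x,y}$: in the $2$-dimensional case it splits further according to whether $xy=0$ (using Lemma \ref{lem 2dim} and identifying $\e=x+y$ as the identity of $2B_{x,y}$) or not, and in the $3$-dimensional case it invokes Lemma \ref{lem identity} to conclude that $\e$ is the identity $1_{x,y}$ of $N_{x,y}$ and Theorem \ref{thm betaphi}(3) to get $\pi_{x,y}\ne 0$ and $\gs_{x,y}=\pi_{x,y}1_{x,y}$. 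Your argument bypasses all of this: the hypothesis says precisely $\gs_{x,y}=\ga_{x,y}\e$, and multiplying by $x$ and comparing $\gs_{x,y}x=\pi_{x,y}x$ (Proposition \ref{prop table}(2), valid for any two distinct axes) with $(\ga_{x,y}\e)x=\ga_{x,y}x$ gives $(\pi_{x,y}-\ga_{x,y})x=0$, hence $\ga_{x,y}=\pi_{x,y}$ since the axis $x$ is nonzero. This is shorter, uniform in all cases, and uses only the universal relation from Proposition \ref{prop table}; what the paper's longer route buys is side information of independent interest --- namely the identification of $\e$ with $1_{x,y}$ (resp.\ with $x+y$ in the $2B$ case), which resonates with Proposition \ref{prop e}(4) --- but none of that is needed for the lemma itself, and your streamlined version loses nothing that part (2) depends on.
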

\begin{proof}
(1):\quad Let $x,y\in\{a,b,c\}$ with $x\ne y$.  Set $N:=N_{x,y}$.  Suppose first that  $\dim(N)=2$.
If $xy=0,$ then  $xy=-\half (x+y)+\half x+\half y,$ and by Lemma \ref{lem 2dim}(1), $\pi_{x,y}=-\half$.
Also $x+y$ is the identity element of $N$.  Hence $\e=x+y$ and $\ga_{x,y}=\pi_{x,y}=-\half$.
Otherwise, since $N$ contains no identity element, $\ga_{x,y}=\pi_{x,y}=0,$ by Lemma \ref{lem 2dim}(2\&3).

Suppose $\dim(N)=3$.  By Theorem \ref{thm betaphi}(4), $xy=\gs_{x,y}+\half x+\half y$
and $\gs_{x,y}\ne 0$.
Hence $\gs_{x,y}=\ga_{x,y}\e\ne 0$.  Thus $\e\in N$ so by Lemma \ref{lem identity}, $N$ contains
an identity element $\e$.  But if $\pi_{x,y}=0,$ then $N$ contains no identity element (Theorem \ref{thm betaphi}(3)).
Hence $\pi_{x,y}\ne 0,$ and then $1_{x,y}=\e$.  But $\gs_{x,y}=\pi_{x,y}1_{x,y}=\pi_{x,y}\e$.  
Hence $\ga_{x,y}=\pi_{x,y}$.
\medskip

\noindent
(2):\quad
Set
\[
\ga:=\pi_{a,b},\quad  \gb:=\pi_{b,c}\quad\gc:=\pi_{a,c}\quad\text{ and }\gvp:=\gvp_{b,c} 
\]
By Lemma \ref{lem etanothalf} (and since here $\gs_{b,c}=\pi_{b,c}\e$),
\[
b^{\tau(c)}=-4\gb\e-(2-4\gvp)c-b.
\]
Hence (recalling that $\gb=\pi_{b,c}=\half\gvp-\half)$ we have
\begin{align*} 
ab^{\tau(c)}&=a(-4\gb\e-(2-4\gvp)c-b)=-4\gb a-(2-4\gvp)ac-ab\\
&=\textstyle{-4\gb a+(4\gvp-2)(\gc\e+\half a+\half c)-(\ga\e+\half a+\half b)}\\
&=\textstyle{-4\gb a+(4\gvp-2)\gc\e+(2\gvp-1) a+(2\gvp-1) c-\ga\e-\half a-\half b}\\
&=\textstyle{((4\gvp-2)\gc-\ga)\e+(2\gvp-\frac{3}{2}-4\gb)a+(2\gvp-1)c-\half b}\\
&=\textstyle{((4\gvp-2)\gc-\ga+2\gb)\e+(2\gvp-\frac{3}{2}-4\gb)a-2\gb\e+(2\gvp-1)c-\half b}\\
&=\textstyle{((4\gvp-2)\gc-\ga+2\gb)\e+\half a+\half b^{\tau(c)}.}
\end{align*}
Since $\gb=\half\gvp-\half$ we get that $\gvp=2\gb+1$.
\end{proof}

\begin{lemma}\label{lem ab=0}
Let $a,c$ be two distince $\eta$-axes in $A$ and assume that
there exists an $\eta$-axis $b\in N_{a,c}$ such that $ab=0$.
Then either $N_{a,c}=2B_{a,c}$ and $b=c,$ or $\eta=\half$ and
$N_{a,c}$ contains an identity $1_{a,c}=a+b$. 
\end{lemma}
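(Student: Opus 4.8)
The plan is to use the classification of $2$-generated subalgebras together with an eigenvalue comparison. First I would record that, by Proposition \ref{prop table}, $N_{a,c}=\operatorname{Span}\{a,c,\gs_{a,c}\}$ is at most $3$-dimensional, so it is covered by either Lemma \ref{lem 2dim} (dimension $2$) or Theorem \ref{thm betaphi} (dimension $3$); I split the argument along this dichotomy. Note also that $ab=0\ne a=a^2$ forces $b\ne a$.

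In the $2$-dimensional case I would write the axis $b=\ga a+\gb c$ and run through the three possibilities of Lemma \ref{lem 2dim}. If $N_{a,c}=2B_{a,c}$, then $a^2=a$, $c^2=c$, $ac=0$, so the nonzero idempotents of $N_{a,c}$ are exactly $a,c,a+c$; imposing $ab=0$ rules out $a$ and $a+c$ and leaves $b=c$, the first alternative. For $N_{a,c}=3C(-1)^{\times}_{a,c}$ I would use the explicit list of idempotents in Lemma \ref{lem 3c(-1)times} (in both characteristics) and check directly that none of them is annihilated by $a$; similarly, a short computation with $ac=\half a+\half c$ shows $J_{a,c}$ has no nonzero idempotent $b$ with $ab=0$. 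Hence the $2$-dimensional case yields only $N_{a,c}=2B_{a,c}$ with $b=c$.

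For the $3$-dimensional case, set $\gs=\gs_{a,c}$ and $\pi=\pi_{a,c}$ and expand $b=\ga a+\gb c+\gc\gs$ in the basis $\{a,c,\gs\}$. Using $a\gs=\pi a$ (Theorem \ref{thm betaphi}(2)) and $ac=\gs+\eta a+\eta c$ (Theorem \ref{thm betaphi}(4)) I compute $ab=(\ga+\gb\eta+\gc\pi)a+\gb\eta\,c+\gb\gs$; comparing coefficients in $ab=0$ gives $\gb=0$ and $\ga=-\gc\pi$, so $b=\gc(\gs-\pi a)$. Then $\gs^2=\pi\gs$, $a\gs=\pi a$, $a^2=a$ give $(\gs-\pi a)^2=\pi(\gs-\pi a)$, and idempotency $b^2=b$ with $b\ne 0$ forces $\gc\pi=1$. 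In particular $\pi\ne 0$, so by Theorem \ref{thm betaphi}(3) the element $1_{a,c}=\frac{1}{\pi}\gs$ is the identity of $N_{a,c}$, and $b=\frac{1}{\pi}\gs-a=1_{a,c}-a$; that is, $a+b=1_{a,c}$.

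It remains to extract $\eta=\half$, which is the crux of the argument. Since $a+b=1_{a,c}$ is the identity of $N_{a,c}$, on $N_{a,c}$ we have $\operatorname{ad}_b=\operatorname{id}-\operatorname{ad}_a$, so the eigenvalues of $\operatorname{ad}_b$ on $N_{a,c}$ are obtained from those of $\operatorname{ad}_a$ by $\gl\mapsto 1-\gl$; as $N_{a,c}$ is $3$-dimensional these are exactly $0$, $1$, and $1-\eta$ (the last coming from the nonzero eigenspace $N_\eta(a)$ of Theorem \ref{thm betaphi}(5)). But $b$ is an $\eta$-axis and $N_{a,c}$ is $\operatorname{ad}_b$-invariant, so every eigenvalue of $\operatorname{ad}_b$ on $N_{a,c}$ lies in $\{1,0,\eta\}$. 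Hence $1-\eta\in\{1,0,\eta\}$, and since $\eta\notin\{0,1\}$ this forces $\eta=\half$, giving the second alternative. The main obstacle is organizing the $3$-dimensional computation so as to identify $a+b$ as the identity cleanly; once that is done the passage to $\eta=\half$ via the eigenvalue comparison is immediate.
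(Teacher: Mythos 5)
Your proof is correct, and it shares the paper's skeleton---split on $\dim N_{a,c}$, get $b=c$ in dimension $2$, get $\pi_{a,c}\ne 0$ and $1_{a,c}=a+b$ in dimension $3$, then force $\eta=\half$ by the eigenvalue shift $\gl\mapsto 1-\gl$---but the route through the middle is genuinely different. The paper dispatches the $2$-dimensional case in one line ($a,b\in N_{a,c}$ with $ab=0$ forces $N_{a,c}=N_{a,b}=2B_{a,b}$, whence $b=c$), and in the $3$-dimensional case argues structurally rather than computationally: setting $B:=N_{a,b}=2B_{a,b}$, if $\pi_{a,c}=0$ then $\gs_{a,c}$ lies in the annihilator of $N_{a,c}$ while $B$ has trivial annihilator, so $N_{a,c}=B\oplus\ff\gs_{a,c}$ would be \emph{associative}, contradicting the fact that $\eta\notin\{0,1\}$ is an eigenvalue of ${\rm ad}_a$ on $N_{a,c}$ (idempotents in associative algebras have only eigenvalues $0,1$); the same associativity contradiction with $1_{a,c}$ in place of $\gs_{a,c}$ then forces $1_{a,c}\in B$, hence $1_{a,c}=a+b$. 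Your replacement---expanding $b=\ga a+\gb c+\gc\gs_{a,c}$ in the basis $\{a,c,\gs_{a,c}\}$, reading off $\gb=0$ and $b=\gc(\gs_{a,c}-\pi_{a,c}a)$ from $ab=0$, and extracting $\gc\pi_{a,c}=1$ from idempotency---obtains $\pi_{a,c}\ne 0$ and $a+b=1_{a,c}$ in a single computation, never invoking $N_{a,b}$, annihilators, or associativity; it is more elementary and self-contained, at the cost of being tied to the explicit multiplication rules of Proposition \ref{prop table} and Theorem \ref{thm betaphi}. Likewise your exhaustive idempotent check in the $2$-dimensional case (covering $2B_{a,c}$, $3C(-1)^{\times}_{a,c}$ in both characteristics, and $J_{a,c}$) is more laborious than the paper's containment argument but correctly verifies that the latter two types are vacuous here. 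The closing step is identical in substance: the paper phrases it as $a=1_{a,c}-b$ having eigenvalues $0,1,1-\eta$, you as ${\rm ad}_b={\rm id}-{\rm ad}_a$ on $N_{a,c}$ with $b$ an $\eta$-axis; either way $1-\eta\in\{0,1,\eta\}$ and $\eta\notin\{0,1\}$ yield $\eta=\half$.
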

\begin{proof}
If $b=c,$ this is clear.  So assume $b\ne c$.
Suppose that $N_{a,c}$ is $2$-dimensional.
Then $N_{a,c}=N_{a,b}=2B_{a,b},$ so $b=c,$ a contradiction.

Hence $N:=N_{a,c}$ is $3$-dimensional. Set $B:=N_{a,b}=2B_{a,b}$. 
Let $\gs=\gs_{a,c}\ne 0$.  
If $\pi_{a,c}=0$, then $\gs\notin B$
because $\gs$ is in the annihilator of $N$ and 
$B$ has $\{0\}$ annihilator.  But this implies that $N=B\oplus\ff\gs$
is associative, contradicting the fact that $\eta$ is
an eigenvalue of ${\rm ad}_a$ (and ${\rm ad}_c$). (The eigenvalues of idempotents
in an associative algebra are $0$ and $1$.)

Thus $\pi_{a,c}\ne 0$ and then $1_{a,c}=\frac{1}{\pi_{a,c}}\gs$ is the identity element of $N$.
Now if $1_{a,c}\notin B,$ then again we get that $N=B\oplus \ff\cdot 1_{a,c}$ is
associative a contradiction.  Hence $1_{a,c}\in B,$ so $1_{a,c}=a+b$.
Also, since $a=1_{a,c}-b$ is an $\eta$-axis in $A$ and it has eigenvalues $0,1,1-\eta,$
we must have $\eta=\half$.
\end{proof}

The following properties of dihedral groups are well-known and easy to check:

\begin{lemma}\label{lem t s}
Let $D:=\lan t, s\ran$ be a dihedral group such
that $t,s$ are involutions and 
such that $|ts|=k \ge 2$.  Then
\begin{enumerate}
\item
if $k$ is odd then $t^{(st)^{\frac{k-1}{2}}}=s$ and $s^{(ts)^{\frac{k-1}{2}}}=t;$
\item
if $k$ is even then
\begin{enumerate}
\item
$t^{(st)^{\frac{k-2}{2}}s}=t;$

\item
\begin{equation*}
(st)^{\frac{k-2}{2}}s=
\begin{cases}
s^{(ts)^{\frac{k-2}{4}}}, &\ {\rm if}\   k\equiv 2\ ({\rm mod}\ 4)\\
t^{(st)^{\frac{k-4}{4}}s}, &\ {\rm if}\ k\equiv 0\ ({\rm mod}\ 4).
\end{cases}
\end{equation*}
\end{enumerate}
\end{enumerate}
\end{lemma}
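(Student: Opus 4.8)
The plan is to reduce every assertion to the single rotation $r := ts$, which has order $k$, together with two defining relations. Since $t^2 = s^2 = 1$, a one-line computation gives $trt = t(ts)t = st = (ts)^{-1} = r^{-1}$, and symmetrically $srs = s(ts)s = st = r^{-1}$, so that both $t$ and $s$ invert $r$. I would rewrite these as $t r^j = r^{-j} t$ and $s r^j = r^{-j} s$ for all $j \in \zz$, and record the two bookkeeping identities $st = r^{-1}$ (hence $(st)^j = r^{-j}$) and $ts = r$ (hence $(ts)^j = r^j$). From these the only computation the rest of the argument needs is the master conjugation formula $r^m t r^{-m} = r^{2m} t$ and, symmetrically, $r^m s r^{-m} = r^{2m} s$, each obtained by pushing one $r^{\pm m}$ past the involution.

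For part (1), with $k$ odd set $m = \frac{k-1}{2}$, so the conjugating element is $(st)^m = r^{-m}$ and $t^{(st)^m} = r^{m} t r^{-m} = r^{2m} t = r^{k-1} t = r^{-1} t$; since $r^{-1} t = (st)t = s$, the first identity follows. The second identity $s^{(ts)^{(k-1)/2}} = t$ is obtained by the same computation with $t$ and $s$ interchanged, using $s r^j = r^{-j} s$ and $rs = t$.

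For part (2), with $k$ even write $n = \frac{k-2}{2}$, so that the element in question is $(st)^n s = r^{-n} s$. For (2a) I conjugate directly: $t^{r^{-n} s} = s\, r^{n} t\, r^{-n} s = s\, r^{2n} t\, s = s\, r^{-2} t\, s$ because $r^{2n} = r^{k-2} = r^{-2}$, and then $s r^{-2} = r^{2} s$ together with $sts = r^{-1} s$ collapses this to $r^{2} r^{-1} s\cdot s \cdot t = t$. For (2b) the whole point is to recognize $r^{-n} s$ as a conjugate of $s$ or of $t$. When $k \equiv 2 \pmod 4$, put $p = \frac{k-2}{4}$; then $s^{(ts)^p} = r^{-p} s\, r^{p} = r^{-2p} s = r^{-n} s$, matching $(st)^n s$. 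When $k \equiv 0 \pmod 4$, put $q = \frac{k-4}{4}$, so the conjugator is $(st)^q s = r^{-q} s$ and $t^{(st)^q s} = s\, r^{2q} t\, s = r^{-(n-1)}(sts) = r^{-n} s$, again matching $(st)^n s$.

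None of these steps is genuinely hard; the one place that demands care is the exponent bookkeeping in part (2b), where the residue classes $k \equiv 2$ and $k \equiv 0 \pmod 4$ force the different halvings $2p = n$ versus $2q = n-1$, and where one must consistently reduce exponents modulo $k$. I would guard against slips by checking each case reduces both sides to the single normal form $r^{-n} s$.
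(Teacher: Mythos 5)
The paper offers no proof of this lemma at all --- it is introduced with ``the following properties of dihedral groups are well-known and easy to check'' --- so there is no argument of the paper's to compare against; your write-up supplies exactly the standard verification being alluded to, and it is correct. The reduction to the rotation $r=ts$ of order $k$, the observations $trt=srs=r^{-1}$ and $(st)^j=r^{-j}$, $(ts)^j=r^j$, and the conjugation formula $r^m t r^{-m}=r^{2m}t$ are precisely the right tools, and your exponent bookkeeping in all four cases checks out: in (1), $r^{2m}t=r^{k-1}t=r^{-1}t=s$; in (2b), $2p=n$ when $k\equiv 2\pmod{4}$ and $2q=n-1$ when $k\equiv 0\pmod{4}$, with both sides reduced to the common normal form $r^{-n}s$, which is the honest way to certify equality of two reflections.

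One cosmetic slip: in (2a) your displayed intermediate expression $r^{2}r^{-1}s\cdot s\cdot t=t$ is garbled --- read literally it equals $rt$, not $t$. The ingredients you cite do give the result, but the chain should run, e.g., $s\,r^{-2}\,t\,s = s r^{-2}(ts) = s r^{-1} = rs = t$, or equivalently $s r^{-2} t s = (r^{2}s)(ts) = r^{2}(sts) = r^{2}r^{-1}s = rs = t$. Since the stated identities $sr^{-2}=r^{2}s$ and $sts=r^{-1}s$ are correct and the conclusion $t^{(st)^{(k-2)/2}s}=t$ is what they yield, this is a typographical lapse rather than a gap; the proof stands once that line is rewritten.
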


\begin{lemma}\label{lem tatbk}
Let $a, b$ be two distinct  $\eta$-axes in $A$ and assume that $|\tau(a)\tau(b)|=k<\infty$.  
Set $t=\tau(a)$ and $s=\tau(b)$ and assume that $s\ne t$.  Let $N=N_{a,b}$.
Then,
\begin{enumerate}
\item
if $k$ is odd then either $a=b^{(ts)^{\frac{k-1}{2}}},$
or $\eta=\half$ and $N_{a,b}$ contains an identity $1_{a,b}$.
Further $1_{a,b}-a=b^{(ts)^{\frac{k-1}{2}}},$ $\tau(1_{a,b}-a)=\tau(a)$
and $\tau(1_{a,b}-b)=\tau(b).$

\item
If $k=2$ then either $N=2B_{a,b}$ or $N=B(\half,\half)_{a,b}$ and 
$\tau(1_{a,b}-x)=\tau(x)$ for $x\in\{a,b\}.$

\item 
If $k\ge 4$ is even then $\eta=\half,$ $N$ contains an identity $1_{a,b},$ and $(1_{a,b}-x)\in\calx$ for $x\in\{a,b\}$.
Furthermore
\begin{enumerate}
\item
If $k\equiv 2\ {\rm mod}(4)$ then either $\tau(1_{a,b}-x)\ne \tau(x),$ for $x\in\{a,b\}$ and
\[
a+b^{(ts)^{\frac{k-2}{2}}}=1_{a,b}=b+a^{(st)^{\frac{k-2}{2}}}
\]
or for $x\in\{a,b\}$ there exists $c_x\in\calx\cap N$ such that $N=B(\half,\half)_{x,c_x},$
$\tau(x)=\tau(1_{a,b}-x)$ and 
\[
a+a^{(st)^{\frac{k-2}{2}}s}=1_{a,b}=b+b^{(ts)^{\frac{k-2}{2}}t}.
\]
\item
If $k\equiv 0\ {\rm mod}(4),$ then $\eta=\half$ and for $x\in\{a,b\}$ there exists $c_x\in\calx\cap N$ 
such that $N=B(\half,\half)_{x,c_x}$.  Also 
\[
a+a^{(st)^{\frac{k-2}{2}}s}=1_{a,b}=b+b^{(ts)^{\frac{k-2}{2}}t},
\]
$\tau(1_{a,b}-a)\ne\tau(a)$ and $\tau(1_{a,b}-b)\ne\tau(b)$.
\end{enumerate}
\end{enumerate}
\end{lemma}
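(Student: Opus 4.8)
The plan is to realize $\langle t,s\rangle$ with $t=\tau(a)$ and $s=\tau(b)$ as a dihedral group of order $2k$ acting on $A$, to note that it stabilizes $N=N_{a,b}$ (since $a^{s},b^{t}\in N$ by Theorem \ref{thm betaphi}(6) and the two-dimensional lemmas, so $a^{g},b^{g}\in N$ for every $g\in\langle t,s\rangle$), and then to exploit the identity $\tau(z^{\rho})=\tau(z)^{\rho}$ repeatedly. Each time the dihedral calculus of Lemma \ref{lem t s} produces a group element $g$ with $\tau(a)^{g}=\tau(a)$ (respectively $\tau(b)^{g}=\tau(b)$), the corresponding axis $a^{g}$ (respectively $b^{g}$) is an axis of $N$ whose Miyamoto involution equals that of $a$ (respectively $b$). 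I then feed these coincidences into Lemma \ref{lem 2b} and Lemma \ref{lem ab=0}: two distinct axes of $N$ with equal Miyamoto involutions have product $0$, and such a zero product inside a three-dimensional $N_{a,c}$ forces $\eta=\half$ together with an identity $1_{a,c}$ equal to the sum of the two axes.

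For $k$ odd I would apply Lemma \ref{lem t s}(1): with $b':=b^{(ts)^{(k-1)/2}}$ one gets $\tau(b')=t=\tau(a)$ and $b'\neq b$ (as $\tau(b')=t\neq s$). Either $b'=a$, which is the first alternative, or $b'\neq a$, in which case Lemma \ref{lem 2b} gives $ab'=0$ and Lemma \ref{lem ab=0}, its $2B$ branch being excluded by $b'\neq b$, yields $\eta=\half$ and $1_{a,b}=a+b'$; reading off $b'=1_{a,b}-a$ and running the symmetric argument from $t^{(st)^{(k-1)/2}}=s$ gives all the assertions of (1). For $k=2$, if $N$ is two-dimensional then Lemma \ref{lem 2dim} with the known orders ($3$ for $3C(-1)^{\times}$, and odd or infinite for $J_{a,b}$ by Lemma \ref{lem jab}) forces $N=2B_{a,b}$; if $N$ is three-dimensional, then $tst=s$ gives $\tau(b^{\tau(a)})=\tau(b)$ with $b^{\tau(a)}\neq b$ (else $ab=0$), so $V=N_{b,b^{\tau(a)}}=2B$ by Lemma \ref{lem 2b}, and since $V\neq N$ on dimension grounds Lemma \ref{lem vsubb} (its case (i) being incompatible with $V=2B$) forces $N=B(\half,\half)_{a,b}$, after which Lemma \ref{lem halfhalf}(5) finishes (2).

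The case $k\geq 4$ even is where the work concentrates. First I would rule out $N=2B_{a,b}$: there both $t$ and $s$ fix $a$ and $b$, hence every $g\in\langle t,s\rangle$ fixes $a$ and $b$, so $t^{g}=\tau(a^{g})=\tau(a)=t$ and likewise $s^{g}=s$; thus $t,s\in Z(\langle t,s\rangle)$, the group is abelian, and $k\leq 2$, a contradiction. The remaining two-dimensional algebras are excluded by their orders, so $N$ is three-dimensional, $N=B(\eta,\gvp)_{a,b}$. The engine for the rest is the reflection $w=(st)^{(k-2)/2}s$: by Lemma \ref{lem t s}(2a) it satisfies $t^{w}=t$, so $a^{w}$ is an axis of $N$ with $\tau(a^{w})=\tau(a)$, and once one checks $a^{w}\neq a$ (otherwise $a$ is fixed by a second reflection and the centrality argument again caps $k$), Lemma \ref{lem 2b} and Lemma \ref{lem ab=0} force $\eta=\half$, the identity $1_{a,b}=a+a^{w}$, and hence $1_{a,b}-a=a^{w}\in\calx$ (and symmetrically $1_{a,b}-b\in\calx$); the realization $N=B(\half,\half)_{x,c_{x}}$ for a suitable second axis $c_{x}$ then comes through $V=N_{b,b^{\tau(a)}}$ and Lemma \ref{lem vsubb}. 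The heart of the argument, and the step I expect to be the main obstacle, is the precise dihedral bookkeeping of Lemma \ref{lem t s}(2b), which places $w$ in the conjugacy class of $s$ when $k\equiv 2\ (\mathrm{mod}\ 4)$ and of $t$ when $k\equiv 0\ (\mathrm{mod}\ 4)$: separating these classes, and tracking whether $1_{a,b}-a$ is produced by the $b$-conjugate $b^{(ts)^{(k-2)/2}}$ or by the $a$-conjugate $a^{(st)^{(k-2)/2}s}$ (equivalently whether $\tau(1_{a,b}-x)=\tau(x)$ or not), is exactly what yields the two sub-cases of (3a) and the single determined case of (3b). The delicate point throughout is to keep straight which axis each reflection word names and to verify $a^{w}\neq a$ so that Lemma \ref{lem 2b} is applicable.
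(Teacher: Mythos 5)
Your parts (1) and (2) are correct and essentially follow the paper's route (same ingredients: Lemma \ref{lem t s}, Lemma \ref{lem 2b}, Lemma \ref{lem ab=0}; for $k=2$ the paper folds the argument into the general even-$k$ machinery while you split on $\dim N$ and invoke Lemma \ref{lem vsubb} and Lemma \ref{lem halfhalf}(5) --- a harmless variant). The genuine gap is in part (3), at exactly the step you wave through: your claim that $a^{w}\neq a$ for $w=(st)^{(k-2)/2}s$. The parenthetical justification --- ``otherwise $a$ is fixed by a second reflection and the centrality argument again caps $k$'' --- is not valid. Observe that $w=t\,(ts)^{k/2}$, so $\langle t,w\rangle=\langle t,z\rangle$ where $z=(ts)^{k/2}$ is the central involution of the dihedral group; since $t^{w}=t$ holds automatically by Lemma \ref{lem t s}(2a), the condition $a^{w}=a$ only says that $z$ fixes $a$, which imposes no bound whatsoever on $k$. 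This is in contrast to the $2B$ situation you correctly handled, where \emph{both} generators fix \emph{both} axes. In fact $a^{w}=a$ is precisely the configuration that produces the first alternative of (3a): there the paper obtains $1_{a,b}-a=b^{h}$ with $h=(ts)^{(k-2)/4}$, and $w=\tau(b^{h})=\tau(1_{a,b}-a)$ fixes $a$ simply because $a(1_{a,b}-a)=0$. By discarding this case you force the outcome $1_{a,b}=a+a^{w}$ and $\tau(1_{a,b}-a)=\tau(a^{w})=\tau(a)$ in every situation, i.e.\ you erase the ``$\tau(1_{a,b}-x)\ne\tau(x)$'' branch of (3a) entirely; worse, in the discarded case your own formula would read $1_{a,b}=a+a^{w}=2a$, an absurdity which signals that the case cannot be dismissed but needs a different identity element.

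The paper instead runs the Lemma \ref{lem 2b} dichotomy on $a^{w}$ honestly: either $a^{w}=a$, in which case Lemma \ref{lem t s}(2b) rewrites $w$ as $\tau(b^{h})$ (for $k\equiv 2 \bmod 4$, $h=(ts)^{(k-2)/4}$) or as $\tau(a^{h})$ (for $k\equiv 0 \bmod 4$, $h=(st)^{(k-4)/4}s$), and then Lemma \ref{lem 2b} together with Lemma \ref{lem ab=0} yield $1_{a,b}=a+b^{h}$, resp.\ $1_{a,b}=a+a^{h}$, with $\tau(1_{a,b}-a)=s^{h}\ne t$, resp.\ $t^{h}\ne t$; or else $aa^{w}=0$, which gives $1_{a,b}=a+a^{w}$ and $\tau(1_{a,b}-a)=\tau(a)$, after which Lemma \ref{lem etanothalf} is what produces the axis $c_{a}$ with $N=B(\half,\half)_{a,c_{a}}$. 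Your proposal defers this ``dihedral bookkeeping'' rather than carrying it out, and where it does commit itself --- the assertion $a^{w}\ne a$ --- it is wrong, so part (3) is both incomplete and incorrect on the case split. Note also that the identity elements arising in the two branches involve different group elements ($b^{(ts)^{(k-2)/4}}$ versus $a^{(st)^{(k-2)/2}s}$), so no single formula of the shape $1_{a,b}=a+a^{w}$ can cover both; tracking which conjugate appears is the actual content of (3a) and (3b), not an afterthought.
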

\begin{proof}
(1):\quad
Assume that $k$ is odd. Set $g=(st)^{\frac{k-1}{2}}$.  By Lemma \ref{lem t s}
\[
\tau(a^g)=\tau(b).
\]
By Lemma \ref{lem 2b}, either $a^g=b$
or $a^gb=0$.  Suppose $a^gb=0$. Since $a^g\ne a$ (because $ab\ne 0$),
Lemma \ref{lem ab=0} implies that
$\eta=\half$ and $N_{a,b}$ contains an identity $1_{a,b}=a^g+b$.
Hence $1_{a,b}-b=a^g,$ so $\tau(1_{a,b}-b)=\tau(a^g)=\tau(b)$.  
Similarly $\tau(1_{a,b}-a)=\tau(a)$.
\medskip

\noindent
(2):\quad
Assume that $k$ is even.
Let 
\[
g=(st)^{\frac{k-2}{2}}s.
\]
By Lemma \ref{lem t s}, $t^g=t,$ so 
\[
\tau(a^g)=\tau(a),
\]
and
\begin{equation*}
g=
\begin{cases}
\tau(b^h), &\ {\rm if}\   k\equiv 2\ ({\rm mod}\ 4)\\
\tau(a^h), &\ {\rm if}\ k\equiv 0\ ({\rm mod}\ 4),
\end{cases}
\end{equation*}
where
\begin{equation*}
h=
\begin{cases}
(ts)^{\frac{k-2}{4}}, &\ {\rm if}\   k\equiv 2\ ({\rm mod}\ 4)\\
(st)^{\frac{k-4}{4}}s, &\ {\rm if}\ k\equiv 0\ ({\rm mod}\ 4).
\end{cases}
\end{equation*}
By Lemma \ref{lem 2b}, either $a^g=a$ or $aa^g=0$.  
Assume first that $a^g=a$.  If $k=2,$ then $a^{\tau(b)}=a$ so
by Lemma \ref{lem 2b}, $N_{a,b}=2B_{a,b}$. So let $k\ge 4$.
We  have
\begin{equation*}
a=
\begin{cases}
a^{\tau(b^h)}, &\ {\rm if}\   k\equiv 2\ ({\rm mod}\ 4)\\
a^{\tau(a^h)}, &\ {\rm if}\ k\equiv 0\ ({\rm mod}\ 4).
\end{cases}
\end{equation*}
Since $a\notin \{a^h, b^h\}$ in the respective cases  
(because $t\notin \{t^h,s^h\}),$    
Lemma \ref{lem 2b} and Lemma \ref{lem ab=0} imply that
\begin{equation*}
1_{a,b}=
\begin{cases}
a+b^h, &\ {\rm if}\   k\equiv 2\ ({\rm mod}\ 4)\\
a+a^h, &\ {\rm if}\ k\equiv 0\ ({\rm mod}\ 4)
\end{cases}
\end{equation*}
If $k\equiv 2\ {\rm mod}(4),$ then $b^h=1_{a,b}-a$ and $\tau(1_{a,b}-a)\ne\tau(a)$.
Similarly $a^{h^{-1}}=1_{a,b}-b$ so $\tau(1_{a,b}-b)\ne\tau(b).$

If $k\equiv 0\ {\rm mod}(4),$ then $a^h=1_{a,b}-a$.  Since $h=\tau(c)$ for some $c\in\calx\cap N_{a,b},$ 
we see that  $N=B(\half,\half)_{a,c}$.  Further $\tau(a)\ne\tau(1_{a,b}-a)$.
As we will see later (see Theorem \ref{thm ta=tb})
this also yields $\tau(1_{a,b}-b)\ne\tau(b)$

Assume next that
$aa^g=0$.  
Note that $a^g\ne b$ since otherwise $ab=0,$ and then $a^s=a$
and it would follow that $a^g=a,$ a contradiction.  By Lemma \ref{lem ab=0}
we get  that $N_{a,b}$ contains an identity $1_{a,b}=a+a^g$.  Then
$\tau(1_{a,b}-a)=\tau(a^g)=\tau(a)$.  Since $g=\tau(b^h)$ or $g=\tau(a^h),$
and since $a^g=1_{a,b}-a,$  Lemma \ref{lem etanothalf} implies that $N=B(\half,\half)_{a, c_a},$
with $c_a\in\{a^h, b^h\}$.  The argument above (i.e.~the case $a^g=a$) shows
that necessarily the roles of $a$ and $b$ can be interchanged
(since $b^g=b$ implies that $\tau(x)\ne\tau(1_{a,b}-x),$ for $x\in\{a,b\}$).
So Parts (1) and (2) of the Lemma hold in case $aa^g=0,$
and the proof of the lemma is complete.
\end{proof}
 
\begin{lemma}\label{lem closure}
Let $\calb\subseteq\calx$.  Then
\begin{enumerate}
\item
$[\calb]=\bigcup_{g\in G_{\calb}}\calb^g;$

\item
for each $g\in G_{\calb}$ there are $g_1, g_2,\dots g_k\in G_{\calb}$  $(k\ge 2),$
with $g_1={\rm id}$ and $g_k=g,$
such that $\calb^{g_i}\cap\calb^{g_{i+1}}\ne\emptyset,$
for all $i=1,\dots k-1,$ in particular;

\item
for each $x\in[\calb]$ there are $g_1,\cdots,g_k$ as in (2),
such that $x\in\calb^{g_k};$

\item
$G_{\calb}=G_{[\calb]}$ and $G_{\calb}=G_{\calb^{\eta}};$

\item
$[B^{\eta}]=[\calb]^{\eta};$

\item
if $a\in\calx^1,$ then $ax=0$ and $a^{\tau(x)}=a,$ for all $x\in\calx\sminus\{a\};$

\item
$[\calb]=\calb^1\cup[\calb^{\eta}]$ a disjoint union and $[\calb]^1=\calb^1;$

\item
$N_{\calb}=N_{[\calb]}.$
\end{enumerate}
\end{lemma}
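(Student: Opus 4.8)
The plan is to treat part (1) as the master identity from which all the purely formal parts descend, and to isolate parts (2) and (6) as the only two places where genuine work occurs; throughout I assume $\calb\ne\emptyset$ (for $\calb=\emptyset$ everything is trivial). The engine for (1) is the relation $\tau(b^g)=\tau(b)^g$ recorded in Definition \ref{def miyamoto}(3). Writing $S=\bigcup_{g\in G_{\calb}}\calb^g$, I would first show $S$ is closed: for $y=b^g\in S$ and $z=c^h\in S$ we have $z^{\tau(y)}=c^{h\,\tau(b)^g}$, and since $\tau(b)^g=g^{-1}\tau(b)g\in G_{\calb}$ the exponent lies in $G_{\calb}$, so $z^{\tau(y)}\in S$; as $\calb\subseteq S$ this gives $[\calb]\subseteq S$. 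For the reverse, $b\in\calb\subseteq[\calb]$ and the closedness of $[\calb]$ give $[\calb]^{\tau(b)}\subseteq[\calb]$ for every generator $\tau(b)$ of $G_{\calb}$, so $[\calb]$ is $G_{\calb}$-invariant and $S\subseteq[\calb]$.

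For part (2) the key observation is that $c=c^{\tau(c)}$, so $c^g$ lies in both $\calb^g$ and $\calb^{\tau(c)g}$, whence $\calb^g\cap\calb^{\tau(c)g}\ne\emptyset$ for every $c\in\calb$ and $g\in G_{\calb}$. Writing a target $g=\tau(c_1)\cdots\tau(c_m)$ with $c_i\in\calb$ (each $\tau(c_i)$ being an involution, no inverses are needed), the chain $g_1={\rm id}$, $g_{i+1}=\tau(c_{m-i+1})g_i$ walks from ${\rm id}$ to $g$ with consecutive sets meeting; one pads with a repeated ${\rm id}$ when $g={\rm id}$ to honour $k\ge2$. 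Part (3) is then immediate: part (1) supplies $g$ with $x\in\calb^g$, and part (2) supplies the chain ending at $g_k=g$.

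Parts (4) and (5) are formal consequences of (1). For $G_{\calb}=G_{[\calb]}$ one reads $\tau(b^g)=\tau(b)^g\in G_{\calb}$ off part (1), and $G_{\calb}=G_{\calb^{\eta}}$ follows since $\tau(b)={\rm id}$ for $b\in\calb^1$. For (5) I would apply (1) to both $\calb$ and $\calb^{\eta}$ together with (4): each $b^g$ with $b\in\calb^{\eta}$ stays in $\calx^{\eta}$ because $\tau(b)^g\ne{\rm id}$, giving $[\calb^{\eta}]\subseteq[\calb]^{\eta}$, while any element of $[\calb]^{\eta}$ is some $b^g$ with $\tau(b)^g\ne{\rm id}$, forcing $b\in\calb^{\eta}$ and $g\in G_{\calb^{\eta}}$. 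Part (6) is where the two-generated classification enters: $a\in\calx^1$ means $A_{\eta}(a)=0$ (Definition \ref{def miyamoto}(1)), so for $x\ne a$ the eigenspace $N_{\eta}(a)$ in $N_{a,x}$ vanishes; but inspecting Lemma \ref{lem jab}(3), Lemma \ref{lem 3c(-1)times}(3) and Theorem \ref{thm betaphi}(5),(1) shows that in every two-generated algebra other than $2B_{a,x}$ this eigenspace is nonzero. Hence $N_{a,x}=2B_{a,x}$, so $ax=0$ and $a^{\tau(x)}=a$ by Lemma \ref{lem 2b}.

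For (7) I would split $[\calb]=[\calb]^1\sqcup[\calb]^{\eta}$ and substitute $[\calb]^{\eta}=[\calb^{\eta}]$ from (5); it remains to prove $[\calb]^1=\calb^1$. The inclusion $\supseteq$ is clear, and for $\subseteq$ an element $x=b^g\in[\calb]^1$ forces $b\in\calb^1$, after which part (6) shows any $b\in\calx^1$ is fixed by every $\tau(c)$, hence by $g$, so $x=b\in\calb^1$. For (8), $N_{\calb}\subseteq N_{[\calb]}$ is trivial and the reverse needs $[\calb]\subseteq N_{\calb}$: since $c\in N_{\calb}$ and $N_{\calb}$ is ${\rm ad}_c$-invariant, ${\rm ad}_c$ restricts to a diagonalizable operator, so $N_{\calb}=\bigoplus_{\gl}\big(N_{\calb}\cap A_{\gl}(c)\big)$ and $\tau(c)$, acting as $\pm1$ on eigenspaces, preserves $N_{\calb}$; invariance under all generators gives $\calb^g\subseteq N_{\calb}$ and thus $[\calb]\subseteq N_{\calb}$ via part (1). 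I expect the main obstacle to be part (6): it is the only step that leaves the pure closure-and-group bookkeeping and requires the full case analysis of two-generated subalgebras to exclude every algebra except $2B_{a,x}$. The connectivity of part (2) is the second delicate point, but it is tamed entirely by the single fixed-vector identity $c=c^{\tau(c)}$.
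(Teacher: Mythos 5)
Your proposal is correct, and on parts (1)--(5), (7) and (8) it matches the paper's proof essentially step for step: the same identity $\tau(b^g)=\tau(b)^g$ drives (1), (4) and (5); your eigenspace-decomposition argument for the $\tau(c)$-invariance of $N_{\calb}$ in (8) is exactly the paper's; and your chain in (2), built on the fixed-point relation $c^{\tau(c)}=c$, is the paper's induction unrolled (the paper right-translates the inductive chain for $h=\tau(b_1)\cdots\tau(b_{m-1})$ by $\tau(b_m)$, you prepend generators on the left --- the same idea in a different order). The one genuine divergence is part (6), and there your self-assessment is off: you flag it as the main obstacle, ``the only step requiring the full case analysis of two-generated subalgebras,'' and you prove it by translating $\tau(a)={\rm id}$ into $A_\eta(a)=0$ and checking via Lemma \ref{lem jab}(3), Lemma \ref{lem 3c(-1)times}(3) and Theorem \ref{thm betaphi}(5),(1) that every two-generated algebra other than $2B_{a,x}$ carries a nonzero $\eta$-eigenvector. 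That argument is valid (in the $3$-dimensional case the eigenvector $(\eta-\gvp)a+\eta x+\gs$ is nonzero because $a,x,\gs$ form a basis and $\eta\neq 0$), but it is unnecessary: the paper disposes of (6) in one line. Since $\tau(a)={\rm id}$, trivially $x^{\tau(a)}=x$ for every $x\in\calx\sminus\{a\}$, and Lemma \ref{lem 2b}, applied to the pair $(x,a)$, immediately gives $N_{a,x}=2B_{a,x}$, hence $ax=0$ and then $a^{\tau(x)}=a$. In other words, the case analysis you anticipated is already packaged inside Lemma \ref{lem 2b}, which converts the fixed-point condition directly into the $2B$ conclusion; your route re-derives a special case of it from the classification. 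What your version buys is a self-contained explanation of \emph{why} $\calx^1$-axes annihilate everything (the absence of an $\eta$-eigenspace), at the cost of redundancy; the paper's version buys brevity by exploiting the equivalence in Lemma \ref{lem 2b} in the less obvious direction.
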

\begin{proof}
(1):\quad Set $G:=G_{\calb}$.
Let $\calc:=\bigcup_{g\in G}\calb^g$.  Clearly $\calc\subseteq [\calb]$.
Let now $c\in\calc$.  Then $c\in\calb^g$ for some $g\in G$.
Thus $\tau(c)=\tau(b^g)=\tau(b)^g,$ for some $b\in\calb$.  But then
$\tau(c)\in G,$ so $\calc^{\tau(c)}\subset \calc$.  Thus $\calc$
is closed, so $\calc=[\calb]$.
\medskip

\noindent
(2):\quad
Let $g\in G$ and
write $g=\tau(b_1)\tau(b_2)\cdots\tau(b_m),$ with $b_i\in\calb$ for all $i$.
We prove (2) by induction on $m$.  If $m=1,$ then $b_1\in \calb\cap\calb^{\tau(b_1)}$
so (2) holds.  Next let $h:=\tau(b_1)\cdots\tau(b_{m-1}),$ and let ${\rm id}=h_1,\dots,h_k=h$
with  $\calb^{h_i}\cap\calb^{h_{i+1}}\ne\emptyset,$
for all $i=1,\dots k-1.$
Then $b_m\in\calb\cap\calb^{\tau(b_m)}$ and letting $g_{i+1}=h_i\tau(b_m),$
$i=1,\dots k$ and $g_1={\rm id}$ we have $g_{k+1}=g,$ and clearly
(2) hold for $g$ (and $k+1$).
\medskip

\noindent
(3):\quad  This is immediate from (1) and (2).
\medskip

\noindent
(4):\quad  Let $x\in[\calb]$. By (1),
$x=b^g,$ for some $b\in\calb$ and some $g\in G_{\calb},$
so $\tau(x)=\tau(b)^g\in G_{\calb}$.  Hence $G_{[\calb]}\le G_{\calb}$.
Also, since (by definition) $\tau(a)={\rm id},$ for $a\in\calb^1,$ it is clear that $G_{\calb}=G_{\calb^{\eta}}$. 
\medskip

\noindent
(5):\quad Since $\calb^{\eta}\subseteq\calb,$ we have $[\calb^{\eta}]\subseteq[\calb]$.  Let
$a\in[\calb^{\eta}]$. By (1), there is $b\in\calb^{\eta}$ and $g\in G_{\calb^{\eta}}$ such that 
$a=b^g$.  Since $\tau(b)\ne {\rm id},$ also $\tau(a)=\tau(b)^g\ne{\rm id}$.  Hence $a\in[\calb]^{\eta}$.

Let $a\in [\calb]^{\eta},$ by (1) there exists $b\in\calb$ and $g\in G_{\calb}=G_{\calb^{\eta}}$ (by (4))
such that $a=b^g$.  Since $\tau(a)\ne{\rm id}$ it follows that $\tau(b)\ne{\rm id}$ so $b\in\calb^{\eta}$
and we see that $a\in[\calb^{\eta}].$
\medskip

\noindent
(6):\quad
By definition $\tau(a)={\rm id},$ so $x^{\tau(a)}=x,$ for all $x\in\calx$.
Hence (6) follows from Lemma \ref{lem 2b}.
\medskip

\noindent
(7):\quad
Clearly $\calb^1\cup[\calb^{\eta}]\subseteq [\calb]$.
Let $a\in[\calb]$.  Using (1) and (4) write $a=b^g,$ with $b\in\calb$
and $g\in G_{\calb^{\eta}}$.  If $b\in\calb^1,$ then by (6), $b^g=b$.
Otherwise $b\in B^{\eta}$ and then $b^g\in [\calb^{\eta}]$.  By
(5) the union is disjoint and $[\calb]^1=\calb^1$.
\medskip

\noindent
(8):\quad
Clearly $N_{\calb}\subseteq N_{[\calb]}$.  Let $b\in\calb$.  Since $N_{\calb}$ is a subalgebra
of $A$ it is invariant under the adjoint action ${\rm ad}_b;$ that is ${\rm ad}_b$ is a linear
transformation of $N_{\calb}$.  Since ${\rm ad}_b$ is semi-simple on $A,$ it is semi-simple
on $N_{\calb}$.  By the definition of $\tau(b)$ it follows that $N_{\calb}$ is $\tau(b)$-invariant.
As this holds for all $b\in\calb$ we see that $N_{\calb}$ is $G_{\calb}$-invariant.
By (1), since $\calb\subseteq N_{\calb}$ also $[\calb]\subseteq N_{\calb}$ so $N_{[\calb]}\subseteq N_{\calb}$. 
\end{proof}

\subsection{Properties related to $3$-transpositions}\hfill
\medskip

\noindent
This subsection is devoted to results related to the question of when an axial algebra 
is a $3$-transposition algebra with respect to a generating set of $\eta$-axes
(see subsection \ref{sub intro 3-trans} of the introduction for a definition).
These results will be applied in \S\ref{sect 3trans}.

\begin{lemma}\label{lem 2dim3}
Let $a,b$ be two distinct  $\eta$-axes in $A$ and suppose that
$N_{a,b}$  is $2$-dimensional.  Assume further that $|\tau(a)\tau(b)|\in \{2, 3\}$.
Then either   $N_{a,b}=3C(-1)^{\times}_{a,b},\ |\tau(a)\tau(b)|=3,$
and $a^{\tau(b)}=b^{\tau(a)}$. Or $|\tau(a)\tau(b)|=2$ and $N_{a,b}=2B_{a,b}.$
\end{lemma}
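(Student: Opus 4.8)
The plan is to start from Lemma \ref{lem 2dim}, which says that a $2$-dimensional $N_{a,b}$ is exactly one of $2B_{a,b}$ (with $ab=0$), $3C(-1)^{\times}_{a,b}$ (with $\eta=-1$ and $ab=-a-b$), or $J_{a,b}$ (with $\eta=\half$ and $ab=\half a+\half b$). Writing $k:=|\tau(a)\tau(b)|$, I would dispose of the three cases separately, in each pinning down $k$ from the explicit action of the Miyamoto involutions recorded earlier. The one arithmetic fact I will use repeatedly is that $a\ne b$ forces $a-b\ne 0$, so any relation of the shape $c\,(a-b)=0$ with $c\in\ff$ forces $c=0$.

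For $N_{a,b}=2B_{a,b}$ the slick observation is that $a^{\tau(b)}=a$ by Lemma \ref{lem 2b}, whence $\tau(a)^{\tau(b)}=\tau(a^{\tau(b)})=\tau(a)$ by Definition \ref{def miyamoto}(3). Thus $\tau(a)$ and $\tau(b)$ commute; being automorphisms of order at most $2$, their product has order at most $2$, and since $k\in\{2,3\}$ by hypothesis we must have $k=2$, which is the second alternative. For $N_{a,b}=3C(-1)^{\times}_{a,b}$ there is essentially nothing to do: Lemma \ref{lem 3c(-1)times}(4) already records $a^{\tau(b)}=b^{\tau(a)}$ and $(\tau(a)\tau(b))^3={\rm id}$, so $k\mid 3$, and together with $k\in\{2,3\}$ this gives $k=3$, the first alternative.

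The case $N_{a,b}=J_{a,b}$ carries the real content and is where I expect the main (if modest) obstacle. Lemma \ref{lem jab}(5) gives $a^{(\tau(b)\tau(a))^{k}}=(2k+1)a-2kb$, and since $(\tau(b)\tau(a))^{k}={\rm id}$ this reads $(2k+1)a-2kb=a$, i.e.\ $2k\,(a-b)=0$, hence $2k=0$ in $\ff$. For $k=2$ this would give $\charc(\ff)\mid 4$, impossible as $\charc(\ff)\ne 2$; so $k=2$ cannot occur here. For $k=3$ it gives $\charc(\ff)\mid 6$, forcing $\charc(\ff)=3$. The decisive point is then that in characteristic $3$ one has $\half=-1$, so the defining relation $ab=\half a+\half b$ is literally $ab=-a-b$; that is, $J_{a,b}$ \emph{is} $3C(-1)^{\times}_{a,b}$ in the sense of the defining relation, and the preceding case supplies $a^{\tau(b)}=b^{\tau(a)}$ together with $k=3$. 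Since the three cases exhaust the possibilities, the first alternative holds whenever we are not in $2B_{a,b}$, completing the argument.
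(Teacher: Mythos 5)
Your proof is correct and takes essentially the same route as the paper's: the same three-way case split from Lemma \ref{lem 2dim}, with Lemma \ref{lem 2b} (commuting involutions) disposing of $2B_{a,b}$, Lemma \ref{lem 3c(-1)times}(4) of $3C(-1)^{\times}_{a,b}$, and Lemma \ref{lem jab}(5) ruling out $J_{a,b}$ except in $\charc(\ff)=3$, where it is identified with $3C(-1)^{\times}_{a,b}$. The only cosmetic difference is that you apply $a^{(\tau(b)\tau(a))^{k}}=(2k+1)a-2kb$ uniformly in $k$ to get $2k(a-b)=0$, where the paper substitutes $k=2,3$ separately.
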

\begin{proof}
We use Lemma \ref{lem 2dim}. Set $N=N_{a,b}$.   
If $N$ is as in Lemma \ref{lem 2dim}(1c), then by Lemma \ref{lem jab},
$a^{(\tau(a)\tau(b))^2}=5a-4b$ and $a^{(\tau(a)\tau(b))^3}=7a-6b$.   
Hence $|\tau(a)\tau(b)|\ne 2$.  If $\charc(\ff)\ne 3,$ then it follows that $|\tau(a)\tau(b)|\ne 3,$ while
if $\charc(\ff)=3,$ then $N=3C(-1)^{\times}_{a,b}$ and then 
by Lemma \ref{lem 2dim}, $a^{\tau(b)}=b^{\tau(a)}.$

If $N_{a,b}=3C(-1)^{\times}_{a,b},$ then again $a^{\tau(b)}=b^{\tau(a)}$.
Finally if $N_{a,b}=2B_{a,b},$ then, by Lemma \ref{lem 2b}, $|\tau(a)\tau(b)|=2$. 
\end{proof}
 
\begin{cor}\label{cor halfeta}
Let $a, b$ be two distinct $\eta$-axes in $A,$ and assume that $a^{\tau(b)}=b^{\tau(a)}$.
Then $\gvp_{a,b}=\half\eta$ and one of the following holds:
\begin{itemize}
\item[(i)]
$N_{a,b}$ is $2$-dimensional and $N_{a,b}=3C(-1)^{\times}_{a,b}$.

\item[(ii)]
$N_{a,b}$ is $3$-dimensional and $N_{a,b}=3C(\eta)_{a,b}$.
\end{itemize}  
\end{cor}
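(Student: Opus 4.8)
The plan is to split on the dimension of $N:=N_{a,b}$, which by Lemma~\ref{lem 2dim} and Theorem~\ref{thm betaphi} is either $2$ or $3$, and to read off $\gvp_{a,b}=\half\eta$ from the equation $a^{\tau(b)}=b^{\tau(a)}$ in each case. The $3$-dimensional case will be the clean, decisive one; the $2$-dimensional case will require a short case analysis and a characteristic-$3$ check.

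Suppose first that $N$ is $3$-dimensional, so that $a$ and $b$ are linearly independent (being distinct nonzero idempotents, or simply because $\{a,b,\gs_{a,b}\}$ is a basis of $N$). Writing $\gs=\gs_{a,b}$ and $\gvp=\gvp_{a,b}$, I would apply Theorem~\ref{thm betaphi}(6) with $\{x,y\}=\{a,b\}$ to obtain
\[
a^{\tau(b)}=-\tfrac{2}{\eta}\gs-\tfrac{2(\eta-\gvp)}{\eta}b-a,\qquad
b^{\tau(a)}=-\tfrac{2}{\eta}\gs-\tfrac{2(\eta-\gvp)}{\eta}a-b.
\]
Subtracting and cancelling the common term $-\tfrac{2}{\eta}\gs$ gives $\big(\tfrac{2(\eta-\gvp)}{\eta}-1\big)(a-b)=0$, and since $a-b\ne 0$ this forces $\tfrac{2(\eta-\gvp)}{\eta}=1$, i.e.\ $\gvp=\half\eta$. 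Hence $N=B(\eta,\half\eta)_{a,b}=3C(\eta)_{a,b}$ by Notation~\ref{not algebras}(5), which is case~(ii).

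Now suppose $N$ is $2$-dimensional. I would first exclude $N=2B_{a,b}$: by Lemma~\ref{lem 2b} this would give $a^{\tau(b)}=a$, so $a=b^{\tau(a)}$, and applying $\tau(a)$ yields $a=b$, a contradiction. By Lemma~\ref{lem 2dim} the only remaining options are $N=3C(-1)^{\times}_{a,b}$ and $N=J_{a,b}$. In the first, $\eta=-1$ and $\gvp_{a,b}=-\half=\half\eta$, and the hypothesis $a^{\tau(b)}=b^{\tau(a)}$ indeed holds by Lemma~\ref{lem 3c(-1)times}(4); this is case~(i). In the second, $\eta=\half$, and Lemma~\ref{lem jab}(4) gives $a^{\tau(b)}=2b-a$ and $b^{\tau(a)}=2a-b$, so the hypothesis forces $3(a-b)=0$; as $a,b$ are linearly independent this means $\charc(\ff)=3$. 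But then $\half=-1$, whence $ab=\half a+\half b=-a-b$ and $N=3C(-1)^{\times}_{a,b}$ by Notation~\ref{not algebras}(2), returning us to case~(i) with $\gvp_{a,b}=\half\eta$.

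The step I expect to be the main obstacle is precisely this last characteristic-$3$ degeneration: the two a priori distinct families $J_{a,b}$ (requiring $\eta=\half$) and $3C(-1)^{\times}_{a,b}$ (requiring $\eta=-1$) coincide when $\charc(\ff)=3$, and one must verify that the identities $\half=-1$ and $\half\eta=1$ keep the stated conclusion consistent and that the algebra is correctly named $3C(-1)^{\times}_{a,b}$ so as to land in case~(i). Everywhere else the argument is a direct substitution, with the $3$-dimensional case being immediate from Theorem~\ref{thm betaphi}(6).
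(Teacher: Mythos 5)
Your proof is correct, but it reaches the key scalar identity by a genuinely different route than the paper. The paper does not split on $\dim N_{a,b}$ to obtain $\gvp_{a,b}=\half\eta$: it quotes Lemmas 4.1 and 4.4 of \cite{HRS} to produce the single identity $\gvp_{a,b}\, b+\frac{\eta}{2}(a-a^{\tau(b)})=\gvp_{a,b}\, a+\frac{\eta}{2}(b-b^{\tau(a)})$, valid in every dimension, so that the hypothesis $a^{\tau(b)}=b^{\tau(a)}$ cancels the $\tau$-terms and $\gvp_{a,b}=\half\eta$ falls out uniformly from $a\ne b$. It then notes that the hypothesis forces $|\tau(a)\tau(b)|=3$ (conjugating gives the braid relation $\tau(a)^{\tau(b)}=\tau(b)^{\tau(a)}$, and $\tau(a)=\tau(b)$ would give $N_{a,b}=2B_{a,b}$, hence $a=b$) and disposes of the $2$-dimensional case in one stroke via Lemma \ref{lem 2dim3}, which already packages the exclusion of $2B_{a,b}$ and $J_{a,b}$; the $3$-dimensional case is then definitional, exactly as in your argument. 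You instead derive the scalar inside each case: from Theorem \ref{thm betaphi}(6) in dimension $3$ (essentially the same subtraction that the paper's uniform identity encodes), and by enumerating Lemma \ref{lem 2dim} in dimension $2$, where you must handle by hand the characteristic-$3$ degeneration in which $J_{a,b}$ and $3C(-1)^{\times}_{a,b}$ coincide (with $\half=-1$ and $\gvp_{a,b}=1=-\half=\half\eta$) --- a wrinkle that Lemma \ref{lem 2dim3} silently absorbs. What each buys: the paper's identity makes the computation case-free and sidesteps the characteristic-$3$ bookkeeping, at the cost of citing results of \cite{HRS} not reproduced in this paper; your argument is self-contained in formulas proved internally, and your individual steps are sound --- the exclusion of $2B_{a,b}$ via Lemma \ref{lem 2b}, the deduction that $3(a-b)=0$ forces $\charc(\ff)=3$ since $a\ne b$, and the relabelling of $J_{a,b}$ as $3C(-1)^{\times}_{a,b}$ so as to land in case (i) are all exactly right.
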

\begin{proof}
Set $\gvp:=\gvp_{a,b}$.
By Lemma 4.1 and Lemma 4.4 in \cite{HRS},
\[\textstyle{
\gvp b+\frac{\eta}{2}(a-a^{\tau(b)})=\gvp a+\frac{\eta}{2}(b-b^{\tau(a)}),}
\]
Hence $0=\gvp(a-b)+\frac{\eta}{2}(b-a),$ so since $a\ne b,$ we have $\gvp=\frac{\eta}{2}$.
Notice that $|\tau(a)\tau(b)|=3$.
If $N_{a,b}$ is $2$-dimensional, then (i) follows from Lemma \ref{lem 2dim3}.
If $N_{a,b}$ is $3$-dimensional,
then $N_{a,b}=B(\eta,\, \half\eta)_{a,b},$ so, by definition, $N_{a,b}=3C(\eta)_{a,b}$.
\end{proof}

\begin{lemma}\label{lem 3chalfab}
Let $\eta=\half$ and let $a, b\in A$ be two distinct $\half$-axes.
Set $N:=N_{a,b}$ and assume that $N$ is contained in a $3$-dimensional
subalgebra $M$ of $A$ such that $M$ contains an identity element $\e$.
Then the following are equivalent:
\begin{itemize}
\item[(i)]
$\charc(\ff)\ne 3$  
 and $N=3C(\half)_{a,b},$ or $\charc(\ff)=3$ and $N=3C(-1)^{\times}_{a,b}.$

\item[(ii)]
$ab=-\frac{3}{8}\cdot \e+\half a+\half b$.

\item[(iii)]
$a^{\tau(b)}=b^{\tau(a)}$.
\end{itemize}
If these conditions hold then
\begin{enumerate}
\item
$\gvp_{a,b}=\frac{1}{4},\ \pi_{a,b}=-\frac{3}{8}$ and  $\gs_{a,b}=-\frac{3}{8}\e;$

\item
$|\tau(a)\tau(b)|=3;$

\item
if $\charc(\ff)\ne 3,$ then $\e=1_{a,b};$

\item
if $c:=\e-b$ is a $\half$-axis in $A$ then $N_{a,c}$ is $3$-dimensional and
\[\textstyle{
ac=-\frac{1}{8}\e+\half a+\half c.}
\]
Also if $\charc(\ff)\ne 3,$ then $N_{a,b}=N_{a,c},$ 
while if $\charc(\ff)=3$ then $N_{a,b}\subsetneqq N_{a,c}$
and $N_{a,c}=B(-1,0)=B(\half,0)$.
\end{enumerate}
\end{lemma}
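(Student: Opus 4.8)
The plan is to translate the two ``computational'' conditions into statements about the structure constants $\gvp_{a,b},\pi_{a,b},\gs_{a,b}$ of Proposition \ref{prop table} and then prove the cycle (iii)$\Rightarrow$(i)$\Rightarrow$(ii)$\Rightarrow$(iii), keeping a close eye on the case $\charc(\ff)=3$, where $\half=-1$ and $-\frac38=0$, so that the $3$-dimensional Clifford-type algebra degenerates. First I would record the trivial reformulation of (ii): since $ab=\gs_{a,b}+\half a+\half b$ by Proposition \ref{prop table}(1), condition (ii) is exactly $\gs_{a,b}=-\frac38\e$.

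For (iii)$\Rightarrow$(i) I apply Corollary \ref{cor halfeta} with $\eta=\half$: it gives $\gvp_{a,b}=\half\eta=\frac14$ and forces $N$ to be either the $2$-dimensional $3C(-1)^{\times}_{a,b}$ or the $3$-dimensional $3C(\half)_{a,b}$. To decide which, I split on the characteristic. If $\charc(\ff)\ne3$ then $\eta=\half\ne-1$, so the $2$-dimensional possibility $3C(-1)^{\times}_{a,b}$, which requires $\eta=-1$ by Lemma \ref{lem 2dim}, cannot occur, leaving $N=3C(\half)_{a,b}$. If $\charc(\ff)=3$, suppose for contradiction that $N$ is $3$-dimensional; then $\pi_{a,b}=(1-\half)\frac14-\half=-\frac38=0$, so by Theorem \ref{thm betaphi}(3) $N$ has no identity, yet $N\subseteq M$ with $\dim N=3=\dim M$ forces $N=M$, which does carry the identity $\e$, a contradiction. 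Hence $N$ is $2$-dimensional, i.e.\ $N=3C(-1)^{\times}_{a,b}$. I expect this characteristic-$3$ dichotomy to be the main obstacle, precisely because the $3$-dimensional algebra collapses there and one must read off dimension from $\pi_{a,b}$ together with the identity of $M$.

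For (i)$\Rightarrow$(ii): if $\charc(\ff)\ne3$ then $N=3C(\half)_{a,b}$ is $3$-dimensional, so $N=M$, and the identity $1_{a,b}=\frac{1}{\pi_{a,b}}\gs_{a,b}$ of $N$ (Lemma \ref{lem 3ceta}(2), valid since $\eta=\half\ne-1$) must equal $\e$; thus $\gs_{a,b}=\pi_{a,b}\e=-\frac38\e$, which is (ii). If $\charc(\ff)=3$ then $N=3C(-1)^{\times}_{a,b}$ has $\gs_{a,b}=0$ by Lemma \ref{lem 3c(-1)times}(1), and since $-\frac38\e=0$ in this characteristic, (ii) holds again. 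For (ii)$\Rightarrow$(iii): from $\gs_{a,b}=-\frac38\e$ and $\e a=a$ one reads off $\pi_{a,b}=-\frac38$, whence $\gvp_{a,b}=\frac14$; then Lemma \ref{lem etanothalf}, applied to both orderings of the pair, gives $b^{\tau(a)}=-4\gs_{a,b}-a-b=a^{\tau(b)}$, which is (iii).

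The numbered conclusions then follow by bookkeeping. Parts (1) and (3) have already surfaced above ($\gvp_{a,b}=\frac14$, $\pi_{a,b}=-\frac38$, $\gs_{a,b}=-\frac38\e$, and $\e=1_{a,b}$ when $\charc(\ff)\ne3$), and (2) is immediate from Lemma \ref{lem 3ceta}(1d) or Lemma \ref{lem 3c(-1)times}(4) according to the two cases of (i). For (4) I would compute $ac=a\e-ab=a-ab$ and substitute (ii) to get $ac=\frac38\e+\half a-\half b=-\frac18\e+\half a+\half c$, after rewriting $-\half b=\half c-\half\e$; hence $\gs_{a,c}=-\frac18\e$, so $\pi_{a,c}=-\frac18$ and $\gvp_{a,c}=\frac34$. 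Here $a\ne c$, since otherwise $ab=a(\e-a)=0$, contrary to $N_{a,b}$ being of Clifford type. That $N_{a,c}$ is $3$-dimensional follows because $\gvp_{a,c}=\frac34$ is none of the values permitted for a $2$-dimensional algebra with $\eta=\half$ by Lemma \ref{lem 2dim}; its identity is $\frac{1}{\pi_{a,c}}\gs_{a,c}=\e$. When $\charc(\ff)\ne3$, $N_{a,c}\subseteq M$ with $\dim N_{a,c}=3=\dim M$ gives $N_{a,c}=M=N_{a,b}$; when $\charc(\ff)=3$, $\gvp_{a,c}=\frac34=0$ so $N_{a,c}=B(\half,0)=B(-1,0)$, and since $N_{a,b}$ is only $2$-dimensional while $b=\e-c\in N_{a,c}$, we conclude $N_{a,b}\subsetneqq N_{a,c}$.
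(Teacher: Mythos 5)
Your equivalence cycle is correct and is essentially the paper's own argument: (iii)$\Rightarrow$(i) via Corollary \ref{cor halfeta} (with the $2$-dimensional branch excluded in characteristic $\ne 3$ because $3C(-1)^{\times}$ forces $\eta=-1$, and the $3$-dimensional branch excluded in characteristic $3$ because $\pi_{a,b}=-\frac38=0$ kills the identity by Theorem \ref{thm betaphi}(3) while $N=M\ni\e$), then (i)$\Rightarrow$(ii) by reading off $\gs_{a,b}=\pi_{a,b}\e$ (resp.\ $\gs_{a,b}=0=-\frac38\e$ in characteristic $3$), and (ii)$\Rightarrow$(iii) from $\gs_{a,b}a=\pi_{a,b}a$ and the symmetric formula of Lemma \ref{lem etanothalf}. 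Parts (1)--(3) and the computation $ac=-\frac18\e+\half a+\half c$ in (4) also match the paper.

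There is, however, a genuine gap in your proof that $N_{a,c}$ is $3$-dimensional in part (4): your criterion ``$\gvp_{a,c}=\frac34$ is none of the values permitted for a $2$-dimensional algebra'' fails exactly in characteristic $3$, a case the statement includes. There $\frac34=0$, which \emph{is} the value of Lemma \ref{lem 2dim}(1a) (type $2B$), and likewise $\pi_{a,c}=-\frac18=1=-\eta$, so the pair $(\gvp_{a,c},\pi_{a,c})$ cannot distinguish $N_{a,c}$ from $2B_{a,c}$ in that characteristic. The step must be closed by using the element $\e$ itself, which is in effect what the paper does: since $-\frac18\ne 0$ in every characteristic $\ne 2$, the identity $\gs_{a,c}=ac-\half a-\half c=-\frac18\e$ shows $\e\in N_{a,c}$; if $N_{a,c}$ were of type $2B_{a,c}$ then $\gs_{a,c}=-\eta(a+c)$ by Lemma \ref{lem 2dim}(1a), forcing $\e=a+c$ and hence $a=\e-c=b$, a contradiction (equivalently, in characteristic $3$ one computes $ac=\e-a-c=b-a\ne 0$, whereas $2B$ requires $ac=0$); the remaining $2$-dimensional types have $\gs_{a,c}=0\ne-\frac18\e$, or one can invoke Lemma \ref{lem 2dim}(2), since a $2$-dimensional $N_{a,c}$ with $ac\ne 0$ has no identity yet would contain $\e$. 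With this repair the rest of your part (4) goes through as written. (A small terminological slip: ``contrary to $N_{a,b}$ being of Clifford type'' misuses the paper's term, which is reserved for the Jordan algebras $J(V,B)$; what you mean, correctly, is that in both cases of (i) one has $ab\ne 0$.)
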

\begin{proof}
(i)$\iff$(ii):\quad   
If (i) holds and $\charc(\ff)\ne 3,$ then $N=M$ is\linebreak
 $3$-dimensional, $1_{a,b}\ne 0,$ and    
by Lemma \ref{lem 3ceta}, $\pi_{a,b}=-\frac{3}{8}$ and of course $1_{a,b}=\e,$ so
(ii) holds.  If $\charc(\ff)=3,$ then (ii) holds by the definition
of $3C(-1)^{\times}_{a,b}$.

Assume that (ii) holds.  If $\charc(\ff)=3$ then clearly (i) holds.
Suppose $\charc(\ff)\ne 3$. Then $\e\in N_{a,b}$ so   $\e=1_{a,b}$. 
Now Theorem \ref{thm betaphi}(4) shows that $\pi_{a,b}=-\frac{3}{8}$
and that $\pi_{a,b}=\half\gvp_{a,b}-\half$.  Hence $\gvp_{a,b}=\frac{1}{4}$
and $N=B(\half,\frac{1}{4})_{a,b}=3C(\half)_{a,b},$ by Remark
\ref{rem 3ceta}.  Hence (i) holds.
\medskip

\noindent
(i)$\iff$(iii):\quad Suppose (i) holds.  If $\charc(\ff)\ne 3,$  then by Lemma \ref{lem 3ceta}(1d)
(iii) holds.  If $\charc(\ff)=3$ then (iii) holds by Lemma \ref{lem 3c(-1)times}.
If (iii) holds then by Corollary \ref{cor halfeta}, (i) holds. Note that 
when $\charc(\ff)=3,$ $N$ cannot be $3$-dimensional since $3C(-1)_{a,b}$
does not contain an identity element (Lemma \ref{lem 3ceta}(2)).

Part (1) follows from Lemma \ref{lem 3ceta} and part (2) is
an immediate consequence of (iii).  We already saw that (3) holds. To see (4) we have
\begin{gather*}\textstyle{ 
ac=a(\e-b)=a-ab=a-(-\frac{3}{8}\cdot \e+\half a+\half b)=\frac{3}{8}\cdot \e+\half a-\half b}\\
\textstyle{=\frac{3}{8}\cdot \e+\half a-\half \e+\half \e-\half b=-\frac{1}{8}\cdot \e+\half a+\half c.}
\end{gather*}
This shows that $\e\in N_{a,c}$. 
By Lemma \ref{lem 2dim} and (iii),
$N_{a,c}$ is $3$-dimensional and $\e=1_{a,c}$.  Hence if $\charc(\ff)\ne 3$ then $N_{a,c}=N_{a,b}$.
If $\charc(\ff)=3$ then $\pi_{a,c}=-\frac{1}{8}=1=-\half$ and then, by the definition of $\pi_{a,c},$ we have
$\gvp_{a,c}=0$.  This shows  (4).
\end{proof}

\begin{lemma}\label{lem 3chalfa1-b}
Let $\eta=\half$ and let $a, b\in A$ be two distinct $\half$-axes. 
Set $N:=N_{a,b}$. Assume that $\dim(N)=3$ and that $N$ contains
an identity element $1_{a,b}$.  Then the following are equivalent
\begin{itemize}
\item[(i)]
$1_{a,b}-x$ is a $\half$-axis in $A$ and either $\charc(\ff)\ne 3$ and
$N=3C(\half)_{x,\ (1_{a,b}-y)},$ or $\charc(\ff)=3$ and $N_{x,\, (1_{a,b}-y)}=3C(-1)^{\times}_{x,\, (1_{a,b}-y)},$
for $\{x,y\}=\{a,b\}.$

\item[(ii)]
$ab=-\frac{1}{8}\cdot 1_{a,b}+\half a+\half b;$

\item[(iii)]
$a^{\tau(b)}+b^{\tau(a)}=1_{a,b}.$
\end{itemize}
If these conditions hold then
\begin{enumerate}
\item
$\gvp_{a,b}=\frac{3}{4}$ (so $\gvp_{a,b}=0$ if $\charc(\ff)=3$), $\pi_{a,b}=-\frac{1}{8}$ (so
$\pi_{a,b}=1$ if $\charc(\ff)=3$) and  $\gs_{a,b}=-\frac{1}{8}\cdot 1_{a,b}$ (so $\gs_{a,b}=1_{a,b}$ if $\charc(\ff)=3$);
\item
$|\tau(1_{a,b}-x)\tau(y)|=3,$ for $\{x,y\}=\{a,b\};$

\item
$|\tau(a)\tau(b)|\in\{3,6\};$ 

\item
$|\tau(a)\tau(b)|=3$ if and only if $\tau(x)=\tau(1_{a,b}-x)$
for $x\in\{a,b\};$

\item
if $|\tau(a)\tau(b)|=6$ then $\tau(1_{a,b}-x)\tau(x)$
is an involution in the center of $\lan \tau(a),\tau(b)\ran,$ for $x\in\{a,b\}$.
\end{enumerate}
\end{lemma}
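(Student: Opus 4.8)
The plan is to write $\e:=1_{a,b}$ and abbreviate $\gvp:=\gvp_{a,b}$, $\pi:=\pi_{a,b}$, $\gs:=\gs_{a,b}$. Since $N$ is $3$-dimensional with identity, Theorem \ref{thm betaphi}(3) gives $\pi\ne 0$ and $\gs=\pi\e$, Proposition \ref{prop table} gives $\pi=\half\gvp-\half$ (as $\eta=\half$), and $\{a,b,\e\}$ is a basis of $N$ (Theorem \ref{thm betaphi}(1),(5)). I would first dispatch (ii)$\iff$(iii) by a direct calculation. By Theorem \ref{thm betaphi}(4), $ab=\gs+\half a+\half b=\pi\e+\half a+\half b$, so (ii) holds iff $\pi=-\frac18$ iff $\gvp=\frac34$. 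Theorem \ref{thm betaphi}(6) specializes (at $\eta=\half$, $\gs=\pi\e$) to $x^{\tau(y)}=-4\pi\e-(2-4\gvp)y-x$, whence
\[
a^{\tau(b)}+b^{\tau(a)}=-8\pi\e-(3-4\gvp)(a+b).
\]
Comparing coefficients in the basis $\{a,b,\e\}$, condition (iii) is equivalent to $\pi=-\frac18$ and $\gvp=\frac34$, that is, to (ii).

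The core is (ii)$\Rightarrow$(i), and the main obstacle will be showing that $\e-b$ (and $\e-a$) is genuinely a $\half$-axis of $A$, not merely an idempotent of $N$ with the right local eigenvalues: since $\e$ is only the identity of $N$, the operator ${\rm ad}_{\e-b}$ is uncontrolled on the rest of $A$. I would circumvent this by exhibiting $\e-b$ as an automorphic image of $a$. Assuming (ii), the displayed formula gives $a^{\tau(b)}=\half\e+b-a$ and $b^{\tau(a)}=\half\e+a-b$, and $\e$ is fixed by $\tau(a),\tau(b)$ because $\e=a+(\e-a)\in A_+(a)$. Writing $t=\tau(a)$, $s=\tau(b)$, one then computes
\[
a^{st}=(a^{\tau(b)})^{\tau(a)}=\half\e+b^{\tau(a)}-a=\e-b .
\]
Hence $\e-b=a^{st}\in\calx$, and symmetrically $\e-a=b^{ts}\in\calx$. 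A short computation gives $a(\e-b)=-\frac38\e+\half a+\half(\e-b)$, which is condition (ii) of Lemma \ref{lem 3chalfab} for the pair $(a,\e-b)$ with ambient algebra $M:=N$. Applying that lemma yields $N_{a,\e-b}=3C(\half)_{a,\e-b}$ when $\charc(\ff)\ne 3$, resp.\ $3C(-1)^{\times}_{a,\e-b}$ when $\charc(\ff)=3$; the symmetric statement for $(b,\e-a)$ follows the same way, giving (i). (The char $3$ arithmetic $-\frac38=0$, $\half=-1$ matches the definition of $3C(-1)^{\times}$, so Lemma \ref{lem 3chalfab} applies uniformly.)

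For (i)$\Rightarrow$(ii) I would run this in reverse: (i) says precisely that $(a,\e-b)$ is a $3C(\half)$- (or $3C(-1)^{\times}$-) pair, so Lemma \ref{lem 3chalfab} gives $a(\e-b)=-\frac38\e+\half a+\half(\e-b)$, and expanding $ab=a\e-a(\e-b)=a-a(\e-b)$ returns $ab=-\frac18\e+\half a+\half b$, i.e.\ (ii). Claim (1) is then immediate from the equivalences: $\pi=-\frac18$, $\gvp=\frac34$ (with the char $3$ reductions $\pi=1$, $\gvp=0$) and $\gs=\pi\e=-\frac18\e$.

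Finally, (2)--(5) are group theory in $H:=\lan\tau(a),\tau(b)\ran$. Claim (2) is immediate from (i): since $(a,\e-b)$ and $(b,\e-a)$ are $3C$-pairs, Lemma \ref{lem 3ceta}(1d) (resp.\ Lemma \ref{lem 3c(-1)times}(4)) gives $|\tau(a)\tau(\e-b)|=|\tau(b)\tau(\e-a)|=3$, and $|gh|=|hg|$. For (3)--(5) I would use the explicit action on $\{a,b,\e\}$: iterating $g:=st$ shows $a^{g^{3}}=a$, $b^{g^{3}}=b$, $\e^{g^3}=\e$, so $(st)^{3}$ fixes $N$ pointwise, hence fixes $a,b$ and therefore centralizes $H$; since $s\ne t$ (else $N=2B_{a,b}$ by Lemma \ref{lem 2b}) and $a^{st}=\e-b\ne a$, the order $k:=|st|$ is neither $1$ nor $2$, and a central element of the dihedral group $H$ equal to $(st)^{3}$ forces $k\in\{3,6\}$, giving (3). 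Writing $r:=st$ and using $trt=r^{-1}$, one has $\tau(\e-a)=\tau(b^{ts})=s^{ts}=r^{2}tr^{-1}$, so $\tau(\e-a)=\tau(a)$ iff $r^{3}={\rm id}$ iff $k=3$ (and symmetrically for $b$), which is (4); when $k=6$ the same computation gives $\tau(\e-a)\tau(a)=r^{2}(tr^{-1}t)=r^{3}$, the involution generating $Z(H)$, and likewise $\tau(\e-b)\tau(b)=r^{3}$, which is (5). The only genuinely non-routine point is the passage from the local eigenstructure of $\e-b$ to its being a $\half$-axis of $A$, handled above via the identity $\e-b=a^{\tau(b)\tau(a)}$.
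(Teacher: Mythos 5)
Your proof is correct and follows essentially the paper's own route: the equivalences pivot on $\pi_{a,b}=-\frac18$, $\gvp_{a,b}=\frac34$ via Theorem \ref{thm betaphi}(4),(6) (Lemma \ref{lem etanothalf}), the crucial axis-hood of $1_{a,b}-b$ comes from the same identity $1_{a,b}-b=a^{\tau(b)\tau(a)}$ used in the paper, and the identification of the $3C$-pairs goes through Lemma \ref{lem 3chalfab} exactly as there. Your dihedral computations for (3)--(5) are just a more explicit unwinding of the paper's relations $\tau(a)\tau(1_{a,b}-b)=(\tau(b)\tau(a))^2$ and $\tau(1_{a,b}-b)\tau(b)=(\tau(a)\tau(b))^3$, so no substantive difference.
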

\begin{proof}
Set $\pi=\pi_{a,b},\ \gvp=\gvp_{a,b}$ and $\gs=\gs_{a,b}$.
\medskip

\noindent
(i)$\implies$ (ii):\quad
Assume that (i) holds.  By Lemma \ref{lem 3chalfab}, and by the definition 
of $3C(-1)^{\times}_{a,b}$ (when $\charc(\ff)=3$), 
\begin{gather*}\textstyle{
a-ab=a(1_{a,b}-b)=-\frac{3}{8}\cdot 1_{a,b}+\half a+\half(1_{a,b}-b)}\\
\textstyle{=\frac{1}{8}\cdot 1_{a,b}+\half a-\half b},
\end{gather*}
so (ii) holds.  Also, by (i) and Lemma \ref{lem 3chalfab}(iii) (respectively Lemma \ref{lem 3c(-1)times}),
part (2) holds.
\medskip

\noindent
(ii)$\implies$(iii):\quad
By Lemma \ref{lem etanothalf},
\[
x^{\tau(y)}=-4\gs+y-x\quad\text{for }\{x,y\}=\{a,b\}.
\]
Adding we see that $a^{\tau(b)}+b^{\tau(a)}=-8\gs=1_{a,b}$. 
\medskip

\noindent 
(iii)$\implies$(ii):\quad
We know that $N=B(\half,\gvp),$ so by Lemma \ref{lem etanothalf},
\[
x^{\tau(y)}=-4\gs-(2-4\gvp)y-x\quad\text{for }\{x,y\}=\{a,b\}.
\]
Adding we get $a^{\tau(b)}+b^{\tau(a)}=-8\gs-(3-4\gvp)a-(3-4\gvp)b$.
But this expression equals $1_{a,b}$.  Hence $\gvp=\frac{3}{4}$ and $\pi=-\frac{1}{8}$.
Now Theorem \ref{thm betaphi}(4) yields (ii).
\medskip

\noindent
(ii)$\implies$ (i):\quad
Assume that (ii) holds.  By (ii)$\implies$ (iii) we already know that $1_{a,b}-x$
is a $\half$-axis, for $x\in\{a,b\}$.  By Theorem \ref{thm betaphi},  
$\pi=-\frac{1}{8}$ and $\pi=\half\gvp-\half,$ so $\gvp=\frac{3}{4}$.
Now
\begin{gather*}
\textstyle{a(1_{a,b}-b)=a-ab=a-(-\frac{1}{8}\cdot 1_{a,b}+\half a+\half b)=}\\
\textstyle{\frac{1}{8}\cdot 1_{a,b}+\half a-\half b=
\frac{1}{8}\cdot 1_{a,b}+\half a-\half 1_{a,b}+\half 1_{a,b}-\half b}\\
\textstyle{=-\frac{3}{8}\cdot 1_{a,b}+\half a+\half (1_{a,b}- b).}
\end{gather*}
Hence if $\charc(\ff)\ne 3,$ this show that $1_{a,b}$ is in the subalgebra of $N$ generated by
$a$ and $1_{a,b}-b$ and hence $a$ and $1_{a,b}-b$ generate $N$.
Now Lemma \ref{lem 3chalfab} shows that  
that $N=3C(\half)_{a,(1_{a,b}-b)}$.
If $\charc(\ff)=3$ then $N_{a,(1_{a,b}-a)}=3C(-1)^{\times}_{a,(1_{a,b}-b)}$.
By symmetry the same holds for $b$.

We already saw that (1) and (2) hold.  Since $1_{a,b}-b=a^{\tau(b)\tau(a)}$
we have $\tau(a)\tau(1_{a,b}-b)=(\tau(b)\tau(a))^2$.  Hence, by (2), $|\tau(a)\tau(b)|\in\{3,6\}$.
Also $\tau(1_{a,b}-b)\tau(b)=(\tau(a)\tau(b))^3$ so (3), (4) and (5) hold for $b,$ and by symmetry
they also holds for $a$.
\end{proof}

\section{$3$-trans\-pos\-i\-tion groups of $ADE$-type}\label{sect 3transpositions}
\numberwithin{prop}{section}
The purpose of this section is to characterize central quotients
of finite simply-laced Weyl/Coxeter groups of type
$A$, $D$, and $E$ (see Proposition \ref{prop-weyl} for a precise
description of these groups).  Thus we define $3$-transpositions
groups of $ADE$-type (see Definition \ref{def-ade}), and Theorem \ref{thm-ade}
is the main theorem of this section.
In \S 7 we will see how these groups are related to primitive axial algebras of Jordan type half.

We start with a short discussion.
In the $3$-trans\-pos\-i\-tion group $G$, the normal
set of generating $3$-trans\-posi\-tions $D$ is said
to be of
\emph{symplectic type} if for every
$d,e,f \in D$ with $\langle e,f \rangle$ isomorphic
to $\Symm{3}$, the transposition $d$ commutes with
at least one of $\{e,f, efe=fef\}=D \cap \langle e,f \rangle$. 
Equivalently (see \cite{CH,JH1,HSo}) $G$ has no subgroup
$H=\langle D\cap H \rangle$ 
with $|D \cap H|=9$; that is, $|H|
\neq 18,54$.

The name comes from the fact that (see Theorem
\ref{thm-symp-type} below)
every group of symplectic type arises from
a subgroup of a symplectic group over $\ff_2$
that is generated by transvections 
(a generating $3$-trans\-pos\-i\-tion class in the
full symplectic group).

Let us recall the notion of the {\it diagram}:
Given a subset $Y\subseteq D,$
the {\it diagram} of $Y$ is the graph whose vertex set is $Y$
and $a,b\in Y$ form an edge if and only if $|ab|=3$.

It is well-known and easy to see \cite{CH,JH2} that
the finite simply-laced Weyl/Coxeter groups of type
$A$, $D$, and $E$ are $3$-trans\-pos\-i\-tion groups
with the Weyl generators contained in a 
$3$-trans\-pos\-i\-tion class
of symplectic type. 
These facts were of great help in the classification
\cite{CH}
of $3$-trans\-pos\-i\-tion groups with trivial center.
For instance, the diagram $A_3\,(=D_3)$ is complete
bipartite $K_{1,2}$, and
the isomorphism $\Weyl(A_3) \cong \Symm{4}$
leads directly to a result that is  often used
without reference:

\begin{lemma}
\label{lem-center}
Let $G$ be a group generated by the conjugacy 
class $D$ of $3$-trans\-posi\-tions. Then
$D \cap d\oZ(G) = \{d\}$
for each $d\in D$
and
$\oZ(G)=\oZ_2(G)$.
\end{lemma}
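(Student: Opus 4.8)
The plan is to prove the two assertions $D \cap d\oZ(G) = \{d\}$ and $\oZ(G) = \oZ_2(G)$ by exploiting the structure of the $3$-transposition $\Symm{3}$-subgroups, with the key tool being the concrete isomorphism $\Weyl(A_3) \cong \Symm{4}$ mentioned just above the statement (equivalently, the complete bipartite diagram $A_3 = D_3 = K_{1,2}$).

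\medskip

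\noindent\textbf{The first assertion.} Suppose for contradiction that some $e \in D$ lies in $d\oZ(G)$ with $e \neq d$. Then $z := de = ed \in \oZ(G)$, so in particular $d$ and $e$ commute, giving $|de| = 2$. Since $D$ is a single conjugacy class, there exists $f \in D$ not commuting with $d$, i.e.\ $\langle d, f\rangle \cong \Symm{3}$ (the product $df$ has order $3$). I would now track how $f$ interacts with $e$. Because $z = de$ is central, conjugation by $f$ fixes $z$; but conjugation by $f$ sends $d$ to one of the three transpositions in $D \cap \langle d,f\rangle$, and the relation $d = ze^{-1} = ze$ (using $z$ central and $e$ an involution) lets me compute $f^{-1}df$ and $f^{-1}ef$ in terms of each other. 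The upshot I expect is that $e$ fails to commute with $f$ precisely when $d$ does, forcing $\langle d, e, f\rangle$ to be a $3$-transposition group on the three involutions $d, e, f$ whose diagram contains the edge $df$ but whose element $de$ is central of order $2$. Working inside the resulting $\Symm{3}$ or $\Weyl(A_3)\cong\Symm4$ configuration, centrality of $de$ contradicts $\oZ(\Symm3) = 1$ and $\oZ(\Symm4) = 1$. Thus no such $e$ exists.

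\medskip

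\noindent\textbf{The second assertion.} Having $D \cap d\oZ(G) = \{d\}$, I would deduce $\oZ(G) = \oZ_2(G)$ by showing that the action of $G$ on the central quotient $\GG := G/\oZ(G)$ collapses the second center. Let $x \in \oZ_2(G)$, so that $[x, g] \in \oZ(G)$ for all $g \in G$; equivalently, the image $\bar x$ in $\GG$ is central. Take any $d \in D$. Then $[x, \tau]$ where $\tau$ records conjugation — more precisely, $x^{-1}d^{-1}xd \in \oZ(G)$, so $d^x \in d\oZ(G)$. But $d^x$ is again an element of the conjugacy class $D$, so $d^x \in D \cap d\oZ(G) = \{d\}$ by the first assertion. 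Hence $x$ centralizes every $d \in D$, and since $D$ generates $G$, this gives $x \in \oZ(G)$. Therefore $\oZ_2(G) \subseteq \oZ(G)$, and the reverse inclusion is automatic, yielding $\oZ(G) = \oZ_2(G)$.

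\medskip

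\noindent\textbf{Main obstacle.} The routine part is the second assertion, which is a clean one-line consequence of the first once we observe that conjugates of $d$ stay in $D$. The delicate part is the first assertion: I must rule out a nontrivial $e \in D \cap d\oZ(G)$. The hard point is organizing the case analysis cleanly — finding the correct $f \in D$ with $\langle d, f\rangle \cong \Symm3$ and then verifying that centrality of $de$ is incompatible with the $\Symm3$ (or $\Weyl(A_3) \cong \Symm4$) relations, which hinges on the trivial-center fact for these small symmetric groups. I would structure this around the fact that $D$ is a conjugacy class (so such an $f$ exists and $d$ is not itself central) and around the bipartite structure of the $A_3$ diagram that was emphasized in the preceding discussion.
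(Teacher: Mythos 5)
Your proposal is correct and takes essentially the same route as the paper's proof: a hypothetical second element of $D \cap d\oZ(G)$ together with a transposition not commuting with $d$ yields a three-element set with diagram $A_3$ generating a copy of $\Symm{4}$, whose trivial center contradicts the centrality of the nontrivial product of the two commuting class elements; and the second assertion follows, exactly as in the paper, because the first forces $\oZ_2(G)$ to fix every $d\in D$ under conjugation and $D$ generates $G$. The only differences are cosmetic — you swap the roles of $e$ and $f$ relative to the paper, and your ``expected'' computation is just the immediate fact that $e=dz$ with $z$ central has the same centralizer as $d$, so $|ef|=3$ by the $3$-transposition property.
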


\begin{proof}
This is due to Fischer and can be found
as \cite[Lemma~(3.16)]{CH} and
\cite[(4.3)]{JH1} (where the assumption
of symplectic type is not used).

If $G=\langle D \rangle \cong \Symm{2}$,
then this is certainly true. Otherwise there
is an $e \in D$ with $\langle d,e \rangle
\cong \Symm{3}$. Were there to be an $
f \in d\oZ(G)$ with $d\neq f$, then $\{d,e,f\}$ would have
diagram $A_3$ and so generate a subgroup
$H \cong \Symm{4}$. But then $1 \neq df \in \oZ(G)$
while $\oZ(H)=1$. The contradiction shows
that no such $f$ exists.

The subgroup $\oZ(G)$ is clearly the kernel
of the action of $G=\langle D \rangle$ 
on $D$ by conjugation. But the previous
paragraph implies that $\oZ_2(G)$
is also in this kernel. Thus $\oZ(G)=\oZ_2(G)$.
\end{proof}

Let us now define groups of $ADE$-type.

\begin{Def}
\label{def-ade}
In the $3$-trans\-pos\-i\-tion group $G$, the normal
set of generating $3$-trans\-posi\-tions $D$ is said
to be of \emph{$ADE$-type} provided 
it is of symplectic type and there is no subgroup
$H=\langle D \cap H\rangle$ 
isomorphic to a central quotient of $\Weyl_2(\tilde{D}_4)$.
The group $G$ is then called a group of $ADE$-type.

Recall that $\tilde{D}_4$ is the complete bipartite graph $K_{1,4},$
and see Proposition \ref{prop-klein}(4) for $\Weyl_2(\tilde{D}_4)$.
\end{Def}

In this section we will prove:

\begin{thm}
\label{thm-ade}
Let $G$ be a finite $3$-trans\-pos\-i\-tion group
generated by the conjugacy class $D$ of $3$-trans\-pos\-i\-tions
having $ADE$-type. Then there is an $n \in \zz^+$
with $G$ a central quotient of 
$\Weyl(A_n)$ for $n \ge 2$, $\Weyl(D_n)$ for $n\ge 4$,
or $\Weyl(E_n)$ for $n \in \{6,7,8\}$.
All of these groups are of $ADE$-type.
\end{thm}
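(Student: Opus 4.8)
The plan is to pass to the symplectic representation supplied by Theorem \ref{thm-symp-type}, where the whole problem becomes the combinatorics of a configuration of transvections, apply the Fischer--Cuypers--Hall classification to list the possible configurations, and then let the two $ADE$-type hypotheses single out the root configurations of types $A$, $D$, $E$. First I record two reductions. As $D$ is a single conjugacy class, the diagram $\Delta$ of $D$ is connected: were $D=Y_1\sqcup Y_2$ with no edge between the parts, each element of $Y_1$ would commute with each element of $Y_2$, conjugation would preserve both $Y_i$, and $D$ could not be one class. By Lemma \ref{lem-center}, moreover, $d\mapsto d\oZ(G)$ is a bijection of $D$ onto the transposition class of $\GG:=G/\oZ(G)$ that preserves every order $|de|$ (a central subgroup meets a dihedral $\langle d,e\rangle\cong\Symm{3}$ trivially); hence both hypotheses transfer between $G$ and $\GG$, and it suffices to identify the center-free $\GG$ and then lift.

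By Theorem \ref{thm-symp-type}, $\GG$ is realized inside $Sp(V)$ for a symplectic $\ff_2$-space $V$, with $D$ the set of transvections $t_v$ for $v$ in a spanning set $S$ of nonzero vectors, where $|t_ut_v|=3$ exactly when $\langle u,v\rangle=1$; since $D$ is a class, $S$ is closed under $u,v\mapsto u+v$ whenever $\langle u,v\rangle=1$, so $S$ is a root-type configuration whose non-orthogonality graph is $\Delta$. The classification of the finite connected configurations of this kind (Fischer; Cuypers--Hall \cite{CH}, with \cite{JH1,JH2,HSo}) shows that $S$ is, up to equivalence, either a \emph{full} configuration---all nonzero vectors, giving $Sp(V)$, or all nonsingular vectors of a quadratic form, giving $O^{\pm}(V)$---or the $\ff_2$-image of a finite simply-laced root system of type $A_n$, $D_n$, or $E_n$, the corresponding group being a central quotient of $\Weyl(A_n)$, $\Weyl(D_n)$, or $\Weyl(E_n)$. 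I would invoke this classification as the engine of the proof; small-rank coincidences between the two kinds fold harmlessly into type $A$.

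The selection is then made by forbidding a subgroup $H=\langle D\cap H\rangle$ isomorphic to a central quotient of $\Weyl_2(\tilde{D}_4)$. The key point is that a full configuration of sufficiently large rank contains the four-star $\tilde{D}_4$: choosing a hyperbolic pair $v_0,e$ (so $\langle v_0,e\rangle=1$) and a totally isotropic plane $U\le\{v_0,e\}^{\perp}$, the four vectors $e+u$ with $u\in U$ lie in $S$, pair to $1$ with $v_0$, and are pairwise orthogonal, so $\{t_{v_0}\}\cup\{t_{e+u}:u\in U\}$ has diagram $K_{1,4}=\tilde{D}_4$ and generates a central quotient of $\Weyl_2(\tilde{D}_4)$ in the sense of Proposition \ref{prop-klein}(4). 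Hence the full configurations are excluded, leaving only the root configurations of types $A$, $D$, $E$. Conversely, inspecting these root systems and using Proposition \ref{prop-weyl} one checks that no such four-star occurs---in type $D_n$, for example, the neighbours of a root split into two cliques and contain no three pairwise orthogonal vectors---which both completes the elimination and proves the final clause that all the listed groups are of $ADE$-type. Lifting from $\GG$ back to $G$ is then immediate from Proposition \ref{prop-weyl}, since $G$ is a $3$-transposition group sharing the Fischer space of the relevant Weyl group.

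The step I expect to be the main obstacle is this two-sided $\tilde{D}_4$ analysis. One must produce the forbidden four-star inside \emph{every} excluded symplectic and orthogonal configuration and confirm that it genuinely generates a central quotient of $\Weyl_2(\tilde{D}_4)$ rather than a smaller group, while at the same time proving the complete \emph{absence} of such a star from the $A_n$, $D_n$, $E_n$ root configurations---a delicate point precisely because these configurations can sit inside the same abstract symplectic or orthogonal groups from which the full configurations arise. Granting the classification of \cite{CH}, the reductions and the central lifting are by comparison routine.
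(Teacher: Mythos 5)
Your plan founders on two points, and they are exactly the points the paper's actual proof is built around. First, the reduction ``identify the center-free group $\GG=G/\oZ(G)$ as a transvection configuration in $Sp(V)$, then lift centrally'' cannot produce the $\Weyl(D_n)$ conclusions at all. Theorem \ref{thm-symp-type} does not give a representation in which $D$ maps bijectively to a set of transvections: its kernel $N$ identifies $x,y\in D$ whenever $\oC_D(x)=\oC_D(y)$, and $N$ need not be central. Already $\Weyl(D_5)$ is center-free, yet $\oC_D(r_{e_i-e_j})=\oC_D(r_{e_i+e_j})$, so by Theorem \ref{thm-symp-type} these two distinct reflections map to the \emph{same} transvection; the transvection image of $\Weyl(D_n)$ is $\Symm{n}$, the quotient by the \emph{noncentral} subgroup $2^{n-1}$. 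Equivalently, the $\ff_2$-configuration coming from the $D_n$ root system collapses onto that of $A_{n-1}$, so your classification list with ``the corresponding group being a central quotient of $\Weyl(D_n)$'' is false, and your closing sentence ``lifting from $\GG$ back to $G$ is then immediate'' waves away the hardest case. In the paper this is the entire branch $[G,N]\neq 1$ of the proof: there one shows $\bar{G}\cong\Symm{n}$ via Proposition \ref{prop-d4-free} (a $\Weyl(D_4)$ below a noncentral $N$ would give $\tilde{D}_4$ by Proposition \ref{prop-aweyl}), forces $|[a,N]|=2$ for the same reason, and assembles a $D_n$ diagram from a lifted $A_{n-1}$ together with the twin $d\in a^N$.

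Second, your $\tilde{D}_4$-star sieve is unsound as stated, and a rank threshold alone cannot repair it. The construction (hyperbolic pair $v_0,e$ plus a totally isotropic, respectively totally singular, plane $U\le\{v_0,e\}^{\perp}$) runs verbatim in $Sp_6(2)$, $O^-_6(2)$ and $O^+_8(2)$ --- that is, inside $\Weyl(E_7)/\oZ$, $\Weyl(E_6)$ and $\Weyl(E_8)/\oZ$ --- so if five transvections with diagram $K_{1,4}$ always generated a central quotient of $\Weyl_2(\tilde{D}_4)$, your argument would delete the $E_n$ cases of the theorem itself. A $K_{1,4}$ diagram only guarantees, via Proposition \ref{prop-aweyl}, a central quotient of $\Weyl(D_4)$ \emph{or} of $\Weyl_2(\tilde{D}_4)$, and in these small groups it is the former; deciding which is precisely the control you defer, and you acknowledge but do not supply it. For the same reason your claim that no four-star occurs in the $D_n$ root configuration is simply false: $\{r_{e_1\pm e_3},\,r_{e_2\pm e_4}\}$ against $r_{e_1-e_2}$ is such a star already in $\Weyl(D_4)$, harmless only because it generates a central quotient of $\Weyl(D_4)$. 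The paper avoids all of this --- it deliberately uses only the elementary Theorem \ref{thm-symp-type} from \cite{JH1} rather than the Cuypers--Hall list \cite{CH} --- by excluding the large groups because each contains $O^-_8(2)$ as a $D$-subgroup, and $O^-_8(2)$ fails $ADE$-type through its \emph{noncentrally} extended parabolic $2^6{:}O^-_6(2)$ (Lemma \ref{lem-o8}(1) with Proposition \ref{prop-aweyl}), while $\Weyl(E_8)$ is cleared by the positive-definite argument of Lemma \ref{lem-o8}(2): eight pairwise commuting reflections cannot act on $[V,H]$ of dimension at most $5$. Until you can certify when a star generates $\Weyl_2(\tilde{D}_4)$ rather than $\Weyl(D_4)$, and until the $D_n$ groups re-enter your list, the proposal does not prove the theorem.
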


Given the appropriate definitions,
Theorem \ref{thm-ade} remains true for infinite $3$-trans\-pos\-i\-tion groups
of $ADE$-type. In this paper we are only concerned with the finite
case.

\begin{prop}
\label{prop-weyl}
Let $X$ be a subset of $D$, a normal set
of $3$-trans\-posi\-tions in the group $G$.
Set $H=\langle X \rangle$.
\begin{enumerate}
\item
If $X$ has diagram (isomorphic to) $A_n$ then
$H$ is isomorphic to the Weyl/Coxeter group
$\Weyl(A_n)\cong \Symm{n+1}$.
\item
If $X$ has diagram $D_n$ then
$H$ is isomorphic to a central quotient
of $\Weyl(D_n)$. That is, either
$H \cong\Weyl(D_n)\cong 2^{n-1}\colon\Symm{n}$
or $H \cong \Weyl(D_{2k})/\oZ(\Weyl(D_{2k}))\cong 2^{2k-2}\colon\Symm{2k}$.
\item
If $X$ has diagram $E_6$ then
$H$ is isomorphic to
$\Weyl(E_6)\cong O^-_{6}(2)$.
\item
If $X$ has diagram $E_7$ then
$H$ is isomorphic to a central quotient
of $\Weyl(E_7)$. That is, either
$H \cong\Weyl(E_7)\cong 2 \times Sp_{6}(2)$
or $H \cong \Weyl(E_7)/\oZ(\Weyl(E_7))\cong Sp_{6}(2)$.
\item
If $X$ has diagram $E_8$ then
$H$ is isomorphic to a central quotient
of $\Weyl(E_8)$. That is, either
$H \cong\Weyl(E_8)\cong  2 \cdot O^+_{8}(2)$
or the group $H \cong \Weyl(E_8)/\oZ(\Weyl(E_8))\cong O^+_{8}(2)$.
\end{enumerate}
\end{prop}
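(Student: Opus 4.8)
The plan is to realize $H$ as a quotient of the full Weyl/Coxeter group attached to the given diagram $\Gamma$ and then to prove that the kernel is central. First I would record that distinct elements of $X$ are involutions in $D$ whose products have order $2$ (at a non-edge) or $3$ (at an edge), which are exactly the Coxeter relations read off from $\Gamma$; hence the universal property of the Coxeter group yields a surjective homomorphism $\phi\colon W\onto H$, where $W:=\Weyl(\Gamma)$, sending the simple reflections to the corresponding elements of $X$. Writing $K:=\Ker\phi$, we have $H\cong W/K$, and it remains to show $K\le\oZ(W)$ and then to read off the possibilities from $\oZ(W)$.

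For the centrality of $K$ I would exploit the reflection class. Let $R$ be the conjugacy class of reflections of $W$; it is a class of $3$-trans\-pos\-i\-tions generating $W$ (the fact cited as well-known before Lemma \ref{lem-center}), and $\phi$ carries $R$ to a class of $3$-trans\-pos\-i\-tions of $H$ lying in $D$, since a $W$-conjugate of a simple reflection maps to a $G$-conjugate of an element of $X\subseteq D$. The key reduction is a commutator identity: for $r\in R$ and $k\in K$ one has $r^{k}=r\,[r,k]$ with $[r,k]\in K$, so that $\phi(r^{k})=\phi(r)$. Consequently, if $\phi$ is injective on $R$, then $r^{k}=r$ for all $r\in R$ and all $k\in K$, and since $R$ generates $W$ this forces $K\le\oZ(W)$. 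Thus everything comes down to showing that $\phi$ separates distinct reflections.

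The heart of the proof is this injectivity. I would first observe that it holds locally: two reflections lying in a common rank-$2$ parabolic are separated, because an edge $\{a,b\}$ of $\Gamma$ generates $\langle a,b\rangle\cong\Symm{3}$ in $H$ with three distinct transpositions, while a non-edge gives two distinct commuting generators $a\ne b$ of $D$. The fibres of $\phi|_{R}$ form a $W$-invariant partition of the transitive $W$-set $R$ (because $\phi(r^{w})=\phi(r)^{\phi(w)}$), that is, a block system; local separation forbids two reflections in an $A_2$- or $A_1\times A_1$-parabolic from sharing a block. I would then rule out every nontrivial block system type by type, using the known normal-subgroup structure of $W$ (see \cite{CH,JH2}): any noncentral $K$ would identify two distinct generators, which is impossible since each $a\in X$ maps to itself in $H\le G$. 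Concretely, for $A_n$ one has $W\cong\Symm{n+1}$, whose only proper nontrivial normal subgroups are the alternating group and (for $n=3$) a Klein four group, each of which fuses two adjacent or two commuting generators; for $E_6$, $W\cong O^{-}_{6}(2)$ has only its index-$2$ simple subgroup, again collapsing generators; and similarly for $E_7\cong 2\times Sp_6(2)$ and $E_8\cong 2\cdot O^{+}_{8}(2)$, while for $D_n\cong 2^{n-1}\colon\Symm{n}$ one inspects the $\Symm{n}$-submodule lattice of $2^{n-1}$ over $\ff_2$. In each case only kernels inside $\oZ(W)$ survive.

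Finally I would compute the centers: $\oZ(W)=1$ for type $A_n$ with $n\ge 2$ and for $E_6$, while $\oZ(W)$ has order $2$ for $E_7$, $E_8$, and $D_n$ with $n$ even $\ge 4$, and is trivial for $D_n$ with $n$ odd. Substituting these into $H\cong W/K$ with $K\le\oZ(W)$ produces exactly the listed isomorphism types. The main obstacle is precisely the injectivity of $\phi$ on $R$, i.e.\ excluding a noncentral kernel, since this is where the global rigidity of the reflection class must be proved rather than merely its local behaviour; the delicate cases are $D_n$, which requires the submodule lattice of the $\Symm{n}$-permutation module over $\ff_2$ (and this is also what accounts for the even-$n$ central quotient appearing but not the odd-$n$ one), and $E_8$, whose normal-subgroup structure is entangled with triality in $O^{+}_{8}(2)$.
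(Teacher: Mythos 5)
Your proposal is correct and follows essentially the same route as the paper, whose entire proof is the two-sentence observation that $H$ is a quotient of the relevant Coxeter group and that, since the elements of $X$ stay distinct (with their product orders intact), the kernel can only be central. Your block-system and commutator reduction to injectivity of $\phi$ on the reflection class is sound extra scaffolding, but it ultimately rests on the same case-by-case inspection of the normal-subgroup structure of the simply-laced Weyl groups that the paper invokes implicitly, together with the standard center computations.
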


\begin{proof}
In each case, $H$ must be a quotient of the related
Weyl/Coxeter group. As the elements of $X$ are distinct,
the only possible kernels for this quotient are central.
\end{proof}

As is noted in \cite{CH},
in each of these $3$-trans\-pos\-i\-tion groups
the $3$-trans\-pos\-i\-tion class is uniquely
determined
except for  $\Weyl(A_5) \cong \Symm{6}$,
$\Weyl(D_{2k})$, and $\Weyl(E_8)$ 
where there are two classes of $3$-trans\-pos\-i\-tions,
exchanged by an outer automorphism (a central automorphism
except in the case of $\Symm{6}$).

\medskip

The simply-laced affine Weyl group
$\Weyl(\tilde{X})$ 
for $X \in \{A_n,D_n,E_n\}$
is the split extension of the corresponding rank $n$
root lattice $\Lambda_X$ by the finite Weyl group
$\Weyl(X)$ \cite[p.~173]{bourbaki}.
These
are not $3$-trans\-pos\-i\-tion groups but become
such if we factor by $2\Lambda_X$ or $3\Lambda_X$;
see again \cite{CH}. 
Indeed the factor
group $\Weyl_2(\tilde{X})=\Weyl(\tilde{X})/2\Lambda_X$
is a finite $3$-trans\-pos\-i\-tion group of
symplectic type. For instance,
$\Symm{4}$ is $\Weyl(A_3)=\Weyl(D_3)$ but it is also
$\Weyl_2(\tilde{A}_2)=2^2\colon \Symm{3}$.
(The diagram $\tilde{A}_2$ is a triangle.)
Here the normal elementary abelian $2^2$
is the mod $2$ root lattice $V_4=\Lambda_{A_2}/2\Lambda_{A_2}$
of type $A_2$, naturally admitting $\Weyl(A_2) \cong \Symm{3}$.

As already mentioned, the diagram $A_3=D_3$ is complete
bipartite $K_{1,2}$. Additionally $D_4$ is $K_{1,3}$ and 
$\tilde{D}_4$ is $K_{1,4}$.

\begin{prop}
\label{prop-klein}
Let $H = (\bigoplus_{i=1}^m V(i))\colon \Symm{3}$ be the
split extension by $\Symm{3}$ of $V=\bigoplus_{i=1}^m V(i)$, a
direct sum of copies $V(i)$ of the $\Symm{3}$-module $V_4$.
\begin{enumerate}
\item $H$ is a $3$-trans\-pos\-i\-tion group, generated
by the class $E=d^H=e^H$ for $\langle d,e \rangle\cong \Symm{3}$,
a complement to $V$. The diagram of $E$ is a complete tripartite
graph $K_{2^m,2^m,2^m}$ with parts $d^V$, $e^V$, and $(ede)^V$.
The group $H$ is generated by $d$ together with a basis of 
the elementary abelian subgroup $\langle e^V \rangle$, this
generating set having diagram $K_{1,m}$.
\item
For $m=1$, the group $H$ is isomorphic to
$\Weyl(A_3)=\Weyl(D_3)\cong\Weyl_2(\tilde{A}_2)$
and is isomorphic to $\Symm{4}$.
\item
For $m=2$, the group $H$ is isomorphic to
the quotient of
$\Weyl(D_4)\cong 2^{1+(2\oplus 2)}\colon \Symm{3}$
by its center of order $2$.
\item
For $m=3$, the group $H$ is isomorphic to
the quotient of
\begin{align*}
\Weyl_2(\tilde{D}_4) & \cong 2^4\colon\Weyl(D_4)\\
&\cong 2^4\colon(2^{1+(2\oplus 2)}\colon \Symm{3})
=(2^3\cdot(2^2\oplus2^2\oplus2^2))\colon\Symm{3}
\end{align*}
by its elementary center of order $2^3$.
\end{enumerate}
\end{prop}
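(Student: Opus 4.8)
The plan is to work inside the explicit semidirect product $H=V\rtimes\Symm{3}$, writing $S=\langle d,e\rangle\cong\Symm{3}$ for the chosen complement and using that $S\cong\GL(V_4)$ acts on each summand $V(i)\cong V_4$ as the natural module, so that for a transposition $t\in S$ the image of $t-1$ on $V(i)$ is its fixed line, while the $3$-cycle $de$ has no nonzero fixed vector. The one computation behind everything is the conjugation formula $t^{w}=\big((t-1)w,\,t\big)$ for $w\in V$; from it each $t^{V}$ has exactly $2^{m}$ elements, and since the transpositions of $S$ form the single class $\{d,e,ede\}$, the class $E=d^{H}=e^{H}$ is the disjoint union $d^{V}\cup e^{V}\cup(ede)^{V}$ of size $3\cdot 2^{m}$.

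For part (1) I would compute products. Two members of one part, say $d^{v}$ and $d^{w}$, multiply to $\big((d-1)(v+w),\,1\big)\in V$, an element of order at most $2$, so same-part transpositions commute (no edge). For $d^{v}$ and $e^{w}$ in different parts the product projects to $de$, of order $3$, and has order \emph{exactly} $3$ because $1+de+(de)^{2}$ annihilates every $V(i)$ (equivalently $de$ has no nonzero fixed vector), whence $(d^{v}e^{w})^{3}=1$. Thus all products of pairs from $E$ have order in $\{1,2,3\}$, so $E$ is a normal set of $3$-transpositions with diagram the complete tripartite graph $K_{2^{m},2^{m},2^{m}}$. Generation is then routine: $\langle E\rangle$ contains $S$ and every line $(t-1)V$, and since the fixed lines of $d$ and $e$ span each $V(i)$ we get $\langle E\rangle=H$. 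Finally $\langle e^{V}\rangle=\langle e\rangle\times(e-1)V$ is elementary abelian; choosing a basis of transpositions $e,f_{1},\dots,f_{m}\in e^{V}$ and adjoining $d$ gives a generating set of $H$ whose diagram is the star centred at $d$ (its leaves all lie in $e^{V}$, hence pairwise commute), which is the diagram $A_{3}$, $D_{4}$, $\tilde{D}_{4}$ for $m=1,2,3$ respectively.

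For parts (2) and (3) I would feed this star generating set into Proposition \ref{prop-weyl}. When $m=1$ the diagram is $A_{3}$, so \ref{prop-weyl}(1) gives $H\cong\Weyl(A_{3})\cong\Symm{4}$, and $|H|=4\cdot 6=24=|\Symm{4}|$ confirms it; this is also $\Weyl(D_{3})\cong\Weyl_{2}(\tilde{A}_{2})$. When $m=2$ the diagram is $D_{4}$, so \ref{prop-weyl}(2) makes $H$ a central quotient of $\Weyl(D_{4})$; since $|H|=4^{2}\cdot 6=96=\half\,|\Weyl(D_{4})|$ and $|\oZ(\Weyl(D_{4}))|=2$, necessarily $H\cong\Weyl(D_{4})/\oZ(\Weyl(D_{4}))$.

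The genuinely different case, and the main obstacle, is $m=3$, where the star diagram $\tilde{D}_{4}$ is affine and Proposition \ref{prop-weyl} does not apply: the star relations only present $H$ as a quotient of the \emph{infinite} Coxeter group $\Weyl(\tilde{D}_{4})$. I would instead use that $H$ is finite of order $4^{3}\cdot 6=384$ and that its normal $2$-group $V$ is built from the mod-$2$ root lattice, so the presentation factors through the finite $3$-transposition group $\Weyl_{2}(\tilde{D}_{4})=\Weyl(\tilde{D}_{4})/2\Lambda_{D_{4}}\cong 2^{4}\colon\Weyl(D_{4})$ of order $3072$. Comparing orders, $3072=2^{3}\cdot 384$, exhibits $H$ as a central quotient of $\Weyl_{2}(\tilde{D}_{4})$. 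The remaining, and hardest, point is to show that the kernel is precisely the elementary abelian center of order $2^{3}$: this needs an identification of $\oZ(\Weyl_{2}(\tilde{D}_{4}))$, for which Lemma \ref{lem-center} (so that $\oZ=\oZ_{2}$) together with a direct analysis of the $\Weyl(D_{4})$-module $\Lambda_{D_{4}}/2\Lambda_{D_{4}}$ should suffice.
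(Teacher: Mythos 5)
Your parts (1)--(3) are correct and are essentially the paper's own route, executed in more detail: the paper's proof of part (1) is literally ``a direct computation,'' and your computation (the conjugation formula $t^w=((t-1)w,t)$, same-part products landing in $V$, cross-part cubes killed because $1+de+(de)^2$ annihilates each $V(i)$, and generation from the two fixed lines spanning each $V(i)$) is a faithful execution of it; for $m=1,2$ both you and the paper expand $\{d,e\}$ to a star-shaped generating set inside $E$ and feed it to Proposition \ref{prop-weyl}, your order counts ($24$, and $96$ against $|\Weyl(D_4)|=192$) making explicit which central quotient occurs -- a point the paper leaves implicit. Note that your star has $m+1$ leaves, i.e.\ diagram $K_{1,m+1}$ ($A_3$, $D_4$, $\tilde{D}_4$ for $m=1,2,3$); the ``$K_{1,m}$'' in the statement is an off-by-one slip of the paper that your count silently corrects.

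Where you genuinely diverge is $m=3$. The paper settles this case externally, by citing \cite{HSo}: $\Weyl_2(\tilde{D}_4)$ is the group $F(5,24)$ there, which identifies its structure and center outright. You instead propose a self-contained argument, and that route does work -- but as written it has two unexecuted steps, and your stated justification for the first is not an argument. (i) ``Its normal $2$-group $V$ is built from the mod-$2$ root lattice'' does not show that the surjection $\Weyl(\tilde{D}_4)\onto H$ kills $2\Lambda_{D_4}$. A correct elementary argument: the image $L$ of the translation lattice $\Lambda_{D_4}$ is an abelian normal subgroup of $H$, so its image in $H/V\cong\Symm{3}$ is trivial or $A_3$; the latter is impossible, since for $x=(v,\rho)\in L$ with $\rho=de$ and $w\in V$ we get $[x,w]\in L\cap V$ by normality, so abelianness of $L$ forces $x$ to centralize $[x,w]$, whereas $\rho$ is fixed-point-free on $V$, making $[x,w]\ne 1$ for $w\ne 1$ and $C_V(x)=1$. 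Hence $L\subseteq V$ is elementary abelian, so the image of $2\Lambda_{D_4}$ (the squares in $L$) is trivial and the map factors through $\Weyl_2(\tilde{D}_4)$. (ii) For kernel $=$ center you need neither Lemma \ref{lem-center} nor a full module analysis: since $\Symm{3}$ has no nonzero fixed vectors on $V$, we have $\oZ(H)=1$, so the kernel $K$ (of order $2^3$ by your order count) contains $\oZ(\Weyl_2(\tilde{D}_4))$; conversely the classes of $(2,0,0,0)$ and $(1,1,1,1)$ are $\Weyl(D_4)$-fixed in $\Lambda_{D_4}/2\Lambda_{D_4}$, and $-1\in\Weyl(D_4)$ acts trivially mod $2\Lambda_{D_4}$, so the center has order at least $2^3$. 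Hence $K=\oZ(\Weyl_2(\tilde{D}_4))$, elementary abelian of order $2^3$, as the statement requires. With (i) and (ii) supplied, your proof of part (4) is complete and, unlike the paper's, independent of the presentation $F(5,24)$ of \cite{HSo}.
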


\begin{proof}
The first part is a direct computation. The rest then come
from expanding $\{d,e\}$ to a generating set $X$ from $E$ of size $2+m$
and having the appropriate diagram.
The group
$\Weyl_2(\tilde{D}_4)$ is the group $F(5,24)$ of \cite{HSo}. 
\end{proof}

\begin{prop}
\label{prop-aweyl}
Let $X$ be a subset of $D$, a normal set
of $3$-trans\-posi\-tions of symplectic type in the group $G$.
If $X$ has diagram $\tilde{D}_4$ and $H=\langle X \rangle$
is not a central quotient of $\Weyl(D_4)$, then $H$ is
a central quotient of $\Weyl_2(\tilde{D}_4)$.
\end{prop}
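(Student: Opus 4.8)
The plan is to realize $H$ as a quotient of the affine Coxeter group of type $\tilde{D}_4$ and then to locate the kernel. Writing $X=\{c,x_1,x_2,x_3,x_4\}$ with $c$ the central node, the diagram $\tilde{D}_4=K_{1,4}$ records exactly that $\langle c,x_i\rangle\cong\Symm{3}$ for each $i$ (so $(cx_i)^3=1$) while the four leaves $x_1,\dots,x_4$ are pairwise commuting involutions. These are precisely the Coxeter relations of type $\tilde{D}_4$, so sending the Coxeter generators to $X$ yields a surjection $\phi\colon\Weyl(\tilde{D}_4)\twoheadrightarrow H$. Using $\Weyl(\tilde{D}_4)=\Lambda\rtimes\Weyl(D_4)$ with $\Lambda=\Lambda_{D_4}$ the rank-$4$ root lattice, the task reduces to pinning down $M:=\ker\phi$ relative to $\Lambda$.

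The heart of the argument is to show that $2\Lambda\subseteq M\subseteq\Lambda$, unless $H$ is a central quotient of $\Weyl(D_4)$. For the lower inclusion I would use symplectic type: any proper $\Weyl(D_4)$-invariant sublattice $L$ with $2\Lambda\not\subseteq L$ yields a quotient $\Weyl(\tilde{D}_4)/L$ mapping onto the $3\Lambda$-type group $\Weyl_3(\tilde{A}_2)\cong 3^2{:}\Symm{3}$ of order $54$, hence a subgroup $\langle D\cap H'\rangle$ of order $54$; this is exactly what the definition of symplectic type forbids (cf.\ \cite{CH} and the condition $|H|\neq18,54$), so $2\Lambda\subseteq M\cap\Lambda$. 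For the behaviour on the finite part, if $\phi(\Lambda)=1$ then $H=\phi(\Weyl(D_4))$ is generated by a $D_4$-substar, and Proposition \ref{prop-weyl}(2) together with Lemma \ref{lem-center} identifies $H$ as a central quotient of $\Weyl(D_4)$ — the excluded alternative. Otherwise $\phi(\Lambda)\neq1$; since each substar $\langle c,x_i,x_j,x_k\rangle$ already maps to a central quotient of $\Weyl(D_4)$ by Proposition \ref{prop-weyl}(2), the normal subgroup $M\Lambda/\Lambda\trianglelefteq\Weyl(D_4)$ cannot be proper and nontrivial without collapsing an edge of the diagram, which forces $M\subseteq\Lambda$.

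Granting $2\Lambda\subseteq M\subseteq\Lambda$, the surjection $\phi$ factors through $\Weyl_2(\tilde{D}_4)=\Weyl(\tilde{D}_4)/2\Lambda$, giving $H\cong\Weyl_2(\tilde{D}_4)/(M/2\Lambda)$ with $M/2\Lambda$ a $\Weyl(D_4)$-submodule of $\Lambda/2\Lambda\cong\ff_2^{4}$. To conclude I would verify that this submodule is central: as the five images $\phi(c),\phi(x_1),\dots,\phi(x_4)$ form a genuine $\tilde{D}_4$ diagram (five distinct transpositions of the prescribed orders), the kernel can neither merge two generators nor kill an edge, so by Lemma \ref{lem-center} it lies in $\oZ(\Weyl_2(\tilde{D}_4))$; comparison with the $m=3$ model of Proposition \ref{prop-klein}(4), which exhibits this centre as the elementary group $2^3$, confirms $M/2\Lambda\le\oZ(\Weyl_2(\tilde{D}_4))$. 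Hence $H$ is a central quotient of $\Weyl_2(\tilde{D}_4)$. I expect the main obstacle to be the kernel analysis of the second paragraph — in particular ruling out, via symplectic type, every $3\Lambda$-type (order-$54$) quotient and showing that $M$ meets the finite Weyl part only trivially or fully; the remaining submodule bookkeeping for $\Lambda/2\Lambda$ is routine.
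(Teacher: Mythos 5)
Your proof follows the paper's own skeleton (the paper's three-sentence argument is exactly: $H$ is a quotient of $\Weyl(\tilde{D}_4)\cong\zz^4\colon\Weyl(D_4)$; symplectic type forces the quotient to factor through $\Weyl_2(\tilde{D}_4)$, citing \cite{CH}; and the exclusion hypothesis makes the remaining kernel central), but two of the steps you supply in place of the paper's citations do not hold as stated. The smaller problem is your lower-inclusion claim: it is false that every invariant sublattice $L$ with $2\Lambda\not\subseteq L$ yields a quotient mapping onto the order-$54$ group --- take $L=5\Lambda$, where $\Weyl(\tilde{D}_4)/L\cong 5^4\colon\Weyl(D_4)$ has no quotient or $D$-subgroup of order $54$ at all. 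The correct route first uses the $3$-transposition property itself: the image of the translation $t_{\alpha^\vee}=s_\alpha s_{\alpha,1}$ is a product of two elements of $D$, hence has order at most $3$; since $\Weyl(D_4)$ is transitive on roots and $\Lambda$ is spanned by them, $\Lambda/(M\cap\Lambda)$ has exponent dividing $2$ or $3$, and only \emph{then} does symplectic type (via the order-$18$/$54$ configurations, cf.\ Lemma \ref{lem triangle}) exclude exponent $3$. This is repairable.

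The genuine gap is your reduction ``$\phi(\Lambda)\neq 1$ forces $M\subseteq\Lambda$,'' which is contradicted by legitimate instances of the proposition. Take $H$ to be the $m=3$ group of Proposition \ref{prop-klein}(4), i.e.\ $\Weyl_2(\tilde{D}_4)$ modulo its full center $2^3$: the proof of Lemma \ref{lem-d42} exhibits exactly such an $H$ generated by a $5$-subset of $D$ with diagram $\tilde{D}_4$, of symplectic type, and of order $384$, so not a central quotient of $\Weyl(D_4)$. Here $M$ is the preimage of $\oZ(\Weyl_2(\tilde{D}_4))$, which contains elements lying over $-1\in\Weyl(D_4)$ (the image of $-1$ is central in $\Weyl_2(\tilde{D}_4)$ because $-1$ acts trivially on $\Lambda/2\Lambda$); thus $M\Lambda/\Lambda=\langle -1\rangle\neq 1$ even though $\phi(\Lambda)\cong 2^2\neq 1$, and indeed $M\subseteq\Lambda$ is numerically impossible for this $H$ since $\Lambda/2\Lambda$ has no submodule of index $2$. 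Your supporting principle is also unsound: killing the \emph{entire} translation module neither merges two of the five generators nor kills an edge --- the images in $\Weyl(D_4)$ are five distinct reflections (the affine node going to the reflection in the highest root $\theta$, which is orthogonal to the three outer simple roots) carrying the same $K_{1,4}$ diagram --- so ``diagram preservation plus Lemma \ref{lem-center}'' cannot detect non-central kernels. Centrality must instead come from the excluded-alternative hypothesis combined with the normal-subgroup structure of $\Weyl_2(\tilde{D}_4)$: every proper $\Weyl(D_4)$-submodule of $\Lambda/2\Lambda$ lies in the radical $2^2$ of the induced form, which is central, and every nontrivial normal subgroup of $\Weyl(D_4)$ contains $-1$, whose lift is central modulo a central translation. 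As written, kernels meeting the Weyl part are simply not handled by your argument.
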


\begin{proof}
The group $H$ must be a quotient of the affine Weyl/Coxeter
group $\Weyl(\tilde{D}_4) \cong \zz^4 \colon \Weyl(D_4)$. 
As it is a $3$-trans\-posi\-tion
group of symplectic type, it is in fact a quotient
of $\Weyl_2(\tilde{D}_4)$. Since it is not a central quotient 
of $\Weyl(D_4)$, the only
possible kernels are central.
\end{proof}

\begin{thm}
\label{thm-symp-type}
Let $G$ be a finite group generated by a conjugacy class 
$D$ of $3$-trans\-pos\-i\-tions of symplectic type.

\begin{enumerate}
\item
There is a normal subgroup $N$ of $G$ such 
that $\bar{G} = G/N$ is
isomorphic to one of the groups 
$\Symm{n}$, $O^\gre_{2m}(2)$, or $Sp_{2m}(2)$
for $4\neq n \ge 2$ and $m \ge 3$ with
$(m,\gre) \neq (3,+)$.
This isomorphism can be chosen to map $D$ to 
the collection of symplectic transvections in $\bar{G}$. 
For $x, y \in D$, $\bar{x} = \bar{y}$ 
if and only if $\oC_D(x) = \oC_D(y)$.
\item
The normal subgroup $[G,N]$ is a $2$-group,
generated by its normal elementary abelian
$2$-subgroups $[x,N]=\langle\, xy\, |\, y \in D\,,\, 
\bar{x} = \bar{y}\,\rangle$ for $x \in D$.
\end{enumerate}
\end{thm}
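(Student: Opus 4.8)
The plan is to reduce $G$ to a faithful quotient on which $D$ acts as a class of symplectic $\ff_2$-transvections, to identify that quotient via Fischer's classification, and then to analyse the kernel to obtain part (2). The organizing device is the congruence on $D$ given by equality of $D$-centralizers: for $x,y\in D$ set $x\equiv y$ if and only if $\oC_D(x)=\oC_D(y)$, where $\oC_D(x)=\{z\in D\mid xz=zx\}$. Since $\oC_D(x^g)=\oC_D(x)^g$ for $g\in G$, this relation is $G$-invariant, so $G$ permutes the set $\bar D$ of $\equiv$-classes by conjugation. Two elementary remarks drive the whole argument: (i) if $x\equiv y$ with $x\ne y$, then $x\in\oC_D(x)=\oC_D(y)$ forces $[x,y]=1$, so every $\equiv$-class consists of pairwise commuting involutions and $\lan x^\equiv\ran$ is elementary abelian; and (ii) by Lemma \ref{lem-center} we have $\oZ(G)=\oZ_2(G)$, which together with symplectic type keeps these classes small and controlled. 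I would then let $N$ be the kernel of the conjugation action $G\to\Sym(\bar D)$ and put $\bar G=G/N$, so that $\bar G$ is generated by the image of $D$, again a normal set of $3$-transpositions of symplectic type, now \emph{reduced} in the sense that distinct images have distinct centralizers; establishing $\bar x=\bar y\iff\oC_D(x)=\oC_D(y)$ is precisely the faithfulness of this reduction.

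The core step is to identify the reduced group $\bar G$, and here the symplectic-type hypothesis is used to build a nondegenerate alternating $\ff_2$-space carrying $\bar G$. The form is forced by $\langle\bar x,\bar y\rangle=0$ when $\bar x,\bar y$ commute and $\langle\bar x,\bar y\rangle=1$ when $|\bar x\,\bar y|=3$, while the $3$-transposition identity $\bar x\,\bar y\,\bar x=\overline{t}$ matches the transvection relation $t_u t_v t_u=t_{u+v}$. The computation $\langle\bar d,\bar e+\bar f\rangle=\langle\bar d,\bar e\rangle+\langle\bar d,\bar f\rangle$ over $\ff_2$ shows that symplectic type is exactly the condition for $\bar D$ to map to a transvection class (and, conversely, that the form is well defined): if $\bar d$ fails to commute with both $\bar e$ and $\bar f$, then it commutes with $\bar e\,\bar f\,\bar e$. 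With $\bar G$ thus realised inside a symplectic or orthogonal $\ff_2$-group generated by transvections, the classification of reduced $3$-transposition groups of symplectic type, as developed in \cite{CH,JH1}, gives that $\bar G$ is one of $\Symm{n}$, $O^\gre_{2m}(2)$, or $Sp_{2m}(2)$, with $\bar D$ the transvection class; the ranges $4\neq n\ge 2$ and $m\ge 3$ with $(m,\gre)\neq(3,+)$ simply remove the degenerate small cases and the coincidence $O^+_6(2)\cong\Symm{8}$. This classification step is the main obstacle: all of the depth of the theorem sits in matching the symplectic-type geometry to these $\ff_2$-groups, and I would cite Fischer's work and \cite{CH,JH1} rather than reprove it.

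For part (2) I would analyse $N$ directly. For $x\in D$ set $[x,N]=\lan xy\mid y\in D,\ \bar y=\bar x\ran$. Since $y\equiv x$ gives $[x,y]=1$ and $y^2=1$, each generator $xy$ is an involution, and two generators $xy,\,xz$ with $y,z\in x^\equiv$ commute because $x^\equiv$ is a commuting set; hence $[x,N]$ is elementary abelian. Conjugation by $g\in G$ carries $x^\equiv$ to $(x^g)^\equiv$, so $[x,N]^g=[x^g,N]$ and the family $\{[x,N]\mid x\in D\}$ is permuted by $G$. Consequently $\lan[x,N]\mid x\in D\ran\nsg G$ is generated by normal elementary abelian $2$-subgroups, hence is a normal $2$-group. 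It remains to match this with the group-theoretic commutator and with the displayed generating set: because $N$ fixes $\bar D$ pointwise it stabilises the class $x^\equiv$ and moves $x$ only within $\lan x^\equiv\ran$, so $[x,n]=x\,x^n$ runs through products $xy$ with $y\in x^N\subseteq x^\equiv$, whence $[G,N]=\lan[x,N]\mid x\in D\ran$.

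The one remaining point to verify in part (2) is that $N$ acts transitively on each class $x^\equiv$, so that $x^N=x^\equiv$ and the commutator $[x,N]$ coincides with $\lan xy\mid \bar y=\bar x\ran$ as stated; I expect this transitivity to follow from the explicit description of $N$ furnished by the classification in the previous step, since there $N$ is exactly the $2$-local ``radical'' whose $[x,N]$ blocks are the full mod-$2$ root-lattice pieces attached to each transvection. Assembling these observations yields $[G,N]$ as a normal $2$-group generated by the elementary abelian subgroups $[x,N]$, completing the proof.
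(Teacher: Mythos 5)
The paper offers no argument to compare with in detail: part (1) is quoted as the finite case of \cite[Theorem~5]{JH1}, and part (2) is observed to follow from the last sentence of part (1). Your reconstruction follows the right outline --- it is essentially how the cited theorem is proved --- but as written it has a genuine gap in part (1). Having defined $N$ as the kernel of the conjugation action of $G$ on the set $\bar{D}$ of $\equiv$-classes, you assert that $\bar{x}=\bar{y}\iff\oC_D(x)=\oC_D(y)$ ``is precisely the faithfulness of this reduction.'' That is a renaming, not a proof. The implication $x\equiv y\Rightarrow xy\in N$ requires showing that $xy$ fixes \emph{every} $\equiv$-class, i.e.\ that $\oC_D(z^{xy})=\oC_D(z)$ for all $z\in D$, and the converse implication (an element $xy$ acting trivially on $\bar{D}$ forces $\oC_D(x)=\oC_D(y)$) is equally non-automatic. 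These facts, together with the construction of the $\ff_2$-space and the identification of $\bar{G}$ with a transvection-generated group, are exactly the content of \cite[Theorem~5]{JH1}: the ``classification of reduced groups'' you plan to cite is not a separable black box sitting downstream of your reduction, because the theorem you would cite already packages the reduction, the biconditional, and the identification together. (Note also that $n\neq 4$ is excluded not as a mere small-case degeneracy: $\Symm{4}$ fails the biconditional, since $(12)\equiv(34)$ there.)

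The second gap is the one you flag yourself: the transitivity $x^N=\{y\in D\mid \bar{y}=\bar{x}\}$, needed to convert $[x,N]=\langle\, xx^n\mid n\in N\,\rangle$ into $\langle\, xy\mid y\in D,\ \bar{y}=\bar{x}\,\rangle$. You hope to extract it from an explicit description of $N$, but no appeal to the classification is needed, and this is presumably what the paper means by ``follows directly.'' Grant part (1). If $x$ commutes with all of $D$, then $D=\{x\}$ (it is a conjugacy class generating $G$), $G\cong\Symm{2}$, and everything is trivial. Otherwise, given $y\neq x$ with $\bar{y}=\bar{x}$, choose $z\in D$ with $|xz|=3$; since $\oC_D(y)=\oC_D(x)$ we get $|yz|=3$, while $|xy|=2$, so $\{x,z,y\}$ has diagram $A_3$ and $\langle x,z,y\rangle\cong\Symm{4}$ by Proposition \ref{prop-weyl}(1). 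Identifying $(x,z,y)$ with $((12),(23),(34))$, the element $n:=z^xz^y=(13)(24)$ satisfies $x^n=y$, and $\bar{n}=\bar{z}^{\bar{x}}\,\bar{z}^{\bar{y}}=\bigl(\bar{z}^{\bar{x}}\bigr)^2=1$ because $\bar{x}=\bar{y}$, so $n\in N$. Hence $x^N$ is the full fibre and $[x,N]=\langle\, xy\mid y\in D,\ \bar{y}=\bar{x}\,\rangle$; combined with your correct observations that the fibres are commuting sets of involutions and that $[G,N]=\langle\,[x,N]\mid x\in D\,\rangle$, this closes part (2). With part (1) simply taken from \cite[Theorem~5]{JH1}, as the paper does, this short $\Symm{4}$ computation is the only argument actually required.
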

\begin{proof}
The first part of this theorem is the finite part of
\cite[Theorem~5]{JH1}.
The second part of the theorem then follows
directly from the last sentence of the first
part.
\end{proof}

The restrictions on $m$ in the theorem arise
from isomorphisms of the smaller groups with
certain symmetric groups.

The papers \cite{JH1,JH2} provide a full classification
(up to a central quotient) of all $3$-trans\-pos\-i\-tion groups
of symplectic type, and the paper \cite{CH} describes the
near-complete classification of all $3$-trans\-pos\-i\-tion groups
with trivial center.
In our proof of Theorem 
\ref{thm-ade} we only need the elementary \cite{JH1}, as
detailed in Theorem \ref{thm-symp-type};
in particular
the cohomological arguments of \cite{JH2} are not necessary.

\begin{prop}
\label{prop-d4-free}
Let $G$ be a finite group generated by a conjugacy class 
$D$ of $3$-trans\-pos\-i\-tions of symplectic type.
Assume additionally there is no subgroup
$H=\langle D \cap H\rangle$ 
isomorphic to a central quotient of $\Weyl(D_4)$.
Then there is an $n \in \zz^+$
with $G$ isomorphic to 
$\Weyl(A_n) \cong \Symm{n+1}$ for $n \ge 1$.
\end{prop}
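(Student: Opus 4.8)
The plan is to deduce this from the structural description in Theorem \ref{thm-symp-type} together with the combinatorial hypotheses. By Theorem \ref{thm-symp-type}(1), there is a normal subgroup $N \trianglelefteq G$ with $\bar{G} = G/N$ isomorphic to one of $\Symm{n}$, $O^\gre_{2m}(2)$, or $Sp_{2m}(2)$, and the image $\bar{D}$ of $D$ is the set of symplectic transvections. The strategy is to rule out the orthogonal and symplectic cases entirely, forcing $\bar{G} \cong \Symm{n+1}$, and then to show that the kernel $N$ is trivial so that $G \cong \Symm{n+1}$ itself.

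First I would handle the $\bar{G}$ level. The hypothesis that no subgroup $H = \langle D \cap H\rangle$ is a central quotient of $\Weyl(D_4)$ must be pushed down to $\bar{G}$: if $\bar{G}$ were $O^\gre_{2m}(2)$ or $Sp_{2m}(2)$, I would exhibit a subgroup generated by transvections whose diagram is $D_4 = K_{1,3}$, hence (by Proposition \ref{prop-weyl}(2)) a central quotient of $\Weyl(D_4)$, and then lift it back to a subgroup $H = \langle D \cap H\rangle$ of $G$ of the same isomorphism type, contradicting the assumption. Concretely, inside any $Sp_{2m}(2)$ or $O^\gre_{2m}(2)$ with $m \ge 3$ one can find four transvections whose associated vectors realize the star $K_{1,3}$ (one central vector nonperpendicular to three mutually perpendicular vectors), which generate a $\Weyl(D_4)$-quotient. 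This eliminates all the orthogonal and symplectic possibilities, leaving only $\bar{G} \cong \Symm{n}$. Writing $n = m+1$, the transvections correspond to transpositions and the diagram is $A_m$.

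Next I would show $N = 1$, i.e.\ that $G \to \bar{G}$ is an isomorphism. By Theorem \ref{thm-symp-type}(2), $[G,N]$ is generated by the elementary abelian subgroups $[x,N] = \langle xy \mid y \in D,\ \bar{x} = \bar{y}\rangle$. The key point is that $\bar{x} = \bar{y}$ holds exactly when $\oC_D(x) = \oC_D(y)$; in $\Symm{n}$ a transposition is determined by its centralizer in the class, so distinct $x, y \in D$ mapping to the same transposition would produce a product $xy \in N$ with $xy \ne 1$. To forbid this I would use Lemma \ref{lem-center}: since $\oZ(G) = \oZ_2(G)$ and $D \cap d\oZ(G) = \{d\}$, any such coincidence would have to involve noncentral structure, and I would argue that the absence of $\Weyl(D_4)$-subgroups forces every fiber of $D \to \bar{D}$ to be a singleton. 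One clean way to organize this is to show that two distinct $3$-transpositions with equal centralizer generate, together with a suitable third transposition, a $K_{1,3}$ diagram, again contradicting the $\Weyl(D_4)$-freeness; this simultaneously proves $N=1$ and that the $A_n$ diagram has no multiplicities.

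The hard part will be the descent from $\bar{G}$ to $G$: Theorem \ref{thm-symp-type} controls $\bar{G}$ and the structure of $[G,N]$, but one must verify that the $\Weyl(D_4)$-free hypothesis, stated intrinsically in terms of subgroups of $G$, genuinely obstructs every nontrivial element of the central kernel. The subtlety is that $[x,N]$ could a priori be nontrivial even when $\bar{G} \cong \Symm{n}$ is $D_4$-free, so the argument must locate, for each would-be nontrivial $xy \in N$, an explicit $D_4$-subgroup of $G$ (not merely of $\bar{G}$) to derive the contradiction. Once $N = 1$ is established, the conclusion $G \cong \Weyl(A_n) \cong \Symm{n+1}$ (after reindexing $n \mapsto n+1$) is immediate from Proposition \ref{prop-weyl}(1) applied to the full generating set of diagram $A_n$.
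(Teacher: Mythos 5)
Your first step---eliminating the orthogonal and symplectic possibilities for $\bar{G}$---is sound and close in spirit to the paper, which reaches the same conclusion more quickly by observing that every $O^\gre_{2m}(2)$ and $Sp_{2m}(2)$ on the list of Theorem \ref{thm-symp-type} contains $\Weyl(E_6)\cong O_6^-(2)$ as a transvection-generated subgroup, and that $E_6$ has $D_4$ as a subdiagram. Your lifting of a $D_4$ diagram from $\bar{G}$ back to $D$ is legitimate, since for $x,y\in D$ the order $|xy|\in\{2,3\}$ is detected in the quotient: if $|xy|=3$ with $\bar{x}\neq\bar{y}$ then $|\bar{x}\bar{y}|=3$, so $|\bar{x}\bar{y}|=2$ forces $|xy|=2$.

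The genuine gap is in your second step: the claim that $\Weyl(D_4)$-freeness forces every fiber of $D\to\bar{D}$ to be a singleton, hence $N=1$, is false---and the conclusion of the proposition does not require $N=1$. Take $G=\Symm{4}$ with $D$ its six transpositions: every hypothesis holds (a central quotient of $\Weyl(D_4)$ has order $96$ or $192$, so $\Symm{4}$ contains none), yet Theorem \ref{thm-symp-type}---whose list of quotients pointedly excludes $\Symm{4}$---gives $\bar{G}\cong\Symm{3}$ with $N\cong 2^2$ noncentral, and the fiber over a transvection is $\{(1\,2),(3\,4)\}$: two distinct elements of $D$ with equal centralizers $\oC_D$. (Your remark that a transposition in $\Symm{n}$ is determined by its centralizer in the class fails exactly at $n=4$, the case at issue.) Your proposed $K_{1,3}$ construction breaks here: with $x=(1\,2)$, $y=(3\,4)$ and center $b=(2\,3)$, a third leaf would have to be a transposition commuting with both $x$ and $y$, and none exists. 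A doubled fiber produces a $D_4$ diagram only when an extra node is available, namely when some $\{a,b,c\}\subseteq D$ with diagram $A_3$ maps to a genuine $\Symm{4}$ in $\bar{G}$; then $c$ serves as the third leaf alongside $a$ and its fiber-mate $d$. This is precisely how the paper organizes the proof: in that case any nontriviality of $[G,N]$ yields $\{a,c,d\}\cup\{b\}$ with diagram $D_4$, so $[G,N]=1$ and the central extension is resolved via Lemma \ref{lem-center} and Proposition \ref{prop-weyl}; in the residual case $\bar{G}\in\{\Symm{2},\Symm{3}\}$, a short analysis of $[G,N]$ gives $G\cong\Symm{2}$, $\Symm{3}$, or $\Symm{4}=\Weyl(A_3)$ (the last with $N\neq 1$), while a fiber of size at least three does give a $K_{1,3}$ and the contradiction. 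Your argument needs this case split; as written, it would ``prove'' that $G\cong\Symm{4}$ over $\bar{G}\cong\Symm{3}$ cannot occur, which is wrong.
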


\begin{proof}
This is nearly equivalent to the finite version
of \cite[(2.17)]{JH1}, which
is a step in the proof of \cite[Theorem~5]{JH1}
(the finite version of which is the first part
of Theorem \ref{thm-symp-type}). Here we prove
it as a consequence of Theorem \ref{thm-symp-type}. 

Let $G$, $D$, $N$, and $\bar{G}=G/N$ be as in the previous theorem.

As $D$ is a conjugacy class, if  
$\bar{G} \cong \Symm{2}$ then $G=\bar{G} \cong \Symm{2} \cong \Weyl(A_1)$,
and we are done. So we may assume that there are $a,b$ in 
$D$
with $\langle a,b \rangle \cong \langle \bar{a},\bar{b} \rangle \cong \Symm{3}$.

First suppose that there is a $c \in D$ for which $\{a,b,c\}$
has diagram $A_3$ (with $|ac|=2$ and $|bc|=3$) and additionally that
$\langle \bar{a},\bar{b},\bar{c}\rangle$ is
isomorphic to $\langle a,b,c\rangle$ and hence to $\Weyl(A_3)\cong\Symm{4}$.

As $G$ is generated by $D=a^G$, 
if $[G,N] \neq 1$ then there is an $x \in N$ with
$[a,x]\neq 1$. In that case $a\neq d = x^{-1}ax \in D$
with $\bar{a}=\bar{d}$, so that $\{ a,b,c,d\}$
has diagram $D_4$ and generates a central quotient
of $\Weyl(D_4)$. This contradicts the hypothesis, so
$[G,N]=1$ and $G$ is a central extension of one of the groups of
Theorem \ref{thm-symp-type}.

The groups $O^\gre_{2m}(2)$ and $Sp_{2m}(2)$,
for $m \ge 3$ with
$(m,\gre) \neq (3,+)$ all contain $\Weyl(E_6)
\cong O_6^-(2)$ as a transvection generated subgroup.  
As $E_6$ has $D_4$ as
a subdiagram, these have subgroups
$H=\langle D \cap H\rangle$ that are
isomorphic to a central quotient of $\Weyl(D_4)$
by Proposition \ref{prop-weyl}. 
(Indeed this subgroup is actually $\Weyl(D_4)$.)
Thus the only possibilities
for the quotient $\bar{G}=G/\oZ(G)$ are $\Symm{n}$ for $n \ge 3$.
Such a group $G$ will be generated by a subset of $D$ with diagram
$A_{n-1}$ (by Lemma \ref{lem-center}), and so by Proposition 
\ref{prop-weyl} we have $G \cong \Symm{n}$.

The only groups $\bar{G}$ of the theorem that contain no $\Symm{4}$
are $\Symm{2}$ and $\Symm{3}$. We have already dealt with the
first case, so we may assume
now that $\bar{G}=G/N$ is $\Symm{3}=\langle \bar{a},\bar{b}
\rangle$ for $a,b \in D$. 

If $[G,N]=1$,
then $G = \bar{G} \cong \Weyl(A_2)\cong \Symm{3}$
by Proposition \ref{prop-weyl}. If
$|[G,N]|=2$, then as above there is a 
$d (\neq a)$ with $\bar{a}=\bar{d}$
and $G=\langle a,b,d \rangle \cong \Weyl(A_3) \cong \Symm{4}$.
Finally, if $|[G,N]|>2$ then there are distinct $d,e
(\neq a)$ with $\bar{a}=\bar{d}=\bar{e}$ and 
$\langle a,b,d,e \rangle$ a central quotient
of $\Weyl(D_4)$, against hypothesis.
\end{proof}

\begin{lemma}
\label{lem-o8}
\begin{enumerate}
\item  The $3$-transpositions of $O^-_{8}(2)$ are not of $ADE$-type.
\item The $3$-transpositions of
$O^+_{8}(2)$ are of $ADE$-type.
\end{enumerate}
\end{lemma}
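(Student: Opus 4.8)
The plan is to translate everything into the natural $8$-dimensional $\ff_2$-module $V$ carrying a nondegenerate quadratic form $Q$ of the relevant type, with associated alternating bilinear form $B$. Transvections correspond to the nonsingular vectors $v$ (those with $Q(v)=1$); two transvections $t_u,t_v$ commute exactly when $B(u,v)=0$ and generate an $\Symm{3}$ exactly when $B(u,v)=1$. Thus a subset $X\subseteq D$ with diagram $\tilde{D}_4=K_{1,4}$ is the same datum as a \emph{$\tilde{D}_4$-configuration}: nonsingular vectors $v_0;v_1,v_2,v_3,v_4$ with $v_1,\dots,v_4$ mutually orthogonal and $B(v_0,v_i)=1$ for each $i$. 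Since both transvection classes are of symplectic type, Proposition \ref{prop-aweyl} applies and $\lan X\ran$ is a central quotient either of $\Weyl(D_4)$ or of $\Weyl_2(\tilde{D}_4)$. I would first pin down this dichotomy by the single vector $v_1+v_2+v_3+v_4$: a central quotient of $\Weyl(D_4)$ acts by transvections labelled by vectors of a rank-$4$ $\ff_2$-space (namely $\Lambda_{D_4}/2\Lambda_{D_4}$), where any five vectors are dependent, so it has \emph{dependent} leaves; while by Proposition \ref{prop-klein}(4) a central quotient of $\Weyl_2(\tilde{D}_4)$ is generated by a $\tilde{D}_4$-configuration whose five transvection-vectors are independent, so it has \emph{independent} leaves. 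Combined with Proposition \ref{prop-aweyl} this yields the clean equivalence that I would use throughout: \emph{a $\tilde{D}_4$-configuration generates a forbidden subgroup (a central quotient of $\Weyl_2(\tilde{D}_4)$) if and only if its four leaves $v_1,\dots,v_4$ are linearly independent.}

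For part (2) I would use $O^+_8(2)\cong \Weyl(E_8)/\oZ(\Weyl(E_8))$ from Proposition \ref{prop-weyl}(5), under which $V\cong \Lambda_{E_8}/2\Lambda_{E_8}$, the form $Q$ is the mod-$2$ reduction of $\half(x,x)$, and the $120$ nonsingular vectors are exactly the reductions of the $240$ roots, in $\pm$ pairs. Given any $\tilde{D}_4$-configuration, lift each $v_i$ to a root $r_i$; sign changes do not alter the reduction, so I may assume $(r_0,r_i)=-1$, and distinctness of the transvections forces $(r_i,r_j)=0$ for $1\le i<j\le 4$. Writing $r_0=\sum_i c_i r_i + r_0^{\perp}$ with $r_0^{\perp}$ orthogonal to the leaves, the relations $(r_0,r_j)=2c_j=-1$ give $c_j=-\half$, and then $2=(r_0,r_0)=2+|r_0^{\perp}|^2$ forces $r_0^{\perp}=0$; hence $2r_0+\sum_i r_i=0$ and so $v_1+v_2+v_3+v_4\equiv 0\pmod 2$. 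Thus every $\tilde{D}_4$-configuration in $O^+_8(2)$ has dependent leaves, no forbidden subgroup occurs, and $O^+_8(2)$ is of $ADE$-type.

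For part (1) I would exhibit an explicit configuration with independent leaves, which is available precisely because $O^-_8(2)$ has Witt index $3$ and hence no $4$-dimensional totally singular subspace. Taking $Q(x)=x_1x_2+x_3x_4+x_5x_6+x_7^2+x_7x_8+x_8^2$ on $V=\ff_2^8$, I would set $v_0=e_8$ and $v_1=e_7$, $v_2=e_1+e_7$, $v_3=e_3+e_7$, $v_4=e_5+e_7$. A direct check gives $Q(v_i)=1$ for all $i$, $B(v_i,v_j)=0$ for $1\le i<j\le 4$, and $B(v_0,v_i)=1$ for each $i$, so this is a $\tilde{D}_4$-configuration; moreover $v_1,\dots,v_4$ span $\lan e_1,e_3,e_5,e_7\ran$ and are independent. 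By the equivalence of the first paragraph, $\lan t_{v_0},t_{v_1},\dots,t_{v_4}\ran$ is a central quotient of $\Weyl_2(\tilde{D}_4)$, so $O^-_8(2)$ contains a forbidden subgroup and is not of $ADE$-type.

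The main obstacle is establishing the dichotomy of the first paragraph rigorously, i.e. showing that independent leaves force $\lan X\ran$ to be a central quotient of $\Weyl_2(\tilde{D}_4)$ rather than of $\Weyl(D_4)$ (and conversely for dependent leaves). This requires combining Proposition \ref{prop-aweyl} with a module-dimension count, using Proposition \ref{prop-klein}(4) to confirm that a central quotient of $\Weyl_2(\tilde{D}_4)$ is genuinely generated by a $\tilde{D}_4$-configuration spanning a rank-$5$ space, and Theorem \ref{thm-symp-type} to identify distinct transvections with distinct vectors. Once this equivalence is in place, the remaining steps—the $E_8$ root computation for $O^+_8(2)$ and the explicit verification for $O^-_8(2)$—are routine.
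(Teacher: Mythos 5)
Your proposal is correct, but it takes a genuinely different route from the paper, and it is worth comparing the two. For part (1) the paper never writes down vectors: it observes that $O^-_8(2)$ has a parabolic subgroup $2^6\colon O^-_6(2)$, that $O^-_6(2)\cong\Weyl(E_6)$ contains a central quotient of $\Weyl(D_4)$, and that the noncentrality of the extension supplies a fifth transvection, whence Proposition \ref{prop-aweyl} yields a central quotient of $\Weyl_2(\tilde{D}_4)$; your explicit $\tilde{D}_4$-configuration $v_0=e_8$, $v_i\in\{e_7,e_1+e_7,e_3+e_7,e_5+e_7\}$ (which I have checked: $Q(v_i)=1$, $B(v_0,v_i)=1$, $B(v_i,v_j)=0$, leaves independent) replaces this structural argument by a direct computation in the natural module. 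For part (2) the paper passes to $\Weyl(E_8)$ via Lemma \ref{lem-center} and argues that a central quotient of $\Weyl_2(\tilde{D}_4)$ would contain eight pairwise commuting reflections, which cannot act on the at most $5$-dimensional positive definite space $[V,S]$; your root-lifting computation ($c_j=-\tfrac12$, then $2=(r_0,r_0)=2+|r_0^\perp|^2$ forces $r_0^\perp=0$, hence $2r_0+\sum_i r_i=0$ and $\sum_i v_i\equiv 0 \bmod 2\Lambda$) exploits the same positive definiteness but is sharper, as it exhibits the exact affine $\tilde{D}_4$ relation that makes every configuration's leaves dependent. What your approach buys is a single uniform criterion (the independence dichotomy) handling both signs in the natural $\ff_2$-module; what the paper's buys is brevity, since it never needs that dichotomy at all.

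Two caveats, both repairable within your framework. First, the dichotomy you flag as the main obstacle does hold, but your sketched justifications should be firmed up in the direction each part actually uses: for dependent leaves (necessarily $v_4=v_1+v_2+v_3$ up to relabelling, since a sum of two pairwise orthogonal nonsingular vectors is singular), closing $\{v_0,\dots,v_3\}$ under $t_w\colon v\mapsto v+B(v,w)w$ produces the $12$ vectors $v_1,v_2,v_3,v_1+v_2+v_3$ and $v_0+(\text{even sums})$, which include $v_1+v_2+v_3$, so $t_{v_4}\in\lan t_{v_0},\dots,t_{v_3}\ran$ and $H$ is a central quotient of $\Weyl(D_4)$; for independent leaves, the eight distinct pairwise commuting transvections $t_{v_0+\sum_{i\in S}v_i}$ ($|S|$ even) rule out a central quotient of $\Weyl(D_4)$, whose commuting sets of transpositions have size at most $4$ by Proposition \ref{prop-klein}, so Proposition \ref{prop-aweyl} forces a central quotient of $\Weyl_2(\tilde{D}_4)$ (and conversely Lemma \ref{lem-d42} and Proposition \ref{prop-klein}(1) guarantee that any forbidden subgroup is generated by a $\tilde{D}_4$-diagram subset of $D$, which your reduction for part (2) tacitly requires). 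Second, your parenthetical explanation that the $O^-_8(2)$ configuration exists ``precisely because'' of Witt index $3$ is misleading: four independent pairwise orthogonal nonsingular vectors exist in the plus-type space as well (take a $3$-dimensional totally singular $T$ and $u\in T^\perp\setminus T$ with $Q(u)=1$); what fails in plus type is the star, since $Q$ is constant on the coset of candidates for $v_0$, and in plus type that constant is $0$ --- which is exactly what your $E_8$ computation detects. The aside is not load-bearing, so this does not affect correctness.
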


\begin{proof}
(1) $O^-_{8}(2)$  contains a parabolic subgroup
$2^6\colon O^-_6(2)$.
As $ O^-_6(2)\cong\Weyl(E_6)$ contains a
(central quotient of) $\Weyl({D}_4)$
(as mentioned before),
the noncentral extension $2^6\colon O^-_6(2)$
contains a
central quotient of $\Weyl_2(\tilde{D}_4)$
by Proposition \ref{prop-aweyl}.

\medskip
(2) $O^+_{8}(2)$ is of $ADE$-type
if and only if $\Weyl(E_8)$ is (by Lemma
\ref{lem-center}). Suppose
$\Weyl(E_8)$ is not. 
Then it has a subset $S$
of five reflections with diagram $\tilde{D}_4$
that generate $H$, a central quotient of 
$\Weyl_2(\tilde{D}_4)$. In the action of $\Weyl(E_8)$
on $V=\qq^8$ we have $W=[V,H]=[V,S]$ of dimension
at most $5$ and positive definite, as $V$ is.
But by Proposition \ref{prop-aweyl} the reflection
group $H$ contains eight pairwise commuting reflections.
These cannot act on the positive definite space $W$ of
dimension less than $8$, a contradiction.

We conclude that  $\Weyl(E_8)$ and
$O^+_{8}(2)$ are both of $ADE$-type.
\end{proof}

We are now in a position to prove Theorem \ref{thm-ade}.

\begin{proof}[Proof of Theorem \ref{thm-ade}]

Let $G$, $N$, and $\bar{G}=G/N$ be as in Theorem \ref{thm-symp-type},
and assume that the conjugacy class $D$ of $3$-trans\-pos\-i\-tions 
is of $ADE$-type.

As $D$ is a conjugacy class, if 
$\bar{G} \cong \Symm{2}$, then $G=\bar{G} \cong \Symm{2} \cong \Weyl(A_1)$;
and we are done. So we may assume that in $G$ there are $a,
b \in \bar{D}$ with $\langle \bar{a},\bar{b} \rangle\cong 
\langle a,b\rangle \cong \Symm{3}$.

First suppose the normal $2$-group
$[G,N]$ is nontrivial. 
If $\bar{G}$ had a subgroup $\bar{H}=\langle \bar{H} \cap \bar{D}\rangle$
that was a central quotient $\Weyl({D}_4)$, then as $[N,H]
\neq 1$ there would be a new, fifth generator that together with four
lifted from $\bar{H}$ would provide a $\tilde{D}_4$ diagram and 
a central quotient of $\Weyl_2(\tilde{D}_4)$ by Proposition
\ref{prop-aweyl}. But this is not the case. Therefore by
Proposition \ref{prop-d4-free}
the group $\bar{G}$ is $\Symm{n}$ for some $n \ge 3$.
For $a\in D$, if $|[a,N]| > 2$, then within
$a^N$ there are enough elements of $D$ to produce 
together with $b$ a
diagram $\tilde{D}_4$ as in Proposition \ref{prop-aweyl}. But then
$G$ must contain a subgroup $H=\langle H \cap D\rangle$
that is a central
quotient of $\Weyl_2(\tilde{D}_4)$, against hypothesis. Therefore
$|[a,N]| = 2$. Let $\{a,d\}= a^N = D \cap aN$.
A generating set for $\bar{G}$ containing $\bar{a}$
and having diagram $A_{n-1}$ can then be lifted to an $(n-1)$-subset
$A$ of $G$ with the same diagram and $H=\langle A \rangle \cong
\Symm{n}$. The set $\{d,A\}$ then has diagram $D_n$
and generates $G=\langle D \rangle$ since $D=a^H \cup d^H$. 
Therefore
$G$ is a central quotient of $\Weyl(D_{n})$
by Proposition \ref{prop-weyl}.

Now we may assume $[G,N]=1$ so that $G$ is an extension
of central $N$ by $\bar{G}$, which is one of the groups of
Theorem \ref{thm-symp-type}. If $\bar{G}$ is $\Symm{n}$ 
for some $n \ge 3$,
then $G=\langle D\rangle=\bar{G} \cong \Symm{n} \cong \Weyl(A_{n-1})$
by Lemma \ref{lem-center} and Proposition \ref{prop-weyl}.

If $\bar{G}$ is
$O^\gre_{2m}(2)$, or $Sp_{2m}(2)$ with $m \ge 4$ and
$(m,\gre) \neq (4,+)$, then $\bar{G}$ has a $\bar{D}$-subgroup
$O^-_{8}(2)$. By Lemma \ref{lem-o8} the groups $\bar{G}$
and $G$ are not of $ADE$-type, against hypothesis.
We are left with three possible examples: 
\[
\bar{G} \in \{O_6^-(2),Sp_6(2),O_8^+(2)\}\,.
\]
Thus by Proposition \ref{prop-weyl}, the group 
$G$ is a central quotient of $\Weyl(E_n)$
for $n\in\{6,7,8\}$. Each of these is a genuine
example by Lemma \ref{lem-o8}, the groups $\Weyl(E_6)$
and $\Weyl(E_7)$ being subgroups of $\Weyl(E_8)$
generated by reflections.
\end{proof}

We conclude this section with a lemma that
will enable us to apply Theorem \ref{thm-ade}
in our primitive axial algebras setup.

\begin{lemma}
\label{lem-d42}
\label{lem-lem-5.12}
Let $G$ be a group generated by the normal
set $D$ of $3$-trans\-posi\-tions of symplectic type. 
\begin{enumerate}
\item 
A subgroup $H=\langle H \cap D\rangle$ is generated
by a subset $Y\subseteq D$ with diagram the complete bipartite graph $K_{3,2}$ if and only if
$H$ is a central quotient of $\Weyl(D_4)$ or $\Weyl_2(\tilde{D}_4)$.
\item 
For the subgroup $H=\langle Y \rangle$ of the previous
part, the following are equivalent:
\begin{enumerate}
\item 
some $4$-subset of $Y$ generates a subgroup
isomorphic to $\Symm{4}$;
\item $H$ is generated by a $4$-subset of $Y$ with
diagram $D_4$;
\item $H$ is a central quotient of $\Weyl(D_4)$.
\end{enumerate}
\end{enumerate}
\end{lemma}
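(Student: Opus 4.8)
The plan is to read the five elements of $Y$ as transvections and, in each direction, reduce to the structural results already established. Write $Y=\{p_1,p_2,p_3,q_1,q_2\}$, where $\{p_1,p_2,p_3\}$ and $\{q_1,q_2\}$ are the two parts of the $K_{3,2}$; thus the $p_i$ pairwise commute, $q_1$ and $q_2$ commute, and $|p_iq_j|=3$ for all $i,j$. Since $\{p_1,p_2,p_3,q_1\}$ has diagram $D_4=K_{1,3}$, Proposition \ref{prop-weyl}(2) already shows $K:=\langle p_1,p_2,p_3,q_1\rangle$ is a central quotient of $\Weyl(D_4)$. For the forward implication of (1) I would manufacture a generating set of diagram $\tilde{D}_4=K_{1,4}$ and quote Proposition \ref{prop-aweyl}. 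Put $\ell:=p_1^{\,q_1q_2}$ and $X:=\{q_1;p_1,p_2,p_3,\ell\}$. The generation $\langle X\rangle=H$ is a clean dihedral computation: since $q_1,q_2$ commute one gets $\ell^{q_1}=p_1^{q_2}$, and inside $\langle p_1,q_2\rangle\cong\Symm{3}$ one has $(p_1^{q_2})^{p_1}=q_2$; hence $q_2=\ell^{\,q_1p_1}\in\langle q_1,p_1,\ell\rangle$, so $\langle X\rangle\ni q_2$ and $\langle X\rangle=\langle Y\rangle=H$. That $X$ has diagram $\tilde{D}_4$ with centre $q_1$ amounts to checking that $\ell$ is joined to $q_1$ and commutes with each $p_i$; for this I pass to the transvection model of Theorem \ref{thm-symp-type}, writing the images of $p_i,q_j$ as $t_{u_i},t_{w_j}$ with $\langle u_i,u_{i'}\rangle=\langle w_1,w_2\rangle=0$ and $\langle u_i,w_j\rangle=1$. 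Then $q_1q_2$ carries $u_1$ to $u_1+w_1+w_2$, so $\ell$ is $t_{u_1+w_1+w_2}$, and the pairings $\langle u_1+w_1+w_2,w_1\rangle=1$ and $\langle u_1+w_1+w_2,u_i\rangle=0$ give $|\ell q_1|=3$ and $[\ell,p_i]=1$ in $H$. With $X$ in place, Proposition \ref{prop-aweyl} yields that $H$ is a central quotient of $\Weyl(D_4)$ or of $\Weyl_2(\tilde{D}_4)$.

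For the reverse implication of (1) I would invoke Proposition \ref{prop-klein}: by Lemma \ref{lem-center} the class $D\cap H$ is unchanged on factoring by the centre, so for every central quotient of $\Weyl(D_4)$ (resp. $\Weyl_2(\tilde{D}_4)$) it is a complete tripartite graph $K_{N,N,N}$ with $N=4$ (resp. $N=8$). Taking $p_1,p_2,p_3$ in one part and $q_1,q_2$ in a second part yields a subset of diagram $K_{3,2}$, and that it generates $H$ follows from the forward direction together with an order comparison, the $2$-group part being absorbed by Theorem \ref{thm-symp-type}(2).

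For (2), the implication (b)$\Rightarrow$(c) is immediate from Proposition \ref{prop-weyl}(2). For (c)$\Rightarrow$(b), when $H$ is a $\Weyl(D_4)$-quotient I claim the $D_4$-subset $\{p_1,p_2,p_3,q_1\}$ already generates $H$: its span $K$ is a $\Weyl(D_4)$-quotient with $K\le H$, and since such groups have order $96$ or $192$ while a $D_4$-set of reflections generates all of $\Weyl(D_4)$, the orders force $K=H$. For (a)$\Leftrightarrow$(c) I note that a $D_4$-subset never gives $\Symm{4}$ (it generates a group of order $96$ or $192$), so (a) can hold only for a $K_{2,2}$-subset $\{p_i,p_j,q_1,q_2\}$; in the transvection model such a subset generates $\Symm{4}$ precisely when $u_i+u_j+w_1+w_2=0$. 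Because the relations $\langle u_i,w_j\rangle=1$ force $u_1,u_2,u_3$ to be independent, the vectors $u_1+u_2$ and $u_2+u_3$ are independent, so any dependency that shrinks the radical of the form on $\langle u_1,u_2,u_3,w_1,w_2\rangle$ must read $w_1+w_2=u_i+u_j$; and by the computation behind Proposition \ref{prop-aweyl} the radical dropping from dimension $3$ to $2$ is exactly what distinguishes the $\Weyl(D_4)$-case from the $\Weyl_2(\tilde{D}_4)$-case. This gives (a)$\Leftrightarrow$(c), closing the cycle with the two implications above.

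The step I expect to be most delicate is the passage between $H$ and its symplectic-type quotient $\bar H=H/(H\cap N)$: the transvection model only computes orders of products and commutators in $\bar H$, so each relation must be lifted to $H$. For the diagram relations of $\ell$ this lift is painless, since a commuting pair of distinct transvection images cannot lift to an $\Symm{3}$; for the generation statements it is precisely Theorem \ref{thm-symp-type}(2), that $[H,H\cap N]$ is a $2$-group, together with Proposition \ref{prop-aweyl}, that does the work. The design I would follow therefore quarantines all symplectic bookkeeping in the verification of the $\tilde{D}_4$-relations for $\ell$ and in the dependency criterion of (2a), reducing everything else to Propositions \ref{prop-weyl}, \ref{prop-klein}, and \ref{prop-aweyl}.
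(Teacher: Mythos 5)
Your forward direction of part (1) --- manufacturing $\ell=p_1^{q_1q_2}$, recovering $q_2=\ell^{q_1p_1}$ by the dihedral identity, verifying the $\tilde{D}_4$ relations for $X=\{q_1;p_1,p_2,p_3,\ell\}$ in the transvection model, and quoting Proposition \ref{prop-aweyl} --- is a genuinely different route from the paper's (which, when $e\notin K$, uses the symplectic-type condition to pin down $C_K(e)$ inside the $K_{4,4,4}$ class of $K$ and so extracts a $\tilde{D}_4$ set directly), and it can be made to work once you add the unexamined degenerate case $\ell\in\{p_1,p_2,p_3\}$ (harmless: then $H=\langle X\rangle=K$ is a central quotient of $\Weyl(D_4)$) and justify invoking Theorem \ref{thm-symp-type}, which is stated only for \emph{finite} groups while the lemma carries no finiteness hypothesis. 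The serious problems are in the other two pieces.

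First, the reverse direction of (1): the claim that taking $p_1,p_2,p_3$ in one part and $q_1,q_2$ in a second part of the tripartite class ``generates $H$ \dots by an order comparison'' is false for an arbitrary choice when $H$ is a central quotient of $\Weyl_2(\tilde{D}_4)$. Writing $H=(V(1)\oplus V(2)\oplus V(3))\colon\Symm{3}$ as in Proposition \ref{prop-klein}, the subgroup $H_0=(V(1)\oplus V(2))\colon\Symm{3}$ is a central quotient of $\Weyl(D_4)$ whose class sits inside the parts of $D\cap H$ (e.g.\ $d^{V(1)\oplus V(2)}\subseteq d^V$), so choosing all five elements from $H_0$ produces a $K_{3,2}$ subset generating a proper subgroup of order $96$ inside $|H|\geq 384$; your order comparison then certifies the failure rather than repairing it, and nothing in your text constructs a good choice. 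The paper must, and does, choose carefully: it fixes a $D_4$ quadruple generating $K$ with $|d^{O_2(K)}|=4$, takes $e\in d^{O_2(H)}\sminus d^{O_2(K)}$ (using $|d^{O_2(H)}|=8$), and proves generation via the auxiliary $\tilde{D}_4$ set $W=\{a,b,c,a^{de},e\}$, Proposition \ref{prop-aweyl}, and $e\notin KZ(H)$.

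Second, your (a)$\Leftrightarrow$(c) in part (2) rests on the assertion that the relations $\langle u_i,w_j\rangle=1$ force $u_1,u_2,u_3$ to be independent, and this is false --- indeed false precisely in the case (c) at issue. For $H$ a central quotient of $\Weyl(D_4)$ the Theorem \ref{thm-symp-type} quotient is $\Symm{3}$, not $\Symm{4}$: in the $K_{4,4,4}$ diagram each element commutes exactly with its own part, so all four elements of a part share the same $C_D$ and hence the same transvection image, giving $u_1=u_2=u_3$ (and possibly $w_1=w_2$). Your dependency criterion ``$\langle p_i,p_j,q_1,q_2\rangle\cong\Symm{4}$ precisely when $u_i+u_j+w_1+w_2=0$'' thus degenerates, and it is in any case unproved in the lift: a $\Symm{4}$-image of $\langle p_i,p_j,q_1,q_2\rangle$ only bounds that subgroup, which could a priori be a $2$-extension of $\Symm{4}$; the gesture at Theorem \ref{thm-symp-type}(2) does not close this. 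The paper avoids all of this: for (c)$\Rightarrow$(a) it sets $E=a^{O_2(H)}=\{a,b,c,x\}$, notes each $\{y,d,e\}$ with $y\in E$ has diagram $A_3$ so that $\{d,e\}$ lies in exactly two $\Symm{4}$-subgroups $K_1,K_2$ with $|K_i\cap E|=2$ and $K_1\cap K_2\cap E=\emptyset$, whence some $K_i$ meets $\{a,b,c\}$ twice and is generated by a $4$-subset of $Y$; and (a)$\Rightarrow$(b) is the one-liner that $\langle a,b,d,e\rangle\cong\Symm{4}$ forces $e\in\langle a,b,d\rangle$, so $H=\langle a,b,c,d\rangle$ has the $D_4$ generating set. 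Your (b)$\Rightarrow$(c) agrees with the paper, and your (c)$\Rightarrow$(b) order argument is repairable but silently uses the unproved fact that every $D_4$-subset of reflections of $\Weyl(D_4)$ generates the whole group.
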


\begin{proof}
Let $H=\langle H \cap D\rangle$
be a central quotient of $\Weyl({D}_4)$ or
$\Weyl_2(\tilde{D}_4)$. We show that $H$ is generated by
a subset $Y\subset D$ with diagram $K_{3,2}$.  
By Proposition \ref{prop-klein}(1) we can choose
$\{a,b,c,d\}\subset H\cap D$ 
having diagram $D_4=K_{3,1},$
with parts $\{a,b,c\}$ and $\{d\}$. Further, $K=\langle
a,b,c,d\rangle$ is a central quotient of $\Weyl(D_4)$
and $|d^{O_2(K)}|=4$. In the case $H=K$,
choose any $e \in d^{O_2(H)}=d^{O_2(K)}$ with $e \neq d$.
Then $H=\langle a,b,c,d\rangle=\langle a,b,c,d,e\rangle$ and 
$\{a,b,c,d,e\}$ has diagram $K_{3,2}$ with parts $\{a,b,c\}$ and $\{d,e\},$
by Proposition \ref{prop-klein}(1).

Suppose next that $H$ is a central quotient of
$\Weyl_2(\tilde{D}_4)$. Then $|d^{O_2(H)}|=8$.
Choose $e \in d^{O_2(H)}\backslash d^{O_2(K)}$.
Then $Y=\{a,b,c,d,e\}$ has diagram $K_{3,2}$.
Here $\{ d,a,e \}$ and $\{ a,e,a^{de}\}$ both have
diagram $A_3$ with $\langle d,a,e \rangle=
\langle a,e,a^{de}=d^{ae}\rangle$, a copy of $S_4$. Thus
$W=\{a,b,c,a^{de},e\}$ has diagram $K_{4,1}=\tilde{D}_4$
and $\langle Y \rangle = \langle W \rangle$.
By Proposition \ref{prop-aweyl}, the full group
$H$ is $\langle Y \rangle = \langle W \rangle$
as $e \notin KZ(H)$. This gives one direction of (1).

For the remainder
of the proof of (1) and the proof of (2),
let the subset $Y$ of $D$ have diagram $K_{3,2}$
and generate $H$.
Specifically, let $Y=X \cup Z$ 
with $X=\{a,b,c\}$ and $Z=\{d,e\}$
such that $|xz|=3$ for all $x \in X$ and $z\in Z$
and $(wy)^2=1$ for $w,y \in X$ or $w,y \in Z$.
Let
\[
K=\langle a,b,c,d\rangle.
\]
By Proposition \ref{prop-weyl}(2), $K$ is a central
quotient of $\Weyl(D_4)$.

\medskip
(1)
If $e \in K$, then $K=H$ is a central
quotient of $\Weyl(D_4)$, as claimed. So we may assume
$e \notin K$.
As such, the result follows directly by
checking the list of \cite[Theorem 6.6]{HSo}; but we provide a 
direct proof here.

By Proposition \ref{prop-klein}
the set 
$D \cap K$
consists of $12$ transpositions with
diagram the complete tripartite graph $K_{4,4,4}$, and every 
$\Symm{3}$ subgroup $S=\langle 
D \cap S\rangle \le K$ meets each of the parts exactly once.
As $H$ is symplectic,
$C_K(e)$ 
must meet each such $S$ in at least
one element of $D\cap S$. 
The only proper subgroups 
$J=\langle J \cap D\rangle$ of $K$
with this property are those isomorphic to $\Symm{4}$ and
the three elementary abelian $2$-groups generated by one
of the parts.  Again by Proposition \ref{prop-klein} any $J\,(\le K)$ 
isomorphic to $\Symm{4}$ {would contain
three pairs of commuting $3$-transpositions, and so}
at least two {members} of $\{a,b,c,x\}$, the part of 
$D \cap K$
containing $\{a,b,c\}$. But then $J=C_K(e)$ 
would contain at least one of $\{a,b,c\},$ 
which is not the case by hypothesis.

Therefore 
$D \cap C_K(e) =\{d,f,g,h\}$, the part of $D \cap K$
that contains $d$. Here 
$\{a,d,f,g\}$ has diagram $D_4$ and so
$\langle a,d,f,g \rangle =K$ 
and $H = \langle K,e \rangle = \langle
a,d,f,g,e\rangle$ with $\{a,d,f,g,e\}$ having
diagram $\tilde{D}_4=K_{4,1}$. 
As $K$  is a central quotient of $\Weyl(D_4)$ and $e \notin K$,
the group $H$ is a central quotient of $\Weyl_2(\tilde{D}_4)$ 
by Proposition \ref{prop-aweyl}.

\medskip

(2) $(a) \implies (b)$:
A $4$-subset of $Y$ not containing both $d$ and $e$
generates an abelian group or a central quotient
of $\Weyl(D_4)$ by Proposition \ref{prop-weyl}.
Suppose instead that
$\langle a,b,d,e\rangle$ is a copy of $\Symm{4}$.
Then $\langle a,b,d,e\rangle=\langle a,b,d\rangle$,
so $H=\langle a,b,c,d\rangle=K$.

$(b) \implies (c)$: This follows from Proposition \ref{prop-weyl}(2).

$(c) \implies (a)$: Let $E=a^{O_2(H)}=\{a,b,c,x\}$.
For each $y \in E$ the set
$\{y,d,e\}$ has diagram $A_3=D_3=K_{1,2}$.
Therefore $\{d,e\}$ is in exactly two subgroups $K_1$
and $K_2$ of $H$ isomorphic to $\Symm{4}\,(\cong\Weyl(A_3))$. For these
we have $|K_i \cap E|=2$ and $|K_1\cap K_2 \cap E|=0$.
Therefore there is an $i\in\{1,2\}$ with $|K_i \cap \{a,b,c\}|=2$,
hence $|K_i \cap \{a,b,c,d,e\}|=4$ and $K_i=
\langle K_i \cap \{a,b,c,d,e\}\rangle$ is a copy of $\Symm{4}$.
\end{proof}

\section{Jordan algebras of Clifford type}\label{sect jordan}

In this section we discuss a class of Jordan algebras that appear as
subalgebras of the Jordan algebra $\Cl(V,q)^+$, which comes from the
Clifford algebra  $\Cl(V,q)$ of the quadratic space $(V,q)$.
These appear in \cite[Example (3.5)]{HRS} where they are
denoted $V^J(b)$ (for $b$ equal to  half the form $B$ defined below).
In \cite[3.6, p.~74]{Mc} these algebras are called {\it Jordan spin factors}
and are denoted $\mathcal{J}\mathcal{S}pin(V,B)$.

We also prove
a result connecting primitive axial algebras of Jordan type $\half$
and these Jordan algebras of Clifford type (see Theorem \ref{thm jvb}).  
This result will be used in \S 6. 

As is well known,  if $M$ is an associative 
algebra over of field $\ff$ of characteristic not two then the same $M$ taken 
with the product $x\ast y=\half(xy+yx)$ is a Jordan 
algebra. This Jordan algebra is denoted $M^+$.

Let $V$ be a vector space over $\ff$ endowed with a quadratic form $q$. 
Let $B(u,v)=\half(q(u+v)-q(u)-q(v))$ be the associated symmetric 
bilinear form (and so $q(u)=B(u,u)$).

Consider the Clifford algebra $\Cl(V,q)$. This is an associative 
unital  algebra (having the identity $\e$) which is generated by $V$ and satisfies the relations $u^2=q(u)\e, u\in V$. 
Equivalently 
we have relations $uv+vu=2B(u,v)\e,$ for all $u,v\in V$. 
Thus $\Cl(V,q)$ with the product $x\ast y=\half(xy+yx)=B(u,v)\e$
is a Jordan algebra.

It is easy 
to see that $\e\ast \e=\e$ and $\e\ast u=u$ for $u\in V$. Therefore, the 
subspace $\ff \e\oplus V$ of $\Cl(V,q)$  is a subalgebra of the 
Jordan algebra $\Cl(V,q)^+$, hence itself a Jordan algebra. 
We say that this Jordan algebra is of {\it Clifford type} and
denote it by $J(V,B)$.

Here are some relevant properties of $J(V,B)$. (Many of these can be
found in \cite{HRS}, sometimes with different notation.)
Recall the notion of a Miyamoto involution from Notation \ref{def miyamoto},
and the Notation in \ref{not axes}(2).
 
\begin{lemma}\label{lem jvb}
Let $J=J(V,B)$.
\begin{enumerate}
\item 
For $u\in V$ and $\ga\in \ff$, the vector $a:=\ga \e+u$ is an idempotent 
if and only if $(i)$ $a\in\{0,\e\}$ or $(ii)$ $\ga=\half$ and $q(u)=\frac{1}{4}$.

\item
Assume that $a=\half \e+u$ is an idempotent in $J$.  Then  
$J_1(a)=\ff a$ (and so $a$ is a $\half$-axis), $J_0(a)=\ff(\half \e-u)$, 
and $J_{\half}(a)=u^\perp=\{v\in V\mid B(u,v)=0\}$.

\item
For $a$ as in (2), $J$ decomposes into a directs sum $J_{+}\oplus J_{-},$
where $J_{+}=J_1(a)\oplus J_0(a)$ and $J_{-}=J_{\half}(a),$
with $J_{\delta}J_{\epsilon}=J_{\delta\epsilon}$.  
The Miyamoto involution $\tau(a)$ fixes $\e$ and acts on $V$ as 
minus the reflection through $u^{\perp}$ $($that is $v^{\tau(a)}=-v,$ for $v\in u^{\perp}$
and $u^{\tau(a)}=u)$. $($We recall that Jordan theorists call $\gt(a)$ the Peirce reflection of $a.)$ 
\end{enumerate}
\end{lemma}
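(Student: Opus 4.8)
The plan is to reduce every assertion to the three multiplication rules of $J=\ff\e\oplus V$. From the Clifford relation $uv+vu=2B(u,v)\e$ one reads off, for the Jordan product $x\ast y=\half(xy+yx)$, that
\[
\e\ast\e=\e,\qquad \e\ast u=u,\qquad u\ast v=B(u,v)\e
\]
for all $u,v\in V$; recall also $q(u)=B(u,u)$. For part (1) I would simply expand, for $a=\ga\e+u$,
\[
a\ast a=\big(\ga^2+q(u)\big)\e+2\ga u,
\]
and equate the $\e$- and $V$-components against $\ga\e+u$, obtaining $\ga^2+q(u)=\ga$ together with $(2\ga-1)u=0$. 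If $u=0$ the second equation is vacuous and the first forces $\ga\in\{0,1\}$, i.e.\ $a\in\{0,\e\}$; if $u\ne0$ it forces $\ga=\half$, whereupon the first equation reads $q(u)=\frac{1}{4}$. This is exactly the stated dichotomy.

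For part (2) the computational heart is the single identity
\[
a\ast(\gb\e+w)=\big(\half\gb+B(u,w)\big)\e+\big(\half w+\gb u\big),
\]
valid for all $\gb\in\ff$ and $w\in V$, where $a=\half\e+u$. Solving $a\ast x=\gl x$ for $x=\gb\e+w$ and $\gl\in\{1,0,\half\}$ then isolates the three eigenspaces: $\gl=1$ forces $w=2\gb u$, hence $x\in\ff a$; $\gl=0$ forces $w=-2\gb u$, hence $x\in\ff(\half\e-u)$; and $\gl=\half$ forces $\gb u=0$ and $B(u,w)=0$, hence (as $u\ne0$) $\gb=0$ and $x\in u^\perp$. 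The structural point is that $q(u)=\frac{1}{4}\ne0$ gives $B(u,u)\ne0$, so $u\notin u^\perp$ and $V=\ff u\oplus u^\perp$; since $\ff a\oplus\ff(\half\e-u)=\ff\e\oplus\ff u$, the three eigenspaces already span $J$. Thus $\operatorname{ad}_a$ is diagonalizable with eigenvalues in $\{1,0,\half\}$ and the displayed eigenspaces are exactly as claimed.

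For part (3) the decomposition $J=J_+\oplus J_-$ is immediate from part (2), and I would verify the fusion $J_\delta J_\epsilon\subseteq J_{\delta\epsilon}$ directly on the spanning sets $J_+=\ff\e\oplus\ff u$ and $J_-=u^\perp$. The products $\e\ast\e=\e$, $\e\ast u=u$, $u\ast u=q(u)\e$ all lie in $J_+$; for $v\in u^\perp$ one has $\e\ast v=v\in J_-$ and the crucial cancellation $u\ast v=B(u,v)\e=0\in J_-$; and for $v,w\in u^\perp$, $v\ast w=B(v,w)\e\in J_+$. This checks all three fusion inclusions and, with part (2), confirms that $a$ is a $\half$-axis. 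Finally $\tau(a)$ is $+1$ on $J_+$ and $-1$ on $J_-$ by definition, so it fixes $\e$ and $u$ and negates every $v\in u^\perp$; since the reflection through $u^\perp$ fixes $u^\perp$ and sends $u\mapsto-u$, its negative is precisely the action of $\tau(a)$ on $V$, as asserted.

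I expect no genuine obstacle: once the three product rules are in hand the lemma is routine bilinear bookkeeping. The only step deserving care is the nondegeneracy observation $q(u)\ne0$ in part (2), which is exactly what forces $V=\ff u\oplus u^\perp$ and hence guarantees that the three eigenspaces exhaust $J$ rather than merely inject into it.
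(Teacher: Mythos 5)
Your proposal is correct and follows essentially the same route as the paper: both reduce everything to the product rules $\e\ast\e=\e$, $\e\ast u=u$, $u\ast v=B(u,v)\e$, expand $(\ga\e+u)^{\ast 2}$ for part (1), identify the three eigenspaces together with the spanning observation $J=\ff a\oplus\ff(\half\e-u)\oplus u^{\perp}$ (resting on $q(u)=\frac{1}{4}\ne 0$) for part (2), and read off part (3). The only difference is one of bookkeeping—you solve the eigen-equation $a\ast x=\gl x$ directly and verify the fusion rules explicitly, where the paper exhibits eigenvectors and declares part (3) immediate—so your write-up is, if anything, slightly more detailed at the steps the paper leaves to the reader.
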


\begin{proof}
We have that $(\ga \e+u)\ast(\ga \e+u)=\ga^2 \e+\ga \e\ast u+u\ast\ga \e+u\ast u=
(\ga^2+q(u)) \e+2\ga u$. Hence $\ga \e+u$ is an idempotent if and only if $2\ga u=u$, 
and $\ga^2+q(u)=\ga$.  This shows (1).

Now $a\in J_1(a)$ because $a^2=a,$ and then $\e-a=\half\e-u\in J_0(a)$.  
Next, $a\ast v=(\frac{1}{2}\e+u)\ast v=\frac{1}{2}v+B(u,v)=\frac{1}{2}v,$
for $v\in u^{\perp},$ so  
$u^{\perp}\subseteq J_{\frac{1}{2}}(a)$. Since $a$, $\e-a$, and $u^\perp$ 
together span all of $J$, (2) holds.
Part (3) is immediate from  (2).
\end{proof}

\medskip
%
\begin{remark}
Let $J(V,B)=\ff\e\oplus V$ be a Jordan algebra of Clifford type.
Since $v\ast w=B(v,w)\e$ for all $v,w\in V,$ Lemma \ref{lem jvb}
implies  that 
\begin{quote}
{\it $J(V,B)$ is a 
primitive axial algebra of type $\frac{1}{2}$ if and only if $V$ is 
linearly spanned by vectors $u\in V$ with $q(u)=\frac{1}{4}$.}
\end{quote}
In this case it will be appropriate to refer to $J(V,B)$ as an
{\it axial algebra of Clifford type}.
Then the  $\half$-axes of $J$ have the form $\half\e+u,$
with $q(u)=\frac{1}{4}$.  Furthermore $\half\e-u$
is also an absolutely primitive idempotent, as $q(-u)=\frac{1}{4}$.
Finally Lemma \ref{lem jvb} implies that $\tau(a)=\tau(b),$
where $a=\half\e+u$ and $b=\e-a=\half\e-u$ are distinct
axes.
\end{remark}

Next we prove a result that enables us
to identify a primitive axial algebra of Jordan type
$\half$ as a Jordan algebra of Clifford type.
Throughout the rest of this section $A$ is a primitive axial
algebra of type $\half$ generated by a set of  $\half$-axes 
$\cala$.

\begin{lemma}\label{lem quadratic}
Assume that $A$ contains an identity element $\e$.
For $a\in\cala$ set $v_a:=a-\half\e$.  Suppose further that
\[\tag{$*$}
v_av_b\in\ff\e\quad\text{for all }a,b\in\cala. 
\]
Then  $A=J(V,B)$ for some vector 
space $V$ and a symmetric bilinear form $B$ on $V$.
\end{lemma}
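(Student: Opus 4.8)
The plan is to realize $A$ explicitly as $\ff\e\oplus V$ with Clifford multiplication. I would put
\[
V := \operatorname{span}_{\ff}\{\,v_a \mid a\in\cala\,\},
\]
and record first the basic consequence of $(*)$ that $vw\in\ff\e$ for \emph{all} $v,w\in V$; this is immediate from bilinearity since it holds on the spanning vectors $v_a$. I would also note that $v_a^2=\tfrac14\e$: expanding the idempotent relation $a^2=a$ for $a=\half\e+v_a$, and using that $\e$ is the identity, gives $\tfrac14\e+v_a+v_a^2=\half\e+v_a$, whence $v_a^2=\tfrac14\e$ (so the case $a=b$ of $(*)$ is automatic).

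Next I would check $A=\ff\e+V$. Since multiplication is bilinear it suffices to see that the spanning set $\{\e\}\cup\{v_a\}$ of $\ff\e+V$ closes under products, and indeed $\e\e=\e$, $\e v_a=v_a$, and $v_av_b\in\ff\e$. Thus $\ff\e+V$ is a subalgebra containing $\e$ and every generator $a=\half\e+v_a$, so it contains $A=\langle\cala\rangle$; the reverse inclusion is clear.

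The one step requiring care is the directness of this sum, i.e.\ $V\cap\ff\e=0$. This is trivial when $A=\ff\e$ (take $V=0$), so I would assume $A\neq\ff\e$ and suppose $\mu\e\in V$ for some $\mu\in\ff$. For each $a\in\cala$ we have $(\mu\e)v_a=\mu v_a$ on the one hand, and $(\mu\e)v_a\in\ff\e$ on the other (as $\mu\e\in V$). Hence $\mu v_a\in\ff\e$; if $\mu\neq0$ this forces $v_a\in\ff\e$, so $a=\half\e+v_a\in\ff\e$, and a nonzero idempotent lying in $\ff\e$ must equal $\e$. As this would hold for every $a\in\cala$, we would get $\cala=\{\e\}$ and $A=\ff\e$, contrary to assumption. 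Therefore $\mu=0$, $V\cap\ff\e=0$, and $A=\ff\e\oplus V$.

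Finally I would define the form. For $v,w\in V$ we have $vw\in\ff\e$, and since $\e\neq0$ (as $A$ contains the nonzero axes) there is a unique scalar $B(v,w)\in\ff$ with $vw=B(v,w)\e$; bilinearity of $B$ follows from bilinearity of the product together with the uniqueness of $B(v,w)$, while symmetry follows from commutativity of $A$. On the decomposition $A=\ff\e\oplus V$ the products are then exactly $\e\e=\e$, $\e v=v$ for $v\in V$, and $vw=B(v,w)\e$ for $v,w\in V$, which are precisely the defining relations of a Jordan algebra of Clifford type, so $A=J(V,B)$. I expect the directness argument to be the only genuine obstacle; the remaining steps are routine applications of $(*)$ and the identity axioms. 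As a consistency check, $q(v_a)=B(v_a,v_a)=\tfrac14$, matching the description of the $\half$-axes $\half\e+u$ with $q(u)=\tfrac14$ in Lemma~\ref{lem jvb}.
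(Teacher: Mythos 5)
Your proof is correct and takes essentially the same approach as the paper: the same choice $V=\operatorname{span}_{\ff}\{v_a\mid a\in\cala\}$, the same closure argument showing $\ff\e+V$ is a subalgebra containing $\cala$ (hence equal to $A$), the same use of $(*)$ together with $\e v_a=v_a$ to rule out $\e\in V$ and obtain directness, and the same definition of $B$ via $vw=B(v,w)\e$. The added computation $v_a^2=\tfrac14\e$ is harmless; the paper records it in the proof of Theorem~\ref{thm jvb} rather than in this lemma.
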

 \begin{proof}
 If $A= \ff\e$ then the claim holds with $V=0$. Let us assume that 
$A\neq\ff\e$. In particular, $\e$ is not an axis.

We set $V$ to be the $\ff$-linear span of $v_a$ for all $\half$-axes 
$a\in \cala$. It follows from $(*)$ that $uv\in\ff\e$ 
for all $u,v\in V$.

Note that $V+\ff\e$ is closed for multiplication and 
contains $\cala$. Hence $A=V+\ff\e$. Let $a\in\cala$. Note that 
if $\e\in V$ then $a-\frac{1}{2}\e=(a-\frac{1}{2}\e)\e=
v_a\e\in\ff\e$. This yields that $a\in\ff\e$, 
and so $a=\e$, a contradiction. Therefore, $A=V\oplus\ff\e$. 

Let us define the bilinear form $B$ on $V$ by $uv=B(u,v)\e$. 
Clearly, $B$ is symmetric since $A$ is commutative. Also $B$ is 
bilinear, since the algebra product is bilinear. 
Hence, by definition, $A=J(V,B)$.
\end{proof}

\begin{thm}\label{thm jvb}
Assume that $A$ contains two  $\half$-axes $a, b\in\cala$
such that $a+b=\e$ is the identity element of $A$ and
such that $v_av_c\in\ff\e,$ for all $c\in\cala,$ where $v_c=c-\half\e$.
Then  $A=J(V,B)$ for some vector 
space $V$ and a symmetric bilinear form $B$ on $V$.
\end{thm}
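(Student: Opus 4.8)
The plan is to verify the hypothesis of Lemma~\ref{lem quadratic}, namely that $v_cv_d\in\ff\e$ for \emph{all} $c,d\in\cala$, and then invoke that lemma. The given hypothesis supplies this only for pairs involving the fixed axis $a$, so the work lies in propagating it to arbitrary pairs.

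First I would record the elementary consequences of $a+b=\e$. Since $\e$ is the identity, $a=\e a=(a+b)a=a+ab$, so $ab=0$; also $v_b=-v_a$ and $v_a^2=(a-\half\e)^2=\frac14\e$. Because $b=\e-a$ we have ${\rm ad}_b=\mathrm{id}-{\rm ad}_a$, hence $A_0(a)=A_1(b)=\ff b$ by absolute primitivity of $b$, while $A_1(a)=\ff a$ and $A_-(a)=A_\half(a)$. Thus $A=\ff a\oplus\ff b\oplus A_\half(a)$, and ${\rm ad}_{v_a}$ acts as $\half$ on $\ff a$, as $-\half$ on $\ff b$, and as $0$ on $A_\half(a)$ (the last because $v_aw=aw-\half w=0$ for $w\in A_\half(a)$).

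Next I would exploit the hypothesis. Writing $v_c=(v_c)_1+(v_c)_0+w_c$ in the ${\rm ad}_a$-eigenspaces, with $w_c:=(v_c)_{\half}\in A_\half(a)$, the condition $v_av_c=\half(v_c)_1-\half(v_c)_0\in\ff\e=\ff(a+b)$ forces $(v_c)_1$ and $(v_c)_0$ to be equal and opposite multiples of $a$ and $b$; concretely $v_c=p_cv_a+w_c$ for some scalar $p_c\in\ff$. Since $v_a^2=\frac14\e$ and $v_aw_d=0$, this gives $v_cv_d=\frac{p_cp_d}4\e+w_cw_d$, so everything is controlled by the ``off-diagonal'' products $w_cw_d$, and the only real obstacle is to show these lie in $\ff\e$.

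To clear that obstacle I would fix distinct $c,d\in\cala$ and argue as follows. If $w_c=0$ then $v_c\in\ff v_a$, so $v_cv_d$ is a scalar multiple of $v_av_d\in\ff\e$ by hypothesis; hence assume $w_c\ne0$. By the fusion rule $A_-(a)A_-(a)\subseteq A_+(a)$ we may write $w_cw_d=\ga a+\gb b$, and then $\gs_{c,d}=v_cv_d-\frac14\e=\kappa\e+w_cw_d=(\kappa+\ga)a+(\kappa+\gb)b$ with $\kappa=\frac{p_cp_d-1}4$. Now I would feed this into the eigenvector relation $\gs_{c,d}\,c=\pi_{c,d}\,c$ of Proposition~\ref{prop table}, using $c=\tfrac{1+p_c}2\,a+\tfrac{1-p_c}2\,b+w_c$: expanding the left side via $a^2=a$, $b^2=b$, $ab=0$ and $aw_c=bw_c=\half w_c$, and comparing the coefficients of the linearly independent vectors $a$, $b$, $w_c$, the $w_c$-coefficient gives $\pi_{c,d}=\kappa+\frac{\ga+\gb}2$, while the $a$-coefficient (or, if $p_c=-1$, the $b$-coefficient) gives $\kappa+\ga=\pi_{c,d}$ (respectively $\kappa+\gb=\pi_{c,d}$). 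Either comparison yields $\ga=\gb$, so $w_cw_d=\ga\e\in\ff\e$ and hence $v_cv_d\in\ff\e$. Having established $v_cv_d\in\ff\e$ for all $c,d\in\cala$, Lemma~\ref{lem quadratic} delivers $A=J(V,B)$, completing the proof.
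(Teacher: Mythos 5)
Your proposal is correct and follows essentially the same route as the paper: both verify hypothesis $(*)$ of Lemma \ref{lem quadratic} by splitting off the $A_{\half}(a)$-component of each $v_c$ (your $w_c$ is precisely the paper's $\half$-eigenvector $v_c-4\gl v_a$, since $v_av_c=\frac{p_c}{4}\e$ gives $4\gl v_a=p_cv_a$) and then using the fusion rule to place $\gs_{c,d}=v_cv_d-\frac{1}{4}\e$ in $\ff a\oplus\ff b$. The only divergence is the endgame: the paper deduces $\gs_{c,d}\in\ff\e$ from $\gs^2=\pi\gs$ together with the absence of nilpotents in $\ff a\oplus\ff b\cong\ff\oplus\ff$ and absolute primitivity (ruling out the idempotents $a$ and $b$), whereas you compare coefficients of $a$, $b$, $w_c$ in the relation $\gs_{c,d}\,c=\pi_{c,d}\,c$ of Proposition \ref{prop table} --- an equally valid, slightly more computational finish, with the degenerate cases $p_c=\pm 1$ correctly covered.
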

\begin{proof}
We show that $(*)$ of Lemma \ref{lem quadratic} holds.
Let $c,d\in\cala$.
Note that 
\[\textstyle{
v_b=b-\half\e=(\e-a)-\half\e=\half\e-a=-v_a.}
\]
Hence also $v_bv_c\in\ff\e,$ for all $c\in\cala$.
Also (recall the notation $\gs_{c,d}$ from Notation \ref{not alg}(2)),
%
\begin{equation}\label{eq vv}
\textstyle{
v_cv_d=(c-\half\e)(d-\half\e)=
cd-\half c-\half d+\frac{1}{4}\e=\gs_{c,d}+\frac{1}{4}\e.}
\end{equation}
Set $\gs=\gs_{c,d}$.   We show that
%
\begin{equation}\label{eq vcvd}
v_cv_d\in\ff\e.
\end{equation} 
If $c=a$ or $b$ then equation $\eqref{eq vcvd}$ holds by hypothesis.
So assume now that $\{c,d\}$ is disjoint from 
$\{a,b\}$. Notice that
\[\textstyle{
v_av_a=
(a-\frac{1}{2}\e)(a-\frac{1}{2}\e)=a^2-\frac{1}{2}a-\frac{1}{2}a
+\frac{1}{4}\e=\frac{1}{4}\e.}
\]
Let $\gl$ be defined by $v_av_c=\gl \e$. Then $v_c-4\gl v_a$ is a 
$\half$-eigenvector for ${\rm ad}_a$. Indeed, 
$(a-\half\e)(v_c-4\gl v_a)=v_a(v_c-4\gl v_a)=v_av_c-4\gl v_av_a=
\gl \e-4\gl\frac{1}{4}\e=0$. Similarly, $v_d-4\gd v_a$ is also a 
$\half$-eigenvector for ${\rm ad}_a$, where $\gd$ is defined by 
$v_av_d=\gd \e$.
 
In view of the fusion rules in $A$, $(v_c-4\gl v_a)(v_d-4\gd v_a)$ lies in 
$A_1(a)+A_0(a)$. On the other hand, 
\begin{gather*}
(v_c-4\gl v_a)(v_d-4\gd v_a)=v_cv_d-
4\gd v_cv_a-4\gl v_av_d+16\gl\gd v_av_a\\
=v_cv_d-4\gd\gl \e-4\gl\gd \e+
4\gl\gd \e=v_cv_d-4\gl\gd \e.
\end{gather*}
Since $\e\in A_1(a)+A_0(a)$, we conclude 
that $v_cv_d\in A_1(a)+A_0(a)$. 
Now since $a$ and $b$ are absolutely primitive, $A_1(a)=\ff a$.  Also  $\ff b=A_1(b)=A_0(a),$ since $b=\e-a$.
It follows that $v_cv_d\in \ff a\oplus \ff b$. 
Recall that $v_cv_d=
\gs+\frac{1}{4}\e$. Hence $\gs$ is contained in $\ff a\oplus\ff b$. 
 
We now note that $ab=0$, which means that $\ff a\oplus \ff b$ is 
isomorphic to the associative algebra $\ff\oplus \ff$. This algebra does not have 
nilpotent elements. Since $\gs^2=\pi\gs$, where $\pi=\pi_{c,d}$, 
either $\gs=0$ or $\pi\neq 0$ and $\frac{1}{\pi}\gs$ is an idempotent, 
namely $\frac{1}{\pi}\gs$ is one of  $a$, $b$, or $\e=a+b$. Let us look at these possibilities in 
turn. If $\gs=0$ then $v_cv_d=\frac{1}{4}\e$, a multiple of $\e$. 
If $\frac{1}{\pi}\gs=a$ then $a$ is the identity in the subalgebra 
generated by $c$ and $d$. However, this means that $a$ is not absolutely primitive, 
a contradiction. Symmetrically, we also rule out the possibility that 
$\frac{1}{\pi}\gs=b$. Finally, if $\frac{1}{\pi}\gs=\e$ then 
$v_cv_d=\gs+\frac{1}{4}\e=(\pi+\frac{1}{4})\e$, again a multiple of $\e$. 
Hence equation \eqref{eq vcvd} holds and the proof is complete.
\end{proof}

\section{The graph $\gD$ and some consequences}\label{sect delta}
The purpose of this section is to discuss the graph
$\gD$ given in Notation \ref{not delta}(1) below.
Our main result in this section is Theorem \ref{thm ta=tb}.
Throughout this section $\cala$ is a generating set
of $\eta$-axes of the axial algebra $A$.

\begin{notation}\label{not delta}
\begin{enumerate}
\item
We define the graph $\gD$ as follows.  The
vertex set of this graph is the set $\calx$ of all the $\eta$-axes
in $A$.  Two distinct axes $x,y\in \calx$ form an edge if and only if $xy\ne 0$.

\item
For a subset $\calb\subseteq\calx$ we denote by $\gD_{\calb}$
the full subgraph of $\gD$ on the vertex set $\calb$.

\item
Recall the notation $\calb^1$ and $\calb^{\eta}$ from Notation \ref{not axes}(4).

\item
For a subset $\calb\subseteq\calx$ we denote by $\calb^u$
($u$ for unique) the set $\{x\in\calb^{\eta}\mid \tau(x)\ne\tau(y)\text{ for all }x\ne y\in\calb\}.$

\item
$\calb^{nu}$ ($nu$ for not unique) is the set $\calb^{\eta}\sminus\calb^u$.
\end{enumerate}
\end{notation}

\begin{remark}\label{rem calx1}
By Lemma \ref{lem closure}(6), if $a\in\calx^1,$ then $\{a\}$ is a connected
component of $\gD$ and hence $\{a\}$
is a connected component of $\gD_{\calb}$ for all $a\in\calb\subseteq\calx$.
Hence from now on we may assume that 
\[
\cala=\cala^{\eta}.
\]
\end{remark}

\begin{lemma}\label{lem components1}
Let $\cala_1$ and $\cala_2$ be two distinct
connected components of $\gD_{\cala},$
and let $\{\cala_i\mid i\in I\}$ be the set
of connected components of $\gD_{\cala}$.  Then
\begin{enumerate}
\item
$[G_{\cala_1},\, G_{\cala_2}]=1;$

\item
$b^{g_2}=b,$ for all $b\in[\cala_1]$ and $g_2\in G_{\cala_2};$

\item
$b_1b_2=0,$ for all $b_1\in[\cala_1]$ and $b_2\in[\cala_2];$

\item
the graph $\gD_{[\cala_1]}$ is connected;

\item
$[\cala]=\dot\bigcup_{i\in I}[\cala_i]$ is a disjoint union;

\item
there is a bijection $\cala_i\mapsto [\cala_i]$ between the connected
components $\{\cala_i\mid i\in I\}$ of $\gD_{\cala}$ and the
connected components $\{[\cala_i]\mid i\in I\}$ of $\gD_{[\cala]}$.
\end{enumerate}
\end{lemma}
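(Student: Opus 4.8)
The plan is to run everything off two general facts plus one fact specific to the situation. The two general facts are: by Lemma \ref{lem 2b}, for distinct $\eta$-axes $x,y$ one has $xy=0$ if and only if $x^{\tau(y)}=x$; and by Definition \ref{def miyamoto}(3), $\tau(z^{\rho})=\tau(z)^{\rho}$ for every automorphism $\rho$ of $A$. The specific input is that $\cala_1$ and $\cala_2$ are \emph{distinct} connected components of $\gD_{\cala}$, so that $a_1a_2=0$ for all $a_1\in\cala_1$ and $a_2\in\cala_2$ (no edges run between distinct components). Everything else bootstraps from these, using the description $[\calb]=\bigcup_{g\in G_{\calb}}\calb^{g}$ of Lemma \ref{lem closure}(1). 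Note also that parts (1)--(3) will be proved for an arbitrary pair of distinct components, so I may apply them to any pair $(\cala_i,\cala_j)$ in the later parts.

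First I would prove (1). For $a_1\in\cala_1$ and $a_2\in\cala_2$ we have $a_1a_2=0$, hence $a_1^{\tau(a_2)}=a_1$ by Lemma \ref{lem 2b}. Applying $\tau$ and the covariance of Definition \ref{def miyamoto}(3) gives $\tau(a_1)=\tau(a_1^{\tau(a_2)})=\tau(a_1)^{\tau(a_2)}$, so $\tau(a_1)$ and $\tau(a_2)$ commute; as these generate $G_{\cala_1}$ and $G_{\cala_2}$, part (1) follows. The same identity shows each generator $\tau(a_2)$ of $G_{\cala_2}$ fixes each $a_1\in\cala_1$, so $G_{\cala_2}$ fixes $\cala_1$ pointwise (and symmetrically). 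Then (2) is immediate: writing $b=a_1^{g_1}$ with $a_1\in\cala_1$, $g_1\in G_{\cala_1}$, for $g_2\in G_{\cala_2}$ I commute $g_1,g_2$ by (1) and use pointwise fixing, $b^{g_2}=a_1^{g_1g_2}=(a_1^{g_2})^{g_1}=a_1^{g_1}=b$. For (3), write $b_1=a_1^{g_1}$ and $b_2=a_2^{g_2}$; transporting $b_1b_2$ by the (commuting) automorphism $g_1^{-1}g_2^{-1}$ and using the two pointwise-fixing statements reduces it to $a_1a_2=0$, whence $b_1b_2=0$.

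Part (4) is the geometric heart. Each $g\in G_{\cala_1}$ is an automorphism and preserves $\gD$ (since $x^{g}y^{g}=(xy)^{g}$, so $x^{g}y^{g}=0\iff xy=0$); thus $\gD_{\cala_1^{g}}$ is a connected subgraph, being an isomorphic copy of the connected $\gD_{\cala_1}$. Lemma \ref{lem closure}(2) supplies, for each $g$, a chain $\mathrm{id}=g_1,\dots,g_k=g$ in $G_{\cala_1}$ with $\cala_1^{g_i}\cap\cala_1^{g_{i+1}}\ne\emptyset$; since consecutive connected copies share a vertex, every vertex of $\cala_1^{g}$ is joined within $\gD_{[\cala_1]}$ to $\cala_1$, giving connectivity. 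For (5), monotonicity of closure gives $\bigcup_i[\cala_i]\subseteq[\cala]$; conversely $\bigcup_i[\cala_i]$ contains $\cala$ and is closed --- for $b\in[\cala_j]$, $c\in[\cala_i]$ with $i\ne j$, part (2) with $\tau(b)\in G_{[\cala_j]}=G_{\cala_j}$ (Lemma \ref{lem closure}(4)) gives $c^{\tau(b)}=c$, while for $i=j$ closedness of $[\cala_i]$ applies --- so $[\cala]=\bigcup_i[\cala_i]$. Disjointness is clean: if $b\in[\cala_i]\cap[\cala_j]$ with $i\ne j$, then (3) forces $b=b^{2}=0$, impossible for an axis. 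Finally (6) assembles these: by (5) the sets $[\cala_i]$ partition $[\cala]$, by (4) each is connected in $\gD_{[\cala]}$, and by (3) no edge joins distinct $[\cala_i]$, so the $[\cala_i]$ are precisely the connected components of $\gD_{[\cala]}$ and $\cala_i\mapsto[\cala_i]$ is the asserted bijection.

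The only place demanding real care is part (4): one must combine the abstract chain of Lemma \ref{lem closure}(2) with the observation that each translate $\cala_1^{g}$ carries the connected graph structure of $\cala_1$, so that it is the \emph{overlap} of consecutive connected blocks --- not any edge computation --- that propagates connectivity through the closure. The conceptual crux, though, is the one-line step in (1) showing that $a_1a_2=0$ upgrades to commuting Miyamoto involutions via $\tau(z^{\rho})=\tau(z)^{\rho}$; once that is in hand, the remaining parts are essentially forced.
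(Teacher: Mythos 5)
Your proposal is correct and follows essentially the same route as the paper's proof: the same use of Lemma \ref{lem 2b} together with $\tau(a^{\rho})=\tau(a)^{\rho}$ for parts (1)--(3), Lemma \ref{lem closure}(2)--(3) for the connectivity in (4) (where you helpfully spell out the overlapping-translates argument the paper leaves implicit), and the same assembly for (5)--(6). The only deviations are minor and valid: in (5) you show directly that $\bigcup_{i\in I}[\cala_i]$ is closed (via Lemma \ref{lem closure}(4) and part (2)) where the paper instead rewrites $a^{g}=a^{g_i}$ using (1) and (2), and your disjointness argument ($b\in[\cala_i]\cap[\cala_j]$ forces $b=b^{2}=0$ by (3)) is a clean variant of the paper's appeal to (3) and (4).
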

\begin{proof}
(1):\quad 
By definition $a_1a_2=0,$ for all $a_1\in\cala_1$ and $a_2\in\cala_2$.
By Lemma \ref{lem 2b} we see that $\tau(a_1)$ commutes with $\tau(a_2)$.
Since $G_{\cala_i}=\lan \tau(a_i)\mid a_i\in\cala_i\ran,$ $i=1,2,$ part (1)
follows. 
\medskip

\noindent
(2):\quad
Assume first that  $b\in\cala_1$.  Then $b^{\tau(a_2)}=b,$  for all $a_2\in\cala_2,$
because $ba_2=0$.  It follows that $b^{g_2}=b$.   Next write $b=a_1^{g_1}$ with $a_1\in\cala_1$ and $g_1\in G_{\cala_1}$
(see Lemma \ref{lem closure}(1)).
Then using (1) we get $b^{g_2}=a_1^{g_1g_2}=a_1^{g_2g_1}=a_1^{g_1}=b$.
\medskip

\noindent
(3):\quad
Write $b_i=a_i^{g_i},$ with $a_i\in\cala_i$ and $g_i\in G_{\cala_i},$ $i=1,2$.
Then, by (1) and (2), $b_1b_2=a_1^{g_1g_2}a_2^{g_1g_2}=(a_1a_2)^{g_1g_2}=0$.
\medskip

\noindent
(4):\quad
This follows from Lemma \ref{lem closure}(3). 
\medskip

\noindent
(5):\quad 
We first show that $[\cala]=\bigcup_{i\in I}[\cala_i]$.  Clearly it
suffices to show that $[\cala]$ is contained in the union.
Let $b\in[\cala],$ then, by Lemma \ref{lem closure}(1), there exists $a\in\cala$ and
$g\in G_{\cala}$ such that $b=a^g$. Let $i\in I$ so that $a\in\cala_i$.
Using (1) and (2) it follows that $a^g=a^{g_i}$ for some $g_i\in G_i$.
Hence $b\in[\cala_i]$.  Since by (4), $\gD_{[\cala_i]}$
is connected, for each $i\in I,$ the fact that the union is disjoint is immediate from (3).
\medskip

\noindent
(6):\quad
This follows from (4) and (5).
\end{proof}

\begin{lemma}\label{lem component}
Let $\{\cala_i\mid i\in I\}$ be the set of connected components of $\gD_{\cala}$.
For each $i\in I,$ let $A_i=N_{\cala_i}$ (see Notation \ref{not alg}) and let $\calx_i$ be the set of all 
$\eta$-axes in $A_i$.  Then
\begin{enumerate}
\item
$A_iA_j=\{0\}$ for all $i\ne j,$ so $A$ is the sum of its ideals $\{A_i\mid i\in I\};$

\item
the connected components of $\gD$ are $\{\calx_i\mid i\in I\}$.
\end{enumerate}
\end{lemma}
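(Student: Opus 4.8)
The plan is to bootstrap everything from two facts already available: each $A_i=N_{\cala_i}$ is linearly spanned by the closure $[\cala_i]$ (by \cite[Corollary 1.2]{HRS} applied to $A_i$, using Lemma \ref{lem closure}), and $[\cala_i][\cala_j]=0$ for $i\ne j$ by Lemma \ref{lem components1}(3). For part (1) I would argue as follows. Since $A_i$ is spanned by $[\cala_i]$ and $A_j$ by $[\cala_j]$, bilinearity of the product reduces $A_iA_j$ to the span of the products $b_ib_j$ with $b_i\in[\cala_i]$ and $b_j\in[\cala_j]$, every one of which vanishes; hence $A_iA_j=0$ for $i\ne j$. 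Consequently $A_iA_j\subseteq A_i+A_j$ for all $i,j$, so $\sum_{i\in I}A_i$ is a subalgebra; as it contains the generating set $\cala$, it equals $A$. Finally $AA_i=\big(\sum_{k\in I}A_k\big)A_i=A_iA_i\subseteq A_i$, so each $A_i$ is an ideal, giving (1).

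For part (2) the two routine halves come first. If $x\in\calx_i\cap\calx_j$ with $i\ne j$, then $x=x^2\in A_iA_j=0$, contradicting $x\ne 0$; thus the sets $\calx_i$ are pairwise disjoint. Likewise, for $x\in\calx_i$ and $y\in\calx_j$ with $i\ne j$ we have $xy\in A_iA_j=0$, so $\gD$ has no edge between distinct $\calx_i$ and $\calx_j$. It remains to show (a) that every $\eta$-axis of $A$ lies in a single $A_i$, whence $\calx=\dot\bigcup_{i\in I}\calx_i$, and (b) that each $\gD_{\calx_i}$ is connected.

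The covering statement (a) is the main obstacle, and I would prove it by an eigenspace argument rather than by trying to show the sum $\sum_iA_i$ is direct. Fix an $\eta$-axis $a\in\calx$ and write $a=\sum_{i\in F}a_i$ with $F$ finite and $a_i\in A_i$. For $z\in A_i$ one computes $za=\sum_{j\in F}za_j=za_i\in A_i$, since $za_j\in A_iA_j=0$ for $j\ne i$; thus each $A_i$ is ${\rm ad}_a$-invariant. As ${\rm ad}_a$ is semisimple with spectrum in $\{1,0,\eta\}$, its restriction to $A_i$ splits as $A_i=(A_i\cap A_1(a))\oplus(A_i\cap A_0(a))\oplus(A_i\cap A_{\eta}(a))$; write $p_i$ for the $A_1(a)$-component of $a_i$. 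Comparing the $A_1(a)$-parts in $A=A_1(a)\oplus A_0(a)\oplus A_{\eta}(a)$ and using $a\in A_1(a)$ yields $\sum_{i\in F}p_i=a$. By absolute primitivity $A_1(a)=\ff a$, so $A_i\cap A_1(a)$ equals $\ff a$ when $a\in A_i$ and is $0$ otherwise; in particular $p_i=0$ whenever $a\notin A_i$. Were $a\notin A_i$ for all $i$, then all $p_i=0$ and $a=\sum_ip_i=0$, a contradiction. Hence $a\in A_{i_0}$ for some $i_0$, that is, $a\in\calx_{i_0}$, proving (a).

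For the connectivity (b): given $x\in\calx_i$, if $xb=0$ for every $b\in[\cala_i]$ then, as $[\cala_i]$ spans $A_i\ni x$, bilinearity forces $x\cdot A_i=0$ and in particular $x=x^2=0$, which is impossible. So each $x\in\calx_i$ is adjacent in $\gD$ to some $b\in[\cala_i]$; since $\gD_{[\cala_i]}$ is connected by Lemma \ref{lem components1}(4), the full subgraph $\gD_{\calx_i}$ is connected. Together with disjointness and the absence of cross-edges, this identifies the connected components of $\gD$ as exactly the sets $\calx_i$, completing (2).
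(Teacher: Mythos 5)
Your proof is correct, and for part (1) and the connectivity half of part (2) it runs essentially parallel to the paper's: both rest on $A_i$ being spanned by $[\cala_i]$ (Lemma \ref{lem closure}(8) together with \cite[Corollary 1.2]{HRS}) and on the cross-annihilation $[\cala_i][\cala_j]=0$ from Lemma \ref{lem components1}(3); your observation that $x[\cala_i]=0$ forces $x=x^2=0$ is a slightly slicker version of the paper's basis manipulation (writing $b=\ga u+x$ for a basis $\calb_i\subseteq[\cala_i]$ and deducing $0=ub=\ga u$). The genuine difference is your covering step: the paper's proof of (2) only argues about axes $u$ already assumed to lie in some $A_i$ and then concludes ``by (1)'' that $\calx_i$ is a connected component, leaving implicit why \emph{every} $\eta$-axis of $A$ belongs to some $A_i$ --- a point that is not automatic, since the sum $\sum_{i\in I}A_i$ need not be direct (as the paper itself remarks after Definition \ref{def components}, distinct components of unique type may intersect nontrivially), so one cannot simply compare coordinates of $a=\sum_i a_i$. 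Your eigenspace argument --- each $A_i$ is ${\rm ad}_a$-invariant because $za=za_i$ for $z\in A_i$, the restriction of the semisimple operator ${\rm ad}_a$ diagonalizes $A_i$ as $(A_i\cap A_1(a))\oplus(A_i\cap A_0(a))\oplus(A_i\cap A_\eta(a))$, and comparing projections onto $A_1(a)=\ff a$ shows some $A_i$ must contain $a$ --- supplies exactly this missing ingredient, using absolute primitivity in an essential way. So your write-up is, if anything, more complete than the paper's at this point; the cost is a short linear-algebra digression, and what it buys is a fully justified partition $\calx=\dot\bigcup_{i\in I}\calx_i$, without which the identification of the connected components of $\gD$ with the sets $\calx_i$ would not follow from the absence of cross-edges alone.
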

\begin{proof}
(1):\quad
By Lemma \ref{lem closure}(8), $A_i=N_{[\cala_i]}$ and by
\cite[Corollary 1.2]{HRS}, $A_i$ is spanned over $\ff$ by $[\cala_i]$.
By Lemma \ref{lem components1}(3), $A_iA_j=\{0\}$ for $i\ne j$.
Hence $A_i$ is an ideal of $A$.  Since the sum of $A_i$ contains $\cala$
we see that it equals $A$. 
\medskip

\noindent
(2):\quad
Let $u\in \calx_i$.  Suppose that $ua=0$ for all $a\in[\cala_i]$.
Since $A_i$ is spanned by $[\cala_i],$ there is a basis $\calb_i$
of $A_i$ such that $\calb_i\subseteq [\cala_i]$.  Hence there 
is $b\in\calb_i$ and $0\ne\ga\in\ff$ such that $b=\ga u+x,$
with $x\in{\rm Span}(\calb_i\sminus\{b\})$.  But then
$0=ub=\ga u,$ a contradiction.  It follows that $u$ is in the
same connected component of $[\cala_i]$ of $\gD$ (see Lemma \ref{lem components1}(6)).  By (1),
$\calx_i$ is a connected component of $\gD$.
\end{proof}

\begin{prop}\label{prop ta=tb}
Let $a, b\in\calx^{\eta}$ be two distinct  $\eta$-axes in $A$.
Assume that $\tau(a)=\tau(b)$.  Then $ab=0$
and
\begin{enumerate}
\item
for any  $\eta$-axis  
$c\in A$ exactly one the following holds:
\begin{itemize}
\item[(i)]
$ac=bc=0.$

\item[(ii)]
$\eta=\half,$ and for some $x\in\{a,b\}=\{x,y\},$ we have $N_{x,c}=B(\half,0)_{x,c}$ is $3$-dimensional,
$N_{y,c}=J_{y, c}$ (see Lemma \ref{lem 2dim}(1c)) and $N_{y,c}\subset N_{x,c}$.
Further $N_{x,c}$ contains an identity $1_{x,c}=a+b$.

\item[(iii)]
$\eta=\half,$ 
$N_{a,c}=N_{b,c}$ is $3$-dimensional
and contains an identity $1_{a,c}=a+b$.  
\end{itemize}
\item
If $d$ is an $\eta$-axis in $A$ such that $\tau(d)=\tau(a),$
then $d\in\{a,b\}$.
\end{enumerate}
\end{prop}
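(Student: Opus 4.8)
The plan is to argue by contradiction: assuming there is an $\eta$-axis $d \notin \{a,b\}$ with $\tau(d)=\tau(a)$, I will exhibit two \emph{different} identity elements for one and the same $3$-dimensional subalgebra, which is absurd. Throughout I take for granted the already-stated conclusions of this proposition, namely that $\tau$-equal distinct axes are orthogonal and that Part~(1) describes the possible products with a third axis.

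First I would record the orthogonality relations. Since $\tau(d)=\tau(a)=\tau(b)$ and $d,a,b$ are pairwise distinct, Lemma~\ref{lem 2b} gives $N_{a,d}=2B_{a,d}$ and $N_{b,d}=2B_{b,d}$, so $ad=bd=0$; together with the established $ab=0$ this makes $a,b,d$ pairwise orthogonal. Next I would check that $a$ has a neighbour in $\gD$. Because $a\in\calx^{\eta}$ we have $A_{\eta}(a)\ne 0$; choosing $0\ne v\in A_{\eta}(a)$ and expanding $v$ over the spanning set $[\cala]\subseteq\calx$ (by \cite[Corollary 1.2]{HRS}), the relation $av=\eta v\ne 0$ forces $ax\ne 0$ for some axis $x\ne a$, for otherwise $\eta v\in\ff a\cap A_{\eta}(a)=0$ and $v=0$. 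Fix such a neighbour $c$; note $ac\ne 0$ and, since $ab=ad=0$, that $c\notin\{a,b,d\}$.

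The heart of the argument is a uniqueness-of-identity comparison. Applying Part~(1) to the pair $(a,b)$ and the axis $c$: as $ac\ne 0$ we are in case~(ii) or case~(iii). In both cases $N_{a,b,c}$ is $3$-dimensional with identity $a+b$, and at least one of $N_{a,c},N_{b,c}$ is $3$-dimensional; call it $N_{p,c}$ with $p\in\{a,b\}$. Then $N_{p,c}\subseteq N_{a,b,c}$ with equal (finite) dimension, so $N_{p,c}=N_{a,b,c}$ has identity $1_{p,c}=a+b$. Now I apply Part~(1) to the pair $(p,d)$ — legitimate since $\tau(p)=\tau(d)$ and $p\ne d$ — and to the same $c$: as $pc\ne 0$ we are again in case~(ii) or~(iii), and because $N_{p,c}$ is already known to be $3$-dimensional, it must be the $3$-dimensional member, whence $N_{p,d,c}=N_{p,c}$ has identity $p+d$. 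A $3$-dimensional subalgebra has a unique identity, so $a+b=p+d$; this forces $b=d$ if $p=a$ and $a=d$ if $p=b$, contradicting $d\notin\{a,b\}$. Hence $d\in\{a,b\}$.

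The main obstacle is the bookkeeping inside cases~(ii) and~(iii): one must notice that \emph{whichever} of $N_{a,c}$ and $N_{b,c}$ is $3$-dimensional carries $a+b$ as its identity, and that this \emph{same} subalgebra reappears when the pair $(p,d)$ is fed into Part~(1), so that uniqueness of the identity can be invoked to equate $a+b$ with $p+d$. Everything else — the orthogonality of $a,b,d$ and the existence of a neighbour of $a$ — is routine.
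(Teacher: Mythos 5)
Your argument for part (2) is correct, and it is essentially the paper's own proof of that part, executed a little more carefully: the paper likewise fixes $c$ with $c^{\gt(a)}\ne c$ (equivalently $ac\ne 0$), invokes part (1) for the pair $(a,b)$ to land in a $3$-dimensional subalgebra with identity $a+b$, then applies part (1) again ``with $d$ in place of $b$'' and equates the two identities $a+d=1_{a,c}=a+b$. Your bookkeeping with $p\in\{a,b\}$ makes precise the paper's ``we may assume that $N_{a,c}$ is $3$-dimensional,'' which silently rests on the $a\leftrightarrow b$ symmetry afforded by $\gt(a)=\gt(b)$. The problem is everything before that. The statement consists of the claim $ab=0$, the trichotomy of part (1), and part (2); you explicitly ``take for granted'' the first two and prove only the third. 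That is a genuine gap, not a presentational one: part (1) is the substance of the proposition and occupies nearly all of the paper's proof, and your part (2) argument leans on precisely the facts it asserts --- that $\eta=\half$ once $ac\ne0$, that the relevant $3$-dimensional algebra has an identity at all, and that this identity is exactly $a+b$ (resp.\ $p+d$). Without those, the uniqueness-of-identity comparison has no foundation.

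For the record, the missing content runs as follows. $ab=0$ is immediate from Lemma \ref{lem 2b} (as you yourself note for $ad$ and $bd$). If $ac\ne0\ne bc$ and $\eta\ne\half$, then \cite[Prop.~4.8]{HRS} forces $N_{x,c}=B(\eta,\half\eta)_{x,c}$ for $x\in\{a,b\}$, whence Lemma \ref{lem 3ceta}(1d) gives $a^{\gt(c)}=c^{\gt(a)}=c^{\gt(b)}=b^{\gt(c)}$ and so $a=b$, a contradiction; hence $\eta=\half$. The trichotomy is then extracted from the subalgebra $V=N_{c,\,c^{\gt(a)}}\subseteq N_{a,c}\cap N_{b,c}$ via Lemma \ref{lem vsubb} and Lemma \ref{lem ab=0} (the latter supplying the key identification $1=a+b$), with separate treatment of the cases $\dim V=2$ versus $\dim V=3$ and $V=2B$ versus $V=J$. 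The one genuinely delicate configuration --- $V=J_{c,\,c^{\gt(a)}}$ with $N_{a,c}$ and $N_{b,c}$ both $3$-dimensional of type $B(\half,0)$ --- is eliminated by comparing the $\half$-eigenvectors $v_a-v_b$ under $\gt(a)=\gt(b)$ to deduce $1_{a,c}-a=1_{b,c}-b$, and then producing a nonzero subspace $W\subseteq N_{a,c}$ that $1_{a,c}$ simultaneously annihilates and fixes. None of these steps is routine, and none appears in your proposal; as written it proves only the final clause of the proposition, conditionally on the rest of it.
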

\begin{proof}
By Lemma \ref{lem 2b}, $N_{a,b}=2B_{a,b}$.
If $N_{a,c}=2B_{a,c},$ then $c=c^{\tau(a)}=c^{\tau(b)},$
so by Lemma \ref{lem 2b} again, $N_{b,c}=2B_{b,c}$.
Hence part (1i) holds.

So we may assume that both $N_{a,c}$ and $N_{b,c}$ are not of type $2B$.
If $\eta\ne\half,$ then by \cite[Prop.~4.8]{HRS},
$N_{x,c}=B(\eta,\half\eta)_{x,c},$ for $x\in\{a,b\}$.
By Lemma \ref{lem 3ceta}, $a^{\tau(c)}=c^{\tau(a)}=c^{\tau(b)}=b^{\tau(c)},$
and then $a=b,$ a contradiction.

Hence $\eta=\half$.
Let 
\[
V:=N_{c,c^{\tau(a)}}\subseteq N_{a,c}\cap N_{b,c}.
\]
Suppose that $N_{a,c}$ is $2$-dimensional. 
By Lemma \ref{lem 2b}, $c\ne c^{\tau(a)}$ so 
$N_{a,c}=V$.     If $N_{b,c}$ is $2$-dimensional,
then $N_{a,c}=V=N_{b,c}$ is spanned by $a$ and $b$.  So $N_{a,c}=N_{a,b}=N_{b,c}$
is of type $2B,$  a contradiction.
Hence $N_{a,c}=V\subset N_{b,c}$.
Since $a, b\in N_{b,c}$ and $ab=0,$
Lemma \ref{lem ab=0} implies that $1_{b,c}=a+b,$
and the last part of (1ii) holds.  
Also, by Lemma \ref{lem vsubb}, if $N_{b,c}=B(\half,\half)_{b,c},$ then $1_{b,c}\in N_{c, c^{\gt(b)}}=V=N_{a,c}$.
But then $b\in N_{c, c^{\gt(b)}},$ a contradiction.  Hence, by Lemma \ref{lem vsubb}
the first part of  
(1ii) holds.   

If $V$ is $3$-dimensional, then since $V=N_{c, c^{\tau(b)}}$
we see that $N_{a,c}=V=N_{b,c}$ and as above 
$1_{a,c}=a+b$ and the proposition holds.
 
Hence we may assume that $N_{a,c},\ N_{b,c}$ are $3$-dimensional and $V$ is $2$-dimensional.
We use Lemma  \ref{lem vsubb}.  We must consider $2$ cases.
 
Assume first that $V=2B_{c, c^{\tau(a)}}$.  Then $N_{x,c}=B(\half, \half)_{x,c},$
for $x\in\{a,b\}$.   By Lemma \ref{lem halfhalf}(2),
$c^{\tau(x)}=1_{x,c}-c,$ for $x\in\{a,b\}$.  Since $\tau(a)=\tau(b)$
we see that
\[
\e:=1_{a,c}=1_{b,c}.
\]
Also $x^{\tau(c)}=\e-x$ for $x\in\{a,b\}$.  We have $(\e-a)b=b$.  Since $\e-a$ is an axis $A$
this forces $\e-a=b$.  So we see that $N_{a,c}=N_{b,c}$
and $a+b$ is its identity element.

Assume next that $V=J_{c, c^{\tau(a)}}$.
Then, by Lemma \ref{lem vsubb}(i), $N_{a,c}=B(\half,0)_{a,c}$
and $N_{b,c}=B(\half,0)_{b,c}$.
We claim that $A_{\half}(a)=A_{\half}(b)$.  Indeed,
for any $v\in A$ we have $v\in A_{\half}(a)$ if and only if $v^{\tau(a)}=-v$.
and the same holds for $b$.  Since $\tau(a)=\tau(b)$
the claim follows.
 
Now by Theorem \ref{thm betaphi}(5) (since $\gvp_{x,c}=0,$ for $x\in\{a,b\}$),
\[\textstyle{
v_x:=\half  x+\half c+\gs_{x,c}\in (N_{x,c})_{\half}(x),\text{ for }x\in\{a,b\}.}
\]
Hence
\[\textstyle{
v_a-v_b= (\gs_{a,c}+\half a)-(\gs_{b,c}+\half b)\in A_{\half}(a).}
\]
Note now that $\tau(a)=\tau(b)$ fixes $v_a-v_b$.  But it also negates it.
Hence
\[\textstyle{
-\gs_{a,c}-\half a=-\gs_{b,c}-\half b}.
\]
Since $\pi_{x,c}=-\half$ for $x\in\{a,b\},$ we get: $\half 1_{a,c}-\half a=\half 1_{b,c}-\half b,$ or
\[
1_{a,c}-a=1_{b,c}- b.
\]
It follows that $1_{a,c}b=(1_{a,c}-a)b=(1_{b,c}-b)b=0$.  So $1_{a,c}b=0$.
Applying $\tau(c)$ we see that $1_{a,c}b^{\tau(c)}=0$.
But 
\[
\dim\big({\rm Span}\big(\{b, b^{\tau(c)}\}\big)\big)=\dim\big({\rm Span}\big(\{c, c^{\tau(b)}\}\big)\big)=\dim(V)=2,
\]
and both spaces live in the algebra $N_{b,c}$ of dimension $3$.
Hence $W:=V\cap {\rm Span}(\{b, b^{\tau(c)}\})\ne 0$ and $1_{a,c}$ annihilates $W$.
However $W\subset N_{a,c}$ and $1_{a,c}$ is the identity of this algebra,
and we finally reached our contradiction.
This proves (1).

Let $d$ be an $\eta$-axis in $A$ such that $\tau(d)=\tau(a)$
and $d\ne a$.  Since $a\in \calx^{\eta}$ also $d\in\calx^{\eta}$.  Let $c\in\calx$
such that $c^{\tau(a)}\ne c$.  This is possible since $\tau(a)\ne {\rm id}$.
Then $N_{a,c}\ne 2B_{a,c}$.  So either (1ii) or (1iii) hold and we may assume
that $N_{a,c}$ is $3$-dimensional and contains an identity $1_{a,c}=a+b$.
By (1ii) and (1iii) applied to $d$ in place of $b$ we see that 
also $a+d$ is the identity of $N_{a,c}$.
Hence $a+d=1_{a,c}=a+b,$ and $d=b$.
\end{proof}

\begin{prop}\label{prop e}
Let $\gD_{\cala}$ be as in Notation \ref{not delta}(3).  Assume there are distinct $a,b\in\cala^{\eta}$ such that 
$\tau(a)=\tau(b)$.  Then $\eta=\half$ and $a,b$ are contained in a connected component
$\calb$  of $\gD_{\cala}$.
 
Let $B=N_{\calb}$ be the subalgebra of $A$ generated by $\calb,$ and let
$\calc$ be the set of all  $\half$-axes in $B$. 
Then 
\begin{enumerate}
\item
$\calc$ is a connected component of $\gD;$

\item
$xa\ne 0\ne xb,$ for all  $x\in\calc\sminus\{a,b\};$

\item
$B$ contains an identity element $\e=a+b;$

\item
for any $x\in\calc$ such that   
$N_{a,x}$ is $3$-dimensional we have $\e=1_{a,x}$.
\end{enumerate}
\end{prop}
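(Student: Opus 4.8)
I would write $\e=a+b$ and reduce the entire statement to one assertion: the set $P:=\{x\in\calc\mid ax=bx=0\}$ is empty. Indeed, by Proposition \ref{prop ta=tb}(1) each $x\in\calc$ falls into exactly one of the cases (i)--(iii) there. In cases (ii) and (iii) the element $\e=a+b$ is the identity of the relevant $3$-dimensional subalgebra containing $x$, so $\e x=x$ and moreover $ax\ne0\ne bx$; case (i) is precisely $x\in P$ and gives $\e x=ax+bx=0$. Thus $\calc$ is the disjoint union of $Q:=\{x\in\calc\mid \e x=x\}$ (which contains $a$ and $b$) and $P$, with $\e x\in\{0,x\}$ throughout. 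Once $P=\emptyset$ we obtain (2) immediately, (4) by reading off case (ii)/(iii), and (3) because then $\e x=x$ for every $x\in[\calb]$, a spanning set of $B$ by Lemma \ref{lem closure}(8) and \cite[Corollary 1.2]{HRS}, so $\e$ is the identity of $B$.

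\textbf{Preliminaries and part (1).} By Proposition \ref{prop ta=tb}, $ab=0$. Since $\tau(a)\ne{\rm id}$ and the $\eta$-axes span $A$, some axis $c$ has $ac\ne0$; then $N_{a,c}\ne 2B_{a,c}$, so Proposition \ref{prop ta=tb}(1) puts us in case (ii) or (iii), forcing $\eta=\half$ and $bc\ne0$ as well. Hence $a$ and $b$ are both adjacent to $c$ in $\gD$. Were they in distinct components $\cala_1,\cala_2$ of $\gD_{\cala}$, then by Lemma \ref{lem component}(2) their components in $\gD$ would be the distinct sets $\calx_1,\calx_2$ of $\eta$-axes of $N_{\cala_1},N_{\cala_2}$, and $c$ would lie in both $\calx_1$ and $\calx_2$, a contradiction. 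So $a,b$ lie in a single component $\calb$ of $\gD_{\cala}$, and Lemma \ref{lem component}(2) identifies $\calc$, the $\half$-axes of $B=N_{\calb}$, with the corresponding component of $\gD$; this is (1).

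\textbf{The main step: no edge of $\gD$ joins $P$ to $Q$.} Because $\calc$ is connected and $Q\ne\emptyset$, the equality $P=\emptyset$ will follow once I rule out $q\in Q$, $p\in P$ with $qp\ne0$. From $\e q=q$ I get $\e=q+\e_0$ with $\e_0:=\e-q\in A_0(q)$, whence $\e_0 q=0$. Comparing $A_1(q)$-components in $0=\e p=qp+\e_0 p$ (using the fusion rules and $\e_0 q=0$) shows that the $A_1(q)$-component of $p$ vanishes. Feeding the eigenvector formulas of Theorem \ref{thm betaphi}(5) into this, the case $\dim N_{q,p}=2$ (necessarily $J_{q,p}$ by Lemma \ref{lem 2dim}) is impossible, since $A_0(q)\cap N_{q,p}=0$ would force $p\in A_{\half}(q)=\ff(q-p)$; and in the $3$-dimensional case a short computation shows that the $A_1(q)$-component of $p$ equals $\gvp_{q,p}\,q$, so $\gvp_{q,p}=0$ and $N_{q,p}=B(\half,0)_{q,p}$, which has identity $1_{q,p}$ by Theorem \ref{thm betaphi}(3). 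Comparing $A_0(q)$-components then yields $\e_0\,1_{q,p}=0$, i.e. $\e\,1_{q,p}=q$. Finally, since $\e p=0$ and $1_{q,p}$ is the identity of the $\tau(p)$-invariant subalgebra $N_{q,p}$ (as in Lemma \ref{lem closure}(8)), both $\e$ and $1_{q,p}$ are fixed by $\tau(p)$; applying $\tau(p)$ to $\e\,1_{q,p}=q$ gives $q^{\tau(p)}=q$, so $N_{q,p}=2B_{q,p}$ by Lemma \ref{lem 2b} and $qp=0$, a contradiction.

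\textbf{Obstacle.} I expect this last step to be the crux of the whole proposition: the eigenspace bookkeeping that pins down $\gvp_{q,p}=0$ and produces the relation $\e\,1_{q,p}=q$, together with the (short but decisive) observation that $\tau(p)$ simultaneously fixes $\e$ and the \emph{internal} identity $1_{q,p}$ of $N_{q,p}$. With $P=\emptyset$ established, parts (2)--(4) are immediate as described in the Overview; in particular (3) identifies $\e=a+b$ as the identity of $B$.
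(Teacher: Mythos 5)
Your proof is correct, and at the level of global strategy it coincides with the paper's: both arguments use the trichotomy of Proposition \ref{prop ta=tb}(1) to split $\calc$ into the set $Q$ where $\e x=x$ and the set $P$ where $ax=bx=0$, both reduce everything by connectivity to excluding a $P$--$Q$ edge (in the paper this appears as ruling out vertices at distance $2$ from $a$), and both finish part (3) via the spanning argument of Lemma \ref{lem identity}. Where you genuinely diverge is at the crux, the exclusion of an edge $qp$ with $\e q=q$, $\e p=0$, $qp\ne 0$. The paper's mechanism is a dimension trick: from $\e y=0=\e y^{\gt(x)}$ and $\e x=x$, $\e x^{\gt(y)}=x^{\gt(y)}$ it produces a nonzero subspace $W={\rm Span}\{y,y^{\gt(x)}\}\cap{\rm Span}\{x,x^{\gt(y)}\}$ of the at most $3$-dimensional $N_{x,y}$ on which multiplication by $\e$ is simultaneously zero and the identity. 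Your mechanism is instead an eigencomponent computation, and it checks out: by Theorem \ref{thm betaphi}(5) together with $\pi_{q,p}=\half\gvp_{q,p}-\half$, the $A_1(q)$-component of $p$ is indeed $\gvp_{q,p}\,q$, so comparing components in $0=\e p=qp+\e_0p$ (with $\e_0=\e-q\in A_0(q)$, using the fusion rules) correctly forces $\gvp_{q,p}=0$, eliminates the $2$-dimensional case, gives $p_0=1_{q,p}-q$ and hence $\e_0 1_{q,p}=0$, i.e.\ $\e\cdot 1_{q,p}=q$; and since $\gt(p)$ fixes $\e$ (because $\e\in A_0(p)$) and fixes $1_{q,p}$ (being an automorphism stabilizing $N_{q,p}$, as in Lemma \ref{lem closure}(8)), applying it yields $q^{\gt(p)}=q$, contradicting $qp\ne 0$ by Lemma \ref{lem 2b}. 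Your route is computationally heavier than the paper's $W$-trick but more uniform -- there is no case split over $u\in\{a,b\}$ or over the choice of an intermediate vertex, and $\e^{\gt(p)}=\e$ comes for free rather than through the identification $\e=1_{a,x}$ -- and it yields extra structural information en route, namely $N_{q,p}=B(\half,0)_{q,p}$ and the relation $\e\, 1_{q,p}=q$. Two cosmetic repairs: in the $2$-dimensional case write $N_\half(q)=\ff(q-p)$ rather than $A_\half(q)$ (it is the eigenspace inside $N_{q,p}$, which suffices since the ${\rm ad}_q$-components of $p\in N_{q,p}$ lie in $N_{q,p}$ by semisimplicity); and the existence of $c\in\cala$ with $ac\ne 0$ is better justified as in the paper -- if $c^{\gt(a)}=c$ for all $c\in\cala$ then $\gt(a)={\rm id}$ on the generated algebra -- since ``the $\eta$-axes span $A$'' is literally true only for the closure $[\cala]$. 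Neither point affects correctness.
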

\begin{proof}
Part (1) is Lemma \ref{lem component}(2).  
Since $a,b\in\cala^{\eta},$ there exists $c\in\cala$ such that $c^{\tau(a)}\ne c$
(indeed if $c^{\tau(a)}=c,$ for all $c\in\cala,$ it would follow that
$\tau(a)={\rm id}$ as $A$ is generated by $\cala$).
Hence $ac\ne 0$ and $bc\ne 0$.  Thus $a,b$ are in the same
connected component $\Delta_{\cala}$.

By Proposition \ref{prop ta=tb}, $N_{a,b}=2B_{a,b}$.
Let $d(\ ,\ )$
be the distance function on $\gD$.
Since $\calc$ is connected there exists 
$c\in \calc$ with $d(a,c)=1$. 
Thus $c\ne b,$ and by Proposition \ref{prop ta=tb},
$d(b,c)=1$ and  we may assume without loss
that $N_{a,c}$ is $3$-dimensional and
contains $1_{a,c}=a+b$.  Also $\eta=\half$.  Set
\[
\e=1_{a,c}=a+b.
\]
Consider the set
\[
\calc_1(a):=\{x\in\calc\mid d(a,x)=1\}.
\]
Replacing $c$ with $x\in\calc_1(a)$ in the above argument shows that 
there exists $u\in\{a,b\}$ such that $N_{u,x}$ is
$3$-dimensional and contains an identity $1_{u,x}=a+b=\e$. 
Hence $\e x=x,$ for all $x\in\calc_1(a)$.
Note also that by Proposition \ref{prop ta=tb}, $\calc_1(a)=\calc_1(b)$.

Let $y\in\calc\sminus\calc_1(a)$ be at distance $2$ from $a$ in $\gD$.  
Then $ay=0=by$ and we can find $x\in \calc_1(a)$ such that $d(x,y)=1$.
Thus by Proposition \ref{prop e} (without loss after perhaps interchanging $a$ and $b$), we have
$\e=1_{a,x}$ and $ay=0$.

Notice that by Proposition \ref{prop ta=tb}(1i), $(a+b)^{\tau(y)}=a+b,$
that is $\e^{\tau(y)}=\e$.  Also $N_{x,y}\ne 2B_{x,y}$.

Now $\e y=0$ and hence $\e y^{\tau(x)}=0$ (because $\e=1_{a,x}$ so $\e^{\tau(x)}=\e$).
Also $\e x=x,$ so $\e x^{\tau(y)}=x^{\tau(y)}$ (because $\e^{\tau(y)}=\e$).
Let $W:={\rm Span}\big(\{y,\ y^{\tau(x)}\}\big)\cap {\rm Span}\big(\{x,\ x^{\tau(y)}\}\big)$.
Since $N_{x,y}$ is at most $3$-dimensional, $W\ne 0$.  But the above
shows that multiplication by $\e$ both annihilates $W$ and acts as
the identity map on $W,$ a contradiction.

Hence $\calc_1(a)=\calc\sminus\{a, b\}$ and clearly $d(a,b)=2$
in $\calc$.  But now, as we saw above $\e x=x$ for
all $x\in\calc$.  By Lemma \ref{lem identity} (with $\calb$ in place
of $\cala$ and $N_{\calb}$ in place of $A$) we see that $\e$
is the identity of $B$.
\end{proof}
 
We can now prove the main result of this section

\begin{thm}\label{thm ta=tb}
Assume that $\gD_{\cala}$ is connected and that there are distinct $a,b\in\cala^{\eta}$ such that 
$\tau(a)=\tau(b)$.  Then $A=J(V,B)$ is a Jordan algebra of Clifford type.
\end{thm}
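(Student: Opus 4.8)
The plan is to reduce the statement to Theorem~\ref{thm jvb}. Since $\gD_{\cala}$ is connected, the connected component $\calb$ containing $a,b$ that is produced by Proposition~\ref{prop e} is all of $\cala$, and hence $B=N_{\calb}=N_{\cala}=A$. That same proposition gives $\eta=\half$ and tells us that $\e:=a+b$ is the identity element of $B=A$; moreover the set $\calc$ of all $\half$-axes of $B$ is the full set of $\half$-axes of $A$. Thus $a,b\in\cala$ are $\half$-axes with $a+b=\e$ the identity of $A$, and the only hypothesis of Theorem~\ref{thm jvb} remaining to check is condition $(*)$: that $v_a v_c\in\ff\e$ for every $c\in\cala$, where $v_c=c-\half\e$.

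To verify $(*)$ I would first record, using only that $\e$ is the identity and that $\eta=\half$, the identity $v_a v_c=(a-\half\e)(c-\half\e)=ac-\half a-\half c+\tfrac14\e=\gs_{a,c}+\tfrac14\e$. Hence it suffices to show $\gs_{a,c}\in\ff\e$ for all $c\in\cala$. The cases $c=a$ and $c=b$ are immediate: $v_a v_a=\tfrac14\e$ by the same computation, and $v_b=(\e-a)-\half\e=-v_a$, so $v_a v_b=-\tfrac14\e$. Now fix $c\in\cala\sminus\{a,b\}$. Then $c\in\calc\sminus\{a,b\}$, so Proposition~\ref{prop e}(2) gives $ac\ne0$, which excludes alternative (1i) of Proposition~\ref{prop ta=tb}. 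The heart of the argument is to dispatch the two surviving alternatives. In case (1iii), $N_{a,c}$ is $3$-dimensional with identity $1_{a,c}=\e$, so $\pi_{a,c}\ne0$ and Theorem~\ref{thm betaphi}(3) yields $\gs_{a,c}=\pi_{a,c}\,1_{a,c}=\pi_{a,c}\e\in\ff\e$. In case (1ii), either $N_{a,c}=B(\half,0)_{a,c}$ has identity $\e$, and the same computation applies, or $N_{a,c}=J_{a,c}$, in which case $\gs_{a,c}=0$ by Lemma~\ref{lem jab}(1). In every case $\gs_{a,c}\in\ff\e$, whence $v_a v_c\in\ff\e$ and $(*)$ holds.

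With $(*)$ established, Theorem~\ref{thm jvb} immediately gives $A=J(V,B)$ for a suitable vector space $V$ and symmetric bilinear form $B$, which is the desired conclusion. I expect the substantive step to be the second paragraph: the reduction in the first paragraph is essentially bookkeeping once Proposition~\ref{prop e} is invoked, whereas controlling $\gs_{a,c}$ for the axes $c$ far from $\{a,b\}$ relies on the precise trichotomy of Proposition~\ref{prop ta=tb} together with the uniform observation that each surviving configuration forces $\gs_{a,c}$ into the line $\ff\e$.
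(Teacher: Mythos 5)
Your proposal is correct and follows essentially the same route as the paper: both reduce to Theorem~\ref{thm jvb} via Proposition~\ref{prop e} (identity $\e=a+b$, no $2B$-pairs with $a$) and then verify $(*)$ by showing $\gs_{a,c}\in\ff\e$ case by case. The only cosmetic difference is that you organize the cases by the trichotomy of Proposition~\ref{prop ta=tb}, whereas the paper splits directly on $\dim N_{a,c}\in\{2,3\}$ using Lemma~\ref{lem 2dim} and Proposition~\ref{prop e}(4); the two case analyses are equivalent.
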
 
\begin{proof}
We show that the hypotheses of Theorem \ref{thm jvb} are satisfied.
By Proposition \ref{prop e}(3), $a+b=\e$ is the identity element of $A$.
Let $c\in\cala$.  Then clearly $v_av_c\in\ff\e,$ for $c\in\{a,b\}$.
Otherwise, by Proposition \ref{prop e}(2), $N_{a,c}$ is not $2B_{a,c}$.  
Also, as in equation \eqref{eq vv} (in the proof of Theorem \ref{thm jvb}), 
$v_av_c=\gs_{a,c}+\frac{1}{4}\e$.  If $N_{a,c}$ is $2$-dimensional,
then by Lemma \ref{lem 2dim}, $\gs_{a,c}=0$ (because $ac\ne 0$), and so $v_av_c\in \ff\e$.
If $N_{a,c}$ is $3$-dimensional, then by Proposition \ref{prop e}(4), $\e=1_{a,c},$
so by Theorem \ref{thm betaphi}(3), $\gs_{a,c}=\pi_{a,c}\e$ and again $v_av_c\in\ff\e$.
\end{proof}

\begin{Def}\label{def components}
\begin{enumerate}
\item
Let the ideals $A_i,$ $i\in I$ be as in Lemma \ref{lem component}.
We call $A_i$ the {\it components} of the algebra $A$.
Note that by Lemma \ref{lem component}(2), the components $A_i$
are independent of $\cala$.

\item
Let $A_i$ be a componet of the algebra $A$.  If $A_i$ is a Jordan
algebra of Clifford type we call $A_i$ a component of {\it Clifford type}.  Otherwise
(in the case where $\tau(x)\ne\tau(y)$ for distinct $x,y\in\calx\cap A_i$)
we call $A_i$ a component of {\it unique type}.

\item
Let $\calb\subseteq\calx^{\eta}$ be a closed set of $\eta$-axes ($\calb=[\calb]$).
Suppose that $\calb$ is contained in a connected component $\calx_i$ of $\gD$.
\begin{enumerate}
\item[(i)]
We say that $\calb$ is  of {\it non-unique type} if there exists distinct $a,b\in\calb$
such that $\tau(a)=\tau(b)$ (and then $\eta=\half$ and $A_i$
is a Jordan algebra of Clifford type).  
\item[(ii)]
We say that $\calb$ is of {\it C-unique type} if $A_i$
is a Jordan algebra of Clifford type and the map $b \mapsto\gt(b)$ is bijective
on $\calb$.  
\item[(iii)]
We say that $\calb$ is of {\it NC-unique type} if $A_i$
is a {\bf not} a Jordan algebra of Clifford type.  (And then
the  map $b \mapsto\gt(b)$ is necessarily bijective
on $\calb$.)
\end{enumerate}
\end{enumerate}
\end{Def}

\begin{remark}
Let $A_i$ be a component of $A$ of Clifford type.
Then $A_i\cap A_j=\{0\}$ for any component $A_j$
of $A$ with $A_j\ne A_i$.  Indeed, by Lemma 
\ref{lem component}(1), $A_iA_j=0,$ so since 
$A_i$ contains an identity element
we must have $A_i\cap A_j=\{0\}$.  If $A_i$
and $A_j$ are distinct components of $A$ of unique
type, then it may happen that $A_i\cap A_j\ne \{0\}$.
\end{remark}

We close this section with a theorem that summarizes some
of the results in this section.

\begin{thm}\label{thm a}
Let $\{\cala_i\mid i\in I\}$ be the set of connected components
of $\gD_{\cala}$. For each $i\in I,$ let $A_i=N_{\cala_i}$ and let
$\calx_i=A_i\cap\calx$.  Then
\begin{enumerate}
\item
$A=\sum_{i\in I}A_i$ is the sum of its ideals $A_i;$ 
\item
$A_iA_j=0,$ for distinct $i, j\in I;$
\item
$\{A_i\mid i\in I\}$ are the components of $A$ and   $\{\calx_i\mid i\in I\}$
are the connected components of $\gD.$
\item
Let $i\in I,$ then exactly one of the following holds:
\begin{enumerate}
\item[(i)]
$[\cala_i]$ is of non-unique type, so $A_i$ is a Jordan algebra of Clifford type.

\item[(ii)]
$[\cala_i]$ is of C-unique type.

\item[(iii)]
$[\cala_i]$ is of NC-unique type.
\end{enumerate}
\end{enumerate}
\end{thm}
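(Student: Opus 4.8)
The plan is to recognize this theorem as a consolidation of results already established in this section, so the work lies in assembling them and in verifying that the trichotomy in part (4) is both exhaustive and mutually exclusive.

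First I would dispatch parts (1)--(3). Parts (1) and (2) are immediate from Lemma \ref{lem component}(1), which supplies $A_iA_j=\{0\}$ for $i\neq j$ and hence that $A$ is the sum of its ideals $A_i$. For part (3), the assertion that the $A_i$ are the components of $A$ is Definition \ref{def components}(1) (they were \emph{defined} to be the components), while the assertion that $\{\calx_i\mid i\in I\}$ are precisely the connected components of $\gD$ is Lemma \ref{lem component}(2).

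The substance is in part (4). Fix $i\in I$ and work with the closed set $[\cala_i]$. I would first record the structural facts needed to invoke Theorem \ref{thm ta=tb} for $A_i$: by Lemma \ref{lem closure}(8) the set $[\cala_i]$ generates $A_i$; by Lemma \ref{lem components1}(4) the graph $\gD_{[\cala_i]}$ is connected; and since we have arranged $\cala=\cala^{\eta}$ (Remark \ref{rem calx1}), one checks via Lemma \ref{lem closure}(7) that $[\cala_i]^1=\cala_i^1=\emptyset$, so $[\cala_i]=[\cala_i]^{\eta}\subseteq\calx^{\eta}$. Thus $[\cala_i]$ is a connected generating set of $\eta$-axes for $A_i$ with no trivial Miyamoto involutions---exactly the hypothesis configuration of Theorem \ref{thm ta=tb}.

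Then I would split on whether the map $b\mapsto\tau(b)$ is injective on $[\cala_i]$. If it is not injective, there are distinct $a,b\in[\cala_i]$ with $\tau(a)=\tau(b)$, so $[\cala_i]$ is of non-unique type by definition; applying Theorem \ref{thm ta=tb} to $A_i$ (with $[\cala_i]$ in place of $\cala$) yields $A_i=J(V,B)$, a Jordan algebra of Clifford type, which is case (i). If the map is injective, then $[\cala_i]$ cannot be of non-unique type, and I would split once more: if $A_i$ is a Jordan algebra of Clifford type we are in the C-unique case (ii), and otherwise in the NC-unique case (iii), matching Definition \ref{def components}(3) verbatim. Exclusivity is then immediate: case (i) is separated from (ii) and (iii) by non-injectivity versus injectivity of $\tau$ on $[\cala_i]$, while (ii) is separated from (iii) by whether $A_i$ is of Clifford type. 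The parenthetical remark in Definition \ref{def components}(3)(iii)---that in the NC case the map is automatically injective---is precisely the contrapositive of Theorem \ref{thm ta=tb}, so no separate argument is needed for it. The only genuine care-point, rather than a real obstacle, is the bookkeeping in the non-injective case: I must ensure that the two coinciding axes lie in $[\cala_i]^{\eta}$ (not merely in $\calx^{\eta}$) and that $[\cala_i]$ legitimately plays the role of the generating set in Theorem \ref{thm ta=tb}. Once that transfer of hypotheses is in place, no computation remains.
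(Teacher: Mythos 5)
Your proposal is correct and follows the paper's own route: the paper proves parts (1)--(3) by citing Lemma \ref{lem component} and part (4) by citing Theorem \ref{thm ta=tb}, exactly the assembly you carry out. Your extra bookkeeping (Lemma \ref{lem closure}(7)(8), Lemma \ref{lem components1}(4), and the injective/non-injective case split against Definition \ref{def components}(3)) just makes explicit the hypothesis transfer that the paper leaves implicit.
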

\begin{proof}
Parts (1), (2) and (3) are Lemma \ref{lem component}.
Part (4) follows from Theorem \ref{thm ta=tb}.
\end{proof}

\section{The case where $A$ is a $3$-transposition algebra}
\label{sect 3trans}
Recall from subsection \ref{sub intro 3-trans} of the introduction
the notion of a $3$-transposition algebra with respect to a generating
set of $\eta$-axes.  The following theorem is taken from \cite{HRS}:

\begin{thm}[Theorem 5.4 in \cite {HRS}]\label{thm hrs}
Assume $\eta\ne\half$. Then $A$ is a $3$-transposition
algebra with respect to any subset $\calb\subseteq \calx$ that
generates $A$.
\end{thm}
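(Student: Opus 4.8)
The plan is to reduce the statement to the single inequality $|\tau(a)\tau(b)|\le 3$ for every pair of distinct $\eta$-axes $a,b\in\calx$, and then to obtain this by running through the short list of possible two-generated subalgebras $N_{a,b}$. Once we know that any two Miyamoto involutions attached to $\eta$-axes have product of order at most $3$, the set $D_\calb=\{\tau(b)\mid b\in\calb,\ \tau(b)\ne\mathrm{id}\}$ consists of $3$-transpositions for every generating $\calb\subseteq\calx$: replacing $\calb$ by its closure $[\calb]$ if necessary (still generating $A$ by Lemma \ref{lem closure}(8), and closed, so that $D_{[\calb]}$ is normal in $G_{[\calb]}$ as recorded in the introduction) makes $A$ a $3$-transposition algebra with respect to $\calb$. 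Thus the whole content lies in bounding $|\tau(a)\tau(b)|$.

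First I would classify $N_{a,b}$ for distinct $\eta$-axes $a,b$ under the hypothesis $\eta\ne\half$. If $N_{a,b}$ is $2$-dimensional, Lemma \ref{lem 2dim} leaves only the cases $N_{a,b}=2B_{a,b}$ (so $ab=0$) and, when $\eta=-1$, $N_{a,b}=3C(-1)^{\times}_{a,b}$; case (1c) of that lemma is excluded, as it forces $\eta=\half$. If $N_{a,b}$ is $3$-dimensional, then by \cite[Prop.~4.8]{HRS} (which applies precisely because $\eta\ne\half$) we must have $\gvp_{a,b}=\half\eta$, i.e.\ $N_{a,b}=B(\eta,\half\eta)_{a,b}=3C(\eta)_{a,b}$. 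Hence for $\eta\ne\half$ every two-generated subalgebra is one of $2B_{a,b}$, $3C(-1)^{\times}_{a,b}$, or $3C(\eta)_{a,b}$.

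Next I would treat the two resulting regimes. In the $2B$ case $ab=0$, so Lemma \ref{lem 2b} gives $a^{\tau(b)}=a$; applying $\tau$ and the identity $\tau(x^\rho)=\tau(x)^\rho$ yields $\tau(a)^{\tau(b)}=\tau(a)$, so $\tau(a)$ and $\tau(b)$ commute and $|\tau(a)\tau(b)|\le 2$. In the remaining cases $N_{a,b}\in\{3C(-1)^{\times}_{a,b},\,3C(\eta)_{a,b}\}$, Lemma \ref{lem 3c(-1)times}(4) and Lemma \ref{lem 3ceta}(1d) supply the crucial symmetry $a^{\tau(b)}=b^{\tau(a)}$ \emph{as an identity between elements of} $N_{a,b}\subseteq A$. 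The key step is to lift this to a relation in $G$: setting $c:=a^{\tau(b)}=b^{\tau(a)}$ and using $\tau(a^{\tau(b)})=\tau(a)^{\tau(b)}$ together with $\tau(b^{\tau(a)})=\tau(b)^{\tau(a)}$, the single element $\tau(c)$ equals both $\tau(b)\tau(a)\tau(b)$ and $\tau(a)\tau(b)\tau(a)$; hence the braid relation holds and $(\tau(a)\tau(b))^3=\mathrm{id}$, giving $|\tau(a)\tau(b)|\le 3$.

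The main obstacle — and presumably the gap in the original argument — is exactly this last lifting: the equality $a^{\tau(b)}=b^{\tau(a)}$ read off the multiplication table a priori lives only inside $N_{a,b}$, whereas one needs $(\tau(a)\tau(b))^3=\mathrm{id}$ as an automorphism of all of $A$. What makes the lift legitimate is that $a^{\tau(b)}$ and $b^{\tau(a)}$ are genuine elements of $A$, computed inside the $\tau(a)$- and $\tau(b)$-invariant subalgebra $N_{a,b}$, so that the well-definedness of the Miyamoto map and the relation $\tau(x^\rho)=\tau(x)^\rho$ propagate the braid relation to the global automorphism group. Combining the three cases gives $|\tau(a)\tau(b)|\in\{1,2,3\}$ for all $\eta$-axes, which is the $3$-transposition condition and completes the proof.
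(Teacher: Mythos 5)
Your proof is correct and takes essentially the same route as the paper's: the same reduction to bounding $|\tau(a)\tau(b)|$ for all distinct axes, the same classification of $N_{a,b}$ into $2B_{a,b}$, $3C(-1)^{\times}_{a,b}$, and $3C(\eta)_{a,b}$ via Lemma \ref{lem 2dim} and \cite[Proposition 4.8]{HRS}, and the same use of $a^{\tau(b)}=b^{\tau(a)}$ (Lemma \ref{lem 3c(-1)times}(4), Lemma \ref{lem 3ceta}(1d)) to conclude via the braid relation. The global lifting you carefully justify is already built into those lemmas' conclusions (through the identity $\tau(x^{\rho})=\tau(x)^{\rho}$ of Definition \ref{def miyamoto}(3)), so your extra care there, like your closure remark, only makes explicit what the paper's proof uses implicitly.
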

\begin{proof}
It suffices to show that $|\tau(a)\tau(b)|\in\{2,3\}$ for any $a, b\in\calx,$
with $a\ne b$ (note that by Proposition \ref{prop e}, $\tau(a)\ne\tau(b)$).  
If $N_{a,b}$ is $2$-dimensional then by Lemma \ref{lem 2dim},
$N_{a,b}$ is either $3C^{\times}(-1)$ (and $\eta=-1$ so $\charc(\ff)\ne 3$)
and so by Lemma \ref{lem 3c(-1)times}, $|\tau(a)\tau(b)|=3,$ or $N_{a,b}=2B_{a,b}$
and $|\tau(a)\tau(b)|=2$.  If $N$ is $3$-dimensional
then, by \cite[Proposition 4.8]{HRS}, $N=B(\eta,\half\eta)_{a,b},$
so by Lemma \ref{lem 3ceta}, $|\tau(a)\tau(b)|=3$.
\end{proof}

What then about the case $\eta=\half$\,?

Most of this section is devoted to the case where $A$ is a $3$-transposition algebra
with respect to a generating set of $\eta$-axes $\cala\subseteq\calx$.
By Theorem \ref{thm hrs} we may (and we will) assume that
\[\textstyle{
\eta=\half.}
\]
Thus by an axis in $A$ we mean a $\half$-axis.
Note that if $\charc(\ff)=3$ then $\half=-1$.

The Miyamoto involution set $D:=D_{[\cala]}$ is a normal subset of 
$3$-trans\-pos\-i\-tions
generating the group $G:=G_{[\cala]}$.    
Further,
we assume that
$D$ is a conjugacy class in $G$.  This implies that the graph $\gD_{\cala}$
is connected (see Notation \ref{not delta}(2)).  In particular $A$ is either
of Clifford type or of unique type (see Definition \ref{def components}(2)).

Our main result in this section is:

\begin{thm}\label{thm jordan ade}
Assume that $\charc(\ff)\ne 3,$ that $A$ is of Clifford type,
and that $A$ is a $3$-transposition algebra with respect to $\cala$. 
Then $G_{[\cala]}$ is a $3$-transposition group of $ADE$-type. $($See Definition \ref{def-ade}.$)$
\end{thm}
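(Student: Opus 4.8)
The plan is to work entirely inside the Clifford realization $A=J(V,B)$ and to translate the two defining conditions of $ADE$-type (Definition \ref{def-ade})---symplectic type, and the absence of a $D$-subgroup that is a central quotient of $\Weyl_2(\tilde D_4)$---into statements about the bilinear form $B$ on $V$. By Lemma \ref{lem jvb} every $\half$-axis has the shape $a=\half\e+u$ with $q(u)=\tfrac14$, and $\tau(a)$ fixes $\e$ and acts on $V$ as $-r_u$, where $r_u$ is the orthogonal reflection in $u$. Hence $G_{[\cala]}$ embeds in $\mathrm O(V,q)$, and for axes $a=\half\e+u$, $b=\half\e+w$ the product $\tau(a)\tau(b)$ acts on $V$ as $r_ur_w$. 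A direct $2\times2$ computation on $\mathrm{Span}\{u,w\}$ (equivalently, Lemmas \ref{lem 3chalfab} and \ref{lem 3chalfa1-b}) gives
\[
|\tau(a)\tau(b)|=3\iff B(u,w)=\pm\tfrac18,\qquad |\tau(a)\tau(b)|=2\iff B(u,w)=0 .
\]
Since $A$ is a $3$-transposition algebra, $|\tau(a)\tau(b)|\le 3$ for every pair of axes, so $B(u,w)\in\{0,\pm\tfrac18,\pm\tfrac14\}$ throughout, the value $\pm\tfrac14$ occurring exactly when the two axes determine the same involution.

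For symplectic type I would argue by contradiction. If $D$ is not of symplectic type there are $d,e,f\in D$ with $\langle e,f\rangle\cong\Symm3$ and $d$ commuting with none of $e,f,efe$. Writing $e=\tau(c_0)$, $f=\tau(c_1)$ with $c_i=\half\e+v_i$, the third axis of the $\Symm3$ is $c_2=c_0^{\tau(c_1)}=\half\e+v_2$, and applying $-r_{v_1}$ yields $v_2=-v_0+8B(v_0,v_1)v_1=-v_0\pm v_1$; thus $\epsilon_0v_0+\epsilon_1v_1+\epsilon_2v_2=0$ with each $\epsilon_i=\pm1$. If $d=\tau(c_3)$ with $c_3=\half\e+v$, then $d$ failing to commute with each of the three forces $B(v,v_i)=\pm\tfrac18$ for $i=0,1,2$. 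Pairing the linear relation with $v$ gives $\sum_i\epsilon_iB(v,v_i)=0$, i.e.\ a sum of three terms each equal to $\pm\tfrac18$ vanishes; this is impossible precisely because $\charc(\ff)\ne3$. Hence $D$ is of symplectic type. (This is the step where the hypothesis $\charc(\ff)\ne3$ is genuinely used.)

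For the exclusion of $\Weyl_2(\tilde D_4)$ I would follow the strategy of Lemma \ref{lem-o8}(2), replacing positive definiteness by a dimension count. Suppose $H=\langle D\cap H\rangle$ is a central quotient of $\Weyl_2(\tilde D_4)$; by Proposition \ref{prop-aweyl} it is generated by a $\tilde D_4=K_{1,4}$ set $\tau(c_0),\dots,\tau(c_4)$ with $c_i=\half\e+v_i$, and $H$ preserves $W:=\mathrm{Span}\{v_0,\dots,v_4\}$, so $\dim_\ff W\le5$. As in Lemma \ref{lem-o8}(2) (via Proposition \ref{prop-klein}), $H$ contains eight pairwise commuting transpositions $\tau(b_1),\dots,\tau(b_8)$, say $b_i=\half\e+w_i$ with $q(w_i)=\tfrac14$ and $B(w_i,w_j)=0$ for $i\ne j$. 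The point will be that each $w_i$ lies in $W$: since $\tau(b_i)=-r_{w_i}$ preserves $W$ one has $w_i\in W$ or $w_i\in W^{\perp}$, and if $w_i\in W^{\perp}$ then $\tau(b_i)$ acts on $W$ as $-\mathrm{id}$, which is impossible because $\tau(b_i)$ is $H$-conjugate to a generator $\tau(c_j)$ and $\tau(c_j)$ fixes the nonzero vector $v_j\in W$. Thus $w_1,\dots,w_8\in W$ are mutually orthogonal and anisotropic, hence linearly independent, contradicting $\dim_\ff W\le5$. Therefore no such $H$ exists, and combining the two parts shows $G_{[\cala]}$ is of $ADE$-type.

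The main obstacle I anticipate is the $\Weyl_2(\tilde D_4)$ exclusion: over an arbitrary field the positive-definiteness argument of Lemma \ref{lem-o8}(2) is unavailable, so the crux is to confine the eight commuting transpositions to the five-dimensional space $W$. This is exactly what the conjugacy/fixed-vector argument above is designed to supply; verifying that a transposition preserving $W$ must have its defining vector in $W$ or in $W^{\perp}$, and that the latter is incompatible with $H$-conjugacy to a generator, is the delicate point.
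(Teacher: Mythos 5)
Your proposal is correct, but it proves the theorem by a genuinely different route from the paper. The paper stays inside the axial-algebra formalism: symplectic type is its Theorem \ref{thm bijection}, proved by the arithmetic of the two-generated subalgebra types $-\frac38$ and $-\frac18$ via the formula $\pi_{a,b^{\tau(c)}}=8\pi_{a,c}\pi_{b,c}+2\pi_{a,c}-\pi_{a,b}+2\pi_{b,c}$ of Lemma \ref{lem ta}, together with Lemma \ref{lem a/8}; the $\Weyl_2(\tilde{D}_4)$ exclusion is done by invoking Lemma \ref{lem-d42} to extract a $K_{3,2}$-diagram subset of $[\cala]$, normalizing axes by the swaps $c\leftrightarrow\e-c$ so that the algebras $N_{b_1,a_i}$ become $3C(\half)$, and contradicting Lemma \ref{lem nx}(2). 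You instead work in the Clifford model $J(V,B)$ throughout and translate everything into Gram-matrix data: your dictionary $|\tau(a)\tau(b)|=3\iff B(u,w)=\pm\frac18$, $|\tau(a)\tau(b)|=2\iff B(u,w)=0$ is exactly the content of Lemmas \ref{lem 3chalfab}, \ref{lem 3chalfa1-b} and \ref{lem tatb2} (your $\beta=-\frac18$ and $\beta=+\frac18$ are the paper's types $-\frac38$ and $-\frac18$), so your parity argument --- three terms $\pm\frac18$ summing to zero forces $\charc(\ff)=3$ --- is a compressed, sign-theoretic version of the paper's symplectic-type computation, and it correctly locates where $\charc(\ff)\ne 3$ is used. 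Your $\tilde{D}_4$ exclusion is the more novel part: it adapts the paper's Lemma \ref{lem-o8}(2) argument (which the paper applies only to $\Weyl(E_8)$ acting on $\qq^8$, where positive definiteness is available) to an arbitrary field, replacing definiteness by two facts you verify correctly: mutually orthogonal anisotropic vectors are linearly independent over any field of characteristic not $2$, and the eight commuting transpositions have their defining vectors in $W={\rm Span}\{v_0,\dots,v_4\}$ because each is $H$-conjugate to a generator, so its $(+1)$-eigenspace on $V$ is $\ff v_j^h\subseteq W$ (this also disposes of your $w_i\in W^{\perp}$ case directly, since $2v_j^h\ne 0$). Two small inaccuracies, neither affecting validity: the existence of a generating $\tilde{D}_4$-diagram set inside $H$ should be cited to Proposition \ref{prop-klein}(1) (or the construction in Lemma \ref{lem-d42}(1)), not to Proposition \ref{prop-aweyl}, which is the converse implication; and in your dictionary $B(u,w)=+\frac14$ forces $a=b$ (same axis) rather than merely the same involution, with the degenerate cases $B(u,w)=\pm\frac14$, $u\ne\pm w$ excluded because they yield $J_{a,b}$ or $B(\half,0)$ with $\tau(a)\tau(b)$ of infinite (or characteristic) order, contradicting the $3$-transposition hypothesis --- a point worth making explicit since your arguments quietly rely on it when converting ``commuting'' to $B=0$. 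What your approach buys is a uniform, case-free argument over any field with $\charc(\ff)\ne 2,3$; what the paper's buys is independence from the explicit Clifford realization machinery and reusable intermediate lemmas (\ref{lem a/8}, \ref{lem nx}).
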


By Remark \ref{rem calx1} we may ignore the axes in $\cala\cap\calx^1$.

\begin{lemma}\label{lem triangle}
Let $G$ be a $3$-transposition group generated
by a conjugacy class of $3$-transpositions $D$.
Let $r, s, t\in D$ be three distinct involutions such that $|uv|=3,$ for all distinct $u, v\in\{r, s, t\}$.
Set $H=\lan r,s,t\ran,$ then
\begin{enumerate}
\item
if $r^s=t,$ then $H\cong S_3;$

\item
if $|r^st|=3,$ then $H\cong  3^2:2$ or $3^{1+2}:2;$ 

\item if $|r^st|=2,$ then $H\cong S_4$.
\end{enumerate}
\end{lemma}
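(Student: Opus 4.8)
The plan is to reduce everything to the two order-three elements $u:=rs$ and $v:=st$ of $H$ and to the single quantity $|r^{s}t|$. First I would record three elementary facts. Since $|rs|=|st|=3$, both $u$ and $v$ have order $3$, and a one-line computation gives $r^{s}t=(srs)t=(sr)(st)=u^{-1}v$, using $u^{-1}=sr$. Next, $\langle r,s\rangle\cong S_{3}$ contains exactly the three involutions $r$, $s$ and $r^{s}=srs=rsr$, and the hypotheses $|tr|=|ts|=3$ say that $t$ commutes with neither $r$ nor $s$. Finally, as $H$ is generated by the involutions $r,s,t$, the subgroup $H':=\langle u,v\rangle$ has index at most $2$ in $H$ and satisfies $u^{r}=r(rs)r=sr=u^{-1}$. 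Because $r^{s},t\in D$, the $3$-transposition hypothesis forces $|r^{s}t|=|u^{-1}v|\in\{1,2,3\}$, and these three values are exactly the three cases of the lemma; so I would argue case by case on $|u^{-1}v|$.

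The case $|r^{s}t|=1$ is immediate: then $r^{s}=t\in\langle r,s\rangle$, so $H=\langle r,s\rangle\cong S_{3}$, which is (1). For the case $|r^{s}t|=2$, set $b:=u^{-1}v$, an involution, and write $v=ub$; then $u,b$ satisfy $u^{3}=b^{2}=(ub)^{3}=1$, so $H'=\langle u,b\rangle$ is a quotient of the spherical von Dyck (triangle) group $\langle x,y\mid x^{3}=y^{2}=(xy)^{3}=1\rangle\cong A_{4}$. As $r^{s}\ne t$ gives $u\ne v$, the group $H'$ is neither trivial nor of order $3$, hence $H'\cong A_{4}$ and $|H|=24$. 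Since $H$ is generated by the triangle $\{r,s,t\}$ it is a $3$-transposition quotient of the affine Weyl group $\Weyl(\tilde{A}_{2})$, and the only such quotient of order $24$ is $\Weyl_{2}(\tilde{A}_{2})\cong S_{4}$ (equivalently, $r$ inverts the $3$-element $u$, which is an outer automorphism of $A_{4}$, so the split extension $H=A_{4}\rtimes\langle r\rangle$ is $S_{4}$); this is (3).

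The remaining case $|r^{s}t|=3$ is where the real work lies. Here $t$ commutes with none of the three involutions $r$, $s$, $r^{s}$ of $\langle r,s\rangle\cong S_{3}$, so the triple $(t;r,s)$ violates the symplectic condition inside $H=\langle D\cap H\rangle$; thus $H$ is not of symplectic type. By the equivalence recalled in the introduction (following \cite{CH,JH1,HSo}), a $3$-transposition group that is not of symplectic type contains a subgroup $\langle D\cap H\rangle$ with nine transpositions and of order $18$ or $54$; applied to the triangle-generated $H$ this yields $|D\cap H|=9$ and $|H|\in\{18,54\}$. Since $S_{3}$ and $S_{4}$ are of symplectic type, the only $3$-transposition groups generated by such a triangle with $|H|\in\{18,54\}$ are $3^{2}:2$ and $3^{1+2}:2$, giving (2).

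The main obstacle is precisely this last case: controlling $H$ when $|r^{s}t|=3$. A purely computational route would have to analyse $\langle u,v\mid u^{3}=v^{3}=(u^{-1}v)^{3}=1\rangle$, the Euclidean $(3,3,3)$ von Dyck group $\cong \zz^{2}\rtimes \zz/3$, which is infinite, and then show that the full $3$-transposition relations on $D$ collapse its translation lattice modulo $3$ (producing $3^{2}$ or $3^{1+2}$). I would instead bypass this by invoking the symplectic-type dichotomy as above, which delivers finiteness and the isomorphism type at once; the only point requiring care is to confirm that the violating triple $(t;r,s)$ generates $H$ itself, so that the nine-transposition subgroup produced by the dichotomy is all of $H$.
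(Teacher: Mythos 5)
For calibration: the paper itself does not prove this lemma at all --- its entire proof is the single line ``See, e.g., \cite[4.1, p.~2526]{HSo}.'' Measured against that, your cases (1) and (3) genuinely do more than the paper. They are also correct: the identity $r^st=(srs)t=u^{-1}v$, the identification of $H'=\lan u,b\ran$ with a quotient of the spherical von Dyck group $\lan x,y\mid x^3=y^2=(xy)^3=1\ran\cong A_4$, the parity argument excluding the quotients $1$ and $\zz/3$ (since $b$ has order $2$), and the observation that $r$ inverts $u$ while no inner automorphism of $A_4$ inverts an element of order $3$ (the two classes of $3$-elements of $A_4$ are not fused), so that $r\notin H'$ and $H=H'\rtimes\lan r\ran\cong \Symm{4}$, all check out.

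Case (2), however, has a real gap, and it sits exactly where you placed your caveat. The equivalence quoted in the paper's introduction is an \emph{existence} statement: if the symplectic condition fails, then the group \emph{contains some} subgroup $K=\lan D\cap K\ran$ with $|D\cap K|=9$, i.e.\ $|K|\in\{18,54\}$. Applied to $H$ this bounds nothing: a priori $H'=\lan u,v\ran$ is a quotient of the infinite Euclidean group $\zz^2\rtimes\zz/3$, and the existence of a subgroup $K\le H$ of order $18$ or $54$ is perfectly compatible with $H$ being strictly larger, even infinite; the step ``this yields $|H|\in\{18,54\}$'' is a non sequitur. Your proposed repair --- ``confirm that the violating triple generates $H$ itself'' --- is vacuous, since $\{r,s,t\}$ generates $H$ by definition; what you actually need is that the witness subgroup produced by the dichotomy can be taken to be $\lan d,e,f\ran$ for the violating triple. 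That constructive form is not part of the statement you quoted; it is precisely how the equivalence is \emph{proved} in \cite{CH,JH1,HSo}, i.e.\ it is the content of case (2) itself. So as written your case (2) is either circular or collapses into the same citation the paper makes.

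The good news is that the computation you hoped to avoid is much easier than you feared: no $3$-transposition relations are needed beyond the case hypothesis itself. The four relations $u^3=v^3=(uv)^3=(u^{-1}v)^3=1$ (the last one is exactly $|r^st|=3$) already present a group of order $27$, namely the Burnside group $B(2,3)\cong 3^{1+2}$: in $\zz^2\rtimes\zz/3$ the element $u^{-1}v$ is a translation whose rotates generate the full translation lattice $T$, so the normal closure of $(u^{-1}v)^3$ is $3T$ and the quotient $(T/3T)\rtimes\zz/3$ has order $27$ and exponent $3$. Hence $|H'|\in\{1,3,9,27\}$, and orders $1,3$ are excluded because $v\in\{u,u^{-1}\}$ would force $t=r^s$ or $t=r$. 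Since $H'$ has odd order, $r\notin H'$, so $|H|=2|H'|\in\{18,54\}$; and from $u^r=u^{-1}$ and $v^r=uv^{-1}u^{-1}$ you see that in the abelian case $r$ inverts $H'$, giving $H\cong 3^2{:}2$, while in the nonabelian case $H\cong 3^{1+2}{:}2$, exactly as in (2). With that paragraph in place of the appeal to the dichotomy, your proposal becomes a complete, self-contained proof of the lemma, which the paper itself does not supply.
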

\begin{proof}
See, e.g., \cite[4.1, p.~2526]{HSo}.
\end{proof}
 
\begin{lemma}\label{lem tatb2}
Let $a,b$ be two axes in $A$ and assume that  $|\tau(a)\tau(b)|=2$.
Set $N=N_{a,b}$. Then either
\begin{enumerate}
\item[(i)]  
$A$ is of unique type and 
$N=2B_{a,b}$.   

\item[(ii)]
$A$ is of Clifford type and $N=B(\half,\half)_{a,b}.$
\end{enumerate}
\end{lemma}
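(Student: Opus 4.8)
The plan is to analyze the two-generated algebra $N = N_{a,b}$ under the hypothesis $|\tau(a)\tau(b)| = 2$, and to show that the two possibilities for $N$ correspond exactly to whether $A$ is of unique type or of Clifford type. First I would apply Lemma \ref{lem tatbk}(2) with $k = 2$, which tells us that either $N = 2B_{a,b}$ or $N = B(\half,\half)_{a,b}$ with $\tau(1_{a,b}-x) = \tau(x)$ for $x \in \{a,b\}$. So the structural dichotomy for $N$ comes essentially for free from the earlier analysis; the real content is to match each case to the correct global type of $A$.

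\textbf{The case $N = 2B_{a,b}$.} In this case $ab = 0$ and I would want to conclude that $A$ is of unique type. The key tool is Proposition \ref{prop ta=tb} together with the classification of component types in Definition \ref{def components}(2). If $A$ were of Clifford type, then by Theorem \ref{thm ta=tb} (or the discussion following Lemma \ref{lem jvb}) there would exist distinct axes with equal Miyamoto involution, and moreover $A$ would contain an identity $\e = a' + b'$ for paired axes. I would argue that in the Clifford case the generic two-generated subalgebra between two axes at distance $1$ is forced to be $3$-dimensional with an identity (via Proposition \ref{prop e}), which is incompatible with $N = 2B_{a,b}$ being $2$-dimensional unless $ab = 0$ reflects the components being disconnected---but we have assumed $\gD_\cala$ connected. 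Thus a $2B$ configuration with $|\tau(a)\tau(b)|=2$ can only occur when $A$ is of unique type.

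\textbf{The case $N = B(\half,\half)_{a,b}$.} Here I would show $A$ must be of Clifford type. From Lemma \ref{lem tatbk}(2) we already have $\tau(1_{a,b}-x) = \tau(x)$ for $x \in \{a,b\}$. Since $N = B(\half,\half)_{a,b}$ is $3$-dimensional and contains an identity $1_{a,b}$ (by Lemma \ref{lem halfhalf}), the element $1_{a,b} - a$ is a genuine $\half$-axis in $A$ distinct from $a$ with $\tau(1_{a,b}-a) = \tau(a)$. This is precisely the hypothesis of Theorem \ref{thm ta=tb}: the graph $\gD_\cala$ is connected and there are two distinct axes with equal Miyamoto involution. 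Therefore $A = J(V,B)$ is a Jordan algebra of Clifford type, giving case (ii).

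\textbf{The main obstacle.} The delicate point, and the step I expect to require the most care, is verifying that in the $B(\half,\half)$ case the axis $1_{a,b}-a$ genuinely lies in $\cala^\eta$ (equivalently that $\tau(1_{a,b}-a) \neq \mathrm{id}$) so that Theorem \ref{thm ta=tb} applies with two distinct axes in $\cala^\eta$; and conversely, in the $2B$ case, ruling out that the component could still secretly be of Clifford type. The cleanest resolution is to note that the two cases are mutually exclusive by dimension ($2$ versus $3$) and that Proposition \ref{prop e} forces any Clifford-type component with connected $\gD$ to produce $3$-dimensional subalgebras with identity between adjacent axes, so the $2B$ case is incompatible with Clifford type. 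Thus the dichotomy of Lemma \ref{lem tatbk}(2) aligns perfectly with the unique/Clifford dichotomy of Definition \ref{def components}.
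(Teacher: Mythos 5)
Your reduction via Lemma \ref{lem tatbk}(2) and your treatment of the $B(\half,\half)$ case match the paper: there $\tau(1_{a,b}-x)=\tau(x)$ produces two distinct axes with the same Miyamoto involution, so $A$ cannot be of unique type and must be of Clifford type (your worry about $\tau(1_{a,b}-a)=\mathrm{id}$ is a non-issue, since $N$ has a nonzero $\half$-eigenspace by Lemma \ref{lem halfhalf}(3)). The genuine gap is in the $2B$ case. You argue that if $A$ were of Clifford type, Proposition \ref{prop e} would force two-generated subalgebras between \emph{adjacent} axes to be $3$-dimensional with identity, and that this is incompatible with $N=2B_{a,b}$ unless the component is disconnected. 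But $a$ and $b$ with $ab=0$ are precisely \emph{not} adjacent in $\gD$, so Proposition \ref{prop e} says nothing about $N_{a,b}$; and $2B$ pairs genuinely occur inside a connected Clifford-type algebra --- for every axis $a$ of $J(V,B)$ the pair $\{a,\,\e-a\}$ satisfies $a(\e-a)=0$ while $\gD$ can remain connected through other axes. So your claimed incompatibility of $2B$ with Clifford type is false as stated, and connectedness of $\gD_{\cala}$ does not rescue it.

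What actually excludes Clifford type in the $2B$ case is the hypothesis $|\tau(a)\tau(b)|=2$, i.e.\ $\tau(a)\neq\tau(b)$, combined with the short computation that is the paper's entire argument: if $A$ is of Clifford type with identity $\e$ and $ab=0$, then $(\e-a)b=\e b-ab=b$, and since $\e-a$ is an absolutely primitive axis, $b\in A_1(\e-a)=\ff(\e-a)$; comparing idempotents gives $b=\e-a$, whence $\tau(b)=\tau(\e-a)=\tau(a)$, contradicting $|\tau(a)\tau(b)|=2$. Your proposal never makes this deduction, and the substitute reasoning (dimension count plus distance-one subalgebras plus connectedness) fails at the point indicated; as written, the $2B$ case of the lemma is not proved.
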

\begin{proof}
This is Lemma \ref{lem tatbk}(2).  
Note that if $N=B(\half,\half)_{a,b},$ then 
we must have $\tau(1_{a,b}-x)=\tau(x),$ for $x\in\{a,b\},$
so $A$ is of Clifford type.  Note further that if $A$ is of Clifford type
and $ab=0,$ then $(\e-a)b=b$.  But since $\e-a$ is an
axis in $A$ we must have $b=\e-a,$ so $\gt(a)=\gt(b)$. 
\end{proof}

\begin{lemma}\label{lem tatb3}
Let $a, b$ be two distinct axes in $A$ and set $N=N_{a,b}$.
Then 
\[\tag{$*$}
|\tau(a)\tau(b)|=3,
\]
if and only if one of the
following holds
\begin{itemize}
\item[(i)]
$\dim(N)=2$ and $N=3C(-1)^{\times}_{a,b}$ $($so $\charc(\ff)=3).$

\item[(ii)]
$\dim(N)=3,$  $N$ contains no identity element and $N=3C(-1)_{a,b}$  $($so $\charc(\ff)=3).$

\item[(iii)]
$\dim(N)=3,$ $N$ contains an identity element  $1_{a,b}$
and $N=3C(\half)_{a,b}=B(\half,\frac{1}{4})$ $($so $\charc(\ff)\ne 3)$.  We then have
\[\textstyle{
ab=-\frac{3}{8}1_{a,b}+\half a+\half b.}
\]
\item[(iv)]
$A$ is of Clifford type and $N_{a,b}=B(\half,\frac{3}{4})$
$($so when $\charc(\ff)=3,$ $N_{a,b}=B(-1,0))$.
We then have
\[\textstyle{
ab=-\frac{1}{8}1_{a,b}+\half a+\half b.}
\] 
Further, if $\charc(\ff)\ne 3,$
then  $N=3C(\half)_{x,\ (1_{a,b}-y)},$ while if $\charc(\ff)=3,$
then $N_{x,\, (1_{a,b}-y)}=3C(-1)^{\times}_{x, (1_{a,b}-y)},$ for $\{x,y\}=\{a,b\}.$ 
\end{itemize}
\end{lemma}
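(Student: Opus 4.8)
The plan is to prove the two implications separately, using throughout that $\eta=\half$ in this section and that $3$ is odd, so that the dihedral bookkeeping of Lemma~\ref{lem tatbk}(1) applies directly. For the forward implication I would first observe that $(*)$ forces $\tau(a)\ne\tau(b)$ and hence $ab\ne 0$: otherwise $N=2B_{a,b}$ and $|\tau(a)\tau(b)|=2$ by Lemma~\ref{lem 2b}. Thus $N$ is $2$- or $3$-dimensional. If $\dim N=2$, then Lemma~\ref{lem 2dim3} (applied with $|\tau(a)\tau(b)|\in\{2,3\}$) leaves only $N=3C(-1)^{\times}_{a,b}$, which forces $\eta=\half=-1$ and $\charc(\ff)=3$; this is case (i).

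If $\dim N=3$, then $N=B(\half,\gvp_{a,b})_{a,b}$, and I would feed the odd order $k=3$ into Lemma~\ref{lem tatbk}(1) with $t=\tau(a)$, $s=\tau(b)$, $g=(st)^{(k-1)/2}=\tau(b)\tau(a)$. That lemma offers two alternatives. In the first, $a^{g}=b$, so $a^{\tau(b)\tau(a)}=b$ and, applying $\tau(a)$, $a^{\tau(b)}=b^{\tau(a)}$; Corollary~\ref{cor halfeta} then gives $\gvp_{a,b}=\half\eta=\frac{1}{4}$ and $N=3C(\eta)_{a,b}$. I would split on the characteristic using $\pi_{a,b}=-\half(\eta+\eta^{2})=-\frac{3}{8}$ (Lemma~\ref{lem 3ceta}): when $\charc(\ff)\ne 3$ we have $\pi_{a,b}\ne 0$, an identity exists, and $ab=-\frac{3}{8}1_{a,b}+\half a+\half b$ by Theorem~\ref{thm betaphi}(4), which is (iii); when $\charc(\ff)=3$ we have $\pi_{a,b}=0$, so $N=3C(-1)_{a,b}$ has no identity, which is (ii). In the second alternative of Lemma~\ref{lem tatbk}(1), $N$ has an identity $1_{a,b}$ with $a^{g}=1_{a,b}-b$ and $\tau(1_{a,b}-x)=\tau(x)$ for $x\in\{a,b\}$; applying $\tau(a)$ to $a^{\tau(b)\tau(a)}=1_{a,b}-b$ yields $a^{\tau(b)}+b^{\tau(a)}=1_{a,b}$, which is condition (iii) of Lemma~\ref{lem 3chalfa1-b}. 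That lemma then delivers $\gvp_{a,b}=\frac{3}{4}$, $\pi_{a,b}=-\frac{1}{8}$ and $ab=-\frac{1}{8}1_{a,b}+\half a+\half b$, i.e. $N=B(\half,\frac{3}{4})_{a,b}$; and since $\tau(1_{a,b}-a)=\tau(a)$ exhibits two distinct axes with the same Miyamoto involution, Theorem~\ref{thm ta=tb} (together with Definition~\ref{def components}) shows the component containing $a,b$ is of Clifford type. This is case (iv).

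For the converse, cases (i)--(iii) close off by already-proved computations: (i) is Lemma~\ref{lem 3c(-1)times}(4), where $N\ne 2B_{a,b}$ forces $\tau(a)\ne\tau(b)$ (Lemma~\ref{lem 2b}) so that $(\tau(a)\tau(b))^{3}=\mathrm{id}$ gives order exactly $3$; and (ii), (iii) are $N=3C(\eta)_{a,b}$, where Lemma~\ref{lem 3ceta}(1d) already records $|\tau(a)\tau(b)|=3$. The one delicate case is (iv): there $\gvp_{a,b}=\frac{3}{4}$ gives condition (ii) of Lemma~\ref{lem 3chalfa1-b}, but Lemma~\ref{lem 3chalfa1-b}(3) only narrows $|\tau(a)\tau(b)|$ to $\{3,6\}$, and part (4) pins it to $3$ precisely when $\tau(x)=\tau(1_{a,b}-x)$. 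I expect the main obstacle to be extracting this last equality from the hypothesis that $A$ is of Clifford type. To do so I would use that in $A=J(V,B)$ an axis $a=\half\e+u$ has partner $\e-a=\half\e-u$ with $\tau(\e-a)=\tau(a)$ (Lemma~\ref{lem jvb}), that for a $3$-dimensional $N_{a,b}$ the local identity $1_{a,b}$ coincides with the global $\e$ by Proposition~\ref{prop e}(4) (so $1_{a,b}-a=\e-a$), and that Proposition~\ref{prop ta=tb}(2) confirms $\{a,\e-a\}$ are exactly the axes with Miyamoto involution $\tau(a)$. With $\tau(1_{a,b}-a)=\tau(a)$ in hand, Lemma~\ref{lem 3chalfa1-b}(4) yields $|\tau(a)\tau(b)|=3$, completing the proof.
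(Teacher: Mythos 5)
Your proposal is correct and follows essentially the same route as the paper: the forward direction rests on Lemma \ref{lem tatbk}(1) with $k=3$, whose two alternatives are resolved by Corollary \ref{cor halfeta} together with Lemma \ref{lem 3ceta} (splitting by characteristic to separate (ii) from (iii)) and by Lemma \ref{lem 3chalfa1-b}, while the converse cites Lemmas \ref{lem 3c(-1)times}, \ref{lem 3ceta}/\ref{lem 3chalfab} and \ref{lem 3chalfa1-b}, exactly as in the paper. Your one deviation is a welcome sharpening rather than a different route: where the paper settles the converse in case (iv) with a bare citation of Lemma \ref{lem 3chalfa1-b} (which on its own only yields $|\tau(a)\tau(b)|\in\{3,6\}$), you explicitly extract $\tau(x)=\tau(1_{a,b}-x)$ from the Clifford-type hypothesis via Lemma \ref{lem jvb} and Proposition \ref{prop e}(4) (identifying $1_{a,b}$ with the global identity $\e$), and similarly you justify the Clifford-type claim in the forward direction through Theorem \ref{thm ta=tb}, making explicit two steps the paper leaves to the reader.
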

\begin{proof}
Suppose that $(*)$ holds. Then by Lemma \ref{lem tatbk} with $k=3$ we either have:
\begin{enumerate}
\item
$a^{\tau(b)}=b^{\tau(a)},$ or 

\item
$\dim(N)=3,$ $N$
contains an identity element $1_{a,b},$ 
$1_{a,b}-x$ is an axis in $A,$ 
$\tau(1_{a,b}-x)=\tau(x)$  
and $1_{a,b}-x=y^{\tau(x)\tau(y)},$ for $\{x,y\}=\{a,b\}$.
\end{enumerate}
Suppose that (1) holds.  Then (i), (ii) or (iii) hold by Corollary \ref{cor halfeta},
and Lemma \ref{lem 3chalfab}.  Also
use Lemma \ref{lem 3ceta}(2) if $\charc(\ff)=3$ (because $\half=-1$ if $\charc(\ff)=3$).

Suppose that (2) holds.  Then (iv) holds by Lemma \ref{lem 3chalfa1-b}.
Finally if (i) holds then $(*)$ holds by Lemma \ref{lem 3c(-1)times},
if (ii) or (iii) hold then $(*)$ holds by Lemma \ref{lem 3ceta} and Lemma \ref{lem 3chalfab}.
Finally, if (iv) holds, then $(*)$ holds by Lemma
\ref{lem 3chalfa1-b}.  
\end{proof}

\begin{cor}\label{cor tatb3}
Let $a,b\in A$ be two distinct axes and suppose 
$|\tau(a)\tau(b)| =3$.   Then either
\begin{enumerate}
\item[(i)]
$A$ is of unique type and $N_{a,b}=3C(\half)_{a,b}$.

\item[(ii)]
$\charc(\ff)=3$ and $N_{a,b}=3C(-1)^{\times}_{a,b}$. 

\item[(iii)]
$A$ is of Clifford type, $\charc(\ff)\ne 3$ and $N_{a,b}=3C(\half)_{a,b}$. 

\item[(iv)]
$A$ is of Clifford type and $N_{a,b}=B(\half,\frac{3}{4})_{a,b}.$
\end{enumerate}
\end{cor}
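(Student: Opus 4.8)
The plan is to reduce the statement to Lemma~\ref{lem tatb3}, which, under the hypothesis $|\tau(a)\tau(b)|=3$, already sorts the two-generated algebra $N=N_{a,b}$ into its four cases (i)--(iv); it then remains only to re-organise those cases according to the type of $A$. Recall that, under the standing assumptions of this section, $\gD_{\cala}$ is connected, so that $A$ is either of unique type or of Clifford type (Definition~\ref{def components}(2)). This is exactly the bookkeeping carried out in the proof of Lemma~\ref{lem tatb2} for the parallel case $|\tau(a)\tau(b)|=2$, and I would model the write-up on that short proof.

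Three of the four cases of Lemma~\ref{lem tatb3} transfer at once. Case~(i) gives $\charc(\ff)=3$ and $N=3C(-1)^{\times}_{a,b}$, which is conclusion~(ii). Case~(iv) already asserts that $A$ is of Clifford type and that $N=B(\half,\frac{3}{4})_{a,b}$, which is conclusion~(iv). Case~(iii) gives $\charc(\ff)\ne 3$ and $N=3C(\half)_{a,b}$; by the dichotomy above, $A$ is then either of unique type, giving~(i), or of Clifford type, giving~(iii). In each of these there is nothing further to check.

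The one case needing a genuine argument is Lemma~\ref{lem tatb3}(ii), where $\charc(\ff)=3$, the algebra $N$ is three-dimensional with no identity element, and $N=3C(-1)_{a,b}$. Since $\half=-1$ in characteristic~$3$, we have $3C(-1)_{a,b}=3C(\half)_{a,b}$, so all that is left is to show that $A$ is of unique type, which then yields conclusion~(i); I expect this to be the main point of the proof. I would rule out Clifford type directly: were $A=J(V,B)$ of Clifford type with identity $\e$, then $v_cv_d\in\ff\e$ for all axes $c,d$, so by equation~\eqref{eq vv} we would have $\gs_{a,b}=v_av_b-\frac14\e\in\ff\e$; as $N$ is three-dimensional, Theorem~\ref{thm betaphi}(1) forces $\gs_{a,b}\ne 0$, whence $\e\in N$ and $\e$ is an identity element of $N$, contradicting the hypothesis of case~(ii). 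Hence $A$ is of unique type and~(i) holds, which completes the matching of all four cases.
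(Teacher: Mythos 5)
Your proposal is correct, and its skeleton---reduce everything to the four cases of Lemma~\ref{lem tatb3} and then sort them by the unique/Clifford dichotomy coming from the connectivity of $\gD_{\cala}$---is exactly the paper's; the routine matchings (Lemma~\ref{lem tatb3}(i) $\to$ (ii), (iii) $\to$ (i) or (iii), (iv) $\to$ (iv), and $3C(-1)_{a,b}=3C(\half)_{a,b}$ when $\charc(\ff)=3$) are as in the paper. Where you genuinely diverge is the one substantive step: ruling out Clifford type in case (ii) of Lemma~\ref{lem tatb3}, where $N_{a,b}$ is $3$-dimensional with no identity element. The paper argues structurally: since $\tau(a)=\tau(\e-a)$ in a Clifford-type algebra, Proposition~\ref{prop e}(4) gives (after perhaps interchanging $a$ and $\e-a$) that $\e=1_{a,b}$ whenever $N_{a,b}$ is $3$-dimensional, and then the observation that $3C(-1)$ has no identity forces $\charc(\ff)\ne 3$ in conclusion (iii). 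You instead compute directly in the Clifford model $J(V,B)$: every axis has the form $\half\e+u$ with $u\in V$, so $v_av_b\in\ff\e$, whence by equation~\eqref{eq vv} $\gs_{a,b}=v_av_b-\frac{1}{4}\e\in\ff\e$, and $\gs_{a,b}\ne 0$ by Theorem~\ref{thm betaphi}(1), so $\e\in N_{a,b}$ acts as an identity of $N_{a,b}$, contradicting case (ii). This is in effect a rerun of the opening computation in the proof of Theorem~\ref{thm jvb}, and it buys you a self-contained, more elementary verification that also sidesteps the paper's slightly delicate ``after perhaps interchanging $a$ and $\e-a$''; the paper's citation of Proposition~\ref{prop e} is shorter given the structural machinery already in place, and has the advantage of not appealing to the explicit Clifford presentation. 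Both arguments ultimately rest on the same fact: in a Clifford-type component, every $3$-dimensional subalgebra $N_{a,b}$ contains the global identity $\e$.
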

\begin{proof}
Assume that $A$ is of Clifford type.
Let $\e$ be the identity element $A$.
Since $\tau(a)=\tau(\e-a),$ 
Proposition \ref{prop e} implies that we may assume (after
perhaps interchanging $a$ and $\e-a$) that  
$\e$ is the identity element of $N_{a,b}$.  
Since $3C(-1)_{x,y}$ does not contain an identity element we must have 
$\charc(\ff)\ne 3$ in (iii).  The rest of the lemma 
follows from Lemma \ref{lem tatb3}.
\end{proof}
 
\begin{Def}\label{def 1/8}
Assume $\eta=\half$. Let $a,b\in\calx$.
If $ab=-\frac{3}{8}1_{a,b}+\half a+\half b$
we will say that
$N_{a,b}$ is of type $-\frac{3}{8}$ 
while if  $ab=-\frac{1}{8}1_{a,b}+\half a+\half b$
we will say that
$N_{a,b}$ is of type $-\frac{1}{8}$.  Notice that
if $|\tau(a)\tau(b)|=3$ and
$A$ is of Clifford type, 
then, by Corollary \ref{cor tatb3}, $N_{a,b}$
is necessarily of type $-\frac{3}{8}$ or of type $-\frac{1}{8}$.
(Indeed when $\charc(\ff)=3$ and $N_{a,b}$ is of type $-\frac{3}{8},$
then $N_{a,b}=3C(-1)^{\times}_{a,b}$.)
\end{Def}

\begin{lemma}\label{lem a/8}
Assume that $A$ is of Clifford type, and let $a,b,c\in\calx$ be
three distinct axes.  Then  
we have
\begin{enumerate}
\item
if $N_{x,y}$ is of type $-\frac{3}{8}$ for all distinct $x,y\in\{a,b,c\}$
and $\lan\tau(a),\tau(b), \tau(c)\ran$ is {\em not} isomorphic to $S_3,$ then $N_{a,b^{\tau(c)}}=J_{a,b^{\tau(c)}}$.
In particular, if $\charc(\ff)\ne 3,$ then  $|\tau(a)\tau(b^{\tau(c)})|\notin\{2, 3\};$

\item
if $N_{a,b}$ and $N_{a,c}$ are  of type $-\frac{3}{8}$ and $N_{b,c}$ is
of type  $-\frac{1}{8},$ then  $N_{a,b^{\tau(c)}}=B(\half,\half)_{a, b^{\tau(c)}}$.
In particular, $|\tau(a)\tau(b^{\tau(c)})|=2$.
\end{enumerate}
\end{lemma}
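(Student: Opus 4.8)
The plan is to work entirely inside the explicit model $A=J(V,B)$, which is available because $A$ is of Clifford type, and to translate the hypotheses and conclusions into statements about the bilinear form $B$ on $V$. Writing the three axes as $a=\half\e+u$, $b=\half\e+v$, $c=\half\e+w$ with $q(u)=q(v)=q(w)=\frac14$ (Lemma \ref{lem jvb}), a one-line computation in $J(V,B)$ shows that for any two distinct axes $x=\half\e+x_0$, $y=\half\e+y_0$ one has
\[
xy=\left(B(x_0,y_0)-\tfrac14\right)\e+\half x+\half y .
\]
When $N_{x,y}$ is $3$-dimensional (which is the case for both relevant types, by Corollary \ref{cor tatb3}) this forces $x_0,y_0$ independent and lets one solve for $\e$, so $\e=1_{x,y}$; comparing with Definition \ref{def 1/8} I get the dictionary: $N_{x,y}$ is of type $-\frac38$ iff $B(x_0,y_0)=-\frac18$, and of type $-\frac18$ iff $B(x_0,y_0)=\frac18$. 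The second ingredient is the closed form for the Miyamoto image: since $\tau(c)$ fixes $\e$ and acts on $V$ as minus the reflection in $w^{\perp}$ (Lemma \ref{lem jvb}(3)),
\[
b^{\tau(c)}=\half\e+\bigl(8B(v,w)\,w-v\bigr),\qquad
B\!\left(u,\,8B(v,w)w-v\right)=8B(v,w)B(u,w)-B(u,v).
\]

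For part (1), the hypothesis gives $B(u,v)=B(u,w)=B(v,w)=-\frac18$, so the displayed inner product equals $8(-\frac18)(-\frac18)-(-\frac18)=\frac14$. By the first display this means $a\,b^{\tau(c)}=\half a+\half b^{\tau(c)}$, a nonzero product, so by Lemma \ref{lem 2dim} the algebra $N_{a,b^{\tau(c)}}$ equals $J_{a,b^{\tau(c)}}$ as soon as $a\neq b^{\tau(c)}$. The only point needing care is excluding $a=b^{\tau(c)}$: this equality reads $u=8B(v,w)w-v=-v-w$, i.e.\ $u+v+w=0$, and under that relation $u^{\tau(c)}=8B(u,w)w-u=v$, so $a^{\tau(c)}=b$ and hence $\tau(a)^{\tau(c)}=\tau(b)$. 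Since all three pairs have $|\tau(x)\tau(y)|=3$ (the standing context of type $-\frac38$), Lemma \ref{lem triangle}(1) then forces $\langle\tau(a),\tau(b),\tau(c)\rangle\cong S_3$. Contrapositively, the hypothesis $\langle\tau(a),\tau(b),\tau(c)\rangle\not\cong S_3$ gives $a\neq b^{\tau(c)}$, so $N_{a,b^{\tau(c)}}=J_{a,b^{\tau(c)}}$. The ``in particular'' clause then follows from Lemma \ref{lem 2dim3} (equivalently Lemma \ref{lem jab}(5)): a $J$-algebra has $|\tau(a)\tau(b^{\tau(c)})|\notin\{2,3\}$ once $\charc(\ff)\neq3$.

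For part (2), the hypotheses give $B(u,v)=B(u,w)=-\frac18$ and $B(v,w)=\frac18$, so the inner product is $8(\frac18)(-\frac18)-(-\frac18)=0$. Hence $a\,b^{\tau(c)}=-\frac14\e+\half a+\half b^{\tau(c)}$, and since $\e$ lies in $N_{a,b^{\tau(c)}}$ (solve for it from this product) and is its identity, Remark \ref{rem halfhalf} identifies $N_{a,b^{\tau(c)}}=B(\half,\half)_{a,b^{\tau(c)}}$. Finally Lemma \ref{lem halfhalf}(4) gives $|\tau(a)\tau(b^{\tau(c)})|\in\{2,4\}$, while the Clifford structure gives $\tau(a)=\tau(\e-a)=\tau(1_{a,b^{\tau(c)}}-a)$, so Lemma \ref{lem halfhalf}(5) pins the order down to $2$.

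The only genuinely delicate step is the bookkeeping in part (1) that converts the algebraic degeneracy $a=b^{\tau(c)}$ into the group-theoretic exclusion $\langle\tau(a),\tau(b),\tau(c)\rangle\cong S_3$; once the two computational dictionaries (the $B$-value $\leftrightarrow$ type correspondence and the closed form for $b^{\tau(c)}$) are in place, the remainder is routine arithmetic with $\frac18$'s.
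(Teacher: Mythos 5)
Your proof is correct, but it takes a genuinely different route from the paper's. The paper proves both parts model-free: it invokes Lemma \ref{lem ta}(2), whose closed formula
\[
\pi_{a,b^{\tau(c)}}=8\pi_{a,c}\pi_{b,c}+2\pi_{a,c}-\pi_{a,b}+2\pi_{b,c}
\]
is derived purely from the fusion rules plus the existence of an idempotent $\e$ acting as the identity on $a,b,c$ with $xy=\pi_{x,y}\e+\half x+\half y$; evaluating at $(\pi_{a,b},\pi_{a,c},\pi_{b,c})=(-\frac38,-\frac38,-\frac38)$ gives $0$ (part (1), then quoting Lemma \ref{lem 2dim3} for the order statement), and at $(-\frac38,-\frac38,-\frac18)$ gives $-\frac14$ (part (2), then quoting Remark \ref{rem halfhalf}). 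Your coordinate computation in $J(V,B)$ --- the dictionary type $-\frac38\leftrightarrow B(x_0,y_0)=-\frac18$, type $-\frac18\leftrightarrow B(x_0,y_0)=\frac18$, together with $b^{\tau(c)}=\half\e+8B(v,w)w-v$ --- re-derives exactly the same identity inside the model, so the two arguments are arithmetically equivalent; indeed your coefficient $B\bigl(u,8B(v,w)w-v\bigr)-\frac14$ reproduces the paper's $\pi$-formula. What your version buys is an explicit treatment of the degeneracy $a=b^{\tau(c)}$ in part (1): you show it forces $u+v+w=0$, hence $a^{\tau(c)}=b$ and $\lan\tau(a),\tau(b),\tau(c)\ran\cong S_3$ via Lemma \ref{lem triangle}(1), so the non-$S_3$ hypothesis excludes precisely this case. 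The paper's proof never mentions this possibility (and nowhere else uses the $S_3$ hypothesis in part (1)), so your bookkeeping makes explicit a point the paper leaves silent; note that in part (2) the degeneracy is automatically impossible, since $a=b^{\tau(c)}$ would give $a=ab^{\tau(c)}=-\frac14\e+a$. One blemish: your parenthetical claim that both types force $N_{x,y}$ to be $3$-dimensional (via Corollary \ref{cor tatb3}) fails when $\charc(\ff)=3$, where type $-\frac38$ means $N_{x,y}=3C(-1)^{\times}_{x,y}$, which is $2$-dimensional with $\gs_{x,y}=0$ and no identity; but your dictionary survives unchanged, because in characteristic $3$ one has $-\frac38=0$ and $\frac14=-\frac18$, so $\gs_{x,y}=0$ still yields $B(x_0,y_0)=-\frac18$ as a field identity, and all your subsequent arithmetic is characteristic-free.
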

\begin{proof}
By Lemma \ref{lem ta} we have
\[
\pi_{a, b^{\tau(c)}}=8\pi_{a,c}\pi_{b,c}+2\pi_{a,c}-\pi_{a,b}+2\pi_{b,c}.
\]
\smallskip

\noindent
(1):\quad In this case $\pi_{x,y}=-\frac{3}{8},$ for all distinct $x,y\in\{a,b,c\},$
so $\pi_{a, b^{\tau(c)}}=0$.  Hence, by Lemma \ref{lem ta}(2), the first part of (1) holds.  The second
part follows from Lemma \ref{lem 2dim3}.
\smallskip

\noindent
(2):\quad In this case $\pi_{a,b}=\pi_{a,c}=-\frac{3}{8}$ and $\pi_{b,c}=-\frac{1}{8}$
so $\pi_{a, b^{\tau(c)}}=-\frac{1}{4}$.  Now (2) follows from Remark \ref{rem halfhalf}.
\end{proof}
 
\begin{lemma}\label{lem nx}
Suppose that $\charc(\ff)\ne 3,$ that $A$ is of Clifford type, and that 
$A$ is a $3$-transposition algebra with respect to $\cala$.  
Let $\e$ be the identity element of $A$. Then
\begin{enumerate}
\item
Suppose that $\calb=\{a,b,c\}\subset [\cala]$ is a set of size $3$ such that\linebreak
$\lan \tau(a),\tau(b),\tau(c)\ran$ is {\em not} isomorphic to $S_3$   and $|\tau(x)\tau(y)|=3,$ for all distinct $x,y\in\calb$.
Then either $N_{x,y}$ is of type $-\frac{1}{8}$ for all distinct
$x,y\in\calb,$ or there exists distinct $x,y\in\calb$ such that
$N_{x,y}$ is of type $-\frac{1}{8}$ and both $N_{x,z}$ and  $N_{y,z}$ are
type $-\frac{3}{8},$ where $\{x,y,z\}=\{a,b,c\}.$ 

\item
Suppose that $\calb=\{a,b,c,d\}\subset [\cala]$ is such that
\begin{enumerate}
\item
$\big\lan\tau(x)\mid x\in\{a,b,c,d\}\big\ran$ is {\em not} isomorphic to $S_4.$
\item
$|\tau(a)\tau(c)|=2=|\tau(b)\tau(d)|.$

\item
$|\tau(x)\tau(b)|=3=|\tau(x)\tau(d)|,$ for $x\in\{a,c\}.$   
\end{enumerate}
(Thus $\calb$ has diagram $\tilde{A}_3$ (see the definition of a diagram in \S4)).
Then the following {\em does not hold}:
\begin{itemize}
\item[$(*)$]
both $N_{a,b},\ N_{a,d}$ have the same type,
and both $N_{c,b},\ N_{c,d}$ have the same type.
\end{itemize} 
\end{enumerate}
\end{lemma}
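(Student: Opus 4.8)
The plan is to reduce both parts to a single ``forbidden value'' principle powered by the multiplication formula of Lemma \ref{lem ta}. First I would record that, since $A=J(V,B)$ is of Clifford type, any two distinct axes $x=\half\e+u$ and $y=\half\e+w$ satisfy $xy=(B(u,w)-\tfrac14)\e+\half x+\half y$, so $\gs_{x,y}=\pi_{x,y}\e$ with $\pi_{x,y}=B(u,w)-\tfrac14$. Consequently the hypotheses of Lemma \ref{lem ta} hold for \emph{every} triple of distinct axes, and I may freely use
\[
\pi_{x,\,y^{\tau(z)}}=8\pi_{x,z}\pi_{y,z}+2\pi_{x,z}+2\pi_{y,z}-\pi_{x,y},
\]
the left-hand side being the $\e$-coefficient of $x\,y^{\tau(z)}$. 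The principle I would isolate is: \emph{if $x,y\in[\cala]$ are axes with $\pi_{x,y}=-\half$, then $\tau(x)=\tau(y)$.} Indeed $|\tau(x)\tau(y)|\in\{1,2,3\}$, and Lemma \ref{lem tatb2} (the value $2$ gives $\pi=-\tfrac14$) together with Corollary \ref{cor tatb3} (the value $3$ gives $\pi\in\{-\tfrac38,-\tfrac18\}$) rule out everything except $|\tau(x)\tau(y)|=1$.

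For part (1) I would note that each edge $N_{x,y}$ is of type $-\tfrac38$ or $-\tfrac18$ (Corollary \ref{cor tatb3}, Definition \ref{def 1/8}); the two alternatives in the statement are exactly the cases of three, respectively one, type $-\tfrac18$ edge, so it suffices to exclude the cases of zero and of exactly two such edges. If all three edges are of type $-\tfrac38$, then as $\lan\tau(a),\tau(b),\tau(c)\ran\not\cong\Symm{3}$, Lemma \ref{lem a/8}(1) forces $N_{a,b^{\tau(c)}}=J_{a,b^{\tau(c)}}$, whence $|\tau(a)\tau(b^{\tau(c)})|=\infty$ by Lemma \ref{lem jab}\,---\,impossible since $b^{\tau(c)}\in[\cala]$ and $D$ is a set of $3$-transpositions. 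If exactly two edges are of type $-\tfrac18$, I would name their common vertex $a$, so $\pi_{a,b}=\pi_{a,c}=-\tfrac18$ and $\pi_{b,c}=-\tfrac38$; the formula then gives $\pi_{a,b^{\tau(c)}}=-\tfrac12$, and the principle yields $\tau(a)=\tau(b)^{\tau(c)}\in\lan\tau(b),\tau(c)\ran\cong\Symm{3}$, against $\lan\tau(a),\tau(b),\tau(c)\ran\not\cong\Symm{3}$.

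For part (2) I would assume $(*)$, writing $\pi_{a,b}=\pi_{a,d}=p$ and $\pi_{c,b}=\pi_{c,d}=q$ with $p,q\in\{-\tfrac38,-\tfrac18\}$, and $\pi_{a,c}=\pi_{b,d}=-\tfrac14$ (Lemma \ref{lem tatb2}). The decisive move is to rotate $d$ by the commuting involution $\tau(a)\tau(c)$ and compare it with the opposite vertex $b$, computing $\pi_{b,d^{\tau(a)\tau(c)}}$ by two applications of Lemma \ref{lem ta}: I get $\pi_{b,d^{\tau(a)}}=8p^2+4p+\tfrac14$ (which is $-\tfrac18$ for both admissible $p$), then $\pi_{d^{\tau(a)},c}=\pi_{d,c^{\tau(a)}}=-\tfrac12-q$ (using that $\tau(a)$ is an automorphism), and finally
\[
\pi_{b,d^{\tau(a)\tau(c)}}=8q\bigl(-\tfrac12-q\bigr)+2q+2\bigl(-\tfrac12-q\bigr)+\tfrac18=-8q^2-4q-\tfrac78=-\tfrac12 ,
\]
which is $-\tfrac12$ for \emph{both} admissible values of $q$ and independently of $p$. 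By the principle $\tau(b)=\tau(d)^{\tau(a)\tau(c)}$, so $\tau(d)\in\lan\tau(a),\tau(b),\tau(c)\ran$; but $\{a,b,c\}$ has diagram $A_3$ (since $|\tau(a)\tau(b)|=|\tau(b)\tau(c)|=3$ and $|\tau(a)\tau(c)|=2$), so $\lan\tau(a),\tau(b),\tau(c)\ran\cong\Symm{4}$ by Proposition \ref{prop-weyl}(1), forcing $\lan\tau(a),\tau(b),\tau(c),\tau(d)\ran\cong\Symm{4}$ and contradicting the hypothesis. I expect the main obstacle to be precisely this two-step computation: recognizing that conjugation by the diagonal involution $\tau(a)\tau(c)$ is the right thing to do, and checking that condition $(*)$ is exactly what makes the forbidden value $-\tfrac12$ emerge regardless of $p$ and $q$.
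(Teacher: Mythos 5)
Your proof is correct, and (especially in part (2)) it takes a genuinely different route from the paper. Your ``forbidden value'' principle is sound: in Clifford type $\gs_{x,y}=\pi_{x,y}\e$ for \emph{every} pair of axes, so Lemma \ref{lem ta} applies to all triples, and for distinct axes in $[\cala]$ the order $|\tau(x)\tau(y)|$ lies in $\{1,2,3\}$, with order $2$ forcing $\pi_{x,y}=-\frac{1}{4}$ (Lemmas \ref{lem tatb2} and \ref{lem halfhalf}) and order $3$ forcing $\pi_{x,y}\in\{-\frac{3}{8},-\frac{1}{8}\}$ (Corollary \ref{cor tatb3}, using $\charc(\ff)\neq 3$), so $\pi_{x,y}=-\half$ indeed pins down $\tau(x)=\tau(y)$. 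I checked your arithmetic: both of your key evaluations reduce to the fact that $8t^2+4t+\frac38$ has roots exactly $-\frac18$ and $-\frac38$, which is why $\pi_{b,d^{\tau(a)}}=-\frac18$ and $\pi_{b,d^{\tau(a)\tau(c)}}=-\half$ come out independent of the admissible $p$ and $q$; the $p$-dependence in $\pi_{d,c^{\tau(a)}}=-\half-q$ also cancels as you claim. The paper proceeds quite differently: it normalizes types by swapping axes $x\leftrightarrow\e-x$ (using $\tau(x)=\tau(\e-x)$ and Lemma \ref{lem tatb3}(iv), which flips every edge at $x$ between types $-\frac38$ and $-\frac18$) so as to reduce each configuration to the all-$(-\frac38)$ triangle of Lemma \ref{lem a/8}(1). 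In part (1) that converts your ``two $-\frac18$ edges'' case into a $J$-subalgebra contradiction via Lemma \ref{lem 2dim3}; in part (2) the paper normalizes all four mixed edges of the $\tilde A_3$ square to type $-\frac38$ and then applies part (1) \emph{twice}, to $\{b,c,c^{\tau(d)}\}$ and then $\{a,b,c^{\tau(d)}\}$. Your approach buys uniformity: one scalar identity drives both parts, part (2) is independent of part (1), and the conjugation by the diagonal involution $\tau(a)\tau(c)$ together with Proposition \ref{prop-weyl}(1) (the $A_3$ diagram on $\{a,b,c\}$ gives $\Symm{4}$, so $\tau(d)\in\lan\tau(a),\tau(b),\tau(c)\ran$ contradicts hypothesis (a)) replaces the paper's case juggling; the paper's route, in exchange, concentrates all computation in the single Lemma \ref{lem a/8} and reuses it.

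Two cosmetic points. First, in the all-$(-\frac38)$ case of part (1), your parenthetical ``$|\tau(a)\tau(b^{\tau(c)})|=\infty$ by Lemma \ref{lem jab}'' is literally true only in characteristic $0$; in characteristic $p>3$, Lemma \ref{lem jab}(5) gives order divisible by $p$. The correct citation is Lemma \ref{lem 2dim3} (as in the paper): the order is not in $\{2,3\}$, and it is not $1$ since $N=J$ has nonzero product, which is all you need against the $3$-transposition hypothesis. Second, applying the principle to $(b,\,d^{\tau(a)\tau(c)})$ tacitly assumes these axes are distinct so that $\pi$ is defined; this is automatic here (if they coincided, the product formula would force the nonzero $\e$-coefficient $-\half$ to vanish), and in any case coincidence would give $\tau(b)=\tau(d)^{\tau(a)\tau(c)}$ directly. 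Neither point affects the validity of the argument.
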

\begin{proof}
(1):\quad
Assume (1) does not hold. 
If $N_{x,y}$ is of type $-\frac{3}{8}$ for all distinct 
$x,y\in\{a,b,c\},$  then Lemma \ref{lem a/8}(1)
applies and so $|\tau(a)\tau(b^{\tau(c)})|\notin\{2,3\},$ a contradiction.

Otherwise, for some distinct $x,y\in\calb$ we have $N_{x,y}$ is of type $-\frac{3}{8}$
and both $N_{x,z}$ and $N_{y,z}$ are of type $-\frac{1}{8}$, where $\{x,y,z\}=\{a,b,c\}$.
But by Lemma \ref{lem tatb3}(iv) we see that both $N_{x,\e-z}$ and $N_{y,\e-z}$
are of type $-\frac{3}{8}$.  By Lemma \ref{lem a/8}, $N_{(\e-z), x^{\tau(y)}}=J_{(\e-z), x^{\tau(y)}}$.
Since $\gt(z)=\gt(\e-z),$ we get from Lemma \ref{lem 2dim3} that 
$|\tau(z)\tau(x^{\tau(y)})|\notin\{2,3\},$ a contradiction.
\medskip

\noindent
(2):\quad
Suppose there exists $\calb\subset [\cala]$ satisfying (a), (b) and (c) of   (2),
and $(*)$ holds.  Notice that by hypothesis (b), and by Lemma \ref{lem tatb2},
$N_{a,c}=B(\half,\half)_{a,c}$ so $\e-x\in\cala,$ for
all $x\in\{a,b,c,d\}$.  Interchanging 
$a$ with $\e-a$ and $c$ with $\e-c$ if necessary, we may assume that
$N_{a,b},\ N_{a,d},\ N_{c,b},\ N_{c,d}$ are all of type $-\frac{3}{8}$.
Note that $\lan \tau(x), \tau(y), \tau(z)\ran\cong S_4,$ for all distinct
$x,y,z\in \{a,b,c,d\}$.  Hence $|\tau(x)\tau(c)^{\tau(d)}|=3,$ for $x\in \{a,b \}$.
Notice that $N_{c,c^{\tau(d)}}$ is of type $-\frac{3}{8}$ (because $N_{c,d}=3C(\half)_{c,d}$).
Hence applying (1) to with $\{b,c,c^{\tau(d)}\}$ in place of
$\{a,b,c\}$ and using Corollary \ref{cor tatb3} shows that $N_{a,c^{\tau(d)}}$ 
is of type $-\frac{1}{8}$.  Similarly, $N_{b,c^{\tau(d)}}$
is of type $-\frac{1}{8}$.  By hypothesis (a), $\lan\tau(a),\tau(b),\tau(c^d)\ran$
is not isomorphic to $S_3$.  Hence we apply part (1)
with $c^{\tau(d)}$ in place of $c$ to get  a contradiction.  
\end{proof}
 
Observe that

\begin{thm}\label{thm bijection}
Assume that $\charc(\ff)\ne 3,$ that $A$ is of Clifford type,
and that $A$ is a $3$-transposition algebra with respect to $\cala$. 
Then $G_{[\cala]}$ is a $3$-transposition group of symplectic type.
\end{thm}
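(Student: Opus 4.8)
The plan is to prove symplectic type by contradiction, using the classification of triangles in Lemma \ref{lem triangle} together with the type analysis in Lemma \ref{lem nx} and the dihedral computations in Lemma \ref{lem a/8}. Recall that, in the setup of this section, $D=D_{[\cala]}$ is already known to be a normal set of $3$-transpositions generating $G=G_{[\cala]}$, that it is a conjugacy class, and that $\gD_{\cala}$ is connected. By the equivalent formulation of symplectic type (no subgroup $H=\langle D\cap H\rangle$ with $|D\cap H|=9$) and by Lemma \ref{lem triangle}, the failure of symplectic type is exactly the existence of three distinct involutions of $D$ with triangle diagram (pairwise products of order $3$) whose generated subgroup is $3^2{:}2$ or $3^{1+2}{:}2$; in the notation of Lemma \ref{lem triangle} this is the case $|r^s t|=3$, as opposed to $\Symm{3}$ (when $r^s=t$) or $\Symm{4}$ (when $|r^s t|=2$).

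First I would assume, for contradiction, that there are distinct $a,b,c\in[\cala]$ with $|\tau(x)\tau(y)|=3$ for all distinct $x,y\in\{a,b,c\}$ and with $\langle\tau(a),\tau(b),\tau(c)\rangle$ isomorphic to $3^2{:}2$ or $3^{1+2}{:}2$; in particular this group is neither $\Symm{3}$ nor $\Symm{4}$. Since it is not $\Symm{3}$ and all three pairwise orders equal $3$, Lemma \ref{lem nx}(1) applies and constrains the multiplication types to one of two shapes: either (Case A) every $N_{x,y}$ is of type $-\frac{1}{8}$, or (Case B) there is a labelling $\{x,y,z\}=\{a,b,c\}$ with $N_{x,z}$ and $N_{y,z}$ of type $-\frac{3}{8}$ and $N_{x,y}$ of type $-\frac{1}{8}$.

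Next I would convert each case into the computation of a single dihedral order via Lemma \ref{lem a/8}(2), which is designed precisely for the ``$-\frac{3}{8},-\frac{3}{8},-\frac{1}{8}$'' configuration. In Case B, Lemma \ref{lem a/8}(2) with apex $z$ yields $N_{z,\,x^{\tau(y)}}=B(\half,\half)_{z,\,x^{\tau(y)}}$, hence $|\tau(z)\tau(x^{\tau(y)})|=2$. Since $\tau(x^{\tau(y)})=\tau(x)^{\tau(y)}$ is exactly the third involution of $\langle\tau(x),\tau(y)\rangle$, Lemma \ref{lem triangle}(3) forces $\langle\tau(a),\tau(b),\tau(c)\rangle\cong\Symm{4}$, contradicting the choice of the triangle. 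Case A I would reduce to Case B using the Clifford structure: because $A$ is of Clifford type, $\e-a$ is again an axis with $\tau(\e-a)=\tau(a)$, and a one-line computation from $ab=-\frac{1}{8}\e+\half a+\half b$ shows that replacing $a$ by $a':=\e-a$ flips both edges at $a$ from type $-\frac{1}{8}$ to type $-\frac{3}{8}$ while leaving $N_{b,c}$ of type $-\frac{1}{8}$. Applying Lemma \ref{lem a/8}(2) with apex $a'$ then gives $|\tau(a)\tau(b^{\tau(c)})|=|\tau(a')\tau(b^{\tau(c)})|=2$, and Lemma \ref{lem triangle}(3) again produces $\Symm{4}$, the same contradiction.

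The main obstacle, and the step demanding care, is the group-theoretic bookkeeping that matches the conclusion of Lemma \ref{lem a/8}(2) (an order-$2$ product $|\tau(u)\tau(v^{\tau(w)})|=2$) to the hypothesis $|r^s t|=2$ of Lemma \ref{lem triangle}(3): one must check that $v^{\tau(w)}$ realizes the third involution of $\langle\tau(v),\tau(w)\rangle$ and that the order is unchanged under the conjugation $\tau(u)\tau(v^{\tau(w)})\mapsto\tau(v^{\tau(w)})\tau(u)$, so that the symmetric roles of the three generators in the triangle may be invoked freely. The second delicate point is treating Case A, which is handled by the complement trick $a\mapsto\e-a$ available precisely because $A$ is of Clifford type (and which uses $\charc(\ff)\neq 3$ through Lemma \ref{lem nx} and Lemma \ref{lem tatb3}). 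Once both cases force $\Symm{4}$, no triangle generating $3^2{:}2$ or $3^{1+2}{:}2$ can exist, and therefore $G_{[\cala]}$ is of symplectic type.
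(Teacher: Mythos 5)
Your proposal is correct and follows essentially the same route as the paper's proof: both argue by contradiction from a triangle of $3$-transpositions witnessing the failure of symplectic type, use the $-\frac{3}{8}$/$-\frac{1}{8}$ dichotomy of Corollary \ref{cor tatb3} together with the complement trick $x\mapsto \e-x$ (Lemma \ref{lem tatb3}(iv), Lemma \ref{lem 3chalfa1-b}), and invoke Lemma \ref{lem a/8} to force $|\tau(a)\tau(b^{\tau(c)})|=2$ (or $\notin\{2,3\}$), which is incompatible with the configuration. The only cosmetic differences are that you organize the case split through the already-proved Lemma \ref{lem nx}(1) and phrase the final contradiction group-theoretically via Lemma \ref{lem triangle}(3) (the triangle would generate $\Symm{4}$ rather than $3^2{:}2$ or $3^{1+2}{:}2$), whereas the paper normalizes $b,c$ by complements inline so that $N_{a,b}$ and $N_{a,c}$ are of type $-\frac{3}{8}$ and contradicts the assumed order $3$ of $\tau(a)\tau(b^{\tau(c)})$ directly.
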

\begin{proof}
Let $\e$ be the identity element of $A$. 
Assume that $G_{[\cala]}$ is not of symplectic type. Then there are  $a, b, c\in [\cala]$ 
such that $\lan\tau(a),\tau(b),\tau(c)\ran$ is {\it not} isomorphic
to $S_3,$ and such that   $|\tau(x)\tau(y)|=3$ for all $x,y\in\{a, b, c, b^{\tau(c)}\}$.
By Lemma \ref{lem a/8}(1) there are distinct $x,y\in\{a,b,c\}$ such that $N_{x,y}$
is of type $-\frac{1}{8}$.  Hence by Lemma \ref{lem 3chalfa1-b}, $\e-x\in [\cala],$
so $\e-z\in\cala,$ for all $z\in\{a,b,c\}$.
   
By Lemma \ref{lem 3chalfa1-b},
after perhaps interchanging $x$ with $\e-x$ for $x\in\{b,c\}$
we may assume that 
\[\textstyle{
N_{a,x}=3C(\half)_{a,x}\text{ for }x\in\{b,c\}.}
\]
Thus $\pi_{a,b}=\pi_{a,c}=-\frac{3}{8}$.  
By  Lemma \ref{lem a/8} we have  
$|\tau(a)\tau(b^{\tau(c)})|=2,$
a contradiction.
\end{proof}

We are now in a position to prove Theorem \ref{thm jordan ade}.

\begin{proof}[Proof of Theorem \ref{thm jordan ade}]
Assume that $D$ is not of $ADE$-type.  Then, by Lemma \ref{lem-d42}, there exists 
a subset $\caly\subset [\cala]$ of size $5$ such that if we let $Y:=\{\gt(x)\mid x\in\caly\},$
then (1)\, $H=\lan Y\ran$ is isomorphic to a central
quotient of $\Weyl_2(\tilde{D}_4)$.  (2)\, 
The diagram of $Y$ 
is the complete bipartite graph $K_{3,2}$ and (3)\, no $4$-subset
of $Y$ generates a subgroup isomorphic to $\Symm{4}$.

Let the parts of $Y$ be $\{a_1, a_2, a_3\}$ and $\{b_1, b_2\}$.
As we already noted, Lemma \ref{lem tatb2} and Lemma \ref{lem halfhalf}
imply that $\e-c\in [\cala],$ for all $c\in\caly$.

Hence, after perhaps interchanging $c$ with $\e-c$ for $c\in \{a_1, a_2, a_3\},$
using Corollary \ref{cor tatb3} and Lemma \ref{lem 3chalfa1-b}, we may assume that 
\[\textstyle{
N_{b_1,a_i}=3C(\half)_{b_1,a_i},\quad i=1,2,3}.
\] 
But then interchanging $b_2$ with $\e-b_2$ if necessary
we see that for at least two of $\{a_1, a_2, a_3\}$ say $a_1$ and $a_2$
we have 
\[\textstyle{
N_{b_2,a_i}=3C(\half)_{b_2,a_i},\quad i=1,2}.
\] 
But now taking $b_1,a_1,b_2,a_2$ in place of $a, b, c, d$ in Lemma \ref{lem nx}(2),
we get a contradiction. 
This completes the proof of the Theorem.  
\end{proof}

\begin{examples}
Finally we observe that, in a certain sense, the converse to 
Theorem \ref{thm jordan ade} holds. More precisely, for any field $\ff$
of characteristic not $2$ and for any $ADE$-type $X_n$,
there is a Jordan-axial $\ff$-algebra $A$ of Clifford type
such that 
\begin{enumerate}
{\it
\item
$A$ is a $3$-transposition algebra with respect to $\cala$;
\item
$D_{[\cala]}$ is a conjugacy of 
$3$-trans\-pos\-i\-tions of type $X_n$;
\item the Miyamoto 
group $G_{[\cala]}$ is isomorphic to one of the groups  $\Weyl(X_n)$ or $\Weyl(X_n)/Z(\Weyl(X_n))$.
(The possible groups being listed in Proposition \ref{prop-weyl}.)}
\end{enumerate}

Consider a root 
system $\Phi$ of type $X_n$.
Let $E$ be 
the Euclidean space containing (and spanned by) $\Phi$ and $E_{\zz}$ 
the root lattice in $E$, the $\zz$-span of $\Phi$. We assume that each 
root in $\Phi$ is of length $1$. Then the values of the inner product 
on $E_\zz$ belong to $\frac{1}{2}\zz$. 
(For instance, in the standard action of $\Weyl(A_m)=\Symm{m+1}$ on its 
permutation module $\rr^{m+1}$ equipped with the dot product,
the roots corresponding to transpositions
have  square length $2$ and inner-products $\pm 1$.)
Hence $V=E_\zz\otimes_\zz \ff$ is 
a vector space over $\ff$ of dimension $n$ endowed with a symmetric 
bilinear form $B$ such that $q(\bar r):=B(\bar r,\bar r)=1$ for 
all $r\in\Phi$. Here we use the notation $\bar e=e\otimes 1_{\ff}\in V$ 
for $e\in E_{\zz}$.

The Weyl group $W=\Weyl(X_n)$ of $\Phi$ generated by the reflections in all 
$r\in\Phi$ acts naturally on $\Phi$ and $E_{\zz}$ and hence on $V$. 
Namely, the reflection in a root $r$ acts on $V$ as the reflection 
in the corresponding vector $\bar r$. Let $\widehat W$ be the (isomorphic) 
image of $W$ in $GL(V)$. 

Consider $A=J(V,B)$ and
take $\cala=\{a=\frac{1}{2}(\e+\bar r)\mid r\in\Phi\}$. 
It follows from \S\ref{sect jordan} and the discussion above that $\cala$ is a set of 
$\half$-axes generating $A$.
The Miyamoto involution $\tau(a)$, for $\frac{1}{2}(\e+\bar r)=a\in \cala$,
fixes $\e\in A$ and acts as the negative of the reflection in $\bar{r}$ on $V$.
Therefore the group $G$ generated by 
the Miyamoto involutions for $\cala$ is a subgroup of index at 
most $2$ of the group $\lan-{\rm id}_{V}\ran\widehat W$.

The order of the product 
of two Miyamoto involutions is the same as the order of the product of the 
corresponding reflections. Hence $G$ is a group of $3$-transpositions
isomorphic to $\widehat W$ or $\widehat W/\lan-{\rm id}_{V}\ran$. The
second case occurs only if $\lan-{\rm id}_{V}\ran$ is in $\widehat W$ but not
in its subgroup generated by negative reflections. This in turn
happens if and only if $-{\rm id}_V \in \widehat W \sminus
\widehat W'$. The only such example is $\Weyl(E_7)$ with $\widehat W$
isomorphic to $\lan-{\rm id}_{V}\ran \times Sp_6(2)$ but $G$ isomorphic
to $Sp_6(2)$.

The space $(V,B)$ may have a nontrivial radical (depending
upon the type $X_n$ and the characteristic of $\ff$), in which case
there is a further example  $J(\tilde{V},\tilde{B})$
corresponding to $\tilde{V}=V/{\rm Rad}(V,B)$.
\end{examples}

\noindent
{\bf Acknowledgment.}  
We would like to thank  Felix Rehren for part of the proof
of Lemma \ref{lem ab=0}, and for various useful remarks.
We are also grateful to Holger Petersson for carefully going
through \S 5.


\end{document}